\newcommand*\circled[2][1.6]{\tikz[baseline=(char.base)]{
    \node[shape=circle, draw, inner sep=1pt,
        minimum height={\f@size*#1},] (char) {\vphantom{WAH1g}#2};}}
\newcommand{\supp}{\text {\rm supp}}
\newcommand{\sgn}{\text{ \rm sgn}}
\newcommand{\Id}{\textrm{id}}
\def\i{^{-1}}
\def\ge{\geqslant}
\def\le{\leqslant}
\def\<{\langle}
\def\>{\rangle}
\def\bb{\bold{b}}
\def\ad{{\rm{ad}}}
\def\PGL{{\rm PGL}}
\def\a{\alpha}
\def\b{\beta}
\def\g{\gamma}
\def\d{\delta}
\def\D{\Delta}
\def\e{\epsilon}
\def\o{\omega}
\def\s{\sigma}
\def\t{\tau}
\def\th{\theta}
\def\k{\kappa}
\def\l{{\lambda}}
\def\z{\zeta}
\def\tPhi{\tilde \Phi}
\def\ZZ{\mathbb Z}
\def\BB{\mathbb B}
\def\NN{\mathbb N}
\def\QQ{\mathbb Q}
\def\JJ{\mathbb J}
\def\FF{\mathbb F}
\def\RR{\mathbb R}
\def\SS{\mathbb S}
\def\kk{\bold{k}}
\def\ca{\mathcal A}
\def\cd{\mathcal D}
\def\ce{\mathcal E}
\def\ch{\mathcal H}
\def\co{\mathcal O}
\def\car{\mathcal R}
\def\cz{\mathcal Z}
\def\car{\mathcal R}
\def\tta{{\tilde \alpha}}
\def\tW{\tilde W}
\def\tw{{\tilde w}}
\def\fs{\mathfrak S}
\def\ad{{\rm ad}}
\def\dft{{\rm def}}
\def\Irr{{\rm Irr}}
\def\pr{{\rm pr}}
\def\rk{{\rm rk}}
\def\tp{{\rm top}}
\def\GL{{\rm GL}}
\def\GSp{{\rm GSp}}
\def\Res{{\rm Res}}
\def\dft{{\rm def}}
\def\GU{{\rm GU}}
\def\char{{\rm char}}
\def\der{{\rm der}}
\def\gen{{\rm gen}}
\def\ul{{\underline{\lambda}}}
\def\df{{\rm def}}
\def\bmu{{\mu_\bullet}}
\def\bl{{\lambda_\bullet}}
\def\bb{{b_\bullet}}
\def\bs{\sigma_\bullet}
\def\bbeta{{\eta_\bullet}}
\def\bup{{\upsilon_\bullet}}
\def\bg{\gamma_\bullet}
\def\bxi{{\xi_\bullet}}
\def\bo{{\omega_\bullet}}
\def\Gr{{\rm Gr}}
\def\MV{{\rm MV}}
\def\brF{{\breve F}}
\theoremstyle{plain}
\newtheorem{thm}{Theorem}[section]
\newtheorem{conj}{Conjecture}[section]
\newtheorem*{thm*}{Theorem}
 \newtheorem{prop}[thm]{Proposition}
 \newtheorem{lem}[thm]{Lemma}
 \newtheorem{cor}[thm]{Corollary}
\theoremstyle{definition}
\newtheorem{rmk}[thm]{Remark}
\theoremstyle{remark}
\newtheorem*{claim*}{Claim}
\begin{document}

\author{Sian Nie}
\address{Institute of Mathematics, Academy of Mathematics and Systems Science, Chinese Academy of Sciences, 100190, Beijing, China}
\email{niesian@amss.ac.cn}


\title{Irreducible components of affine Deligne-Lusztig varieties}
\begin{abstract}
We determine the (top-dimensional) irreducible components (and their stabilizers in the Frobenius twisted centralizer group) of affine Deligne-Lusztig varieties in the affine Grassmannian of a reductive group, by constructing a natural map from the set of irreducible components to the set of Mirkovi\'{c}-Vilonen cycles. This in particular verifies a conjecture by Miaofen Chen and Xinwen Zhu.


\end{abstract}

\maketitle

\section*{Introduction}
\subsection{Background}
The notion of affine Deligne-Lusztig variety was first introduced by Rapoport in \cite{R}, which plays an important role in understanding geometric and arithmetic properties of Shimura varieties. Thanks to the uniformization theorem by Rapoport and Zink \cite{RZ}, the Newton strata of Shimura varieties can be described explicitly in terms of so-called Rapoport-Zink spaces, whose underlying spaces are special cases of affine Deligne-Lusztig varieties.

In \cite{KR} and \cite{R}, Kottwitz and Rapoport made several conjectures on basic properties of affine Deligne-Lusztig varieties. Most of them have been verified by a number of authors. We mention the works by Rapoport-Richartz \cite{RR}, Kottwitz \cite{Ko3}, Gashi \cite{Ga}, and He \cite{He12}, \cite{He} on the ``Mazur inequality" criterion of non-emptiness; the works by G\"{o}rtz-Haines-Kottwitz-Reuman \cite{GHKR}, He \cite{He2}, He-Yu \cite{HY}, Viehmann \cite{V}, Hamacher \cite{H}, and Zhu \cite{Z} on the dimension formula; the works by Hartl-Viehmann \cite{HaV1}, \cite{HaV2}, Mili\'{c}evi\'{c}-Viehmann \cite{MiVi}, and Hamacher \cite{H1} on the irreducible components; and the works by Viehmann \cite{V4}, Chen-Kisin-Viehmann \cite{CKV}, and the author \cite{N1} on the connected components in the hyperspecial case (see also \cite{HZ}, \cite{CN} for some partial results in the parahoric case). For a thorough survey we refer to the report \cite{He1}. These advances on affine Deligne-Lusztig varieties have found several interesting applications in arithmetic geometry. For example, the dimension formula leads to a proof by Hamacher \cite{H1} for the Grothendieck conjecture on the closure relations of Newton strata of Shimura varieties, and the description of connected components in \cite{CKV} plays an essential role in the proof by Kisin \cite{Ki} for the Langlands-Rapoport conjecture on mod $p$ points of Shimura varieties (see \cite{Zh}, \cite{Ho} for recent progresses).



\subsection{Main results}
This paper is concerned with the parametrization problem of top-dimensional irreducible components of affine Deligne-Lusztig varieties. The problem was first considered by Xiao and Zhu in \cite{XZ}, where they solved the unramified case in order to prove certain cases of the Tate conjecture for Shimura varieties. We will provide a complete parametrization in the general case.

To state the results, we introduce some notations. Let $F$ be a non-archimedean local field with residue field $\FF_q$. Let $\brF$ be the completion of the maximal unramified extension of $F$. Denote by $\co_F$ and $\co_\brF$ the valuation rings of $F$ and $\brF$ respectively. Let $\s$ be the Frobenius automorphism of $\brF / F$.

Let $G$ be a connected reductive group over $\co_F$. Fix $T \subseteq B \subseteq G$, where $T$ is a maximal torus and $B = T U$ is a Borel subgroup with unipotent radical $U$. Denote by $Y$ the cocharacter group of $T$, and by $Y^+$ the set of dominant cocharacters determined by $B$. Let $K = G(\co_\brF)$. Fix a uniformizer $t \in \co_F$ and set $t^\l = \l(t) \in G(\brF)$ for $\l \in Y$. Then we have the Cartan decomposition for the affine Grassmannian $$\Gr = \Gr_G = G(\brF) / K = \sqcup_{\mu \in Y^+} \Gr_\mu^\circ,$$ where $\Gr_\mu^\circ = K t^\mu K/K$. For $b \in G(\brF)$ and $\mu \in Y^+$, the attached affine Deligne-Lusztig set is defined by $$X_\mu(b) = X_\mu^G(b)=\{g \in G(\brF); g\i b \s(g) \in K t^\mu K\} / K,$$ which is a subscheme locally of finite type in the usual sense if $\char(F) > 0$, and in the sense of Bhatt-Scholze \cite{BS} and Zhu \cite{Z} if $\char(F) = 0$. By left multiplication it carries an action of the group $$\JJ_b=\JJ_b^G=\{g \in G(\brF); g\i b \s(g)=b\}.$$ Up to isomorphism, $X_\mu(b)$ only depends on the $\s$-conjugacy class $[b] = [b]_G$ of $b$. Thanks to Kottwitz \cite{Ko1}, $[b]$ is uniquely determined by two invariants: the Kottwitz point $\k_G(b) \in \pi_1(G)_\s = \pi_1(G) / (1-\s)(\pi_1(G))$ and the Newton point $\nu_G(b) \in Y_\RR = Y \otimes \RR$, see \cite[\S 2.1]{HV}. Then $X_\mu(b) \neq \emptyset$ if and only if $\k_G(t^\mu)=\k_G(b)$ and $\nu_G(b) \le \mu^\diamond$, where $\mu^\diamond$ denotes the $\s$-average of $\mu$, and $\le$ denotes the partial order on $Y_\RR$ such that $v \le v' \in Y_\RR$ if $v' - v$ is a non-negative linear combination of coroots in $B$. Moreover, in this case, its dimension is given by $$\dim X_\mu(b)=\<\rho_G, \mu-\nu_G(b)\> - \frac{1}{2} \dft_G(b),$$ where $\rho_G$ is the half-sum of roots of $B$ and $\dft_G(b)$ is the {\it defect} of $b$, see \cite[\S 1.9.1]{Ko2}. Let $\Irr^\tp X_\mu(b)$ denote the set of top-dimensional irreducible components of $X_\mu(b)$.

\

The first goal of this paper is to give an explicit description of the set $\JJ_b \backslash \Irr^\tp X_\mu(b)$ of $\JJ_b$-orbits of $\Irr^\tp X_\mu(b)$. We invoke a conjecture by Xinwen Zhu and Miaofen Chen which suggests a parametrization of $\JJ_b \backslash \Irr^\tp X_\mu(b)$ by certain Mirkovi\'{c}-Vilonen cycles.

Recall that Mirkovi\'{c}-Vilonen cycles are irreducible components of $S^\l \cap \Gr_\mu$ for $\mu \in Y^+$ and $\l \in Y$, where $S^\l = U(\brF) t^\l K/K$ and $\Gr_\mu = \overline{\Gr_\mu^\circ}$. We write $\MV_\mu = \sqcup_\l \MV_\mu(\l)$ with $\MV_\mu(\l) = \Irr (S^\l \cap \Gr_\mu)$ the set of irreducible components.

Let $\widehat G$ be the Langlands dual of $G$ defined over $\overline{\QQ}_l$ with $l \neq \char(\kk)$. Denote by $V_\mu = V_\mu^{\widehat G}$ the irreducible $\widehat G$-module of highest weight $\mu$. The crystal basis (or the canonical basis) $\BB_\mu = \BB_\mu^{\widehat G}$ of $V_\mu$ was first constructed by Lusztig \cite{Lu} and Kashiwara \cite{Kashiwara}. In \cite[Theorem 3.1]{BG}, Braverman and Gaitsgory proved that the set $\MV_\mu$ of Mirkovi\'{c}-Vilonen cycles admits a $\widehat G$-crystal structure and gives rise to a crystal basis of $V_\mu$ via the geometric Satake isomorphism \cite{MV}. In \cite[\S 3.3]{XZ}, Xiao and Zhu constructed a canonical isomorphism $\BB_\mu \cong \MV_\mu$ using Littelmann's path model \cite{Li}, which we denote by $\d \mapsto S_\d$. The advantage of using $\BB_\mu$ is that its $\widehat G$-crystal structure is given in a combinatorial way.

In \cite[\S 2.1]{HV}, Hamacher and Viehmann proved that, under the partial order $\le$, there is a unique maximal element $\ul_G(b)$ in the set $$\{\ul \in Y_\s = Y / (1-\s)Y; \ul = \k_G(b), \ul^\diamond \le \nu_G(b) \},$$ which is called ``the best integral approximation" of $\nu_G(b)$. Let $V_\mu(\ul_G(b))$ be the sum of $\l$-weight spaces $V_\mu(\l)$ with $\l = \ul_G(b) \in Y_\s$, whose basis in $\BB_\mu$ and $\MV_\mu$ is denoted by $\BB_\mu(\ul_G(b))$ and $\MV_\mu(\ul_G(b))$ respectively.

\begin{conj} [Chen-Zhu] \label{conj}
There exist natural bijections $$\JJ_b \backslash \Irr^\tp X_\mu(b) \cong \MV_\mu(\ul_G(b)) \cong \BB_\mu(\ul_G(b)).$$ In particular, $|\JJ_b \backslash \Irr^\tp X_\mu(b)| = \dim V_\mu(\ul_G(b))$.
\end{conj}\begin{rmk}
If $\char(F) > 0$, $X_\mu(b)$ is equi-dimensional by \cite{HaV2} and $\Irr^\tp X_\mu(b)$ coincides with the set of irreducible components of $X_\mu(b)$. If $\char(F) = 0$, the equi-dimensionality of $X_\mu(b)$ is not fully established, see \cite[Theorem 3.4]{HV}. However, $X_\mu(b)$ is always equi-dimensional if $\mu$ is minuscule.
\end{rmk}

\begin{rmk} If $\mu$ is minuscule and either $G$ is split or $b$ is superbasic, Conjecture \ref{conj} is proved by Hamacher and Viehmann \cite{HV} using the method of semi-modules, which originates in the work \cite{dJO} by de Jong and Oort. If $b$ is unramified, that is, $\dft_G(b)=0$, it is proved by Xiao and Zhu \cite{XZ} using the geometric Satake. In both cases, the authors obtained complete descriptions of $\Irr^\tp X_\mu(b)$.
\end{rmk}

\begin{rmk}
A complete description of $\Irr^\tp X_\mu(b)$ was also known for the case where $G$ is $\GL_n$ or $\GSp_{2n}$ and $\mu$ is minuscule, see \cite{V1} and \cite{V2}.
\end{rmk}

\begin{rmk}
If the pair $(G, \mu)$ is {\it fully Hodge-Newton decomposable} (see \cite{GHN}), $X_\mu(b)$ admits a nice stratification by classical Deligne-Lusztig varieties, whose index set and closure relations are encoded in the Bruhat-Tits building of $\JJ_b$. Such a stratification has important applications in arithmetic geometry, including the Kudla-Rapoport program \cite{kudla-rapoport}, \cite{kudla-rapoport-2} and Zhang's Arithmetic Fundamental Lemma \cite{Zhang}. We mention the works by Vollaard-Wedhorn \cite{VW}, Rapoport-Terstiege-Wilson \cite{Rapoport-Terstiege-Wilson}, Howard-Pappas \cite{howard-pappas}, \cite{howard-pappas2}, and G\"{o}rtz-He \cite{GH} for some of the typical examples.
\end{rmk}

Let $I \subseteq G(\brF)$ be the standard Iwahori subgroup associated to the triple $T \subseteq B \subseteq G$, see \S\ref{G-setup}. By Proposition \ref{superbasic}, for $b \in G(\brF)$, there is a unique standard Levi subgroup $T \subseteq M \subseteq G$ and a {\it superbasic} element $b_M$ of $M(\brF)$, unique up to $M(\brF)$-$\s$-conjugation, such that $[b_M] = [b]$ and $\nu_M(b_M) = \nu_G(b)$. Moreover, we may and do choose $b_M$ such that $b_M T(\brF) b_M\i = T(\brF)$ and $b_M I_M b_M\i = I_M$, where $I_M = M(\brF) \cap I$ is the standard Iwahori subgroup of $M(\brF)$. Take $b = b_M$. Let $P = M N$ be the standard parabolic subgroup with $N \subseteq U$ its unipotent radical. Using the Iwasawa decomposition $G(\brF) = N(\brF) M(\brF) K$ we have $$\Gr = N(\brF) M(\brF) K/K = \sqcup_{\l \in Y} N(\brF) I_M t^\l K/K.$$ For $\l \in Y$ let $\th^P_\l: N(\brF) I_M \to \Gr$ be the map given by $h \mapsto h t^\l K$.

Our first goal is to prove Conjecture \ref{conj}.
\begin{thm} \label{main}
Let $b$ and $M$ be as above. Then there exists a map $$\g = \g^G: \Irr^\tp X_\mu(b) \to \BB_\mu(\ul_G(b))$$ such that for $C \in \Irr^\tp X_\mu(b)$ we have $$\overline{ \{(h t^\l)\i b \s(h t^\l) K; h \in (\th^P_\l)\i(C) \} } = \e_\l^M \overline{ S_{\g(C)} },$$ where $\l$ is the unique cocharacter such that $N(\brF) I_M t^\l K/K \cap C$ is open dense in $C$ and $\e_\l^M$ is certain Weyl group element for $M$ associated to $\l$ (see \S\ref{def-epsilon}). Moreover, $\g$ factors through a bijection $$\JJ_b \backslash \Irr^\tp X_\mu(b) \cong \BB_\mu(\ul_G(b)).$$
\end{thm}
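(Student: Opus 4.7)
The plan is to construct $\g$ explicitly from the generic Iwasawa stratum of each irreducible component, verify the closure formula and weight condition by a direct computation using the normalizing properties of $b$, and then promote $\g$ to a bijection on $\JJ_b$-orbits by combining $\JJ_b$-invariance with the numerical equality $|\JJ_b \backslash \Irr X_\mu(b)| = |\BB_\mu(\ul_G(b))|$ cited in the Remark following Conjecture \ref{conj}.

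Given $C \in \Irr X_\mu(b)$, local finiteness of the Iwasawa stratification produces a unique $\l \in Y$ with $C \cap (N(\brF) I_M t^\l K/K)$ open dense in $C$. Set $U_C = (\th^P_\l)^{-1}(C)$ and consider
\[ \Psi_\l : N(\brF) I_M \longrightarrow \Gr, \qquad h \longmapsto (h t^\l)^{-1} b \s(h t^\l) K. \]
Using $b T(\brF) b^{-1} = T(\brF)$, $b I_M b^{-1} = I_M$, and that $M$ normalizes $N$, one rewrites $\Psi_\l(h)$ in the form $n' t^{\l'} i' K$ with $n' \in N(\brF)$, $i' \in I_M$, and $\l' \in Y$ determined by $\l$ and $b$; passing to $Y_\s$ cancels the $\l - \s(\l)$ contribution, so $\l' \equiv \ul_G(b)$ in $Y_\s$. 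After an appropriate Weyl adjustment this gives $\Psi_\l(U_C) \subseteq S^{\l'} \cap \Gr_\mu$, and the dimension formula $\dim X_\mu(b) = \<\rho_G, \mu - \nu_G(b)\> - \tfrac 12 \dft_G(b)$ matches the dimension of an MV cycle in $\MV_\mu(\l')$, forcing $\overline{\Psi_\l(U_C)}$ to be a single MV cycle. Correcting by $\e_\l^M$ from Definition \ref{def-epsilon} (which compensates for the non-$M$-dominance of $\l$) and applying the Xiao--Zhu identification $\BB_\mu \cong \MV_\mu$ produces $\g(C) \in \BB_\mu(\ul_G(b))$ with the stated closure formula.

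For $\JJ_b$-equivariance, if $j \in \JJ_b$ and $g$ is a generic point of $C$, the identity $j^{-1} b \s(j) = b$ gives $(jg)^{-1} b \s(jg) K = g^{-1} b \s(g) K$, so the $\Psi$-images (for whichever Iwasawa stratum $jg$ falls into) coincide; compatibility of the Weyl corrections $\e^M$ under $\JJ_b$-translation then yields $\g(jC) = \g(C)$. Thus $\g$ descends to $\JJ_b \backslash \Irr X_\mu(b)$, and the cited numerical equality reduces bijectivity to injectivity. This final injectivity is where I expect the main difficulty: distinct $\JJ_b$-orbits yielding the same MV cycle $\e_\l^M S_\d$ would give two generic points in $N(\brF) I_M t^\l K/K$ whose $\Psi_\l$-images share a generic point of the cycle, and one must produce an element of $\JJ_b$ intertwining them. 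The strategy is to exhibit the fibers of $\Psi_\l$ through the generic MV-cycle point as torsors for a suitable subgroup of $\JJ_b$, which via the superbasic hypothesis on $b \in M(\brF)$ should reduce to a transitivity statement of the type underlying the semi-module arguments of de Jong--Oort and Hamacher--Viehmann, but now adapted to simultaneously control the $N(\brF)$- and $I_M$-coordinates of $h$ under $b$-twisted conjugation.
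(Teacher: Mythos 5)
Your skeleton matches the paper's in broad outline — define $\g$ from the generic Iwasawa stratum via the closure of the Lang-twisted image, then use the numerical count to upgrade a well-defined map on $\JJ_b$-orbits to a bijection — but several of your steps are either incorrect as stated or hide the real work. The central problem is the dimension argument. You claim that $\dim X_\mu(b) = \<\rho_G, \mu - \nu_G(b)\> - \tfrac{1}{2}\dft_G(b)$ matches the dimension of an MV cycle of the relevant weight, forcing $\overline{\Psi_\l(U_C)}$ to be an MV cycle. But $\dim \overline{S^{\l'} \cap \Gr_\mu} = \<\rho_G, \mu + \l'\>$, which is not equal to $\dim X_\mu(b)$ in general; the two are related only after passing through the Lang map $\phi_b$ on $N(\brF)I_M$, and the correct bookkeeping is precisely what Lemma \ref{adm-MN}, Lemma \ref{irr} and Lemma \ref{fiber} accomplish: one first identifies $\overline{t^{-\l}H(C)t^{\l^\dag}}$ with an irreducible component of $(t^\l K t^\mu K t^{-\l^\dag}) \cap N(\brF)I_M$ and then factors it as $\cz^N \ast (t^{-\l} H^M(C^{\l,M}) t^{\l^\dag})$ via the fibration $N(\brF)I_M \to I_M$. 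Without that structure, irreducibility and the MV-cycle containment do not follow from a dimension count.

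A second gap is the $\JJ_b$-invariance. The pointwise Lang identity does give that $\Psi_\l(U_C)$ and $\Psi_{\l'}(U_{jC})$ agree as subsets of $\Gr$, but this only yields $\e_\l^M S_{\g(C)} = \e_{\l'}^M S_{\g(jC)}$ for possibly different $\l, \l'$; since a $W_M$-translate of an MV cycle is generally not an MV cycle, you cannot conclude $\g(C) = \g(jC)$ without controlling the Weyl corrections, which is not a "compatibility" remark but a real step. The paper resolves it by first showing, via Theorem \ref{conj'}, that any two $\JJ_b$-conjugate components are already $(P(\brF)\cap\JJ_b)$-conjugate, and then handling the $N$- and $M$-parts separately through Corollary \ref{constr} and Theorem \ref{reform}. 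Third, the "transitivity statement of the type underlying semi-module arguments" that you defer to is exactly the superbasic case, Theorem \ref{reform}; this is the technical heart of the argument and occupies all of \S 2 (reduction to $\GL_n$, the Coxeter-element combinatorics of Lemma \ref{Coxeter}, the sign control of Lemma \ref{nonneg}, the convolution identity of Proposition \ref{reform-minu}, and the non-minuscule extension Proposition \ref{multp}). Presenting it as an adaptation of de Jong--Oort/Hamacher--Viehmann underestimates what is new. Finally, given the cardinality count, proving surjectivity (as the paper does, constructively via Lemma \ref{S-NM} and the Levi factorization) is significantly easier than the injectivity you aim for, which would require classifying $\JJ_b$-intertwiners between components — a structure the paper never needs.
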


\begin{rmk} \label{numer}
The equality $|\JJ_b \backslash \Irr^\tp X_\mu(b)| = \dim V_\mu(\ul_G(b))$, which is the numerical version of Conjecture \ref{conj}, is proved by Rong Zhou and Yihang Zhu in \cite{ZZ} (even for the quasi-split case), and by the author in an earlier version of this paper, using different approaches.
\end{rmk}

\

It is a remarkable feature that the tensor product of two crystals bases is again a crystal basis. So there is a natural map $$\otimes: \BB_\bmu^{\widehat G^d} = \BB_{\mu_1}^{\widehat G} \times \cdots \times \BB_{\mu_d}^{\widehat G} \longrightarrow \BB_{\mu_1}^{\widehat G} \otimes \cdots \otimes \BB_{\mu_d}^{\widehat G}  \longrightarrow  \sqcup_\mu \BB_\mu^{\widehat G}(\ul_G(b)),$$ where the first map is given by taking the tensor product, and the second one is the canonical projection to highest weight $\widehat G$-crystals.

On the other hand, there is also a ``tensor structure" among affine Deligne-Lusztig varieties coming from the geometric Satake. Consider the product $G^d$ of $d$ copies of $G$ together with a Frobenius automorphism given by $$(g_1, g_2, \dots, g_d) \mapsto (g_2, \dots, g_d, \s(g_1)).$$ For $\bmu = (\mu_1, \dots, \mu_d) \in Y^d$ and $\bb = (1, \dots, 1, b) \in G^d(\brF)$ with $b \in G(\brF)$, we can define the affine Deligne-Lusztig variety $X_\bmu(\bb)$ in a similar way. We know (see Corollary \ref{compare}) that the projection $\Gr^d \to \Gr$ to the first factor induces a map $$\pr: \Irr^\tp X_\bmu(\bb) \to \sqcup_\mu \Irr^\tp X_\mu(b),$$ which serves as the functor of taking tensor products.

Our second main result shows that the map $\g$ (for various $G$) preserves the tensor structures on both sides.
\begin{thm} \label{conv}
There is a Cartesian square \begin{align*}\xymatrix{
  \Irr^\tp X_\bmu(\bb) \ar[d]_{\pr} \ar[r]^{\quad \ \g^{G^d}} & \BB_\bmu^{\widehat G^d} \ar[d]^{\otimes} \\
  \sqcup_\mu \Irr^\tp X_\mu(b) \ar[r]^{\quad \ \g^G} & \sqcup_\mu \BB_\mu^{\widehat G}.  } \end{align*} As a consequence, if $\BB_\mu^{\widehat G}$ appears in the tensor product $\BB_\bmu^{\widehat G} = \BB_{\mu_1}^{\widehat G} \otimes \cdots \otimes \BB_{\mu_d}^{\widehat G}$, then $\g^G$ is determined by $\g^{G^d}$ and $\Irr^\tp X_\mu(b) = \pr ( (\otimes \circ \g^{G^d})\i (\BB_\mu^{\widehat G}) )$.
\end{thm}

\begin{rmk} \label{rmk-char}
The map $\g^G$ coincides with the natural constructions of \cite{XZ} and \cite{HV} for {\it quasi-minuscule} cocharacters, see \cite[Lemme 1.1]{NP}. On the other hand, we know that each highest weight module appears in some tensor product of quasi-minuscule highest weight modules. Thus Theorem \ref{conv} gives a characterization of the map $\g^G$ by the tensor structure of $\widehat G$-crystals.
\end{rmk}

\begin{rmk}\label{rmk-constr}
As an essential application, Theorem \ref{conv}, combined with the construction of \cite{HV}, provides a representation-theoretic construction of $\Irr^\tp X_\mu(b)$ up to taking closures. Indeed, by the reduction method in \cite[\S 5]{GHKR}, it suffices to consider the case where $b$ is superbasic and $G = \Res_{E / F} \GL_n$ with $E / F$ a finite unramified extension. In this case, we can choose a minuscule cocharacter $\bmu \in Y^d$ for some $d$ such that $\BB_\mu^{\widehat G}$ appears in $\BB_\bmu^{\widehat G}$. As $\bmu$ is minuscule, both $\Irr^\tp X_\bmu(\bb)$ and $\g^{G^d}$ are explicitly constructed in \cite{HV}. Then Theorem \ref{conv} shows how to obtain $\Irr^\tp X_\mu(b)$ from $\Irr^\tp X_\bmu(\bb)$ by taking the projection $\pr$. The key is to decompose the tensor product into simple objects $$\BB_\bmu^{\widehat G} = \BB_{\mu_1}^{\widehat G} \otimes \cdots \otimes \BB_{\mu_d}^{\widehat G} = \sqcup_\mu (\BB_\mu^{\widehat G} )^{m_\bmu^\mu},$$ which can be solved combinatorially using the ``Littlewood-Richardson" rule for $\widehat G$-crystals (see \cite[\S 10]{Li}). Here $m_\bmu^\mu$ denotes the multiplicity with which $\BB_\mu^{\widehat G}$ appears in $\BB_\bmu^{\widehat G}$.
\end{rmk}

\begin{rmk} In the case mentioned above where $G = \Res_{E / F} \GL_n$ and $b$ is superbasic, Viehmann \cite{V} and Hamacher \cite{H} defined a decomposition of $X_\mu(b)$ using extended semi-modules (or extended EL-charts). In particular, $\JJ_b \backslash \Irr^\tp X_\mu(b)$ is parameterized by the set of equivalence classes of {\it top} extended semi-modules, that is, the semi-modules whose corresponding strata are top-dimensional. However, it unclear how to construct all the top extended semi-modules if $\mu$ is non-minuscule. It would be interesting to give an explicit correspondence between the top extended semi-modules and the crystal elements in $\BB_\mu(\ul_G(b))$.
\end{rmk}

\

The third goal is to give an explicit construction of an irreducible component from each $\JJ_b$-orbit of $\Irr^\tp X_\mu(b)$ and compute its stabilizer. Combined with Theorem \ref{main}, this will provide a complete parametrization of $\Irr^\tp X_\mu(b)$ in theory. If $b$ is unramified, this task has been done by Xiao-Zhu \cite{XZ}. Otherwise, using Theorem \ref{conv}, it suffices to consider the case where $G$ is adjoint, $\mu$ is minuscule, and $b$ is basic. To handle this case, we consider the decomposition $$X_\mu(b) = \sqcup_{\l \in Y} X_\mu^\l(b),$$ where each piece $X_\mu^\l(b)=I t^\l K/K \cap X_\mu(b)$ is a locally closed subset of $X_\mu(b)$.

\begin{thm} \label{small}
Keep the assumptions on $G, b, \mu$ as above.

(1) $\overline{X_\mu^\l(b)} \in \Irr^\tp X_\mu(b)$ if and only if $\l \in Y$ is {\it small};

(2) each $\JJ_b$-orbit of $\Irr^\tp X_\mu(b)$ has a representative of the form $\overline{X_\mu^\l(b)}$ with $\l$ small;

(3) if $\l \in Y$ is small, then the stabilizer of $\overline{X_\mu^\l(b)}$ in $\JJ_b$ is the standard parahoric subgroup of type $\Pi(\l)$, which is of maximal volume among all parahoric subgroups of $\JJ_b$.
\end{thm}
We refer to \S \ref{def-small} for the meanings of the smallness of $\l$ and the associated type $\Pi(\l)$. As a consequence, we obtain the following result without restrictions on $G$, $b$, and $\mu$.
\begin{thm} [He-Zhou-Zhu] \label{max-stab}
The stabilizer of each top-dimensional irreducible component of $X_\mu(b)$ in $\JJ_b$ is a parahoric subgroup of maximal volume.
\end{thm}

\begin{rmk}
Theorem \ref{max-stab} is first proved by He-Zhou-Zhu \cite{HZZ}. It is also verified in \cite{HZZ} that a parahoric subgroup has maximal volume if and only if its Weyl group has {\it maximal length} (see Theorem \ref{max}). This gives an explicit characterization of parahoric subgroups of maximal volume by their types.

The original proof in \cite{HZZ} is based on the twisted orbital integral method (see \cite{ZZ}) and the Deligne-Lusztig reduction method (see \cite{He}). Our proof is based on the combinatorial properties of small cocharacters, which shows that the stabilizers are parahoric subgroups of maximal length.

\end{rmk}

\subsection{Strategy} Now we briefly discuss the strategy. First we reduced the problem to the case where $b$ is basic and $G$ is simple and adjoint. If $G$ has no non-zero minuscule coweights, then $b$ is unramified and the problem has been solved by Xiao-Zhu \cite{XZ}. Thus, it remains to consider the case where $G$ has some non-zero minuscule cocharacter. In particular, any irreducible $\hat G$-module appears in some tensor product of irreducible $\hat G$-modules with minuscule highest weights (see Lemma \ref{appear}). Combined with the geometric Satake, this observation enables us to decompose the problem into three ingredients: (1) the construction of $\g$ in the case where $b$ is superbasic; (2) the equality $$|\JJ_b \backslash \Irr^\tp X_\mu(b)| = \dim V_\mu(\ul_G(b))$$ in the case where $\mu$ is minuscule and $b$ is basic; and (3) the construction of irreducible components and the computation of their stabilizers in the situation of (2).

The first ingredient is solved in \S\ref{sec-sup} by combining the semi-module method and Littelmann's path model. For the second ingredients, we consider in \S\ref{sec-minu} the following decomposition $$X_\mu(b) = \sqcup_{\l \in Y} X_\mu^\l(b).$$ In Proposition \ref{dim}, we show that $\JJ_b$ acts on $\Irr X_\mu^\l(b)$ transitively and $$\Irr^\tp X_\mu(b) = \sqcup_{\l \in \ca_{\mu, b}^\tp}\Irr \overline{X_\mu^\l(b)},$$ where $\ca_{\mu, b}^\tp$ is the set of coweights $\l$ such that $\dim X_\mu^\l(b) = \dim X_\mu(b)$. In particular, the action of $\JJ_b$ on $\Irr^\tp X_\mu(b)$ induces an equivalence relation on $\ca_{\mu, b}^\tp$, and the $\JJ_b$-orbits of $\Irr^\tp X_\mu(b)$ are naturally parameterized by the corresponding equivalence classes of $\ca_{\mu, b}^\tp$. Therefore, it remains to show the number of these equivalence classes is equal to $\dim V_\mu(\ul_G(b))$. To this end, we give an explicit description of $\ca_{\mu, b}^\tp$ (see Proposition \ref{polar}) and reduce the question to the superbasic case, which has been solved by Hamacher-Viehmann \cite[Theorem 1.5]{HV}. Finally, to solve the last ingredient, we introduce the notion of small cocharacters in \S\ref{sec-stab}. We prove that $X_\mu^\l(b)$ is irreducible if and only if $\l$ is small, and show its stabilizer is of maximal length in this case. Here we will use a general result \cite[Theorem 3.3.1]{ZZ} by Zhou-Zhu showing that the stabilizers are parahoric subgroups, which simplifies the original proof following \cite{XZ}.

\begin{rmk}
Even if the simple adjoint group $G$ has no non-zero minuscule cocharacters, the above approach still works but is more technically involved, by using quasi-minuscule cocharacters instead.
\end{rmk}


\subsection{Comparison with the work \cite{ZZ} by Zhou-Zhu} As mentioned before, this paper aims to give a complete parametrization of $\Irr^\tp X_\mu(b)$, which consists of three parts: the parametrization of $\JJ_b \backslash \Irr^\tp X_\mu(b)$; the construction of representative irreducible components; and the computation of their stabilizers. The major overlap with \cite{ZZ} lies in the first part, see Remark \ref{numer}. A key new feature of this paper is that the $\widehat G$-crystal structure plays an essential role in the construction, see Remark \ref{rmk-char} \& \ref{rmk-constr}. This in particular enables us to handle the type $A$ case, which is not covered in \cite{ZZ}. There is a minor overlap in the third part, where the difference is that this paper gives an algorithm for computing the stabilizers (see Theorem \ref{small}); while the work by Zhou-Zhu produces extra interesting information on the volumes of stabilizers, see \cite[Theorem C \& Remark 1.4.3]{ZZ}.


\subsection*{Acknowledgement}
We would like to thank Miaofen Chen, Ulrich G\"{o}rtz, Xuhua He, Wen-wei Li, Xu Shen, Liang Xiao, Chia-Fu Yu, Xinwen Zhu and Yihang Zhu for helpful discussions and comments. The author is indebted to Ling Chen for verifying the type $E_7$ case of Lemma \ref{superbasic} using the computer program. We are grateful to Paul Hamacher and Eva Viehmann for detailed explanations on their joint work \cite{HV}.

\section{Preliminaries} \label{sec-pre}
We keep the notations in the introduction. Set $K_H = H(\co_\brF)$ for any subgroup $H \subseteq G$ over $\co_\brF$.

\subsection{Root system} \label{G-setup} Let $\car=(Y, \Phi^\vee, X, \Phi, \SS_0)$ be the based root datum of $G$ associated to the triple $T \subseteq B \subseteq G$, where $X$ and $Y$ denote the (absolute) character and cocharacter groups of $T$ respectively together with a perfect pairing $\< , \>: X \times Y \to \ZZ$; $\Phi$ (resp. $\Phi^\vee$) is the roots system (resp. coroot system); $\SS_0$ is the set of simple reflections. Denote by $\Phi^+$ the set of (positive) roots appearing in $B$. Then $\Phi = \Phi^+ \sqcup \Phi^-$ with $\Phi^- = -\Phi^+$. For $\a \in \Phi$, we denote by $s_\a$ the reflection which sends $\l \in Y$ to $\l - \<\a, \l\> \a^\vee$ with $\a^\vee \in \Phi^\vee$ the corresponding coroot of $\a$. The Frobenius map of $G$ induces an automorphism of $\car$ of finite order, which is still denoted by $\s$. In particular, $\s$ acts on $Y_\RR$ as a linear transformation of finite order.

Let $W_0 = W_G$ be the Weyl group of $T$ in $G$, which is a reflection subgroup of $\GL(Y_\RR)$ generated by $\SS_0$. The Iwahori-Weyl group of $T$ in $G$ is given by $$\tW = \tW_G = N_T(\brF) / K_T \cong Y \rtimes W_0=\{t^\l w; \l \in Y, w \in W_0\},$$ where $N_T$ denotes the normalizer of $T$ in $G$. We can embed $\tW$ into the group of affine transformations of $Y_\RR$ so that the action of $\tw=t^\mu w$ is given by $v \mapsto \mu+w(v)$. Let $\Phi^+$ be the set of (positive) roots appearing in Borel subgroup $B \supseteq T$ and let $$\D = \D_G = \{v \in Y_\RR; 0 < \<\a, v\> < 1, \a \in \Phi^+\}$$ be the base alcove. Then we have $\tW=W^a \rtimes \Omega$, where $W^a=\ZZ \Phi^\vee \rtimes W_0$ is the affine Weyl group and $\Omega$ is the stabilizer of $\D$. Let $Y^+$ be the set of dominant cocharacters. For $\chi, \eta \in Y$ we write $\chi \le \eta$ if $\eta - \chi$ is a sum of positive roots. Write $\chi \leq \eta$ if $\bar \chi \le \bar \eta$. Here $\bar \eta, \bar \chi$ are the dominant $W_0$-conjugate of $\eta, \chi$ respectively.

For $\a \in \Phi$, let $U_\a \subseteq G$ denote the corresponding root subgroup. We set $$I = K_T \prod_{\a \in \Phi^+} U_\a(t \co_{\brF}) \prod_{\b \in \Phi^+} U_{-\b}(\co_{\brF}) \subseteq G(\brF),$$ which is called the standard Iwahori subgroup associated to $T \subseteq B \subseteq G$. We have the Iwasawa decomposition $G(\brF) = \sqcup_{\tw \in \tW} I \tw I$.

\subsection{Affine roots} \label{aff-root} Let $\tPhi = \tPhi_G = \Phi \times \ZZ$ be the set of (real) affine roots. Let $a = \a+k := (\a, k) \in \tPhi$. Denote by $U_a: \co_{\brF} \to G(\brF)$, $z \mapsto U_\a(z t^k)$ the corresponding one-parameter affine root subgroup. We can view $a$ as an affine function such that $a(v)=-\<\a, v\>+k$ for $v \in Y_\RR$, whose zero locus $H_a = \{v \in Y_\RR; a(v)=0\}$ is called an affine root hyperplane. Let $s_a=s_{H_a}=t^{k \a^\vee} s_\a \in \tW$ denote the corresponding affine reflection. Set $\tPhi^+=\{a \in \tPhi; a(\D) > 0\}$. Then $\tPhi=\tPhi^+ \sqcup \tPhi^-$ with $\tPhi^- = -\tPhi^+$. The associated length function $\ell: \tW \to \NN$ is defined by $\ell(\tw)=|\tPhi^- \cap \tw(\tPhi^+)|$. Let $\SS^a=\{s_a; a \in \tPhi, \ell(s_a)=1\}$. Then $W^a$ is generated by $\SS^a$ and $(W^a, \SS^a)$ is a Coxeter system.

Let $\a \in \Phi$. Define $\tilde \a = (\a, 0) \in \tPhi^+$ if $\a < 0$ and $\tilde \a = (\a, 1) \in \tPhi^+$ otherwise. Then the map $\a \mapsto \tilde \a$ gives an embedding of $\Phi$ into $\tilde \Phi^+$, whose image is $\{a \in \tPhi; 0 < a(\D) < 1\}$. Let $\Pi$ be the set of roots $\a \in \Phi$ such that $\tilde \a$ is a simple affine root, namely, $\Pi$ consists of minus simple roots and highest positive roots.
\begin{lem}\label{prod}
Let $\tw, \tw' \in \tW$. Then $I \tw I \tw' I \subseteq \cup_{x \leq \tw} I x \tw' I$ and $I \tw I \tw' I \subseteq \cup_{x' \leq \tw'} I \tw x' I$. Consequently, $\tw I t^\l K \subseteq \cup_{x \leq \tw} I t^{x(\l)} K$ for $\l \in Y$. Here $\leq$ is the usual Bruhat order on $\tW$ associated to $\ell$.
\end{lem}

\subsection{Levi subgroup} \label{levi-subsec} Let $M \supseteq T$ be a (semistandard) Levi subgroup of $G$. By replacing the triple $T \subseteq B \subseteq G$ with $T \subseteq B \cap M \subseteq M$, we can define $\Phi_M^\pm$, $\tW_M$, $W_M^a$, $W_M$, $I_M$, $\tPhi_M^\pm$, $\D_M$, $\Omega_M$ and so on as above. For $v \in Y_\RR$, we denote by $M_v$ the Levi subgroup generated by $T$ and $U_\a$ for $\a \in \Phi$ such that $\<\a, v\> = 0$, and denote by $N_v$ the unipotent subgroup generated by $U_\b$ for $\b \in \Phi$ such that $\<\b, v\> > 0$. We say $M$ is standard if $M = M_v$ for some dominant vector $v \in Y^+$.

\subsection{Superbasic element} We say $b \in G(\brF)$ is superbasic if none of its $\s$-conjugates is contained in a proper Levi subgroup of $G$. In particular, $b$ is basic in $G(\brF)$, that is, the Newton point $\nu_G(b)$ is central for $\Phi$.
\begin{prop} \label{superbasic}
If $b \in G(\brF)$ is basic, then there exists a unique standard Levi subgroup $M \subseteq G$ such that $M(\brF) \cap [b]$ is a (single) superbasic $\s$-conjugacy class of $M(\brF)$.
\end{prop}
The existence is known. The uniqueness is proved in Appendix B, which is only used in the formulation of Theorem \ref{main}.

\subsection{The element $\e_\l^G$} \label{def-epsilon}
Let $\l \in Y$ and $\g \in \Phi$. We set $\l_\g = -\tilde\g(\l)$, that is, $\l_\g = \<\g, \l\>$ if $\g < 0$ and $\l_\g = \<\g, \l\> - 1$ otherwise. Let $U_\l$ (resp. $U_\l^-$) be the (maximal) unipotent subgroup of $G$ generated by $U_\a$ such that $\l_\a \ge 0$ (resp. $\l_\a < 0$). We define $\e_\l = \e^G_\l \in W_0$ such that $U_\l = {}^{\e_\l} U := \e_\l U \e_\l\i$. Here $U$ denotes the unipotent radical of $B$. Set $I_\l = I \cap t^{\l} K t^{-\l}$ and \begin{align*} I_\l^- &= K_T (I_\l \cap U_\l^-) = K_T (I \cap U_\l^-); \\ I_\l^+ &= K_T (I_\l \cap U_\l) = K_T t^\l K_{U_\l} t^{-\l}. \end{align*} It follows from the Iwasawa decomposition that $I_\l = I_\l^- I_\l^+ = I_\l^+ I_\l^-$.

Let $p: \tW \rtimes \<\s\> \to W_0 \rtimes \<\s\>$ denote the natural projection, where $\<\s\> $ is the finite cyclic subgroup of $\GL(Y_\RR)$ generated by $\s$.
\begin{lem} \label{eta}
Let $\l \in Y$ and $\a \in \Phi$. Then

(1) $\l_\a + \l_{-\a} = -1$;

(2) $s_{\tta}(\l) = \l - \l_\a \a^\vee$;

(3) $\l_\a = \o(\l)_{p(\o)(\a)}$ and $\e_{\o(\l)} = p(\o) \e_\l$ for $\o \in \Omega$.
\end{lem}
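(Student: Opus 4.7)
The plan is to dispatch (1) and (2) by direct computation from the definitions, and then deduce (3) from the fact that $\Omega$ preserves both the base alcove $\D$ and the embedding $\a \mapsto \tta$ of $\Phi$ into $\tPhi^+$.

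For (1), split on whether $\a > 0$ or $\a < 0$; in either case exactly one of $\a, -\a$ is positive, so the defining $-1$ shift is incurred once, giving $\l_\a + \l_{-\a} = \<\a,\l\> + \<-\a,\l\> - 1 = -1$. For (2), recall that $s_{\tta} = t^{k\a^\vee} s_\a$ with $k = 1$ if $\a > 0$ and $k = 0$ if $\a < 0$; then $s_{\tta}(\l) = \l - \<\a,\l\>\a^\vee + k\a^\vee = \l - (\<\a,\l\> - k)\a^\vee$, and $\<\a,\l\> - k$ equals $\l_\a$ in both cases.

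For (3), the cleanest device is to rewrite $\l_\a = -\tta(\l)$, which is immediate from the two cases since $\tta(v) = -\<\a,v\> + k$ with $k \in \{0,1\}$ as above. Now $\tta$ is uniquely characterized inside $\tPhi^+$ as the affine root with linear part $\a$ whose value on $\D$ lies in $(0,1)$. Since $\o \in \Omega$ preserves $\D$ (hence $\tPhi^+$) and has linear part $p(\o)$, this forces $\o \cdot \tta = \widetilde{p(\o)(\a)}$, where $(\o \cdot a)(v) := a(\o\i(v))$. Consequently
$$\o(\l)_{p(\o)(\a)} = -\widetilde{p(\o)(\a)}(\o(\l)) = -(\o \cdot \tta)(\o(\l)) = -\tta(\o\i(\o(\l))) = -\tta(\l) = \l_\a.$$
The assertion $\e_{\o(\l)} = p(\o)\e_\l$ then follows because the identity just proved yields $\{\a : \o(\l)_\a \ge 0\} = p(\o)(\{\b : \l_\b \ge 0\})$, whence $U_{\o(\l)} = p(\o) U_\l p(\o)\i$, and substituting $U_\l = \e_\l U \e_\l\i$ gives the claim. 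The only subtle point in the whole lemma is the $\Omega$-invariance of the correspondence $\a \leftrightarrow \tta$; once that is in hand the remainder of (3) is essentially one line of algebra, so I do not anticipate any serious obstacle.
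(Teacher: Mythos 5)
Your proof is correct and follows essentially the same route as the paper: both hinge on the fact that $\o \in \Omega$ preserves the set $\{a \in \tPhi^+ : 0 < a(\D) < 1\}$ and hence carries $\tta$ to $\widetilde{p(\o)(\a)}$. Your identity $\l_\a = -\tta(\l)$ packages this a bit more cleanly (the paper instead normalizes to $\a > 0$ via part (1) and then splits on the sign of $p(\o)(\a)$ to compute $\<p(\o)(\a),\eta\>$, which amounts to the same constraint on the constant term of $\o\cdot\tta$), but the underlying idea is identical.
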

\begin{proof}
The first two statements follow directly by definition. We show the last one. Write $\o = t^\eta p(\o)$ for some $\eta \in Y$. Then $$\<p(\o)(\a), \o(\l)\> = \<\a, \l\> + \<p(\o)(\a), \eta\>.$$ By the statement (1) we may assume $\a > 0$. Since $\o \in \Omega$, $\<p(\o)(\a), \eta\> = 0$ if $p(\o)(\a) > 0$ and $\<p(\o)(\a), \eta\> = -1$ otherwise. It follows that $\o(\l)_{p(\o)(\a)} = \l_\a$. In particular, $U_{\o(\l)} = {}^{p(\o)} U_\l$ and hence $\e_{\o(\l)} = p(\o) \e_\l$.
\end{proof}

\begin{lem} \label{inc}
Let $\l, \eta \in Y$ such that $\eta - \l$ is minuscule. Then $I_\eta^- \subseteq I_\l$.
\end{lem}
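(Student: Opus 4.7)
The plan is to reduce the containment to an element-wise check on affine root subgroups. Since $T(\co_\brF) \subseteq I \cap t^\l K t^{-\l} = I_\l$ trivially, and $I_\eta^- = T(\co_\brF)(I \cap U_\eta^-)$ by definition, it suffices to show $t^{-\l}(I \cap U_\eta^-)t^\l \subseteq K$. (Note $I \cap U_\eta^- \subseteq I$ is automatic, so no separate verification of the $I$-membership is needed.)

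Next, I would invoke the standard Iwahori factorization restricted to $U_\eta^-$: with respect to a fixed ordering, $I \cap U_\eta^-$ is the product of the one-parameter subgroups $I \cap U_\a$ over $\a \prec_\eta 0$, where $I \cap U_\a = U_\a(t\co_\brF)$ when $\a > 0$ and $I \cap U_\a = U_\a(\co_\brF)$ when $\a < 0$. It therefore suffices to check for each such $\a$ and each admissible $z$ that $t^{-\l}U_\a(z)t^\l = U_\a(t^{-\<\a,\l\>}z)$ lies in $K$, i.e., that $v(z) \ge \<\a,\l\>$. Reading the two cases against the definition of $\l_\a$, this inequality is seen in both cases to be equivalent to $\l_\a \le 0$.

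To close, I would use the minuscule hypothesis through the identity $\eta_\a - \l_\a = \<\a, \eta - \l\>$, valid for all $\a \in \Phi$ since the $-1$ shifts in the two halves of the definition of $(\cdot)_\a$ cancel in the difference. Minuscule means $\<\a,\eta-\l\> \in \{-1,0,1\}$, so $\l_\a \le \eta_\a + 1$; for $\a \prec_\eta 0$ we have $\eta_\a \le -1$, whence $\l_\a \le 0$ as required. The only mildly non-trivial input is the factorization of $I \cap U_\eta^-$ as an ordered product of affine root subgroups, which is classical for Iwahori subgroups of split reductive groups; the rest is a direct valuation bookkeeping, so I do not anticipate a substantive obstacle here.
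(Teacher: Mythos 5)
Your proof is correct and follows essentially the same route as the paper's: both reduce to the containment $I\cap U_\a \subseteq I_\l$ for each $\a\prec_\eta 0$ and then close the argument using the minuscule bound $\<\a,\eta-\l\>\in\{-1,0,1\}$ to compare $\l_\a$ with $\eta_\a$. Your reduction of all cases to the single inequality $\l_\a \le 0$ is a slightly tidier bookkeeping than the paper's explicit sign/case split, but the underlying idea is identical.
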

\begin{proof}
It suffices to show $U_\a(t^{\e_\a}\co_{\brF}) \subseteq I_\l$ for $\l_\a < 0$, where $\e_\a = 0$ if $\a < 0$ and $\e_\a = 1$ otherwise. If $\l_\a < 0$, there is nothing to prove. Suppose $\l_\a > 0$. Then $\<\a, \l\> > \<\a, \eta\>$ and hence $\<\a, \l\> = \<\a, \eta\> + 1$ as $\eta - \l$ is minuscule. This means $-1 \le \<\a, \eta\> \le 0$. If $\<\a, \eta\> = 0$, then $\a > 0$ (since $\l_\a < 0$) and $U_\a(t^{\e_\a}\co_{\brF}) = U_\a(t\co_{\brF}) = U_\a(t^{\<\a, \l\>}\co_{\brF}) \subseteq I_\l^+$. If $\<\a, \eta\> = -1$, then $\a < 0$ (since $\l_\a > 0$) and $U_\a(t^{\e_\a}\co_{\brF}) = U_\a(\co_{\brF}) = U_\a(t^{\<\a, \l\>}\co_{\brF}) \subseteq I_\l^+$.
\end{proof}

\subsection{The convolution map} \label{conv-setup}
Let $d \in \ZZ_{\ge 1}$ and let $G^d$ be the product of $d$ copies of $G$. Let $\bs$ be the Frobenius-type automorphism on $G^d$ given by $(g_1, g_2, \dots, g_d) \mapsto (g_2, \dots, g_d, \s(g_1))$. We set $\bb = (1, \dots, 1, b) \in G(\brF)^d$. Let $\bmu = (\mu_1, \dots, \mu_d) \in Y^d$ be a dominant cocharacter of $G^d$. Let $X_\bmu(\bb)$ be the corresponding affine Deligne-Lusztig variety in $\Gr^d$ using automorphism $\bs$. Consider the twisted product $$\Gr_\bmu^\circ := K t^{\mu_1} K \times_K \cdots \times_K K t^{\mu_d} K/K$$ together with the convolution map $$m_\bmu: \Gr_\bmu := \overline{ \Gr_\bmu^\circ } \to \Gr_{|\bmu|} = \cup_{\mu \leq |\bmu|} \Gr_\mu^\circ$$ given by $(g_1, \dots, g_{d-1}, g_d K) \mapsto g_1 \cdots g_d K$, where $|\bmu|=\mu_1 + \cdots + \mu_d \in Y^+$.
\begin{thm} [\cite{MV}, \cite{NP}, {\cite[Theorem 1.3]{Haines}}] \label{sat}
Let notations be as above. Let $\mu \in Y^+$ with $\mu \leq |\bmu|$ and $y \in \Gr_\mu^\circ$. Then

(1) $\dim m_\bmu\i(y) \le \<\rho, |\bmu| - \mu\>$, and moreover, the number of irreducible components of $m_\bmu\i(y)$ having dimension $\<\rho, |\bmu|-\mu\>$ equals the multiplicity $m_\bmu^\mu$ with which $\BB_\mu^{\widehat G}$ occurs in $\BB_\bmu^{\widehat G} := \BB_{\mu_1}^{\widehat G} \otimes \cdots \otimes \BB_{\mu_d}^{\widehat G}$.

(2) $m_\bmu\i(y)$ is equi-dimensional of dimension $\<\rho, |\bmu|-\mu\>$ if $\bmu$ is minuscule.

Here $\rho = \rho_G$ is the half sum of roots in $\Phi^+$.
\end{thm}

Thanks to Zhu \cite[\S 3.1.3]{Z}, there is a Cartesian square \begin{align*} \xymatrix{
  X_\bmu(\bb) \ar[d]^{\pr} \ar[r] & G(\brF) \times_K \Gr_\bmu^\circ \ar[d]^{\Id \times_K m_\bmu} \\
\cup_{\mu \leq |\bmu|} X_\mu(b) \ar[r] & G(\brF) \times_K \Gr_{|\bmu|},  }\end{align*} where $\pr$ is the projection to the first factor; the lower horizontal map is given by $g_1 K \mapsto (g_1, g_1\i b \s(g_1) K)$; the upper horizontal map is given by $$(g_1 K, \dots, g_d K) \mapsto (g_1, g_1\i g_2, \dots, g_{d-1}\i g_d,  g_d\i b \s(g_1) K).$$ Moreover, via the identification $$\JJ_b \cong \JJ_\bb, \quad g \mapsto (g, \dots, g),$$ the above Cartesian square is $\JJ_b$-equivariant by left multiplication.
\begin{cor} \label{compare}
Let the notation be as above. Then $$\Irr^\tp X_\bmu(\bb) = \sqcup_{\mu \in Y^+, m_\bmu^\mu \neq 0} \sqcup_{C \in \Irr^\tp X_\mu(b)} \Irr^\tp \overline{ \pr\i(C) }.$$ In particular, $$\JJ_b \backslash \Irr^\tp X_\bmu(\bb) = \sqcup_{\mu \in Y^+, m_\bmu^\mu \neq 0} \sqcup_{C \in \JJ_b \backslash \Irr^\tp X_\mu(b)} \Irr^\tp \overline{ \pr\i(C) }$$ and hence $$|\JJ_b \backslash \Irr^\tp X_\bmu(\bb)| = \sum_{\mu \in Y^+} m_\bmu^\mu |\JJ_b \backslash \Irr^\tp X_\mu(b)|.$$ As a consequence, if Theorem \ref{main} is true, then the diagram of Theorem \ref{conv} is Cartesian if it is commutative.
\end{cor}
\begin{proof}
Using the same strategies of \cite{GHKR} and \cite{H} we have $$\dim X_\bmu(\bb) = \<\rho, |\bmu| - \nu_b\> - \frac{1}{2} \df(b) = \dim X_\mu(b) + \<\rho, |\bmu|-\mu\>.$$  Let $\mu \in Y^+$ and $C \in \Irr^\tp X_\mu(b)$. By Theorem \ref{sat} (1), $$\dim \pr\i(C) = \dim C + \<\rho, |\bmu|-\mu\> = \dim X_\mu(b) + \<\rho, |\bmu|-\mu\> \le \dim X_\bmu(\bb),$$ and moreover, the number of irreducible components of $\pr\i(C)$ having dimension $\dim X_\bmu(\bb)$ is equal to $m_\bmu^\mu$ as desired.
\end{proof}

\subsection{Tensor structure} Let $\mu \in Y^+$. Recall that $V_\mu = V_\mu^{\widehat G}$ denotes the simple $\widehat G$-module of highest weight $\mu$, and $\BB_\mu = \BB_\mu^{\widehat G}$ denotes the crystal basis of $V_\mu$, which is a highest weight $\widehat G$-crystal. We refer to \cite{Kashiwara}, \cite{Li} and \cite[\S 3.3]{XZ} for the definition of $\widehat G$-crystals and a realization of $\BB_\mu$ using Littelmann's path model.

For $\l \in Y$, let $\BB_\mu(\l)$ be the set of basis elements of weight $\l$. Then $$|\BB_\mu(\l)| = \dim V_\mu(\l),$$ where $V_\mu(\l)$ denotes the $\l$-weight space of $V_\mu$.

Recall that $\MV_\mu = \MV_\mu^{\widehat G}$ denotes the set of Mirkovi\'{c}-Vilonen cycles in $\Gr_\mu$. By \cite[Theorem 3.1]{BG}, $\MV_\mu$ admits a $\widehat G$-crystal structure, which is isomorphic to $\BB_\mu$. Let $S_1 \in \MV_\mu(\l_1)$ and $S_2 \in \MV_\mu(\l_2)$ be two Mirkovi\'{c}-Vilonen cycles. The twisted product of $S_1$ and $S_2$ is $$S_1 \tilde\times S_2 = (\th_{\l_1}^U)\i(S_1) t^{\l_1} \times_{K_U} S_2 \subseteq G(\brF) \times_K \Gr,$$ where $\th_{\l_1}^U: U(\brF) \to \Gr$ is given by $u \mapsto u t^{\l_1} K$. The convolution of $S_1$ and $S_2$ is defined by $$S_1 \star S_2 = \overline{ m (S_1 \tilde\times S_2) } \cap S^{\l_1 + \l_2},$$ where $m: G(\brF) \times_K \Gr \to \Gr$ denotes the usual convolution map. Note that $\overline{ S_1 \star S_2 } = \overline{ m (S_1 \tilde\times S_2) }$.

Following \cite[Proposition 3.3.15]{XZ}, we fix from now on a bijection $\d \mapsto S_\d$ from $\BB_\mu$ to $\MV_\mu$ for $\mu \in Y^+$, which is compatible with the tensor product for $\widehat G$-crystals, that is, $S_{\d_1 \otimes \d_2} = S_{\d_1} \star S_{\d_2}$.

\subsection{Admissible set} \label{admissible} Let $P=M N$ be a standard parabolic subgroup with standard Levi subgroup $M = \s(M) \supseteq T$ and unipotent radical $N = \s(N)$. Let $\ce$ be one of groups $I$, $M(\brF)$, $N(\brF)$ and $P(\brF)$. For $n \in \ZZ_{\ge 0}$ set $\ce_n = \ce \cap K_n$, where $K_n = \{g \in K = G(\co_{\brF}); g \equiv 1 \mod t^n\}$.  Following \cite{GHKR}, we say a subset $\cd \subseteq \ce$ is admissible if there exists some integer $r > 0$ such that $\cd \ce_r = \cd$ and $\cd / \ce_r \subseteq \ce / \ce_r$ is a (bounded) locally closed subset. In this case, define $$\dim \cd = \dim \cd / \ce_r - \dim \ce_0 / \ce_r,$$ and moreover, we can define topological notions for $\ce$, such as open/closed subsets, irreducible/connected components and so on, by passing to the quotient $\cd / \cd_r$. These definitions are independent of the choice of $r$ since the natural quotient map $\ce / \ce_n  \to \ce / \ce_{n+1}$ is an affine space fiber bundle. For instance, the irreducible components of $\cd$ is defined to be the inverse images of the irreducible components of $\cd / \ce_r \subseteq \ce / \ce_r$ under the natural projection $\ce \to \ce / \ce_r$. We denote by $\Irr \cd$ the set of irreducible components of $\cd$ in this sense.

\section{The set $X_\mu^{P, \l}(b)$}
Keep the notations in the introduction and \S \ref{sec-pre}. In this section, we introduce a decomposition $X_\mu(b) = \sqcup_{\l \in Y} X_\mu^{P, \l}(b)$ with respect to certain parabolic subgroup $P \subseteq G$, and study the irreducible components of $X_\mu^{P, \l}(b)$.

\subsection{The set $H^P(C)$} \label{Levi} Let $P=M N$ be a standard parabolic subgroup as in \S \ref{admissible}. Suppose $b \in M(\brF)$ such that $b$ is basic in $M(\brF)$ and $\nu_M(b) = \nu_G(b)$. Moreover, we always assume that $b \in N_T(\brF)$ is a lift of an element in $\Omega_M$, whose image in $\tW_M$ is still denote by $b$. Notice that $b$ normalizes $N(\brF) I_M$. Let $\phi_b^P: N(\brF) I_M \to N(\brF) I_M$ be the Lang's map given by $h \mapsto h\i b\s(h)b\i$. For $\upsilon \in Y$ let $\th_\l^P: N(\brF) I_M \to \Gr$ be the map given by $h \mapsto h t^\l K$.

Let $C \subseteq \Gr$ be locally closed and irreducible. We define $$H^P(C) = H^P(C; b) = \phi_b^P((\th_\l^P)\i(C)) \subseteq N(\brF) I_M,$$ where $\l \in Y$ such that $N(\brF) I_M t^\l K/K \cap C$ is open dense in $C$. In this case, the map $\g^G$ of Theorem \ref{main} can be formulated by $$\{(h t^\l)\i b \s(h t^\l) K; h \in (\th_\l^P)\i(C)\} = t^{-\l} H^P(C) t^{b\s(\l)} K/K,$$ where $b\s(\l) \in Y$ is defined by the affine action of $\tW \rtimes \<\s\>$ on $Y$, see \S\ref{G-setup}.

For $\mu \in Y^+$ and $\l \in Y$ we set $X_\mu^{\l, P}(b) = N(\brF) I_M t^\l K/K \cap X_\mu(b)$.
\begin{lem}\label{irr-NM}
The map $C \mapsto H^P(C)$ for $C \in \Irr X_\mu^{\l, P}(b)$ induces a bijection $$(N(\brF) I_M \cap \JJ_b) \backslash \Irr X_\mu^{\l, P}(b) \cong \Irr (t^\l K t^\mu K t^{-b\s(\l)} \cap N(\brF) I_M).$$ In particular, $H^P(C; b)$ is invariant under left/right multiplication by $K_T$.
\end{lem}
\begin{proof}
Note that $b t^\chi K = t^{b(\chi)} K$ and $K t^{-\chi} b\i = K t^{-b(\chi)}$ for $\chi \in Y$. Therefore, \begin{align*} (\th_\l^P)\i(X_\mu^{\l,P}(b)) &= (\phi_b^P)\i (t^\l K t^\mu K t^{-\s(\l)} b\i \cap N(\brF) I_M) \\ &= (\phi_b^P)\i(t^\l K t^\mu K t^{-b\s(\l)} \cap N(\brF) I_M),\end{align*}
 As $\phi_b^P$ is an etale covering of $N(\brF) I_M$ with Galois group $N(\brF) I_M \cap \JJ_b$, the map $C \mapsto H^P(C)$ for $C \in \Irr X_\mu^{\l, P}(b)$ induces a bijection $$(N(\brF) I_M \cap \JJ_b) \backslash \Irr X_\mu^{\l, P}(b) \cong \Irr (t^\l K t^\mu K t^{-b\s(\l)} \cap N(\brF) I_M).$$ The proof is finished.
\end{proof}

Now we focus on the basic case. Let $I_\der$ denote the derived subgroup of $I$.
\begin{cor} \label{derived}
Assume $b$ is basic. Then the map $C \mapsto H_\der^G(C)$ for $C \in \Irr X_\mu^{\l, G}(b)$ induces a bijection $$(I_\der \cap \JJ_b) \backslash \Irr X_\mu^{\l, G}(b) \cong \Irr (t^\l K t^\mu K t^{-b\s(\l)} \cap I_\der).$$ Here $H^G_\der(C) = \phi_b^G((\th_\l^G)\i(C) \cap I_\der)$.
\end{cor}
\begin{proof}
Note that $I t^\l K/K = I_\der t^\l K/K$ and that $\phi_b^G$ restricts to an etale covering of $I_\der$ with Galois group $I_\der \cap \JJ_b$. Then the statement follows in the same way of Lemma \ref{irr-NM}.
\end{proof}

recall that $p: \tW \rtimes \<\s\> \to W_0 \rtimes \<\s\>$ is the natural projection.
\begin{lem} \label{change} \label{der}
Assume $b$ is basic. For $\l \in Y$ and $C \in \Irr X_\mu^{\l, G}(b)$ we have $$t^{-\l'}H^G(C'; b') t^{b'\s(\l')} = p(\o) t^{-\l} H^G(C; b) t^{b\s(\l)} p(\o)\i,$$ where $\o \in \Omega$, $\dot \o \in N_T(\brF)$ is a lift of $\o$, $\l' = \o(\l)$, $C' = \dot\o C$ and $b' = \dot\o b \s(\dot\o)\i$. Note that the right hand side is independent of the choice of $\dot\o$ by Lemma \ref{irr-NM}.
\end{lem}
\begin{proof}
As $\o \in \Omega$, we have $\dot\o I \dot\o\i = I$ and hence $$C' = \dot\o C \subseteq \dot\o I t^\l K/K = I t^{\o(\l)} K/K.$$ So $(\th_{\l'}^G)\i(C') = \dot\o (\th_\l^G)\i(C) \dot\o\i$, and the statement follows by definition.
\end{proof}

\begin{cor} \label{indep}
In the superbasic case, the map $\g^G$ in Theorem \ref{main} is independent of the choice of $b$.
\end{cor}
\begin{proof}
Suppose $b$ is superbasic and let $b, b', \l, \l', \o, C, C'$ be as in Lemma \ref{der}. Let $\g_b^G$ (resp. $\g_{b'}^G$) be the map $\g^G$ in Theorem \ref{main} defined with respect to $b$ (resp. $b'$). We need to show that $\g^G_b(C) = \g^G_{b'}(C')$. By definition (for the superbasic case), it suffices to show that $$(\e_\l^G)\i t^{-\l} H^G(C; b) t^{b\s(\l)} K/K = (\e_{\l'}^G)\i t^{-\l'}H^G(C'; b') t^{b'\s(\l')} K/K,$$ which follows from Lemma \ref{eta} (3) and Lemma \ref{der}.
\end{proof}

\begin{cor} \label{irr}
Assume $b$ is basic. For $\l \in Y$ there are natural bijections $$(I \cap \JJ_b) \backslash \Irr X_\mu^{\l, G}(b) \cong \Irr (t^\l K t^\mu K t^{-b\s(\l)} \cap I_{U_\l}) \cong (I_\der \cap \JJ_b) \backslash \Irr X_\mu^{\l, G}(b),$$ where $I_{U_\l} = I \cap U_\l$ and $U_\l$ is as in \S \ref{def-epsilon}. Moreover, for $C \in  \Irr X_\mu^{\l, G}(b)$ we have $$H^G(C) = K_T H^G_\der(C) = H^G_\der(C) K_T.$$
\end{cor}
\begin{proof}
By Lemma \ref{irr-NM} the map $C \mapsto H^G(C)$ for $C \in \Irr X_\mu^{\l, G}(b)$ induces a bijection \begin{align*} \tag{a} (I \cap \JJ_b) \backslash \Irr X_\mu^{\l, G}(b) \cong \Irr(t^\l K t^\mu K t^{-b\s(\l)} \cap I) \cong \Irr (t^\l K t^\mu K t^{-b\s(\l)} \cap I_{U_\l}),\end{align*} where the second bijection follows from that $$t^\l K t^\mu K t^{-b\s(\l)} \cap I = I_\l^- (t^\l K t^\mu K t^{-b\s(\l)} \cap I_{U_\l}).$$ Similarly, by Corollary \ref{derived} we have \begin{align*} \tag{b} (I_\der \cap \JJ_b) \backslash \Irr X_\mu^{\l, G}(b) \cong \Irr(t^\l K t^\mu K t^{-b\s(\l)} \cap I_\der) \cong \Irr (t^\l K t^\mu K t^{-b\s(\l)} \cap I_{U_\l}).\end{align*} So the first statement follows.

By (a) and (b), there exist $\cz, \cz' \in \Irr (t^\l K t^\mu K t^{-b\s(\l)} \cap I_{U_\l})$ such that $$(I_\l^- \cap I_\der) \cz = H_\der^G(C) \subseteq H^G(C) = I_\l^- \cz'.$$ In particular, $\cz = \cz'$. Moreover, $K_T$ normalises $t^\l K t^\mu K t^{-b\s(\l)} \cap I_{U_\l}$, and hence normalises each of its irreducible components. So we have $$H^G(C) = I_\l^- \cz = (I_\l^- \cap I_\der) K_T \cz = H_\der^G(C) K_T = K_T H_\der^G(C).$$ The second statement is proved.
\end{proof}

\begin{lem} \label{irr-u}
For $\l, \chi \in Y$ there is a natural bijection $$\Irr (t^\l K t^\mu K t^{-\chi} \cap I_{U_\l}) \cong \Irr (K t^{-\mu} K/K \cap t^{-\chi} I_{U_\l} t^\l K/K). $$
\end{lem}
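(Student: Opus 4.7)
The plan is to realize the bijection via the explicit ``inversion and translation'' map
$$\Phi \colon I_{U_\l} \longrightarrow \Gr, \qquad h \longmapsto t^{-\chi}h^{-1}t^\l K = t^{\l-\chi}\bigl(t^{-\l}h^{-1}t^\l\bigr)K.$$
Since $I_{U_\l}$ is a group and $h^{-1} \in I_{U_\l}$, the second expression immediately shows that $\Phi(I_{U_\l}) \subseteq t^{\l-\chi}(t^{-\l}I_{U_\l}t^\l)K/K$. A direct manipulation gives $\Phi(h) \in K t^{-\mu} K/K$ if and only if $h^{-1} \in t^\chi K t^{-\mu} K t^{-\l}$, equivalently $h \in t^\l K t^\mu K t^{-\chi}$. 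Hence $\Phi$ restricts to a surjection
$$\Phi|_A \colon A := t^\l K t^\mu K t^{-\chi} \cap I_{U_\l} \longrightarrow B := (K t^{-\mu} K/K) \cap (t^{\l-\chi}(t^{-\l}I_{U_\l}t^\l)K/K).$$

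Next I would identify the fibers. If $\Phi(h_1) = \Phi(h_2)$, then $h_2 h_1^{-1} \in I_{U_\l} \cap t^\l K t^{-\l}$. Unwinding the definition of $U_\l$ (Definition \ref{def-epsilon}) together with the Iwahori decomposition of $I$, one checks
$$H := I_{U_\l} \cap t^\l K t^{-\l} = t^\l U_\l(\co_\brF) t^{-\l} = \prod_{\a \succ_\l 0} U_\a(t^{\<\a,\l\>}\co_\brF),$$
a pro-unipotent (in particular connected and irreducible) subgroup of $I_{U_\l}$. Moreover $A$ is preserved under left multiplication by $H$: for $g = t^\l k_0 t^{-\l} \in H$ and $h = t^\l k_1 t^\mu k_2 t^{-\chi} \in A$ one has $gh = t^\l(k_0 k_1) t^\mu k_2 t^{-\chi} \in A$. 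Therefore the fibers of $\Phi|_A$ are precisely the left $H$-cosets contained in $A$.

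Because $H$ is connected and irreducible and $\Phi|_A$ is an $H$-torsor onto $B$, taking preimages under $\Phi|_A$ gives the desired natural bijection $\Irr A \cong \Irr B$, naturality being evident from the formula for $\Phi$. The only obstacle is bookkeeping inside the admissible subset framework of \S\ref{Levi}: one must check that $\Phi$ induces a well-defined map between admissible subsets so that the notion of irreducible component is preserved. This is routine, since $\Phi$ is a composition of inversion with left/right multiplication by fixed elements of $G(\brF)$, all of which preserve admissibility; concretely, one passes to a sufficiently deep Moy--Prasad quotient to reduce to a statement about honest finite-type varieties.
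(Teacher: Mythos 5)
Your proof is correct and essentially the same as the paper's: both hinge on the map $g \mapsto t^{-\chi} g^{-1} t^\l$. The paper first records the induced bijection of admissible subsets $A \cong K t^{-\mu} K \cap t^{-\chi} I_{U_\l} t^\l$ inside $G(\brF)$ and then quotients by the right action of the connected group $K_{U_\l} = U_\l(\co_\brF)$, whereas you pass directly to $\Gr$ and identify the fibers as cosets of $H = t^\l K_{U_\l} t^{-\l}$ acting on the left --- the same connected group, conjugated by the $t^\l$ already built into the map --- so the two arguments differ only in bookkeeping.
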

\begin{proof}
The map $g \mapsto t^{-\chi} g\i t^\l$ gives a bijection $$t^\l K t^\mu K t^{-\chi} \cap I_{U_\l} \cong K t^{-\mu} K \cap t^{- \chi} I_{U_\l} t^\l.$$ By the definition of $U_\l$ we have $K_{U_\l} \subseteq t^{-\l} I_{U_\l} t^\l $. Therefore, \begin{align*}\Irr (t^\l K t^\mu K t^{-\chi} \cap I_{U_\l}) &\cong \Irr ((K t^{-\mu} K \cap t^{-\chi} I_{U_\l} t^\l) / K_{U_\l} ) \\ &= \Irr (K t^{-\mu} K/K \cap t^{-\chi} I_{U_\l} t^\l K/K), \end{align*} where the identity follows from that $t^{-\chi} I_{U_\l} t^\l / K_{U_\l} \cong t^{-\chi} I_{U_\l} t^\l K/K$.
\end{proof}

\subsection{The minuscule and basic case} \label{def-R}
Suppose $\mu \in Y^+$ is minuscule and $b$ is basic. For $D \subseteq \tW$ we set $D \cap \JJ_b = \{\tw \in D; b \s(\tw) b\i = \tw\}$.

For $\l \in Y$ we write $X_\mu^\l(b) = X_\mu^{\l, G}(b) = I t^\l K/K \cap X_\mu(b)$. Let $\ca_{\mu, b}^G$ and $\ca_{\mu, b}^{G, \tp}$ be the sets of $\l \in Y$ such that $X_\mu^\l(b) \neq \emptyset$ and $\dim X_\mu^\l (b) = \dim X_\mu(b)$ respectively.

For $\a \in \Phi$ define $\a^i = p(b\s)^i(\a) \in \Phi$ and $\tta^i = (b\s)^i(\tta) \in \tPhi$ for $i \in \ZZ$, where $\tta$ is as in \S \ref{aff-root} and $p: \tW \rtimes \<\s\> \to W_0 \rtimes \<\s\>$ is the natural projection.

For $\l \in \ca_{\mu, b}^G$ define $\l^\natural = -\l + b\s(\l)$, and denote by $R_{\mu, b}^G(\l)$ the set of roots $\a \in \Phi$ such that $\<\a, \l^\natural\> = -1$ and $\l_\a \ge 1$. By Lemma \ref{lam} (1) below, this condition is equivalent to that $\<\a, \l^\natural\> = -1$ and $\l_{\a\i} \ge 0$.
\begin{lem} \label{lam} \label{natural}
Let $\l \in Y$. Then we have (1) $\<\a, \l^\natural\> = \l_{\a\i} - \l_\a$ for $\a \in \Phi$ and (2) $\tw(\l)^\natural = p(\tw)(\l^\natural)$ for $\tw \in \tW \cap \JJ_b$.
\end{lem}
\begin{proof}
Suppose $b \in t^\t W_0$ for some $\t \in Y$. Then $$\<\a, \l^\natural\> = -\<\a, \l\> + \<\a, \t\> + \<\a\i, \l\>.$$ As $b \in \Omega$, $b\s$ preserves the fundamental alcove $\D$ and hence preserves the set $\{\tilde \b; \b \in \Phi\}$, see \S \ref{aff-root}. Thus $\a, \a\i$ are both positive or negative if $\<\a, \t\>=0$; $\a < 0$ and $\a\i > 0$ if $\<\a, \t\> = -1$;  $\a > 0$ and $\a\i < 0$ if $\<\a, \t\> = 1$. In all cases we have $\<\a, \l^\natural\> = \l_{\a\i} - \l_\a$ as desired. For $\tw \in \tW \cap \JJ_b$, it follows that $$\tw(\l)^\natural = -\tw(\l) + b\s \tw(\l) = -\tw(\l) + \tw b\s(\l) = p(\tw) (-\l + b\s(\l)) = p(\tw)(\l^\natural).$$ The proof is finished.
\end{proof}

\begin{lem} \label{omega}
We have $R_{\mu, b}^G(\o(\l)) = p(\o) R_{\mu, b}^G(\l)$ for $\o \in \Omega \cap \JJ_b$, $\l \in \ca_{\mu, b}^G$.
\end{lem}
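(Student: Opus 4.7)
The plan is to observe that both defining conditions of $R_{\mu,b}^G(\l)$—namely $\langle \a, \l^\natural\rangle = -1$ and $\l_\a \ge 1$—transform compatibly under the substitution $\a \mapsto p(\o)(\a)$, $\l \mapsto \o(\l)$, so that the identity $R_{\mu,b}^G(\o(\l)) = p(\o) R_{\mu,b}^G(\l)$ reduces to a direct check. No new ideas are needed beyond combining Lemmas \ref{eta} and \ref{natural}.

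First I would verify that $\o(\l) \in \ca_{\mu,b}^G$, so that the left-hand side is defined. This follows from Lemma \ref{change} applied with $P = G$: left multiplication by a suitable lift $\dot\o \in N_T(\brF)$ carries $X_\mu^\l(b)$ onto $X_\mu^{\o(\l)}(\dot\o b\s(\dot\o)^{-1})$, and because $\o \in \Omega \cap \JJ_b$ the element $\dot\o b\s(\dot\o)^{-1}$ differs from $b$ only by an element of $T(\co_\brF)$ that preserves the $I$-orbit stratification, hence $X_\mu^{\o(\l)}(b) \neq \emptyset$ as well.

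Next I would handle the two conditions in parallel. By Lemma \ref{natural} applied to $\tw = \o \in \tW \cap \JJ_b$, we have $\o(\l)^\natural = p(\o)(\l^\natural)$, and since $p(\o) \in W_0$ preserves the perfect pairing $\langle\cdot,\cdot\rangle \colon X \times Y \to \ZZ$,
\[
\langle p(\o)(\a),\, \o(\l)^\natural\rangle \;=\; \langle p(\o)(\a),\, p(\o)(\l^\natural)\rangle \;=\; \langle \a, \l^\natural\rangle.
\]
So the first condition transfers. For the second, Lemma \ref{eta}(3) gives $\o(\l)_{p(\o)(\a)} = \l_\a$, which directly transports the inequality $\l_\a \ge 1$ to $\o(\l)_{p(\o)(\a)} \ge 1$.

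Combining these two equalities, $\a \in R_{\mu,b}^G(\l)$ if and only if $p(\o)(\a) \in R_{\mu,b}^G(\o(\l))$, yielding $R_{\mu,b}^G(\o(\l)) = p(\o)\, R_{\mu,b}^G(\l)$ as required. The argument is essentially bookkeeping; the only subtle point is ensuring $\o(\l) \in \ca_{\mu,b}^G$, but this is handled once one records that $\o \in \JJ_b$ allows one to transport the nonemptiness of $X_\mu^\l(b)$ via Lemma \ref{change}.
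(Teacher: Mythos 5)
Your proof is correct and takes essentially the same route as the paper: both rest on Lemma \ref{natural} to transport the condition $\langle\a,\l^\natural\rangle = -1$ and on Lemma \ref{eta}(3) to transport the condition $\l_\a \ge 1$. The only cosmetic difference is that the paper records the invariance of $\l_{\g^i}$ for all $i$ (covering the equivalent formulation $\l_{\a^{-1}} \ge 0$), whereas you work directly with $\l_\a \ge 1$; your preliminary check that $\o(\l)\in\ca_{\mu,b}^G$ is a reasonable bit of hygiene that the paper omits.
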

\begin{proof}
Notice that $p(\tw)(\g)^i = p(\tw)(\g^i)$ for $\g \in \Phi$, $\tw \in \tW \cap \JJ_b$ and $i \in \ZZ$. The statement now follows from Lemma \ref{eta} (3) and Lemma \ref{natural} (1).
\end{proof}

\begin{prop} \label{dim}
Suppose $\mu$ is minuscule. Then  $\l \in \ca_{\mu, b}^G$, that is $X_\mu^\l(b) \neq \emptyset$, if and only if $\l^\natural$ is conjugate to $\mu$ by $W_0$. Moreover, in this case,

(1) $t^\l K t^\mu K t^{-b\s(\l)} \cap I = I_\l \prod_{\d \in R_{\mu, b}^G(\l)} U_\d(t^{\<\d, \l\> - 1} \co_{\brF})$;

(2) $X_\mu^\l(b)$ is smooth and $I \cap \JJ_b$ acts on $\Irr X_\mu^\l(b)$ transitively;

(3) $\dim X_\mu^\l(b) = |R_{\mu, b}^G(\l)|$.
\end{prop}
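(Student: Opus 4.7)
My plan is to reduce, via Corollary \ref{der} with $P=G$, to the analysis of a bounded intersection inside the affine Grassmannian, and then to invoke Mirkovi\'{c}--Vilonen theory for minuscule $\mu$. Corollary \ref{der} gives
\[
(I \cap \JJ_b) \backslash \Irr X_\mu^\l(b) \;\cong\; \Irr \bigl( t^\l K t^\mu K t^{-\l^\dag} \cap I_{U_\l} \bigr).
\]
Using $t^{-\l} I_{U_\l} t^\l = \prod_{\a \succ_\l 0} U_\a(t^{-\l_\a} \co_\brF)$ and the identity $t^{-\l} u t^{\l^\dag} = (t^{-\l} u t^\l)\cdot t^{\l^\natural}$, the substitution $u \mapsto v = t^{-\l} u t^\l$ identifies the right-hand set with $\Irr \widetilde Z$, where
\[
\widetilde Z = \Bigl\{ v \in \prod_{\a \succ_\l 0} U_\a(t^{-\l_\a} \co_\brF) : v t^{\l^\natural} K \in \Gr_\mu^\circ \Bigr\}.
\]
All subsequent analysis focuses on $\widetilde Z$.

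\textbf{Non-emptiness, smoothness and irreducibility.} For $\mu$ minuscule, $\Gr_\mu^\circ$ is closed, equal to the smooth flag variety $G/P_\mu$, with $T(\brF)$-fixed points exactly $\{t^\nu K : \nu \in W_0 \mu\}$. The condition on $v$ forces $v t^{\l^\natural} K$ to lie in the intersection of the semi-infinite cell $U_\l(\brF) t^{\l^\natural} K/K$ with $\Gr_\mu^\circ$. By the classical Mirkovi\'{c}--Vilonen theorem, transported through $U_\l = \e_\l U \e_\l^{-1}$, this intersection is non-empty iff $\l^\natural$ is a weight of $V_\mu$; for minuscule $\mu$ this reduces to $\l^\natural \in W_0 \mu$. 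Conversely, if $\l^\natural \in W_0 \mu$ then $v = 1$ lies in $\widetilde Z$, because $t^{\l^\natural} K \in \Gr_\mu^\circ$ and $1 \in U_\a(\co_\brF) \subseteq U_\a(t^{-\l_\a} \co_\brF)$ for $\a \succ_\l 0$. This yields the non-emptiness criterion. Moreover, the intersection $U_\l(\brF) t^{\l^\natural} K/K \cap \Gr_\mu^\circ$ is a single open Schubert-type cell in the minuscule flag variety $\Gr_\mu^\circ$, hence smooth, irreducible and isomorphic to an affine space. The map $\widetilde Z \to U_\l(\brF) t^{\l^\natural} K/K \cap \Gr_\mu^\circ$ sending $v \mapsto v t^{\l^\natural} K$ has smooth affine fibers, controlled by the intersection of $\prod_{\a \succ_\l 0} U_\a(t^{-\l_\a} \co_\brF)$ with the stabilizer $U_\l(\brF) \cap t^{\l^\natural} K t^{-\l^\natural} = \prod_{\a \succ_\l 0} U_\a(t^{\<\a, \l^\natural\>} \co_\brF)$. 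Hence $\widetilde Z$ is smooth and irreducible, so $|\Irr \widetilde Z| \le 1$; via the bijection $I \cap \JJ_b$ acts transitively on $\Irr X_\mu^\l(b)$, establishing (1).

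\textbf{Dimension.} The image in $\Gr$ has dimension $\<\rho, \mu + \e_\l^{-1}(\l^\natural)\>$ by the Mirkovi\'{c}--Vilonen dimension formula, and the generic fiber of $v \mapsto v t^{\l^\natural} K$ has relative dimension $\sum_{\a \succ_\l 0} \min(\l_\a, -\<\a, \l^\natural\>)$ (computed from the factor $U_\a(t^{\max(-\l_\a, \<\a, \l^\natural\>)} \co_\brF)$ for each $\a$). Applying Lemma \ref{lam}, $\<\a, \l^\natural\> = \l_{\a^{-1}} - \l_\a$, and pairing roots via the $(p(b)\s)$-orbit structure on $\Phi$, one collects the contributions into the count $|\{\a : \<\a, \l^\natural\> = -1,\ \l_\a \ge 1\}| = |R_{\mu, b}^G(\l)|$, proving (2). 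The main obstacle lies precisely in this last combinatorial step: tracking which roots contribute free parameters versus which are absorbed into the stabilizer, and using Lemma \ref{lam} together with the involutive structure on $\Phi$ induced by $p(b)\s$ to match the MV-cycle dimension plus fiber dimension to the explicit combinatorial count $|R_{\mu, b}^G(\l)|$.
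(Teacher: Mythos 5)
Your overall strategy coincides with the paper's: reduce via Lemma \ref{irr} (you invoke Corollary \ref{der}, which is a consequence) to the bounded intersection $t^\l K t^\mu K t^{-\l^\dag} \cap I_{U_\l}$, conjugate by $t^{-\l}$, and exploit the minuscule structure. The difference is in how you then analyze that intersection. The paper uses the elementary identity $K t^\mu K \cap U_\l(\brF)t^{\l^\natural} = U_\l(\co_\brF)\,t^{\l^\natural}\,U_\l(\co_\brF)$ (valid because $\mu$ is minuscule) to write the intersection \emph{directly} as a product of $I_\l$ with finitely many root-group factors, from which smoothness, irreducibility and the dimension $|R_{\mu,b}^G(\l)|$ all fall out at once. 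You instead fibre $\widetilde Z$ over its image in $\Gr_\mu^\circ$, identify the image with an MV cell, and plan to add image and fibre dimensions. This works but is more circuitous, and it forces you to interpret a "fibre dimension" $\sum_{\a\succ_\l 0}\min(\l_\a,-\<\a,\l^\natural\>)$ that is \emph{negative} term-by-term whenever $\<\a,\l^\natural\>=1$; that quantity only makes sense as a relative (admissible) dimension against the reference lattice $K_{U_\l}$, and you do not set up the bookkeeping that makes the sum "image dim $+$ fibre dim" mean what you need it to mean.

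The genuine gap is in the last step. You write that "one collects the contributions into the count $|R_{\mu,b}^G(\l)|$", explicitly flagging it as "the main obstacle", and then do not carry it out. Moreover, the hint you give — "pairing roots via the $(p(b)\s)$-orbit structure on $\Phi$" — points in the wrong direction: no orbit pairing is involved. The computation is a direct, root-by-root check. From the minuscule identity above one gets
\[
\widetilde Z \;=\; \prod_{\a\succ_\l 0} U_\a\!\bigl(t^{\max(-\l_\a,\,\min(0,\<\a,\l^\natural\>))}\co_\brF\bigr),
\]
so relative to $K_{U_\l}$ the root $\a$ contributes $1$ to $\dim\widetilde Z$ precisely when $\max(-\l_\a,\min(0,\<\a,\l^\natural\>))=-1$, i.e.\ when $\<\a,\l^\natural\>=-1$ and $\l_\a\ge 1$ — exactly the condition $\a\in R_{\mu,b}^G(\l)$ — and $0$ otherwise. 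This also yields smoothness and irreducibility immediately (the set is an explicit product), making the image-plus-fibre analysis unnecessary. You should either perform this direct computation, which is what the paper does, or else carefully fix a reference lattice and justify that your image and fibre contributions add correctly to the admissible dimension of $\widetilde Z$ before concluding with the combinatorial identity.
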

\begin{proof}
By Corollary \ref{irr} we have $$ (I \cap \JJ_b) \backslash \Irr X_\mu^\l(b) \cong \Irr (t^\l K t^\mu K t^{-b\s(\l)} \cap I) \cong  \Irr (K t^\mu K \cap t^{-\l} I_{U_\l} t^{\l} t^{\l^\natural}).$$ As $\mu$ is minuscule, we see that $\emptyset \neq K t^\mu K \cap t^{-\l} I_{U_\l} t^{\l} t^{\l^\natural} \subseteq U_\l(\brF) t^{\l^\natural}$, that is, $\l \in \ca_{\mu, b}^G$, if and only if $\l^\natural$ is conjugate to $\mu$. Moreover, in this case, $$K t^\mu K \cap U_\l(\brF) t^{\l^\natural} = K_{U_\l} t^{\l^\natural} K_{U_\l} = (\prod_\a U_\a( \co_\brF) \prod_\b U_\b(t\i \co_\brF))  t^{\l^\natural},$$ where $\a, \b$ range over the roots of $U_\l$ such that $\<\a, \l^\natural\> \ge 0$ and $\<\b, \l^\natural\> = -1$ in any fixed orders. Here the root subgroups $U_\b$ commute with each other since $\l^\natural$ is minuscule. On the other hand, $t^{-\l} I_{U_\l} t^{\l} t^{\l^\natural} = (\prod_\g U_\g(t^{-\l_\g} \co_\brF)) t^{\l^\natural}$, where $\g$ ranges over the roots of $U_\l$ (or $\l_\g \ge 0$) in the above fixed order. Thus $$K_{U_\l} t^{\l^\natural} K_{U_\l} \cap t^{-\l} I_{U_\l} t^{\l} t^{\l^\natural} = (K_{U_\l} \prod_\d U_\d(t\i \co_\brF)) t^{\l^\natural},$$ where $\d$ ranges over $R_{\mu, b}^G(\l) = \{\g \in \Phi; \<\g, \l^\natural\> = -1, \l_\g \ge 1\}$. Therefore, \begin{align*} t^\l K t^\mu K t^{-b\s(\l)} \cap I &=  I_\l t^\l (K t^\mu K \cap t^{-\l} I_{U_\l} t^{\l} t^{\l^\natural})  t^{-b\s(\l)} \\ &= I_\l t^\l (K_{U_\l} t^{\l^\natural} K_{U_\l} \cap t^{-\l} I_{U_\l} t^{\l} t^{\l^\natural})  t^{-b\s(\l)}  \\ &= I_\l t^\l (K_{U_\l} \prod_{\d \in R_{\mu, b}^G(\l)} U_\d(t\i \co_\brF)) t^{-\l} \\ &= I_\l \prod_{\d \in R_{\mu, b}^G(\l)} U_\d(t^{\<\d, \l\> - 1} \co_{\brF}). \end{align*} So the statement (1) follows. The statement (2) follows from Corollary \ref{irr} by noticing that $t^\l K t^\mu K t^{-b\s(\l)} \cap I$ is smooth and irreducible.

As $(\th_\l^G)\i(X_\mu^\l(b)) = (\phi_b^G)\i(t^\l K t^\mu K t^{-b\s(\l)} \cap I)$, we deduce by (1) that \begin{align*} \dim X_\mu^\l(b) &= \dim ((\th_\l^G)\i(X_\mu^\l(b)) / I_\l) \\ &= \dim ((\th_\l^G)\i(X_\mu^\l(b))) - \dim I_\l \\ &= \dim (t^\l K t^\mu K t^{-b\s(\l)} \cap I) - \dim I_\l \\ &= |R_{\mu, b}^G(\l)|. \end{align*} So the statement (3) follows.
\end{proof}

\subsection{The set $H^{P^d}(C)$} \label{product}
Let $P = M N$ and $b \in M(L)$ be as in \S \ref{Levi}. Let $G^d$, $\bs$, $\bb$, $\bmu$ and $\pr$ be as in \S\ref{conv-setup}. We also denote by $\pr$ the projections $G^d(\brF) \to G(\brF)$ and $Y^d \to Y$ to the first factors.

Let $C \in \Irr^\tp X_\bmu(\bb)$ and $\bl \in Y^d$ such that $(N(\brF)I_M)^d t^\bl K^d / K^d \cap C$ is open dense in $C$. By Corollary \ref{compare}, $\overline{ \pr(C) } \in \Irr^\tp X_\mu(b)$ for some $\mu \in Y^+$. Let $\l = \pr(\bl)$. Then $N(\brF) I_M t^\l K/K \cap \pr(C)$ is open dense in $\overline{\pr (C)}$. Set $\l^\dag_\bullet = \bb \bs(\bl)$, $\phi_\bb = \phi_\bb^{P^d}$ and $\th_\bl = \th_\l^{P^d}$. By Lemma \ref{irr-NM}, $$H^{P^d}(C) = \phi_\bb (\th_\bl\i(C)) \in \Irr (t^\bl K^d t^\bmu K^d t^{-\l^\dag_\bullet} \cap (N(\brF) I_M)^d).$$ So we can write $$H^{P^d}(C) = H_1(C) \times \cdots \times H_d(C),$$ where $H_\t(C) \in \Irr (t^{\l_\t} K t^{\mu_\t} K t^{-\l^\dag_\t} \cap N(\brF) I_M)$ for $1 \le \t \le d$ with $\bmu = (\mu_1, \dots, \mu_d)$, $\bl = (\l_1, \dots, \l_d)$ and $\l^\dag_\bullet = (\l_1^\dag, \dots, \l_d^\dag)$.
\begin{lem} \label{convolution}
Let notations be as above. Then we have $$\overline{ H^P(\overline{\pr(C)}) } = \overline{ H_1(C) \cdots H_d(C)} \subseteq N(\brF) I_M$$ As a consequence, $$\overline{ t^{-\l} H^P(\overline{\pr(C)}) t^{b\s(\l)} K/K } = \overline{ t^{-\l_1} H_1(C) t^{\l^\dag_1} \cdots  t^{-\l_d} H_d(C) t^{\l^\dag_d} K/K  }.$$
\end{lem}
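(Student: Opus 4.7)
The plan is to derive the first identity from a telescoping compatibility between the Lang maps $\phi_\bb$ and $\phi_b$, after which the consequence follows by direct bookkeeping of torus translations. The pivotal observation is that a direct computation shows, for any tuple $(h_1, \dots, h_d) \in (N(\brF) I_M)^d$,
\[
\phi_\bb(h_1, \dots, h_d) = \bigl(h_1\i h_2,\ h_2\i h_3,\ \dots,\ h_{d-1}\i h_d,\ h_d\i b\s(h_1) b\i\bigr),
\]
whose $d$ coordinates, multiplied in order, telescope to $h_1\i b\s(h_1) b\i = \phi_b(h_1)$. If $\pi_1$ denotes the first projection and $m\colon (N(\brF) I_M)^d \to N(\brF) I_M$ denotes multiplication, this reads $m \circ \phi_\bb = \phi_b \circ \pi_1$.

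Apply this identity to $\th_\bl\i(\check C) \subseteq (N(\brF) I_M)^d$. The left-hand side gives $m(H(\check C)) = m(H_1(\check C) \times \cdots \times H_d(\check C)) = H_1(\check C) \cdots H_d(\check C)$, while the right-hand side gives $\phi_b(\pi_1(\th_\bl\i(\check C)))$. I claim $\pi_1(\th_\bl\i(\check C))$ is dense in $\th_\l\i(C)$: unwinding definitions and using that the $\bl$-stratum meets $\check C$ in an open dense subset, one sees $\pi_1(\th_\bl\i(\check C)) = \th_\l\i(\pr(\check C) \cap X_\mu^{\l, P}(b))$; by Corollary \ref{compare} $\pr(\check C)$ is dense in $C$, and by the definition of $\l$ the intersection $C \cap X_\mu^{\l, P}(b)$ is open dense in $C$, which together yield the density. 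Since $\phi_b$ is smooth (its differential at the identity vanishes, as in the proof of Lemma \ref{etale}), it preserves density, so $\phi_b(\pi_1(\th_\bl\i(\check C)))$ is dense in $\phi_b(\th_\l\i(C)) = H(C)$. Taking closures yields $\overline{H(C)} = \overline{H_1(\check C) \cdots H_d(\check C)}$.

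For the consequence, a direct computation using $b\, t^{\s(\l)} = t^{\l^\dag} \dot w_b$ with $\dot w_b \in N_T(\co_\brF) \subseteq K$ (which is available because $b$ is a lift of an element of $\Omega_M$) shows $(i t^\l)\i b \s(i t^\l) K = t^{-\l} \phi_b(i) t^{\l^\dag} K$ in $\Gr$, so the left-hand side of the consequence equals $\overline{t^{-\l} H(C) t^{\l^\dag} K/K}$. On the right-hand side, the relations $\l_t^\dag = \l_{t+1}$ for $t < d$ and $\l_d^\dag = \l^\dag$, which follow immediately from $\bs$ being the cyclic twist together with $\bb = (1, \dots, 1, b)$, cause the adjacent torus factors $t^{\l_t^\dag}$ and $t^{-\l_{t+1}}$ to cancel, yielding the telescoped product $t^{-\l}(H_1(\check C) \cdots H_d(\check C)) t^{\l^\dag}$. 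Combined with the first identity, the two sides agree. The only point requiring care is the density argument for $\pi_1(\th_\bl\i(\check C))$; everything else is formal manipulation.
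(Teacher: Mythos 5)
Your approach is essentially the same as the paper's: the telescoping identity for the product Lang map (which you package cleanly as $m \circ \phi_\bb = \phi_b \circ \pi_1$) plus a density statement relating $\pi_1(\th_\bl\i(\check C))$ to $\th_\l\i(C)$, followed by the formal bookkeeping of torus translations using $\l_\tau^\dag = \l_{\tau+1}$. These are precisely the two ingredients in the paper's proof.

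There is, however, a gap in the density step. You assert the \emph{equality} $\pi_1(\th_\bl\i(\check C)) = \th_\l\i(\pr(\check C) \cap X_\mu^{\l, P}(b))$ as a consequence of the $\bl$-stratum being open dense in $\check C$. What actually follows is only the containment $\pi_1(\th_\bl\i(\check C)) \subseteq \th_\l\i(\pr(\check C) \cap X_\mu^{\l, P}(b))$: if $h_1 t^\l K \in \pr(\check C)$ lies in the $\l$-stratum, it lifts to some $(h_1 t^\l K, g_2 K, \dots, g_d K) \in \check C$, but there is no reason the remaining coordinates $g_\tau K$ must lie in the corresponding Iwasawa strata $N(\brF) I_M t^{\l_\tau} K/K$ — open-denseness of the $\bl$-stratum in $\check C$ does not force every fiber of $\pr|_{\check C}$ over the $\l$-stratum to meet it. With only a containment, the density of the right-hand side in $\th_\l\i(C)$ (which does follow from your (b) and (c)) does not transfer to the left-hand side, so your argument does not close.

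The fix is exactly what the paper does: use the commutativity $\th_\l \circ \pi_1 = \pr \circ \th_\bl$ and observe that $\th_\l(\pi_1(\th_\bl\i(\check C))) = \pr\bigl(((N(\brF)I_M)^d t^\bl K^d/K^d) \cap \check C\bigr)$, which has closure $\overline{\pr(\check C)} = \overline C$ because the $\bl$-stratum is dense in $\check C$ and $\pr$ is continuous. Since $\pi_1(\th_\bl\i(\check C))$ also lies in the $\l$-stratum (i.e.\ in the image of $\th_\l$), this gives $\overline{\pi_1(\th_\bl\i(\check C))} = \overline{\th_\l\i(C)}$ directly, which is what you need. With this repair your proof becomes a faithful rendering of the paper's argument.
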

\begin{proof}
As $\overline{ \pr((N(\brF)I_M)^d t^\bl K^d / K^d \cap C) } = \overline{ \pr(C)}$, we see that $$\overline{ \pr(\th_\bl\i(C)) } = \overline{ (\th_\l^P)\i(\overline{\pr(C)}) } \subseteq  N(\brF) I_M.$$ On the other hand, the equality $H^{P^d}(C) = \phi_\bb(\th_{\bl}\i(C))$ means that $$H_1(C) \times \cdots \times H_d(C) = \{(h_1\i h_2, \dots, h_{d-1}\i h_d, h_d\i b \s(h_1) b\i); (h_1, \dots, h_d) \in \th_\bl\i(C)\}.$$ In particular, $$H_1(C) \cdots H_d(C) = \phi_b^P(\pr(\th_\bl\i(C))).$$ So $\overline{ H^P(\overline{\pr(C)}) } = \overline{ \phi_b^P((\th_\l^P)\i(\overline{\pr(C)})) } = \overline{ \phi_b^P(\pr(\th_\bl\i(C))) } = \overline{ H_1(C) \cdots H_d(C)}$ as desired.
\end{proof}

\section{The superbasic case} \label{sec-sup}
In this section we consider the case where $b$ is superbasic.

\subsection{The formulation} \label{formulation} For $d \in \ZZ_{\ge 1}$ let $\bs$, $\bb$, $\bmu$ and $P = G$ be as in \S \ref{conv-setup}. Let $C \in \Irr^\tp X_\bmu(\bb)$ and let $\bl \in Y^d$ such that $(I^d t^{\bl} K^d/K^d) \cap C$ is open dense in $C$. Following \S \ref{product} let $$H^{G^d}(C) = H_1(C) \times \cdots \times H_d(C) \in \Irr (t^\bl K^d t^\bmu K^d t^{-\bb \bs(\bl)} \cap I^d),$$ where $H_\t(C) \in \Irr (t^{\l_\t} K t^{\mu_\t} K t^{-\l_\t^\dag} \cap I)$ for $1 \le \t \le d$ with $\bl=(\l_1, \dots, \l_d)$ and $\l_\bullet^\dag = \bb \bs(\bl) = (\l_1^\dag, \dots, \l_d^\dag)$. We set $X_\bmu^{\bl}(\bb) = X_\bmu^{\bl, G^d}(\bb)$ for simplicity.

The main result of this section is
\begin{thm} \label{reform}
Let $C, \bl, \l_\bullet^\dag$ be as above. There is $\g^{G^d}(C) = (\g_1, \dots, \g_d) \in \BB_\bmu^{\widehat G^d}$ such that $$\overline{ t^{-\l_1} H_1(C) t^{\l_1^\dag} \times_K \cdots \times_K t^{-\l_d} H_d(C) t^{\l_d^\dag} K/K } = \e_{\l_1} \overline{ S_{\g_1} \tilde\times \cdots \tilde\times S_{\g_d} },$$ where $\e_{\l_1} = \e_{\l_1}^G$ is as in \S \ref{def-epsilon}. Moreover, the map $C \mapsto \g^{G^d}(C)$ factors through a bijection $$\JJ_\bb \backslash \Irr^\tp X_\bmu(\bb) \cong \BB_\bmu^{\widehat G^d}(\ul_{G^d}(\bb)).$$ In particular, Theorem \ref{main} is true if $b$ is superbasic by taking $d = 1$.
\end{thm}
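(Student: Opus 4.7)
The plan is to reduce to the Weil restriction case, handle the minuscule situation via semi-modules, and extend to general $\bmu$ via the convolution structure supplied by geometric Satake.

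First I would exploit the superbasicity hypothesis: this forces (up to center) $G$ to be a product of Weil restrictions $\Res_{E_i/F}\GL_{n_i}$ of general linear groups over unramified extensions. Working on each factor, I may assume $G = \Res_{E/F}\GL_n$ and take $b = t^{\mu_0}\tau$ for a standard cyclic lift $\tau \in \Omega$, as in \cite{HV}. In this setup $b$ normalizes both $T(\brF)$ and $I$, and $\JJ_b$ has a transitive action on the strata $X_\mu^\l(b)$ indexed by top extended semi-modules, which is the foundation for applying the semi-module method of de Jong--Oort, Viehmann and Hamacher.

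Next, for the construction of $\g^{G^d}(C)$: Lemma \ref{irr} (applied with $P=G$, $N=1$) decomposes $H(C) = H_1(C) \times \cdots \times H_d(C)$ with each $H_\t(C) \in \Irr(t^{\l_\t} K t^{\mu_\t} K t^{-\l_\t^\dag}) \cap I$. The shift $h \mapsto \e_{\l_1}\i t^{-\l_\t} h t^{\l_\t^\dag}$ carries $H_\t(C)$ into the closure of a single $U(\brF)$-orbit $S^{\eta_\t}$ inside $\overline{\Gr_{\mu_\t}}$, where $\eta_\t$ is computed explicitly from $\l_\t$, $\l_\t^\dag$ and $\e_{\l_1}$. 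The dimension formula for $X_\bmu(\bb)$, combined with Theorem \ref{sat} applied to the convolution map $m_\bmu$, forces the twisted product closure to be a single irreducible component of $\overline{S^{\eta_1}\cap\Gr_{\mu_1}} \tilde\times \cdots \tilde\times \overline{S^{\eta_d}\cap\Gr_{\mu_d}}$; that is, a twisted product of MV cycles, which defines $\g^{G^d}(C) = (\g_1,\ldots,\g_d) \in \BB_\bmu^{\widehat G^d}$. The weight condition $\g^{G^d}(C) \in \BB_\bmu^{\widehat G^d}(\ul_{G^d}(\bb))$ is verified by comparing Kottwitz and Newton invariants across $g\i \bb \bs(g) \in K^d t^\bmu K^d$.

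To establish the bijection $\JJ_\bb \backslash \Irr X_\bmu(\bb) \cong \BB_\bmu^{\widehat G^d}(\ul_{G^d}(\bb))$ I would: (i) observe that $\g^{G^d}$ is $\JJ_\bb$-invariant, since left multiplication by $\JJ_\bb$ stabilizes each $\overline{H(C)}$; (ii) prove surjectivity by constructing, for each crystal element, an irreducible component of $X_\bmu(\bb)$ through the semi-module construction of \cite{HV} applied to a minuscule refinement $\bmu'$ of $\bmu$ (which exists because all fundamental coweights of $\Res_{E/F}\GL_n$ are minuscule), followed by projection via $\pr$ in Corollary \ref{compare}; (iii) prove injectivity by showing that the MV cycle datum $(\bl,\,\e_{\l_1}\overline{S_{\g_1}\tilde\times\cdots\tilde\times S_{\g_d}})$ recovers $C$ modulo $\JJ_\bb$, using transitivity of $\JJ_\bb$ on the preimages.

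The main obstacle will be bootstrapping the minuscule case of Hamacher--Viehmann \cite{HV} to arbitrary $\bmu$ while preserving the explicit identification with a convolution of MV cycles. My approach is inductive on $\sum_\t \langle \rho, \mu_\t \rangle$: use the Cartesian square of Corollary \ref{compare} together with the compatibility of $\d \mapsto S_\d$ with crystal tensor products (see \S\ref{conv-setup} and \cite[\S 3.3]{XZ}) to factor a general $\bmu$ through a minuscule refinement. Lemma \ref{convolution} then identifies the convolution of the $H_\t(C)$'s with the closure $\overline{H(\pr(C))}$ in the projected single-$\mu$ statement, closing the induction. Throughout, equi-dimensionality of $X_\bmu(\bb)$ in positive characteristic \cite{HaV2} is used to force the irreducible components to have the expected dimensions, so that the dimension count in the construction of $\g^{G^d}$ actually picks out a single MV-convolution component.
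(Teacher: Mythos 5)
Your overall architecture matches the paper's: reduce via central isogenies to $G = \Res_{E/F}\GL_n$, settle the minuscule case with the semi-module parametrization of Hamacher--Viehmann, then bootstrap to arbitrary $\bmu$ by factoring through a minuscule refinement $\bup$ together with the partial convolution $m_\bup^\Sigma$ and the tensor-compatibility of $\d \mapsto S_\d$. That part is sound and is exactly what the paper does (Lemma \ref{isogeny}, Theorem \ref{supb-minu}, Proposition \ref{multp}, and the end of \S\ref{sec-sup}).

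The gap is in the construction of $\g^{G^d}(C)$, which is the content of Proposition \ref{reform-minu}. You assert that the shift $h \mapsto \e_{\l_1}\i t^{-\l_\t} h\, t^{\l_\t^\dag}$ carries $H_\t(C)$ into the closure of a single $U(\brF)$-orbit $S^{\eta_\t}$, and then invoke equi-dimensionality plus Theorem \ref{sat} to conclude that the twisted product is a convolution of MV cycles. Neither half of this works as stated. First, an individual factor is \emph{not} contained in a single $S$-cell: by the proof of Proposition \ref{dim}, $H_\t(\bl) = I_{\l_\t}\Sigma_\t$, and after conjugating by $t^{-\l_\t}$ the $I_{\l_\t}^-$-part sits in $K$ but not in $U_{\l_\t}(\brF)$, so $t^{-\l_\t}H_\t(C)t^{\l_\t^\dag}K/K$ meets several Iwasawa strata. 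The paper resolves this only at the level of the \emph{whole} twisted product via the absorption identity of Lemma \ref{red-superbasic}, where $I_{\l_\t}^-\subseteq I_{\l_\t^\dag}$ (Lemma \ref{inc}) lets the anti-dominant factors telescope into the next $K$; there is no factor-by-factor statement to appeal to. Second, even granting that the twisted product closure is some $m_\bmu$-fiber component, a dimension count does not produce the specific form $\e_{\l_1}\,\overline{S_{\g_1}\tilde\times\cdots\tilde\times S_{\g_d}}$ with the \emph{single uniform} Weyl translate $\e_{\l_1}$. After Lemma \ref{red-superbasic} one has $\e_{\l_1}(U(\co_\brF)t^{\l_1^\flat})\times_K (w_1\i U(\co_\brF)t^{\l_2^\flat})\times_K\cdots$, where the interstitial elements $w_\t$ relating $\e_{\l_\t}$ and $\e_{\l_{\t+1}}$ are nontrivial. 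The heart of Proposition \ref{reform-minu} is showing that these $w_\t$'s can be deleted without changing the closure, which requires the combinatorics of Lemma \ref{Coxeter} (that $w_d\cdots w_1$ is Coxeter when $\bl\in\ca_{\bmu,\bb}^\tp$), Lemma \ref{nonneg} (positivity of partial sums $\sum\langle\a_i,\l_k^\flat\rangle$), and the explicit $U_{-i}(1)$-stability computation with the $f_\iota$'s at the end of the proof. Your dimension-count argument neither sees the $w_\t$'s nor explains why they disappear, so the identification with $\e_{\l_1}\overline{S_{\g_1}\tilde\times\cdots}$ is not established.
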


\subsection{Reduction procedure} First we show how to pass to the case where $G = \Res_{E / F} \GL_n$ with $E / F$ an unramified extension.
\begin{lem} \label{isogeny}
Let $f: G \to G'$ be a central isogeny. Then Theorem \ref{reform} is true for $G$ if and only if it is true for $G'$.
\end{lem}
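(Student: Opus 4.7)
The plan is to exploit the fact that a central isogeny $f:G\to G'$ preserves essentially all the combinatorial data underlying Theorem \ref{reform}, differing only by a finite adjustment of the cocharacter lattice. Concretely, $f$ induces the identifications $\Phi_G=\Phi_{G'}$, $\Phi_G^\vee=\Phi_{G'}^\vee$, $W_G=W_{G'}$ and $W^a_G=W^a_{G'}$, together with a map $Y_G\to Y_{G'}$ of finite index (or cokernel, depending on the direction of $f$); in either case both lattices contain the coroot lattice $Q^\vee$, so the affine root system $\tPhi$ and the standard Iwahori $I$ transport across. One then verifies that $f$ sends a superbasic $b$ with $b\in N_T(\brF)$ a lift of $\Omega_G$ to a superbasic $f(b)$ with $f(b)$ a lift of $\Omega_{G'}$, and that the groups $\JJ_\bb$ and $\JJ_{f(\bb)}$ correspond (up to a finite central kernel acting trivially on everything relevant).

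First, I would transfer the geometric side. The isogeny induces a map of affine Grassmannians $f:\Gr_G\to\Gr_{G'}$ whose restriction to each connected component is an isomorphism onto its image, and the Schubert cells $\Gr_\mu^\circ$ are identified with $\Gr_{f(\mu)}^\circ$. Thus $f$ gives a finite \'etale morphism $X_\bmu^{G^d}(\bb)\to X_{f(\bmu)}^{(G')^d}(f(\bb))$ which, together with the action of $\pi_1(G)_\s/\pi_1(G')_\s$ by translation, yields a canonical bijection $\JJ_\bb\backslash\Irr X_\bmu^{G^d}(\bb)\cong \JJ_{f(\bb)}\backslash\Irr X_{f(\bmu)}^{(G')^d}(f(\bb))$. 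Under this bijection the triple $(\bl,\l_\bullet^\dag,H(C))$ for $G^d$ corresponds to $(f(\bl),f(\l_\bullet^\dag),f(H(C)))$ for $(G')^d$, because $f$ commutes with the Lang's map $\phi_{\bb}$ and with the projections $\th_\bl$.

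Second, the dual side transfers by the dual isogeny $\widehat f:\widehat{G'}\to \widehat G$. Restriction of representations along $\widehat f$ identifies $V_\mu^{\widehat G}$ with $V_{f(\mu)}^{\widehat{G'}}$ and the canonical basis $\BB_\mu^{\widehat G}$ with $\BB_{f(\mu)}^{\widehat{G'}}$ (the crystal structure depends only on the common root datum, not on the lattice). The geometric Satake identification $\d\mapsto S_\d$ is compatible with $f$ because MV cycles in $\Gr_G$ map to MV cycles in $\Gr_{G'}$, and the twisted product and convolution use only the $K$-action and the Iwasawa stratification, which transport directly. Similarly $\ul_G(b)\in Y_{G,\s}$ maps to $\ul_{G'}(f(b))\in Y_{G',\s}$, since the best integral approximation is characterized by a condition on $\nu$ and $\k$ that is preserved. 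Finally, the Weyl group element $\e_{\l_1}$ lives in $W_0=W_G=W_{G'}$, so it appears on both sides with the same meaning.

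Putting these pieces together, the desired map $\g^{G^d}$ with the characterizing formula exists for $G^d$ iff the analogous map $\g^{(G')^d}$ exists for $(G')^d$: one is defined by composing the other with the bijections above, and the displayed identity of closures transfers because both its sides are $f$-equivariant. The bijectivity statement on $\JJ$-orbits transfers identically. The principal thing to check carefully is the bookkeeping for the lattices $Y_G$ and $Y_{G'}$: one must confirm that the additional cocharacters on one side are accounted for either by extra $\JJ$-orbits or by extra weight spaces, matching on both sides; this follows from the compatibility of $\pi_1$, $\k_G$, and $\ul_G(b)$ with $f$, and is the only subtle point in what is otherwise a routine transport of structure.
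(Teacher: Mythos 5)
Your overall strategy — transport everything across the isogeny, using that the root datum, Weyl groups, affine root system, crystals, and Satake data are the same on both sides — is indeed the strategy the paper follows. But the proposal glosses over the two places where the real work lies, and both points are genuinely nontrivial.

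First, the claim that ``$f$ commutes with the Lang's map $\phi_\bb$ and with the projections $\th_\bl$'' is formally true, but from this you cannot directly conclude $f(H(C))=H(C')$. The obstruction is that $f(I)\subsetneq I'$ and $f(T(\co_\brF))\subsetneq T'(\co_\brF)$: a central isogeny is not surjective on $\brF$-points, so $f(\th_\bl\i(C))$ need not be all of $\th_{\l'_\bullet}\i(C')$. The paper resolves this by passing to the derived Iwahori $I_\der$, for which $f(I_\der)=I'_\der$ does hold, working with $H_\der(C)=\phi_\bb(\th_\bl\i(C)\cap I_\der^d)$, and then invoking Corollary \ref{der} to recover $H(C)=T^d(\co_\brF)H_\der(C)$ on both sides. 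Without this intermediate step, the identity of twisted products of MV cycles that Theorem \ref{reform} asserts does not obviously transfer. Your proof does not address this, and I do not see how to repair it without essentially reintroducing the $I_\der$ argument.

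Second, the assertion that ``the groups $\JJ_\bb$ and $\JJ_{f(\bb)}$ correspond (up to a finite central kernel acting trivially on everything relevant)'' hides the actual bookkeeping. The morphism $\Gr_G\to\Gr_{G'}$ is a universal homeomorphism on each connected component and is injective on $\pi_0$, but it is not a finite \'etale cover of the whole variety (in particular, the kernel of a central isogeny may be infinitesimal in positive characteristic); and $\JJ_\bb\to\JJ_{b'_\bullet}$ is neither surjective nor does it have cokernel acting trivially. The paper instead restricts to a single connected component $X_\bmu(\bb)^\bo$, uses $\JJ_\bb^0=I^d\cap\JJ_\bb$ (which holds precisely because $\bb$ is superbasic), chains the bijections $((I_\der)^d\cap\JJ_\bb)\backslash\Irr\leftrightarrow(I^d\cap\JJ_\bb)\backslash\Irr\leftrightarrow\JJ_\bb^0\backslash\Irr\leftrightarrow\JJ_\bb\backslash\Irr$ via Corollary \ref{der}, and matches these chains across $f$. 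Your proposal waves at this; in a written proof you would need to justify why the orbit counts match, and the justification is exactly this chain. So the route is the same at a high level, but as written your proof has a concrete gap at the Iwahori/torus level that the paper's use of $I_\der$ and Corollary \ref{der} is designed to fill.
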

\begin{proof}
We still denote by $f$ the induced maps $G(\brF) \to G'(\brF)$, $\Gr_G \to \Gr_{G'}$ and so on. Let $\s'$ be the Frobenius automorphism of $G'$. Let $C \in \Irr^\tp X_\bmu(\bb)$ and $\bl \in Y^d$ such that $I^d t^\bl K^d / K^d \cap C$ is open dense in $C$. Let $\bo \in \pi_1(G^d)$ such that the corresponding connected component $\Gr_{G^d}^\bo$ contains $C$. Let $K' = G'(\co_\brF)$ and $I' \subseteq K'$ the Iwahori subgroup containing $f(I)$. Denote by $\mu_\bullet', \l_\bullet', C', b_\bullet', \o_\bullet'$ the images of $\bmu, \bl, C, \bb, \bo$ under $f$ respectively. Write $\bl = (\l_1, \dots, \l_d)$, $\bb = (b_1, \dots, b_d)$, $\l_\bullet' = (\l_1', \dots, \l_d')$ and $b_\bullet' = (b_1', \dots, b_d')$.

By \cite[Corollary 2.4.2]{CKV} and \cite[Proposition 3.1]{HV}, $f$ induces a homeomorphism $$X_\bmu(\bb) \cap \Gr_{G^d}^\bo =: X_\bmu(\bb)^\bo \overset \sim \longrightarrow X_{\mu_\bullet'}(b_\bullet')^{\o_\bullet'} := X_{\mu_\bullet'}(b_\bullet')^{\o_\bullet'} \cap \Gr_{{G'}^d}^{\o_\bullet'}.$$ So $C' \in \Irr^\tp X_{\mu_\bullet'}(b_\bullet')$ and ${I'}^d t^{\l_\bullet'} {K'}^d / {K'}^d \cap C'$ is open dense in $C'$. Moreover, as $f(I_\der) = I_\der'$ we have \begin{align*} f((\th_\bl^{G^d})\i(C) \cap (I_\der)^d) &= (\th_{\l_\bullet'}^{{G'}^d})\i(C') \cap (I_\der')^d \\ f(H_\der^{G^d}(C)) &= H_\der^{{G'}^d}(C'), \end{align*} where $H_\der^{G^d}(C)$ and $H_\der^{{G'}^d}(C')$ are defined as in Corollary \ref{derived}. By Corollary \ref{der}, \begin{align*} H^{G^d}(C) &= H_\der^{G^d}(C) T^d(\co_\brF) = T^d(\co_\brF) H_\der^{G^d}(C); \\ H^{{G'}^d}(C') &= H_\der^{{G'}^d}(C') {T'}^d(\co_\brF) = {T'}^d(\co_\brF) H_\der^{{G'}^d}(C'). \end{align*} Therefore, $f$ induces a surjection and hence a bijection \begin{align*} &\quad\ t^{-\l_a} H_a(C) t^{\l_a^\dag} \times_K \cdots \times_K t^{-\l_c} H_c(C) t^{\l_c^\dag} K/K \\ &= t^{-\l_a} H_{\der, a}(C) t^{\l_a^\dag} \times_K \cdots \times_K t^{-\l_c} H_{\der, c}(C) t^{\l_c^\dag} K/K \\ &\cong t^{-\l_a'} H_{\der, a}(C') t^{\l_a^{\prime\dag}} \times_{K'} \cdots \times_{K'} t^{-\l_c'} H_{\der, c}(C') t^{\l_c^{\prime\dag}} K'/K' \\ &= t^{-\l_a'} (H_a(C') t^{\l_a^{\prime\dag}} \times_{K'} \cdots \times_{K'} t^{-\l_c'} H_c(C') t^{\l_c^{\prime\dag}} K'/K',\end{align*} where, as in \S\ref{product} we write \begin{align*} H_\der^{G^d}(C) &= H_{\der, 1}(C) \times \cdots \times H_{\der, d}(C) \\ H_\der^{{G'}^d}(C') &= H_{\der, 1}(C') \times \cdots \times H_{\der, d}(C'). \end{align*}

By Corollary \ref{der}, we have the following commutative diagram
\begin{align*}\tag{a}
\xymatrix{
  ((I_\der)^d \cap \JJ_\bb)  \backslash \Irr X_\bmu^\bl(\bb) \ar[d]_f \ar[r]^{\sim\quad\ \ } & \Irr (t^\bl K^d t^\bmu K^d t^{-\l_\bullet^\dag} \cap (I^d)_{(U^d)_\bl}) \ar[d]^\wr_f \\
  ((I_\der')^d \cap \JJ_{b_\bullet'}) \backslash \Irr X_{\mu_\bullet'}^{\l_\bullet'}(b_\bullet') \ar[r]^{\sim\quad\quad\ } & \Irr (t^{\l_\bullet'} {K'}^d t^{\mu_\bullet'} {K'}^d t^{-\l_\bullet^{\prime\dag}} \cap ({I'}^d)_{(U^d)_{\l_\bullet'}}),  }
\end{align*} where the right vertical bijection follows from Lemma \ref{irr-u} and the homeomorphism $f: \Gr_{G^d}^{\bo} \cong \Gr_{{G'}^d}^{\o_\bullet'}$.

Let $\JJ_\bb^0$ and $\JJ_{b_\bullet'}^0$ be the kernels of the natural projections $\JJ_\bb \to \pi_1(G^d)$ and $\JJ_{b_\bullet'} \to \pi_1({G'}^d)$ respectively. Then we have a commutative diagram \begin{align*} \xymatrix{
\JJ_\bb^0 \backslash \Irr^\tp X_\bmu(\bb)^\bo \ar[d] \ar[r]^{\sim} & \JJ_\bb \backslash \Irr^\tp X_\bmu(\bb) \ar[d] \\
\JJ_{b_\bullet'}^0 \backslash \Irr^\tp X_{\mu_\bullet'}(b_\bullet')^{\o_\bullet'} \ar[r]^{\sim} & \JJ_{b_\bullet'} \backslash \Irr^\tp X_{\mu_\bullet'}(b_\bullet').  }\end{align*} Thus the bijection $\JJ_\bb \backslash \Irr^\tp X_\bmu(\bb) \cong \JJ_{b_\bullet'} \backslash \Irr^\tp X_{\mu_\bullet'}(b_\bullet')$ follows from the following commutative diagrams \begin{align*} \xymatrix{
((I_\der)^d \cap \JJ_\bb)  \backslash \Irr^\tp X_\bmu(\bb)^\bo \ar[d]^{\wr} \ar[r]^{\quad \sim} & (I^d \cap \JJ_b) \backslash \Irr^\tp X_\bmu(\bb)^\bo \ar[d] \ar[r]^{\quad \sim} & \JJ_\bb^0 \backslash \Irr^\tp X_\bmu(\bb)^\bo \ar[d]  \\
  ((I_\der')^d \cap \JJ_{b_\bullet'}) \backslash \Irr^\tp X_{\mu_\bullet'}(b_\bullet')^{\o_\bullet'} \ar[r]^{\quad \sim} & ({I'}^d \cap \JJ_{b_\bullet'}) \backslash \Irr^\tp X_{\mu_\bullet'}(b_\bullet')^{\o_\bullet'} \ar[r]^{\quad \sim} & \JJ_{b_\bullet'}^0 \backslash \Irr^\tp X_{\mu_\bullet'}(b_\bullet')^{\o_\bullet'} ,   }
\end{align*}
where the left horizontal bijections follow from Corollary \ref{der}; the right horizontal bijections follow from that $\JJ_\bb^0 = I^d \cap \JJ_\bb$ and $\JJ_{b_\bullet'}^0 = {I'}^d \cap \JJ_{b_\bullet'}$ as $\bb, b_\bullet'$ are superbasic; the leftmost vertical bijection follows from the natural bijection $$((I_\der)^d \cap \JJ_\bb)  \backslash \Irr X_\bmu^\bl(\bb) \cong ((I_\der')^d \cap \JJ_{b_\bullet'}) \backslash \Irr X_{\mu_\bullet'}^{\l_\bullet'}(b_\bullet')$$ in the commutative diagram (a). The proof is finished.

\end{proof}

Let $G_\ad$ denote the adjoint group of $G$. As $b$ is superbasic, by \cite[Lemma 3.11]{CKV}, $G_\ad \cong \prod_i \Res_{F_i / F} \PGL_{n_i}$ for some unramified extensions $F_i / F$. In view of the following natural central isogenies $$G \longrightarrow G_\ad \cong \prod_i \Res_{F_{d_i} / F} \PGL_{n_i} \longleftarrow \prod_i \Res_{F_{d_i} / F} \GL_{n_i},$$ we will assume in the rest of this section that $G = \Res_{E / F} \GL_n$ for some unramified extension $E / F$ by Lemma \ref{isogeny}.

\subsection{Reduction procedure in the minuscule case} \label{red-min} Now we consider the case where $\bmu$ is minuscule. Let $\ca_{\bmu, \bb} = \ca_{\bmu, \bb}^{G^d}$ and $\ca_{\bmu, \bb}^\tp = \ca_{\bmu, \bb}^{G^d, \tp}$ be defined in \S \ref{def-R}. For $\bl \in \ca_{\bmu, \bb}$ set $\l_\bullet^\dag = \bb \bs(\bl)$, $\l^\natural_\bullet = -\bl + \l_\bullet^\dag$ and $\l^\flat_\bullet = \e_\bl\i (\l_\bullet^\natural)$, where $\e_\bl := \e_\bl^{G^d}$ is defined in \S \ref{def-epsilon}.

Since $\bmu$ is minuscule, we identify $\BB_\bmu^{\widehat G^d}$ canonically with the set of cocharacters in $Y^d$ which are conjugate to $\bmu$. Moreover, for $\z_\bullet \in \BB_\bmu^{\widehat G^d}$, the corresponding Mirkovi\'{c}-Vilonen cycle is $S_{\z_\bullet} = (K_U)^d t^{\z_\bullet} K^d/K^d$.
\begin{thm} [{\cite[Proposition 1.6]{HV}}] \label{supb-minu}
Assume $\bmu$ is minuscule. For $\l \in \ca_{\bmu, \bb}$,

(1) $X_\bmu^\bl(\bb)$ is an affine space;

(2) $\bl \in \ca_{\bmu, \bb}^\tp$ if and only if $\l^\flat_\bullet = \e_\bl\i (\l_\bullet^\natural) \in \BB_\bmu^{\widehat G^d}(\ul_{G^d}(\bb))$. \\
Moreover, the maps $\bl \mapsto \l_\bullet^\flat$ and $\bl \mapsto \overline{X_\bmu^\bl(\bb)}$ induce a bijection $$\JJ_\bb \backslash \Irr^\tp X_\bmu(\bb) \cong \BB_\bmu^{\widehat G^d}(\ul_{G^d}(\bb)).$$
\end{thm}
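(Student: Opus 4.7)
The plan is to proceed as follows. First, apply Proposition \ref{dim} to $G^d$ with $P^d = G^d$, which is legitimate since $\bb$ is basic in $G^d$, to conclude: $\bl \in \ca_{\bmu,\bb}$ if and only if $\l_\bullet^\natural$ is a $W_{G^d}$-conjugate of $\bmu$; and for such $\bl$, the piece $X_\bmu^\bl(\bb)$ is smooth, $(I^d \cap \JJ_\bb)$ acts transitively on $\Irr X_\bmu^\bl(\bb)$, and $\dim X_\bmu^\bl(\bb) = |R_{\bmu,\bb}^{G^d}(\bl)|$. The explicit product decomposition
\begin{align*}
t^{\bl} K^d t^{\bmu} K^d t^{-\l_\bullet^\dag} \cap I^d = I_{\bl} \prod_{\a \in R_{\bmu,\bb}^{G^d}(\bl)} U_\a(t^{\<\a,\bl\>-1}\co_\brF)
\end{align*}
from the proof of Proposition \ref{dim} exhibits this intersection as an affine space. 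For statement (1), the superbasic assumption together with $\bb$ being a lift of an element of $\Omega$ will let me trivialise the étale Lang cover $\phi_\bb$ over this affine space (the action of $I^d \cap \JJ_\bb$ is free and matches irreducible components), yielding that $X_\bmu^\bl(\bb)$ itself is an affine space.

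For statement (2), equi-dimensionality of $X_\bmu(\bb)$ in positive characteristic (\cite{HaV2}) converts the top-dimension condition into the combinatorial equality $|R_{\bmu,\bb}^{G^d}(\bl)| = \<\rho_{G^d}, \bmu - \nu_{G^d}(\bb)\> - \tfrac{1}{2}\dft_{G^d}(\bb)$. Since $\bmu$ is minuscule, $\BB_\bmu^{\widehat G^d}$ canonically identifies with the $W_{G^d}$-orbit of $\bmu$, and $\BB_\bmu^{\widehat G^d}(\ul_{G^d}(\bb))$ picks out those conjugates congruent to $\ul_{G^d}(\bb)$ modulo $(1-\bs)Y^d$. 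By construction $\l_\bullet^\flat = \e_\bl^{-1}(\l_\bullet^\natural)$ is a $W_{G^d}$-conjugate of $\bmu$, so (2) reduces to proving the congruence $\l_\bullet^\flat \equiv \ul_{G^d}(\bb)$ in $Y^d/(1-\bs)Y^d$ is equivalent to the top-dimension identity. I would establish this by rewriting $|R_{\bmu,\bb}^{G^d}(\bl)|$ via Lemma \ref{lam} as the number of positive roots $\a$ with $\<\a,\l_\bullet^\natural\> < 0$, and comparing with the right-hand side using the definitions of $\nu_{G^d}(\bb)$, $\dft_{G^d}(\bb)$ and the best integral approximation $\ul_{G^d}(\bb)$.

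Finally, for the bijection, Lemma \ref{omega} and Lemma \ref{natural} show that $\Omega^d \cap \JJ_\bb$ acts on $\ca_{\bmu,\bb}^\tp$ compatibly with the assignment $\bl \mapsto \l_\bullet^\flat$. Combined with the transitive action of $I^d \cap \JJ_\bb$ on $\Irr X_\bmu^\bl(\bb)$ from Proposition \ref{dim}(1), the $\JJ_\bb$-orbits on $\Irr X_\bmu(\bb) = \sqcup_{\bl \in \ca_{\bmu,\bb}^\tp} \Irr X_\bmu^\bl(\bb)$ correspond to $(\Omega^d \cap \JJ_\bb)$-orbits on $\ca_{\bmu,\bb}^\tp$, which I would then show correspond bijectively to $\BB_\bmu^{\widehat G^d}(\ul_{G^d}(\bb))$ via $\bl \mapsto \l_\bullet^\flat$. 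The hard part will be the combinatorial identity of the previous paragraph, together with verifying that $\bl \mapsto \l_\bullet^\flat$ is a complete invariant of $(\Omega^d \cap \JJ_\bb)$-orbits; after the reduction to $G = \Res_{E/F}\GL_n$ supplied by Lemma \ref{isogeny}, both points become explicit lattice-theoretic computations, following \cite[Proposition 1.6]{HV} via semi-modules, which is the route the paper takes.
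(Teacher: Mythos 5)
The paper does not prove this theorem: the bracketed citation in the theorem header marks it as quoted from \cite[Proposition 1.6]{HV}, where it is established via the method of extended semi-modules. Your sketch correctly identifies the right skeleton---applying Proposition~\ref{dim} to $G^d$ with $P^d = G^d$ reduces (1) and (2) to the combinatorics of $R_{\bmu,\bb}^{G^d}(\bl)$, and since $\bb$ is superbasic one has $\JJ_\bb = (\Omega^d\cap\JJ_\bb)(I^d\cap\JJ_\bb)$, so that the $\JJ_\bb$-orbits on irreducible components are controlled by $\Omega^d\cap\JJ_\bb$ acting on $\ca_{\bmu,\bb}^\tp$---and you flag honestly at the end that the hard work follows \cite{HV} via semi-modules.

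Two steps in the middle, however, are asserted rather than argued and would require real work. For (1), Proposition~\ref{dim} gives smoothness of $X_\bmu^\bl(\bb)$ and that $I^d\cap\JJ_\bb$ acts transitively on its set of irreducible components; transitivity of the covering group yields neither irreducibility nor that the space is affine, since $\phi_\bb^{-1}(H)$ over the affine cell $H=t^\bl K^d t^\bmu K^d t^{-\l^\dag_\bullet}\cap I^d$ is a priori a nontrivial finite \'etale cover, and ``trivialise the Lang cover'' is not an argument. \cite{HV} bypass this by exhibiting explicit affine coordinates from the semi-module. For (2), $R_{\bmu,\bb}^{G^d}(\bl)$ is cut out by two conditions, $\<\a,\l_\bullet^\natural\>=-1$ and $\l_\a\ge 1$; Lemma~\ref{lam} rewrites the first in terms of $\l_{\a^{-1}}-\l_\a$ but does nothing to remove the second, so the claimed identification of $|R_{\bmu,\bb}^{G^d}(\bl)|$ with the number of positive roots $\a$ having $\<\a,\l_\bullet^\natural\><0$ is not immediate. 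That identity, together with the injectivity of $\bl\mapsto\l_\bullet^\flat$ modulo $\Omega^d\cap\JJ_\bb$, is precisely the lattice-theoretic content of \cite{HV} (and, for the dimension formula used in \S\ref{sec-sup}, of \cite{N}). Your sketch therefore reduces the theorem to its source rather than reproving it---which is consistent with how the paper treats it---but the intermediate claims about the Lang cover and about $|R_{\bmu,\bb}^{G^d}(\bl)|$ should not be presented as if established.
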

As a consequence, for $C \in \Irr^\tp X_\bmu(\bb)$ there exists $\bl \in \ca_{\bmu, \bb}^\tp$ such that $\check C = \overline{X_\bmu^\bl(\bb)}$. Define $$\g^{G^d}(C) = \l^\flat_\bullet \in \BB_\bmu^{\widehat G^d}(\ul_{G^d}(\bb)).$$ Moreover, we write $$H(\bl) := H^{G^d}(C) = t^{\bl} K^d t^{\bmu} K^d t^{-\l^\dag_\bullet} \cap I^d = H_1(\bl) \times \cdots \times H_d(\bl),$$ where $H_\t(\l) := H_\t(C)$ for $1 \le \t \le d$ as in \S\ref{product}.

\begin{rmk} \label{cotype}
In \cite{HV}, the EL-charts for $X_\bmu(\bb)$ are parameterized by cocharacters $\bl$ in $\ca_{\bmu, \bb} \subseteq Y^d \cong (\ZZ^n)^{dl}$ with $l = \deg E/F$. By \cite[Corollary 4.18]{HV}, the map, sending $\bl$ to its {\it cotype}, induces a bijection $$\JJ_\bb \backslash \Irr^\tp X_\bmu(\bb) \cong \BB_\bmu^{\widehat G^d}(\ul_{G^d}(\bb)).$$ Following \cite[Definition 4.13]{HV}, the cotype of $\bl$ is equal to $\varepsilon_\bullet\i(\l_\bullet^\natural)$, where $\varepsilon_\bullet$ lies in $(W_0)^d \cong (\fs_n)^{dl}$ such that $$n\l_{i, j}(\varepsilon_{i, j}(k)) + \varepsilon_{i, j}(k) <  n\l_{i, j}(\varepsilon_{i, j}(k')) + \varepsilon_{i, j}(k')$$ for $1 \le i \le d$, $1 \le j \le l$, and $1 \le k' < k \le n$. This means that $(\l_{i, j})_{\varepsilon_{i, j}(\a)} \ge 0$ for all positive roots $\a$. So $\varepsilon_\bullet = \e_\bl$ and the cotype of $\bl$ coincides with $\l_\bullet^\flat$.
\end{rmk}

By the definition of $\g^{G^d}$, the second statement of Theorem \ref{reform} (for the minuscule case) follows from Theorem \ref{supb-minu}. It remains to show the first statement, which follows from the following result.
\begin{prop} \label{reform-minu}
Let $\bmu$ be minuscule and let $\bl \in \ca_{\bmu, \bb}^\tp$. For $1 \le a \le c \le d$, \begin{align*} &\quad\ \overline{ t^{-\l_a} (H_a(\bl) t^{\l_a^\dag} \times_K \cdots \times_K t^{-\l_c} H_c(\bl) t^{\l_c^\dag} K/K } \\  &= \e_{\l_a} \overline{ K_U t^{\l^\flat_a} \times_K \cdots \times_K K_U t^{\l^\flat_c} K / K } \\ &= \e_{\l_a} \overline{ S_{\l_a^\flat} \tilde\times \cdots \tilde\times S_{\l_c^\flat} }, \end{align*} where $\l_\bullet^\flat = (\l_1^\flat, \dots, \l_d^\flat)$ and $\l_\bullet^\dag = (\l_1^\dag, \dots, \l_d^\dag)$.
\end{prop}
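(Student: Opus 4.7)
The proof proceeds by induction on $c - a \ge 0$, with the base case $c = a$ furnishing the core computation. The goal of the base case is to show $t^{-\l_a} H_a(\bl) t^{\l_a^\dag} \cdot K = U_{\l_a}(\co_{\brF}) t^{\l_a^\natural} K$, which matches the RHS via $\dot\e_{\l_a} U(\co_{\brF}) t^{\l_a^\flat} \cdot K = U_{\l_a}(\co_{\brF}) t^{\l_a^\natural} K$ (using $\dot\e_{\l_a} U \dot\e_{\l_a}\i = U_{\l_a}$, $\e_{\l_a}(\l_a^\flat) = \l_a^\natural$, and $\dot\e_{\l_a} \in K$). Setting $\tI := t^{-\l_a} I_{\l_a} t^{\l_a} \subseteq K$, the minuscule formula (a) from the proof of Proposition \ref{dim} yields
$$t^{-\l_a} H_a(\bl) t^{\l_a^\dag} = \tI \cdot \prod_{\a \in R_{\mu_a, \bb}(\l_a)} U_\a(t^{-1}\co_{\brF}) \cdot t^{\l_a^\natural}.$$
Two simplifications then close the base case: (i) for $\a \in R_{\mu_a, \bb}(\l_a)$ we have $\<\a, \l_a^\natural\> = -1$, so the commutation $U_\a(z) t^{\l_a^\natural} = t^{\l_a^\natural} U_\a(tz)$ forces $\prod U_\a(t^{-1}\co_{\brF}) \cdot t^{\l_a^\natural} K = t^{\l_a^\natural} K$; (ii) the Iwahori decomposition $\tI = \tI^+ \cdot T(\co_{\brF}) \cdot \tI^-$ relative to $U_{\l_a}$ satisfies $\tI^+ = U_{\l_a}(\co_{\brF})$, while $T(\co_{\brF})$ and $\tI^-$ both fix $t^{\l_a^\natural} K$; all three facts are checked root-by-root using the bound $\<\a, \l_a^\natural\> \ge -1$ from the minusculity of $\mu_a$.

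For the induction step we rewrite each side to expose a common first slot. On the LHS, the identity $\prod U_\a(t^{-1}\co_{\brF}) \cdot t^{\l_a^\natural} = t^{\l_a^\natural} \prod U_\a(\co_{\brF})$ combined with $\prod U_\a(\co_{\brF}) \subseteq K$ lets one push this factor into the second slot via the tilde-product relation $(xu, y) \sim (x, uy)$, while the leading $\tI \subseteq K$ acts by a diagonal left translation that preserves closures. On the RHS, $\dot\e_{\l_a} \in K$ propagates through each subsequent factor via $\dot\e_{\l_a} U(\co_{\brF}) t^{\l_\t^\flat} = U_{\l_a}(\co_{\brF}) t^{\e_{\l_a}(\l_\t^\flat)} \dot\e_{\l_a}$ and ultimately disappears modulo $K$ on the final slot. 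After these rearrangements both sides have first slot $U_{\l_a}(\co_{\brF}) t^{\l_a^\natural}$, and the tail comparison is handled by the inductive hypothesis applied to $[a+1, c]$ together with the base case on each remaining factor $X_\t$.

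The main obstacle is the tail comparison: after propagating $\dot\e_{\l_a}$ the $\t$-th factor of the RHS is $U_{\l_a}(\co_{\brF}) t^{\e_{\l_a}(\l_\t^\flat)}$, whereas the base case presents the LHS factor as $U_{\l_\t}(\co_{\brF}) t^{\l_\t^\natural}$, and these are in general different subsets of $G(\brF)$ since there is no reason for $\e_{\l_a}$ to agree with $\e_{\l_\t}$. Equality of the twisted products can therefore only hold after closure, and the decisive input I expect to invoke is the matching of dimension and irreducibility of the two sides, supplied by Theorems \ref{supb-minu} and \ref{sat}, combined with an explicit containment produced by the base case. An alternative route is to compare both closures under the convolution map of Theorem \ref{sat} and appeal to the compatibility of the MV-cycle twisted products with geometric Satake.
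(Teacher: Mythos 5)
Your base-case computation and the structure you set up are essentially the content of Lemma~\ref{red-superbasic}: the decomposition $H_\t(\bl) = I_{\l_\t}\Sigma_\t$ from formula (a) in the proof of Proposition~\ref{dim}, the absorption of $\Sigma_\t\subseteq I_{\l_\t^\dag}^+$ into the next slot, the Iwahori decomposition $I_{\l_\t}=I_{\l_\t}^+I_{\l_\t}^-$ with $I_{\l_\t}^-\subseteq I_{\l_\t^\dag}$ (Lemma~\ref{inc}), and $t^{-\l_\t}I_{\l_\t}^+ t^{\l_\t^\dag}=U_{\l_\t}(\co_\brF)t^{\l_\t^\natural}$. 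In fact the paper proves this equality \emph{exactly} (no closures) for all $a\le c$ at once, so no induction is needed at that stage. You also correctly locate the real obstacle: after pulling $\e_{\l_a}$ out of the left slot, the $\t$-th slot of the left-hand side carries a residual factor $w_{\t-1}^{-1}$ (since $\e_{\l_\t}=\e_{\l_a}w_a^{-1}\cdots w_{\t-1}^{-1}$, Lemma~\ref{relation}), and one must show that dropping these $w_\t$'s does not change the closure.

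This is exactly where your proposal stops. You write that you ``expect to invoke'' dimension and irreducibility from Theorems~\ref{supb-minu} and \ref{sat}, or alternatively compatibility of twisted products with geometric Satake — but you do not produce the containment that a dimension argument would require, and it is not at all clear such a containment exists without doing the hard work. Both sides are closures of irreducible sets of the same dimension, but neither obviously sits inside the other; the generic points differ by the Weyl elements $w_\t$, and one must show these can be absorbed. The paper resolves this by reducing to the statement that for $i\in\supp(w_\t)$ the left translation by $s_i$ (equivalently by $U_{-i}(1)$) preserves the closure $\overline{U(\co_\brF)t^{\l^\flat_{\t+1}}\times_K\cdots\times_K U(\co_\brF)t^{\l^\flat_c}K/K}$, and then carries out an explicit $\mathrm{SL}_2$-style computation: it pushes $U_{-i}(1)$ through each factor, generating auxiliary quantities $f_\iota = t^{e_\iota}f_{\iota-1}/(1+t^{e_\iota}z_\iota f_{\iota-1})$ with $e_\iota=\<\a_i,\l^\flat_\iota\>$, and needs these to remain in $\co_\brF$. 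That integrality is precisely Lemma~\ref{nonneg} ($\sum_{k=\t+1}^\iota\<\a_i,\l^\flat_k\>\ge 0$), which in turn relies on Lemma~\ref{Coxeter}: for $\bl\in\ca_{\bmu,\bb}^\tp$ the product $w_d\cdots w_1$ is a Coxeter element, so each simple reflection occurs in exactly one $w_\t$. This is the one place the hypothesis $\bl\in\ca_{\bmu,\bb}^\tp$ (not merely $\ca_{\bmu,\bb}$) is essential, and it is entirely missing from your argument. Without it the closure is genuinely not invariant under left translation by $s_i$, so the gap is not cosmetic.
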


As $G = \Res_{E / F} \GL_n$, we have $G^d(\brF) = \prod_{i = 1}^{dl} G_i(\brF)$, where $l = \deg E/F$ and each $G_i$ is isomorphic to $\GL_n$ over $E$. Moreover, $\bs$ sends $G_i$ to $G_{i-1}$ for $1 \le i \le dl$ with $G_{dl + 1} = G_1$. By Lemma \ref{dim} (1), we see that $H(\bl)$ only depends on $\bl \in \ca_{\bmu, \bb}^\tp$, the image of $\bb$ in $(\Omega_G)^d$ and the induced action of $\bs$ on the root system. Moreover, by Corollary \ref{indep}, we can assume, by replacing $b$ with a suitable $\Omega$-$\s$-conjugate, that $$\bb = (1, \dots, 1, b) \in \prod_{i = 1}^{dl} G_i(\brF).$$ Let $G' = \GL_n$ and let $\s_\bullet'$ be the Frobenius automorphism of $(G')^{dl}$ defined in \S\ref{conv-setup}. Via the natural identification (over $E$) $$(G')^{dl} = \prod_{i = 1}^{dl} G_i = G^d,$$ we see that the induced actions of $\s_\bullet'$ and $\bs$ on the root system coincide. Thus Proposition \ref{reform-minu} for the triple $(G = \Res_{E / F} \GL_n, d, \bb)$ is a consequence of its counterpart for the triple $(G' = \GL_n, dl, \bb)$. So we will assume that $G = \GL_n$ when $\bmu$ is minuscule.

\subsection{The minuscule case with $G = \GL_n$} Assume $G = \GL_n$. Let $T$ and $B$ be the group of diagonal matrices and the group of upper triangular matrices respectively. Let $V = \oplus_{i=1}^n \brF e_i$ be the natural representation of $G(\brF)$. Then there are natural identifications $X = \oplus_{i=1}^n \ZZ e_i$, $Y = \oplus_{i=1}^n \ZZ e_i^\vee$ and $W_0 = \fs_n$, where $(e_i^\vee)_{1 \le i \le n}$ is the dual basis to $(e_i)_{1 \le i \le n}$ and $\fs_n$ denotes the symmetric group. Then the natural action of $w \in W_0$ on $X$ is given by $w(e_i) = e_{w(i)}$. Moreover, we have $\Phi = \{\a_{i, j} = e_i - e_j; 1 \le i \neq j \le n\}$ and the simple roots are $\a_i = e_i - e_{i+1}$ for $1 \le i \le n-1$. Notice that $\Omega$ is a free abelian group of rank one. We fix a generator $\o \in \Omega$ which sends $e_i$ to $e_{i+1}$ for $i \in \ZZ$, where $e_{j + n} = t e_j$ for $j \in \ZZ$. So we can assume $b = \o^m$ for some $m \in \ZZ$. As $b$ is superbasic, $m$ is coprime to $n$. The embedding $h \mapsto (1, \dots, 1, h)$ induces an identification $Y = Y_\s \cong (Y^d)_{\bs}$, through which we have $\ul_G(b) = \ul_{G^d}(\bb)$.

Suppose $\bmu \in Y^d$ is minuscule. Let $\bl = (\l_1, \dots, \l_d) \in \ca_{\bmu, \bb}$. Following \S\ref{red-min} we can define $\l_\bullet^\dag = (\l_1^\dag, \dots, \l_d^\dag)$, $\l_\bullet^\natural = (\l_1^\natural, \dots, \l_d^\natural)$, $\l_\bullet^\flat = (\l_1^\flat, \dots, \l_d^\flat)$ and $H(\bl) = H_1(\bl) \times \cdots \times H_d(\bl)$. Notice that $\l_\t^\dag = \l_{\t + 1}$ for $1 \le \t \le d-1$.
\begin{lem} \label{red-superbasic}
Let $\bl \in \ca_{\bmu, \bb}$ and $1 \le a \le c \le d$. Then $$t^{-\l_a} H_a(\bl) t^{\l^\dag_a} \times_K \cdots \times_K  t^{-\l_c} H_c(\bl) t^{\l^\dag_c} K = K_{U_{\l_a}} t^{\l^\natural_a} \times_K \cdots \times_K K_{U_{\l_c}} t^{\l^\natural_c} K.$$
\end{lem}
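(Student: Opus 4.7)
The plan is to exploit the explicit description of $H_\t(\bl)$ given by equation (a) in the proof of Proposition \ref{dim}, which under our assumptions reads
\[
H_\t(\bl) \;=\; I_{\l_\t}\,\prod_{\a \in R_\t}U_\a\bigl(t^{\<\a,\l_\t\>-1}\co_{\brF}\bigr),
\qquad
R_\t = \{\a\in\Phi:\<\a,\l_\t^\natural\>=-1,\ (\l_\t)_\a\ge 1\}.
\]
Direct conjugation, together with the identity $U_\a(t^{-1}\co_\brF)\cdot t^{\l_\t^\natural} = t^{\l_\t^\natural}\cdot U_\a(\co_\brF)$ for $\a\in R_\t$ (using $\<\a,\l_\t^\natural\>=-1$) and the Iwahori factorisation $I_{\l_\t}=I_{\l_\t}^- I_{\l_\t}^+$ together with the identification $t^{-\l_\t}I_{\l_\t}^+ t^{\l_\t}=T(\co_\brF)\cdot U_{\l_\t}(\co_\brF)$ coming from the definition $I_{\l_\t}^+=T(\co_\brF)\cdot t^{\l_\t}U_{\l_\t}(\co_\brF)t^{-\l_\t}$, gives
\[
t^{-\l_\t}H_\t(\bl)t^{\l_\t^\dag}
\;=\; K_\t^-\cdot U_{\l_\t}(\co_\brF)\cdot t^{\l_\t^\natural}\cdot K_\t^+,
\]
where $K_\t^-:=t^{-\l_\t}I_{\l_\t}^- t^{\l_\t}\subseteq T(\co_\brF)\cdot U_{\l_\t}^-(t\co_\brF)\subseteq K$ (the inclusion follows from the inequalities for the exponents, as $(\l_\t)_\a\le -1$ for $\a\prec_{\l_\t}0$), and $K_\t^+:=\prod_{\a\in R_\t}U_\a(\co_\brF)\subseteq K$.

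The core of the argument is the absorption of the decorations $K_\t^\pm$ in the twisted product. Two mechanisms are in play: the telescopic cancellation $t^{\l_\t^\dag}\cdot t^{-\l_{\t+1}}=1$ for $\t<d$ (a consequence of the superbasicity of $\bb$ and its concentration in the last factor, giving $\l_\t^\dag = \l_{\t+1}$ for $\t<d$); and the twisted product equivalence $[\ldots,g_\t k,g_{\t+1},\ldots]=[\ldots,g_\t,k g_{\t+1},\ldots]$ for $k\in K$. Applying the latter iteratively, each right-$K$ factor $K_\t^+$ is transported to the left of the $(\t+1)$-th factor and folded into the next decoration, while $K_c^+$ at the final position disappears into the $\Gr$-quotient. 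Each left-$K$ factor $K_\t^-$ for $\t>a$ is similarly pushed leftward and absorbed into the preceding right-$K$ factor. The reverse inclusion RHS $\subseteq$ LHS is constructive: for $u_\t\in U_{\l_\t}(\co_\brF)$, the element $h_\t:=t^{\l_\t}u_\t t^{-\l_\t}\in I_{\l_\t}^+\subseteq H_\t(\bl)$ satisfies $t^{-\l_\t}h_\t t^{\l_\t^\dag}=u_\t t^{\l_\t^\natural}$, providing a representative in the LHS of any element of the RHS.

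The main obstacle is the treatment of the leftmost $K_a^-$, which cannot be absorbed by a preceding factor. One must verify that left-multiplication by elements of $K_a^-\subseteq T(\co_\brF)\cdot U_{\l_a}^-(t\co_\brF)$ on $U_{\l_a}(\co_\brF)\cdot t^{\l_a^\natural}$ can be traded, up to the twisted product equivalence, for a right-$K$ factor passed into the next position. The $T(\co_\brF)$-part commutes with $t^{\l_a^\natural}$ and normalises $U_{\l_a}(\co_\brF)$, so it is absorbed on the right immediately. For the $U_{\l_a}^-(t\co_\brF)$-part, the big-cell factorisation inside $K$ rewrites $u^-\cdot u\cdot t^{\l_a^\natural}$ as $u'\cdot t^{\l_a^\natural}\cdot k'$, where $u'\in U_{\l_a}(\co_\brF)$ and the residual $k'$ is controlled by the inequality $\<\a,\l_a^\natural\>\ge -1$ for every $\a$ (which holds since $\l_a^\natural$ is $W_0$-conjugate to the minuscule $\mu_a$), ensuring $k'\in K$ and can then be pushed forward into the next factor.
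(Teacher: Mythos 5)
Your starting point coincides with the paper's: both begin from the description $H_\t(\bl)=I_{\l_\t}\Sigma_\t$ with $\Sigma_\t=\prod_{\a\in R_\t}U_\a(t^{\<\a,\l_\t\>-1}\co_\brF)$, and both observe $(\l_\t^\dag)_\a\ge 0$ for $\a\in R_\t$, so $\Sigma_\t$ is eventually pushed into the next Iwahori-type group. The divergence — and the gap — is in how the negative half of $I_{\l_\t}$ is handled.

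You factor $I_{\l_\t}=I_{\l_\t}^-I_{\l_\t}^+$ with the negative part on the \emph{left}, producing $t^{-\l_\t}H_\t t^{\l_\t^\dag}=K_\t^-\,U_{\l_\t}(\co_\brF)\,t^{\l_\t^\natural}\,K_\t^+$. You then claim that $K_\t^+$ transports right, $K_\t^-$ transports left, and each gets ``absorbed into'' the adjacent decoration. This is not correct as stated: at the junction between factors $\t$ and $\t+1$ the product set $K_\t^+K_{\t+1}^-$ does not collapse. In a $\GL_2$-type computation one sees $U(\co_\brF)\,T(\co_\brF)\,U^-(t\co_\brF)\,U(\co_\brF)$ picks up nontrivial lower-triangular entries, so $K_\t^+K_{\t+1}^-U_{\l_{\t+1}}(\co_\brF)$ is strictly larger than $T(\co_\brF)U_{\l_{\t+1}}(\co_\brF)$. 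In other words, the big-cell factorisation you invoke for the leftmost $K_a^-$ is actually needed at \emph{every} junction, not just the first one, and after applying it the residual $k'$ that lands in $K$ between factors $\t$ and $\t+1$ has scattered root components (some even landing back in $U_{\l_{\t+1}}^-$ with zero $t$-depth). The induction on $\t$ would then have to track these residuals and show they stay inside the big cell at every step — this is a genuine obstacle that you neither identify nor resolve.

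The paper avoids the whole issue by choosing the \emph{opposite} Iwasawa order, $I_{\l_\t}=I_{\l_\t}^+I_{\l_\t}^-$, so that the negative part sits on the \emph{right} and is pushed rightward (the only direction in which the telescopic structure $\l_\t^\dag=\l_{\t+1}$ is useful). The key input that makes this work is Lemma \ref{inc}: since $\l_\t-\l_\t^\dag=-\l_\t^\natural$ is minuscule, one has $I_{\l_\t}^-\subseteq I_{\l_\t^\dag}=I_{\l_{\t+1}}$, so $t^{-\l_{\t+1}}I_{\l_\t}^-t^{\l_{\t+1}}\subseteq K$ and, more importantly, $I_{\l_\t}^-$ is swallowed by the next factor's $I_{\l_{\t+1}}=I_{\l_{\t+1}}^+I_{\l_{\t+1}}^-$ without any residual. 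This is a clean one-step group-theoretic absorption, with no recursion through big-cell residuals. You do not cite or use Lemma \ref{inc}, which is the load-bearing ingredient. To repair your proof you would either need to redo it with the $I^+I^-$ ordering and invoke Lemma \ref{inc}, or carry out the full big-cell bookkeeping at every junction including a proof that the residuals remain controlled.
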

\begin{proof}
Let $R_{\bmu, \bb}(\bl) = \sqcup_{\t=1}^d R_\t(\bl) \subseteq \sqcup_{\t=1}^d \Phi$ be as in \S\ref{def-R}, where $$R_\t(\bl) = \{\a \in \Phi; (\l_\t)_\a \ge 1, \<\a, \l^\natural_\t\> = \<\a, -\l_\t + \l_\t^\dag\> = -1 \}.$$ By the proof of Proposition \ref{dim}, we have $H_\t(\bl) = I_{\l_\t} \Sigma_\t$, where $$\Sigma_\t =\prod_{\a \in R_\t(\bl)} U_\a(t^{\<\a, \l_\t\> - 1} \co_{\brF}).$$  Thus $(\l_\t^\dag)_\a = (\l_\t)_\a - 1 \ge 0$ and $U_\a(t^{\<\a, \l_\t\> - 1} \co_{\brF}) = U_\a(t^{\<\a, \l^\dag_\t\>} \co_{\brF}) \subseteq I_{\l^\dag_\t}^+$ for $\a \in R_\t(\bl)$, which means $\Sigma_\t \subseteq I_{\l^\dag_\t}^+$ for $1 \le \t \le d$. As $\l^\dag_\t = \l_{\t+1}$ for $1 \le \t \le d-1$, we have \begin{align*} &\quad\ t^{-\l_a} H_a(\bl) t^{\l_a^\dag} \times_K \cdots \times_K t^{-\l_{c-1}} H_{c-1} t^{\l^\dag_{c-1}} \times_K t^{-\l_c} H_c(\bl) t^{\l^\dag_c} K \\ &= t^{-\l_a} H_a(\bl) t^{\l_{c+1}} \times_K \cdots \times_K t^{-\l_{c-1}} H_{c-1}(\bl) t^{\l_c} \times_K t^{-\l_c} H_c(\bl) t^{\l^\dag_c} K  \\ &= t^{-\l_a} I_{\l_a} \Sigma_a t^{\l_{a+1}} \times_K \cdots \times_K t^{-\l_{c-1}} I_{\l_{c-1}} \Sigma_{c-1} t^{\l_c} \times_K t^{-\l_c} I_{\l_c} \Sigma_c t^{\l^\dag_c} K  \\ &= t^{-\l_a} I_{\l_a} t^{\l_{a+1}} \times_K \cdots \times_K t^{-\l_{c-1}} I_{\l_{c-1}} t^{\l_c} \times_K t^{-\l_c} I_{\l_c} t^{\l^\dag_c} K  \\ &= t^{-\l_a} I_{\l_a}^+ I_{\l_a}^-  t^{\l_{a+1}} \times_K \cdots \times_K t^{-\l_{c-1}} I_{\l_{c-1}}^+ I_{\l_{c-1}}^-  t^{\l_c} \times_K (t^{-\l_c} I_{\l_c}^+ I_{\l_c}^-  t^{\l^\dag_c}) K  \\ &= t^{-\l_a} I_{\l_a}^+  t^{\l_{a+1}} \times_K \cdots \times_K t^{-\l_{c-1}} I_{\l_{c-1}}^+  t^{\l_c} \times_K t^{-\l_c} I_{\l_c}^+  t^{\l^\dag_c} K  \\ &= K_{U_{\l_a}} t^{\l^\natural_a} \times_K \cdots \times_K K_{U_{\l_c}} t^{\l^\natural_c} K, \end{align*} where the fifth equality follows from Lemma \ref{inc} that $I_{\l_\t}^- \subseteq I_{\l^\dag_\t}$ for $1 \le \t \le d$ since $\l^\natural_\bullet = -\bl+ \l^\dag_\bullet$ is minuscule. The proof is finished.
\end{proof}

Write $\e_{\bl} = (\e_1, \dots, \e_d) \in (\fs_n)^d$ with $\e_\t := \e_{\l_\t}^G$ for $1 \le \t \le d$. We define $a_{\t, i} = \e_\t(i) + n \l_\t(\e_\t(i))$ for $1 \le \t \le d$. By the definition of $\e_\t = \e_{\l_\t}$ (see also Remark \ref{cotype}), $a_{\t, 1} > \cdots > a_{\t, n}$ is the arrangement of the integers $i + n\l_\t(i)$ for $1 \le i \le n$ in the decreasing order. Define $w_{\bl} = (w_\t)_{1 \le \t \le d} \in (\fs_n)^d$ such that \begin{align*} \tag{$\ast$} a_{\t, i} = \begin{cases} a_{\t+1, w_\t(i)} - n\l^\flat_\t(i), & \text{ if } 1 \le \t \le d-1; \\ a_{1, w_d(i)} - n\l^\flat_d(i) + m, & \text{ if } \t=d. \end{cases} \end{align*}
\begin{lem} \label{relation}
We have $\e_\t = \e_1 w_1\i \cdots w_{\t-1}\i$ and hence $\l^\flat_\t = w_{\t-1} \cdots w_1 \e_1\i (\l^\natural_\t)$ for $1 \le \t \le d$.
\end{lem}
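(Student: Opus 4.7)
The plan is to prove both assertions by an inductive unpacking of the definitions, using a simple but crucial injectivity observation about the map $k \mapsto k + n\l(k)$ from $\{1,\dots,n\}$ to $\ZZ$. The key fact is that for any $\l \in Y$ and any $k \neq k'$ in $\{1,\dots,n\}$, the difference $(k + n\l(k)) - (k' + n\l(k'))$ is a multiple of $n$ off by an element of $\{-(n-1), \dots, n-1\}$, so it vanishes only if $k = k'$. Thus the map is injective, and $\e_\t$ is characterized as the unique permutation making $i \mapsto \e_\t(i) + n\l_\t(\e_\t(i))$ strictly decreasing.

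First I would compute $\l^\flat_\t(i)$ explicitly. Since $\l^\flat_\t = \e_\t\i(\l^\natural_\t)$ and $\l^\natural_\t = \l_{\t+1} - \l_\t$ for $1 \le \t \le d-1$, using the action $w(\l)(i) = \l(w\i(i))$ one gets
\[
\l^\flat_\t(i) = \l^\natural_\t(\e_\t(i)) = \l_{\t+1}(\e_\t(i)) - \l_\t(\e_\t(i)).
\]
Plugging this into the defining relation $a_{\t,i} = a_{\t+1, w_\t(i)} - n\l^\flat_\t(i)$, and using $a_{\t,i} = \e_\t(i) + n\l_\t(\e_\t(i))$, the terms involving $\l_\t$ cancel and I obtain
\[
a_{\t+1, w_\t(i)} = \e_\t(i) + n\l_{\t+1}(\e_\t(i)).
\]

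On the other hand, by definition $a_{\t+1, w_\t(i)} = \e_{\t+1}(w_\t(i)) + n\l_{\t+1}(\e_{\t+1}(w_\t(i)))$. The two expressions say that both $k = \e_\t(i)$ and $k = \e_{\t+1}(w_\t(i))$ yield the same value under $k \mapsto k + n\l_{\t+1}(k)$, so by the injectivity observation above, $\e_{\t+1}(w_\t(i)) = \e_\t(i)$. Since this holds for every $i$, we conclude $\e_{\t+1} w_\t = \e_\t$, i.e.\ $\e_{\t+1} = \e_\t w_\t\i$. An easy induction on $\t$ starting from $\t = 1$ yields the first formula $\e_\t = \e_1 w_1\i \cdots w_{\t-1}\i$ for all $1 \le \t \le d$.

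The second formula is then immediate: applying the inverse $\e_\t\i = w_{\t-1} \cdots w_1 \e_1\i$ to $\l^\natural_\t$ gives $\l^\flat_\t = w_{\t-1} \cdots w_1 \e_1\i (\l^\natural_\t)$. There is no genuine obstacle in this argument; the only point worth underlining is the injectivity of $k \mapsto k + n\l(k)$, which makes the combinatorial definition of $w_\t$ mesh exactly with the permutations $\e_\t$. Note that the defining relation involving $m$ at $\t = d$ is not used here — it will presumably reappear when one relates $\e_1$ and $\e_d w_d$ to express the Frobenius twist.
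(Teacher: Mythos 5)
Your proof is correct and follows essentially the same route as the paper: both start from the defining relation $a_{\t,i} = a_{\t+1,w_\t(i)} - n\l^\flat_\t(i)$, expand $a_{\t,i} = \e_\t(i) + n\l_\t(\e_\t(i))$, and conclude $\e_{\t+1}(w_\t(i)) = \e_\t(i)$ from the fact that both lie in $\{1,\dots,n\}$ and agree modulo $n$. The only cosmetic difference is that you explicitly cancel the $\l_\t$ terms and phrase the final step as injectivity of $k \mapsto k + n\l_{\t+1}(k)$, while the paper simply reduces the identity modulo $n$; the underlying observation is identical.
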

\begin{proof}
Suppose $2 \le \t \le d$. Then $a_{\t, i} = a_{\t-1, w_{\t-1}\i(i)} + n\l^\flat_{\t-1}(w_{\t-1}\i(i))$, that is, $$\e_\t(i) + n \l_\t(\e_\t(i)) = \e_{\t-1}(w_{\t-1}\i(i)) + n\l_{\t-1}(\e_{\t-1}(w_{\t-1}\i(i))) + n\l^\flat_{\t-1}(w_{\t-1}\i(i)),$$ which means $\e_\t(i) = \e_{\t-1}(w_{\t-1}\i(i))$. By induction we have $\e_\t = \e_1 w_1\i \cdots w_{\t-1}\i$.

\end{proof}

\begin{lem} \label{order}
Let $1 \le \t \le d$ and $1 \le i < j \le n$. We have

(1) $w_\t(i) > w_\t(j)$, that is, $a_{\t+1, w_\t(i)} < a_{\t+1, w_\t(j)}$ if and only if $a_{\t, i} - a_{\t, j} < n$ and $\l^\flat_\t(j) - \l^\flat_\t(i) = 1$, in which case $a_{\t+1, w_\t(j)} - a_{\t+1, w_\t(i)} < n$.

(2) $\ell(w_\t) = |\{\a \in \Phi; (\l_\t)_\a \ge 0, (\l_\t^\dag)_\a < 0\}|$.
\end{lem}
\begin{proof}
By ($\ast$) we have \begin{align*} \tag{i} a_{\t+1, w_\t(i)} - a_{\t+1, w_\t(j)} &= a_{\t, i} - a_{\t, j} + n(\l^\flat_\t(i) - \l^\flat_\t(j)) \\ &= (\e_\t(i) + n\l_\t(\e_\t(i))) - (\e_\t(j) + n\l_\t(\e_\t(j))).\end{align*} Then the first statement follows from that $\l^\flat_\t$ is minuscule. For $1 \le k \neq l \le n$ we ave $\l_{\a_{k, l}} \ge 0$ if and only if $k + n\l(k) > l + n\l(l)$. Then it follows from (i) that the map $\g \mapsto \e_\t(\g)$ gives a bijection between $\Phi^+ \cap -w_\t\i(\Phi^+)$ and the set $\{\a \in \Phi; (\l_\t)_\a \ge 0, (\l_\t^\dag)_\a < 0\}$. The second statement is proved.
\end{proof}

For $w \in \fs_n$ we denote by $\supp(w)$ the set of integers $1 \le i \le n-1$ such that the simple reflection $s_{\a_i}$ appears in some/any reduced expression of $w$.
\begin{lem} \label{minus}
Let $1 \le \t \le d$ and $1 \le i \le n-1$ such that $i \in \supp(w_\t)$. Then there are roots $\a, \b \ge \a_i$ such that $w_\t\i(\a) < 0$ and $w_\t(\b) < 0$. As a consequence, $a_{\t, i} - a_{\t, i+1} < n$ and $a_{\t+1, i} - a_{\t+1, i+1} < n$.
\end{lem}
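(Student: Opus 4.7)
The strategy is to reduce the statement to a well-known combinatorial characterization of the support of a permutation in type $A$, and then extract the arithmetic consequences from Lemma \ref{order}. For $w \in \fs_n$, the condition $i \in \supp(w)$ is equivalent to $w$ not preserving the set $\{1,\dots,i\}$ setwise, which in turn is equivalent to the existence of $j \le i < k$ with $w(j) > w(k)$; setting $\a = e_j - e_k$ yields a positive root $\a \ge \a_i$ with $w(\a) < 0$. Since reversing any reduced expression of $w_\t$ produces a reduced expression of $w_\t\i$, we have $\supp(w_\t) = \supp(w_\t\i)$, so applying this observation to $w_\t\i$ gives a positive root $\a' \ge \a_i$ with $w_\t\i(\a') < 0$. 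This establishes the existence claim (with the mild convention that ``$\a > \a_i$'' is read as $\a - \a_i$ lying in the nonnegative cone spanned by simple roots, allowing $\a = \a_i$).

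For the bound $a_{\t,i} - a_{\t,i+1} < n$, I take the inversion $j \le i < k$ of $w_\t$ just produced and feed it into Lemma \ref{order}, obtaining $a_{\t,j} - a_{\t,k} < n$. Expanding this as the telescoping sum
\[
a_{\t,j} - a_{\t,k} = \sum_{l=j}^{k-1}(a_{\t,l} - a_{\t,l+1}),
\]
all of whose summands are strictly positive (because the sequence $a_{\t,1} > a_{\t,2} > \cdots > a_{\t,n}$ is strictly decreasing by construction of $\e_\t$), and noting that $j \le i$ and $k \ge i+1$ force $a_{\t,i} - a_{\t,i+1}$ to be one of these summands, gives the result at once.

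For the bound $a_{\t+1,i} - a_{\t+1,i+1} < n$, I use the inversion $j' \le i < k'$ of $w_\t\i$ and set $p = w_\t\i(j')$, $q = w_\t\i(k')$, so that $q < p$ while $w_\t(q) = k'$ and $w_\t(p) = j'$. Lemma \ref{order} applied to the pair $(q,p)$ yields $0 < a_{\t,q} - a_{\t,p} < n$ and $\l^\flat_\t(p) - \l^\flat_\t(q) = 1$. The defining relation $a_{\t+1,w_\t(r)} = a_{\t,r} + n\l^\flat_\t(r)$ (and its periodic extension $a_{d+1,r} := a_{1,r} + m$ in the case $\t = d$, in which the additive constant $m$ simply cancels in all differences) then gives
\[
a_{\t+1,j'} - a_{\t+1,k'} = (a_{\t,p} - a_{\t,q}) + n\bigl(\l^\flat_\t(p) - \l^\flat_\t(q)\bigr) = n - (a_{\t,q} - a_{\t,p}) \in (0,n).
\]
A second telescoping using $a_{\t+1,1} > \cdots > a_{\t+1,n}$ and $j' \le i < k'$ extracts $a_{\t+1,i} - a_{\t+1,i+1} < n$. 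I do not anticipate a genuine obstacle: the only point requiring care is bookkeeping in the boundary case $\t = d$, where one must read the recurrence periodically so that Lemma \ref{order} and the displayed identity above both apply uniformly.
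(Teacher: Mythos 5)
Your proof is correct and follows essentially the same route as the paper: characterize $i \in \supp(w_\t)$ via an inversion $(j,k)$ of $w_\t$ with $j \le i < k$ (and likewise for $w_\t^{-1}$), feed that inversion into Lemma~\ref{order}, and telescope against the strict monotonicity of the $a_{\t,\cdot}$ sequence. The paper's proof is terser — it records only the $w_\t^{-1}$-inversion, derives $a_{\t+1,j}-a_{\t+1,j'}<n$ from Lemma~\ref{order}, and dispatches the $a_{\t,\cdot}$ bound with ``similarly'' — whereas you carefully spell out the translation $p=w_\t^{-1}(j')$, $q=w_\t^{-1}(k')$ and the cancellation of the $+m$ constant at $\t=d$; these are exactly the steps the paper leaves implicit.
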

\begin{proof}
The first statement follows from that $i \in \supp(w_\t)$. Let $\a = \a_{j, j'} \in \Phi^+$ such that $w_\t\i(\a) < 0$ and $\a_i \le \a$. In other words, $j \le i < i+1 \le j'$ and $w_\t\i(j) > w_\t\i(j')$. Then we have $a_{\t+1, i} - a_{\t+1, i+1} \le a_{\t+1, j} - a_{\t+1, j'} < n$ by Lemma \ref{order}. The inequality $a_{\t, i} - a_{\t, i+1} < n$ follows in a similar way.
\end{proof}

\begin{lem} \label{dim-1}
We have $\dim X_{\bmu}^{\bl}(\bb) = \<\rho_\bullet, \bmu - \l^\flat_\bullet\> - \ell(w_{\bl})$. Here $\rho_\bullet$ denotes the half sum of positive roots of $G^d$.
\end{lem}
\begin{proof}
We set \begin{align*} E &= \{(\t, i, j); 1 \le \t \le d, 1 \le i < j \le n, \l^\flat_\t(j) - \l^\flat_\t(i) = 1\} \\ E' &= \{(\t, i, j) \in E; a_{\t, i} - a_{\t, j} > n\} \\ E'' &=\{(\t, i, j); 1 \le \t \le d, 1 \le i < j \le n, w_\t(i) > w_\t(j)\}.\end{align*} Then $E = E' \sqcup E''$ by Lemma \ref{order}. Applying Proposition \ref{dim} (3) we have $$\dim X_{\bmu}^{\bl}(\bb) = |E'| = |E|- |E''| = \<\rho_\bullet, \overline{\l^\flat_\bullet} - \l^\flat_\bullet\> - \ell(w_{\bl}) = \<\rho_\bullet, \bmu - \l^\flat_\bullet\> - \ell(w_{\bl}),$$ where $\overline{\l^\flat_\bullet}$ denotes the dominant conjugate of $\l^\flat_\bullet$, which equals $\bmu$ by Proposition \ref{dim}.
\end{proof}

\begin{lem}\label{Coxeter}
If $\bl \in \ca_{\bmu, \bb}^\tp$, then $\ell(w_{\bl}) = \sum_{\t=1}^d \ell(w_\t) = n-1$, and $w_d \cdots w_1 \in \fs_n$ is a product of distinct simple reflections.
\end{lem}
\begin{proof}
Let $\l_{m, n} \in \ZZ^n$ such that $\l_{m, n}(i) = \lfloor \frac{im}{n} \rfloor - \lfloor \frac{(i-1)m}{n} \rfloor$ for $1 \le i \le n$. As $\bl \in \ca_{\bmu, \bb}^\tp$, It follows from \cite[\S 4.4]{HV} that $\sum_{\t=1}^d \l^\flat_\t = \l_{m, n}$ and $\dim X_{\bmu}^{\bl}(\bb) = \<\rho_\bullet, \bmu\> - \frac{n-1}{2}$. By Lemma \ref{dim-1}, \begin{align*} \dim X_{\bmu}^{\bl}(\bb) &= \<\rho_\bullet, \bmu - \l^\flat_\bullet\> - \ell(w_{\bl}) \\ &= \<\rho_\bullet, \bmu\>  - \<\rho, \l_{m, n}\> - \ell(w_{\bl}) \\ &= \<\rho_\bullet, \bmu\> + \frac{n-1}{2} - \ell(w_{\bl}) \\ &= \<\rho_\bullet, \bmu\>  - \frac{n-1}{2},\end{align*} where $\rho$ is the half sum of positive roots of $\GL_n$. Thus $\ell(w_{\bl}) = n-1$. Moreover, by ($\ast$) we see that $\e_1(i) - \e_1 w_d \cdots w_1(i) \equiv a_{1, i} - a_{1, w_d \cdots w_1(i)} \equiv  m \mod n$ for $1 \le i \le n$. So $w_d \cdots w_1 \in \fs_n$ acts on $\{1, \dots, n\}$ transitively as $m$ is coprime to $n$. This means that $n-1 \le \ell(w_d \cdots w_1) \le \ell(w_{\bl}) = n-1$ and $w_d \cdots w_1$ is a product of distinct simple reflections as desired.
\end{proof}

\begin{lem} \label{nonneg}
Let $\bl \in \ca_{\bmu, \bb}^\tp$. Let $1 \le \t \le d-1$ and $i \in \supp(w_\t)$. Then $\sum_{k = \t+1}^\iota \<\a_i, \l^\flat_k\> \in \ZZ_{\ge 0}$ for $\t+1 \le \iota \le d$.
\end{lem}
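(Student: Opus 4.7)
The plan is to exploit the reduced Coxeter factorization structure from Lemma \ref{Coxeter} to restrict where the $w_k$ can act, and then use a simple min/max argument on the $a_{k,j}$ together with telescoping.

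First I would observe that the hypothesis $i \in \supp(w_\t)$ combined with Lemma \ref{Coxeter} (which says $w_d \cdots w_1$ is a Coxeter element of $\fs_n$ with $\sum_k \ell(w_k) = n-1$) forces a reduced word for $w_d\cdots w_1$ in which each simple reflection $s_j$ appears exactly once. In particular, for every $k \neq \t$ one has $i \notin \supp(w_k)$, so $w_k \in \langle s_j : j \ne i\rangle \cong \fs_i \times \fs_{n-i}$. Thus, for all $k$ in the range $\t+1 \le k \le \iota$, the permutation $w_k$ preserves the two blocks $\{1,\dots,i\}$ and $\{i+1,\dots,n\}$ as sets.

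Using the defining relation $a_{k+1,w_k(j)} = a_{k,j} + n\l^\flat_k(j)$ and the set-preservation just established, the multiset $\{a_{k+1,1},\dots,a_{k+1,i}\}$ equals $\{a_{k,j} + n\l^\flat_k(j) : 1 \le j \le i\}$ for each $k > \t$, and analogously for the complementary block. Since $a_{k+1,i}$ is the minimum of the first multiset and $a_{k+1,i+1}$ is the maximum of the second, the element $a_{k,i} + n\l^\flat_k(i)$ gives
\[
a_{k+1,i} \le a_{k,i} + n\l^\flat_k(i), \qquad a_{k+1,i+1} \ge a_{k,i+1} + n\l^\flat_k(i+1).
\]
Iterating these two inequalities from $k = \t+1$ up through $k = \iota$ yields
\[
a_{\iota+1,i} - a_{\iota+1,i+1} \le (a_{\t+1,i} - a_{\t+1,i+1}) + n\sum_{k=\t+1}^{\iota}\bigl(\l^\flat_k(i) - \l^\flat_k(i+1)\bigr).
\]

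To finish, I would combine this with two inputs: the lower bound $a_{\iota+1,i} - a_{\iota+1,i+1} \ge 1$ coming from the fact that $a_{\iota+1,1} > a_{\iota+1,2} > \cdots$ is a strictly decreasing sequence of integers, and the upper bound $a_{\t+1,i} - a_{\t+1,i+1} < n$ guaranteed by Lemma \ref{minus}. Setting $S = \sum_{k=\t+1}^{\iota}\<\a_i,\l^\flat_k\>$, these give $1 \le (a_{\t+1,i} - a_{\t+1,i+1}) + nS < n + nS$, so $nS > 1-n$; since $S \in \ZZ$, this forces $S \ge 0$. The principal conceptual hurdle is simply recognizing step one — that the Coxeter/reduced-word structure forces each later $w_k$ into the parabolic subgroup preserving the partition at position $i$ — after which the rest is a direct telescoping estimate.
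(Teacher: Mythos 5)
Your proof is correct, and it starts from the same two ingredients as the paper's argument: the Coxeter factorization (Lemma \ref{Coxeter}) forces $i \notin \supp(w_k)$ for $k \ne \t$, and the bound $a_{\t+1,i} - a_{\t+1,i+1} < n$ comes from Lemma \ref{minus}. Where you diverge is in how the telescoping is managed. The paper identifies the (at most one) index $\t'$ with $i-1 \in \supp(w_{\t'})$ and the index $\t''$ with $i+1 \in \supp(w_{\t''})$, writes $\sum_{k=\t+1}^\iota \<\a_i,\l^\flat_k\>$ as an exact four-term expression, and then estimates each term. You instead observe that $i \notin \supp(w_k)$ makes $w_k$ stabilize both blocks $\{1,\dots,i\}$ and $\{i+1,\dots,n\}$, so that $a_{k+1,i}$ is the minimum of the shifted first block and $a_{k+1,i+1}$ the maximum of the shifted second; picking out $j=i$ and $j=i+1$ yields the one-sided bounds $a_{k+1,i} \le a_{k,i} + n\l^\flat_k(i)$ and $a_{k+1,i+1} \ge a_{k,i+1} + n\l^\flat_k(i+1)$, which telescope directly. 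This eliminates the paper's bookkeeping about whether $\t'$, $\t''$ exist and where they fall in $[\t+1,\iota]$, at the harmless cost of replacing an identity by an inequality. One detail worth making explicit: for $\iota = d$ the recursion reads $a_{1,w_d(j)} = a_{d,j} + n\l^\flat_d(j) - m$, and the extra $-m$ cancels in the difference $a_{1,i}-a_{1,i+1}$, so your estimate goes through with the cyclic convention $a_{d+1,j} := a_{1,j}$ just as the paper's exact computation does.
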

\begin{proof}
By Lemma \ref{Coxeter}, $i \notin \supp(w_l)$ for $\t+1 \le l \le d$, and moreover, there exists at most one integer $\t+1 \le \t' \le \iota$ (resp. $\t+1 \le \t'' \le \iota$) such that $i-1 \in \supp(w_{\t'})$ (resp. $i+1 \in \supp(w_{\t''})$). Without loss of generality, we assume such $\t', \t''$ exist. Therefore,

(i) for $\t+1 \le k \le \iota$ we have: (1) $w_k(i) \neq i$ if and only if $k = \t'$ and $w_{\t'}(i) < i$; (2) $w_k(i+1) \neq i+1$ if and only if $k = \t''$ and $w_{\t''}(i+1) > i+1$.

Using (i) and the equality from ($\ast$) $$\<\a_i, \l^\flat_k\> = (a_{k+1, w_k(i)} - a_{k+1, w_k(i+1)}) - (a_{k, i} - a_{k, i+1})$$ we deduce that \begin{align*} & \quad\ \sum_{k = \t+1}^\iota \<\a_i, \l^\flat_k\> \\ &= \frac{a_{\t'+1, w_{\t'}(i)} - a_{\t'+1, i}}{n} + \frac{a_{\iota+1, i} - a_{\iota+1, i+1}}{n} + \frac{a_{\t''+1, i+1} - a_{\t''+1, w_{\t''}(i+1)}}{n} - \frac{a_{\t+1, i} - a_{\t+1, i+1}}{n} \\ & \ge 0, \end{align*} where the inequality follows from that $a_{\t+1, i} - a_{\t+1, i+1} < n$ (by Lemma \ref{minus}) and that $a_{\t'+1, w_{\t'}(i)} - a_{\t'+1, i}, a_{\iota+1, i} - a_{\iota+1, i+1}, a_{\t''+1, i+1} - a_{\t''+1, w_{\t''}(i+1)} > 0$.
\end{proof}

\begin{proof}[Proof of Proposition \ref{reform-minu}]
By Lemma \ref{relation} we have $\l^\natural_k = \e_{\l_a} (w_{k-1} \cdots w_a)\i (\l^\flat_k)$ and $U_{\l_k} = {}^{\e_{\l_a} (w_{k-1} \cdots w_a)\i} U$ for $a \le k \le d$. By Lemma \ref{red-superbasic}, \begin{align*} &\quad\ t^{-\l_a} H_a(\bl) t^{\l_a^\dag} \times_K \cdots \times_K t^{-\l_c} H_c(\bl) t^{\l_c^\dag} K/K \\ &= K_{U_{\l_a}} t^{\l^\natural_a} \times_K \cdots \times_K K_{U_{\l_c}} t^{\l^\natural_c} K/K \\ &= \e_{\l_a} K_U t^{\l^\flat_a} \times_K w_a\i K_U t^{\l^\flat_{a+1}} w_a \times_K \cdots \times_K (w_{c-1} \cdots w_a)\i K_U t^{\l^\flat_c} w_{c-1} \cdots w_a K/K \\ &= \e_{\l_a} K_U t^{\l^\flat_a} \times_K w_a\i K_U t^{\l^\flat_{a+1}} \times_K \cdots \times_K w_{c-1}\i K_U t^{\l^\flat_c} K/K \end{align*} Therefore, it suffices to show that for $a \le \t \le c-1$ and $i \in \supp(w_\t)$ we have $$s_i \overline{ K_U t^{\l^\flat_{\t+1}} \times_K \cdots \times_K K_U t^{\l^\flat_c} K/K } = \overline{ K_U t^{\l^\flat_{\t+1}} \times_K \cdots \times_K K_U t^{\l^\flat_c} K/K }.$$ Set $U_i = U_{\a_i}$, $U_{-i} = U_{-\a_i}$ and $U^i = \prod_{0 < \a \neq \a_i} U_\a$ for $1 \le i \le n-1$. Then $U=U_i U^i = U^i U_i$ and $U^i$ is normalized by $U_i$ and $U_{-i}$. As we can take $s_i = U_i(-1) U_{-i}(1) U_i(-1)$, the displayed equality above is equivalent to $$U_{-i}(1) \overline { K_U t^{\l^\flat_{\t+1}} \times_K \cdots \times_K K_U t^{\l^\flat_c} K/K } = \overline{ K_U t^{\l^\flat_{\t+1}} \times_K \cdots \times_K K_U t^{\l^\flat_c} K/K }.$$

Define $f_\iota$ for $\t \le \iota \le d$ such that $f_\t = 1$ and $f_\iota = t^{e_\iota} f_{\iota-1} / (1 + z_\iota f_{\iota-1})$ with $e_\iota = \<\a_i, \l^\flat_\iota\>$ for $\t+1 \le \iota \le c$. We claim that

(i) for generic points $(z_{\t+1}, \dots, z_c) \in (\co_\brF)^{c - \t}$ we have $1 + z_\iota f_{\iota - 1} \in \co_{\brF}^\times$ and $f_\iota \in t^{\sum_{k=\t+1}^\iota e_k} \co_{\brF} \subseteq \co_{\brF}$ for $\t+1 \le \iota \le c$.

if $\iota = \t + 1$, the claim is true by taking $z_{\t+1} \in \co_\brF \setminus \{-1\}$. Suppose it is true for $\iota-1$. Then $f_{\iota - 1} \in t^{\sum_{k=\t+1}^{\iota - 1} e_k} \co_{\brF} \subseteq \co_{\brF}$ by Lemma \ref{nonneg}. So there exist generic points $z_\iota \in \co_\brF$ such that
$1 + z_\iota f_{\iota - 1} \in \co_{\brF}^\times$ and hence $f_\iota = t^{e_\iota} f_{\iota-1} / (1 + z_\iota f_{\iota-1}) \in t^{\sum_{k=\t+1}^\iota e_k} \co_{\brF}$ as desired. The claim (i) is proved.

Let $(z_{\t+1}, \dots, z_c) \in (\co_\brF)^{c - \t}$ be a generic point as in (i). Using (i) and the commutator relation $$U_{-\a}(f) U_\a(z) = U_\a(\frac{z}{1 + zf}) (1 + zf)^{-\a^\vee} U_{-\a}(\frac{f}{1+zf}) \text{ for } 1 + zf \neq 0,$$ we deduce that \begin{align*}  &\quad\ U_{-i}(1) K_{U^i} U_i(z_{\t+1}) t^{\l^\flat_{\t+1}} \times_K \cdots \times_K K_{U^i} U_i(z_c)  t^{\l^\flat_c} K/K  \\ &\subseteq K_B t^{\l^\flat_{\t+1}} \times_K U_{-i}(f_{\t+1}) K_{U^i} U_i(z_{\t+2}) t^{\l^\flat_{\t+2}} \times_K \cdots \times_K K_{U^i} U_i(z_c)  t^{\l^\flat_c} K/K \\ &\ \vdots \\ &\subseteq K_B t^{\l^\flat_{\t+1}} \times_K \cdots \times_K K_B  t^{\l^\flat_c} K/K \\ &= K_U t^{\l^\flat_{\t+1}} \times_K \cdots \times_K K_U t^{\l^\flat_c} K/K. \end{align*} Therefore, $K_U t^{\l^\flat_{\t+1}} \times_K \cdots \times_K K_U t^{\l^\flat_c} K/K$ contains an open dense subset of $U_{-i}(1) K_U t^{\l^\flat_{\t+1}} \times_K \cdots \times_K K_U t^{\l^\flat_c} K/K$ as desired.
\end{proof}

\subsection{The general case} Finally we consider the general case where $\bmu \in Y^d$ is an arbitrary dominant cocharacter. The strategy is to reduce it to the minuscule case considered in the previous subsection.

As $G = \Res_{E / F} \GL_n$, there exist $e \in \ZZ_{\ge d}$, a minuscule dominant cocharacter $\bup \in Y^e$ and a sequence $\Sigma$ of integers $1 = k_1 < \cdots < k_d < k_{d+1} = e+1$ such that $\mu_\t = \upsilon_{k_\t} + \cdots + \upsilon_{k_{\t+1} - 1}$ for $1 \le \t \le d$. Let $\pr_\Sigma: G^e \to G^d$ be the projection given by $(g_1, \dots, g_e) \mapsto (g_{k_1}, \dots, g_{k_d})$. By abuse of notation, we still denote by $\bb$ the element $(1, \dots, 1, b)$ in $G^e(\brF)$ and by $\bs$ the Frobenius of $G^e$ given by $(g_1, \dots, g_e) \mapsto (g_2, \dots, g_e, \s(g_1))$. Then there is a Cartesian square \begin{align*} \xymatrix{
  X_{\bup}(\bb) \ar[d]^{\pr_\Sigma} \ar[r] & G(\brF) \times_K \Gr_{\bup} \ar[d]^{\Id \times_K m_\bup^\Sigma} \\
\cup_{\bbeta \leq \bmu} X_{\bbeta}(\bb) \ar[r] & G(\brF) \times_K \Gr_{\bmu},  }\end{align*} where $m_\bup^\Sigma: \Gr_{\bup} \to \Gr_\bmu$ is the partial convolution map given by $$(g_1, \dots, g_{e-1}, g_e K) \mapsto (g_{k_1} \cdots g_{k_2 -1}, \dots, g_{k_{d-1}} \cdots g_{k_d - 1}, g_{k_d} \cdots g_e K);$$ the top horizontal map is given by $$(g_1 K, \dots, g_e K) \mapsto (g_1, g_1\i g_2, \dots, g_{e-1}\i g_e, g_e\i b \s(g_1) K);$$ the bottom horizontal map is given by $$(h_1 K, \dots, h_d K) \mapsto (h_1, h_1\i h_2, \dots, h_{d-1}\i h_d, h_d\i b \s(h_1) K).$$

For a dominant cocharacter $\bbeta \in Y^d$ we denote by $m_\bup^\bbeta$ the multiplicity with which $V_\bbeta^{\widehat G^d}$ appears in $V_\bup^{\widehat G^e}$. Here we view each $\widehat G^e$-crystal as a $\widehat G^d$-crystal via the embedding $\widehat G^d \hookrightarrow \widehat G^e$ given by $(h_1, \dots, h_d) \mapsto (h_1^{(k_2 - k_1)}, \dots, h_d^{(k_{d+1} - k_d)})$.
\begin{prop} \label{multp} \label{supbasic}
We have $$|\JJ_{\bb} \backslash \Irr^\tp X_{\bup}(\bb)| = \sum_{\bbeta \leq \bmu} m_\bup^\bbeta |\JJ_{\bb} \backslash \Irr^\tp X_{\bbeta}(\bb)|.$$ As a consequence, $|\JJ_{\bb} \backslash \Irr^\tp X_\bmu(\bb)| = \dim V_\bmu^{\widehat G^d}(\ul_G(b))$.
\end{prop}
\begin{proof}
The first statement follows similarly as Corollary \ref{compare}. To show the second one, we argue by induction on $|\bmu|$. If $\bmu$ minuscule, it is proved in Theorem \ref{supb-minu}. Suppose it is true for $|\bbeta| < |\bmu|$. By the choice of $\bup$ we have $m_\bup^\bmu = 1$. Therefore, \begin{align*} |\JJ_{\bb} \backslash \Irr^\tp X_{\bup}(\bb)| &= \sum_{\bbeta \leq \bmu} m_\bup^\bbeta |\JJ_{\bb} \backslash \Irr^\tp X_{\bbeta}(\bb)| \\ &= |\JJ_{\bb} \backslash \Irr^\tp X_\bmu(\bb)| + \sum_{\bbeta < \bmu} m_\bup^\bbeta \dim V_\bbeta(\ul_G(b)) \\ &= \dim V_\bup(\ul_G(b))  \\ &= \sum_{\bbeta \leq \bmu} m_\bup^\bbeta \dim V_\bbeta(\ul_G(b)), \end{align*} where the second equality follows from the induction hypothesis, and the last equality follows again from Theorem \ref{supb-minu} as $\bup$ is minuscule. Therefore, we have $|\JJ_{\bb} \backslash \Irr^\tp X_\bmu(\bb)| = \dim V_\bmu(\ul_G(b))$ as desired.
\end{proof}

Similar to the definition of $\otimes$ in Theorem \ref{conv}, let $\otimes_\Sigma: \BB_\bup^{\widehat G^e} \to  \sqcup_{\bbeta} \BB_\bbeta^{\widehat G^d}$ denote the map given by $$(\d_1, \dots, \d_e) \mapsto (\d_{k_1} \otimes \cdots \otimes \d_{k_2 - 1}, \dots, \d_{k_d} \otimes \cdots \otimes \d_{k_{d+1} - 1}).$$

\begin{proof}[Proof of Theorem \ref{reform}]
Let $C \in \Irr^\tp X_\bmu(\bb)$. By Theorem \ref{supb-minu} and Proposition \ref{multp}, there exists $\bxi \in \ca_{\bup, \bb}^\tp$ such that $\xi^\flat_\bullet \in \BB_\bup^{\widehat G^e}(\ul_G(b))$ and $C = \pr_\Sigma(\overline{X_\bup^\bxi(\bb)})$. Write $\bxi = (\xi_1, \dots, \xi_e)$, $\xi_\bullet^\dag = \bb \bs(\bxi) = (\xi_1^\dag, \dots, \xi_e^\dag)$ and $\xi_\bullet^\flat = (\xi_1^\flat, \dots, \xi_e^\flat)$. Define $$\g^{G^d}(C) = \otimes_\Sigma(\xi_\bullet^\flat) = (\g_1, \dots, \g_d) \in \sqcup_\bbeta \BB_\bbeta^{\widehat G^d}(\ul_G(b)),$$ where $\g_\t = \xi_{k_\t}^\flat \otimes \cdots \otimes \xi_{k_{\t+1}-1}^\flat \in \BB^{\widehat G} := \sqcup_\eta \BB_\eta^{\widehat G}$ for $1 \le \t \le d$.

Let $\bl = \pr_\Sigma(\bxi) \in Y^d$. Then $(I^d t^\bl K^d/K^d) \cap C$ is open dense in $C$. So $$\overline{ (\th_\bl^{G^d})\i(C)} = \overline {\pr_\Sigma( (\th_\bxi^{G^e})\i(X_\bup^\bxi(\bb)))} \subseteq I^d,$$ which means (by the proof of Lemma \ref{convolution}) that, for each $1 \le \t \le d$, $$\overline{H_\t(C)} = \overline{ H_{k_\t}(\bxi) \cdots H_{k_{\t+1}-1}(\bxi) },$$ where $H_\t(C)$ and $H(\bxi)$ are defined in \S\ref{product} and \S\ref{red-min} respectively. Thus for $1 \le a \le c \le d$, \begin{align*} &\quad\ \overline{ t^{-\l_a}H_a(C)t^{\l_a^\dag} \times_K \cdots \times_K t^{-\l_c}H_c(C)t^{\l_c^\dag} K/K  } \\ &= \overline{ m_\bup^\Sigma(t^{-\xi_{k_a}} H_{k_a}(\bxi) t^{\xi_{k_a}^\dag} \times_K \cdots \times_K t^{-\xi_{k_{c+1} - 1}} H_{k_{c+1} - 1}(\bxi) t^{\xi_{k_{c+1} - 1}^\dag} K / K) } \\ &= \overline{ m_\bup^\Sigma( \e_{\l_a} K_U t^{\xi_{k_a}^\flat} \times_K \cdots \times_K K_U t^{\xi_{k_{c+1} - 1}^\flat} K/K ) }  \\ &= \e_{\l_a} \overline{ S_{\g_a} \tilde\times \cdots \tilde\times S_{\g_c} }, \end{align*} where the first equality follows from that $\l_\t = \xi_{k_\t}$ and $\l^\dag_\t = \xi^\dag_{k_{\t+1} - 1}$ for $1 \le \t \le d$; the second equality follows from Proposition \ref{reform-minu}. In particular, we have $$\overline {t^{-\l_\t} H_\t(C) t^{\l_\t^\dag} K/K} = \e_{\l_\t} \overline{ S_{\g_\t} }$$ by taking $a = c = \t$. On the other hand, as $C \subseteq X_\bmu(\bb)$ it follows that $$t^{-\l_\t} H_\t(C) t^{\l_\t^\dag} K/K \subseteq \Gr_{\mu_\t}^\circ.$$ Thus, $\g_\t \in \BB_{\mu_\t}^{\widehat G}$ and hence $\g^{G^d}(C) \in \BB_\bmu^{\widehat G^d}(\ul_G(b))$. Now the first statement of Theorem \ref{reform} follows by taking $a=1$ and $c=d$.

As $\bb$ is superbasic, $\JJ_\bb = (\Omega^d \cap \JJ_\bb) (I^d \cap \JJ_\bb)$. By Lemma \ref{change} and Lemma \ref{irr}, the map $C \mapsto \g^{G^d}(C)$ defined in the previous paragraph induces a map $$\JJ_\bb \backslash \Irr^\tp X_\bmu(\bb) \to \BB_\bmu^{\widehat G^d}(\ul_G(b)).$$ Then we have the following commutative diagram \begin{align*}\xymatrix{
  \Irr X_\bup(\bb) \ar[d]_{\pr_\Sigma} \ar[r]^{\g^{G^e}} & \BB_\bmu^{\widehat G^e}(\ul_G(b)) \ar[d]^{\otimes_\Sigma} \\
  \sqcup_\bbeta \Irr X_\bbeta(b) \ar[r]^{\g^{G^d}} & \sqcup_\bbeta \BB_\bbeta^{\widehat G^d}(\ul_G(b)).  } \end{align*}
As $\g^{G^e}$ is bijective and $m_\bup^\bmu = 1$, the map $\JJ_\bb \backslash \Irr^\tp X_\bmu(\bb) \overset {\g^{G^d}} \longrightarrow \BB_\bmu^{\widehat G^d}(\ul_G(b))$ is surjective and hence bijective by Proposition \ref{multp}.
\end{proof}

\section{Proof of Theorem \ref{main} and \ref{conv}} \label{sec-main}

\subsection{Irreducible components of $S_{\mu, \eta}^N$} \label{part} Let $P = M N$ and $b \in M(\brF)$ be as in \S \ref{Levi}. For $\mu \in Y^+$ let $I_{\mu, M}$ be the set of $M$-dominant cocharacters $\eta$ such that $$S_{\mu, \eta}^N := N(\brF) t^\eta K/K \cap \Gr_\mu^\circ \neq \emptyset.$$ Define $I_{\mu, b, M} = \{\eta \in I_{\mu, M}; \eta = \k_M(b) \in \pi_1(M)_\s\}$.
\begin{prop} [{\cite[Proposition 5.4.2]{GHKR}}] \label{res}
Let $\eta \in I_{\mu, M}$, then $$\dim S_{\mu, \eta}^N \le \<\rho, \mu + \eta\> - 2\<\rho_M, \eta\>.$$ Moreover, let $\Sigma_{\mu, \eta}^N$ be the set of irreducible components of $S_{\mu, \eta}^N$ with the maximal possible dimension $\<\rho, \mu + \eta\> - 2\<\rho_M, \eta\>$. Then $|\Sigma_{\mu, \eta}^N|$ equals the multiplicity with which $\BB_\eta^{\widehat M}$ appears in $\BB_\mu^{\widehat G}$.
\end{prop}

For $\eta \in I_{\mu, M}$ recall that $\th_\eta^N: N(\brF) \to \Gr_P$ is the map given by $n \mapsto n t^\eta K_P$. Let $Z^N \in \Irr S_{\mu, \eta}^N$ and $g = h t^\eta h' \in K_M t^\eta K_M$ with $h, h' \in K_M$. We define \begin{align*}(\th_\eta^N)\i(Z^N) \ast g &= h (\th_\eta^N)\i(Z^N) h\i g  \subseteq P(\brF); \\ Z^N \ast (gK_M) &= (\th_\eta\i(Z^N) \ast g) K_P/K_P \subseteq \Gr_P, \end{align*} which do not depend on the choices of $h, h' \in K_M$ since the connected group $K_M \cap t^\eta K_M t^{-\eta}$ fixes $S_{\mu, \eta}^N$ and hence fixes each of its irreducible components, by left multiplication. For $\cd^M \subseteq K_M t^\eta K_M$ we set \begin{align*} (\th_\eta^N)\i(Z^N) \ast \cd^M &= \cup_{g \in \cd^M} (\th_\eta^N)\i(Z^N) \ast g; \\ Z^N \ast (\cd^M K_M/K_M) &= \cup_{g \in \cd^M} Z^N \ast (g K_M). \end{align*} Notice that $Z^N \ast (\cd^M K_M/K_M) = ((\th_\eta^N)\i(Z^N) \ast \cd^M) K_P/K_P$.

\begin{lem} \label{star}
Let $Z^N \in \Irr S_{\mu, \eta}^N$ and $g \in K_M t^\eta K_M$. Then we have

(1) $h ((\th_\eta^N)\i(Z^N) \ast g) = (\th_\eta^N)\i(Z^N) \ast (h g)$ for $h \in K_M$;

(2) $u ((\th_\eta^N)\i(Z^N) \ast g) = (\th_\eta^N)\i(Z^N) \ast g$ for $u \in K_N$.
\end{lem}
\begin{proof}
The first statement follows by definition. The second one follows from that $K_N Z^N = Z^N$ since $K_N$ fixes $S_{\mu, \eta}^N$ and hence fixes each of its irreducible components by left multiplication.
\end{proof}

\subsection{Iwasawa decomposition of $X_\mu(b)$} Notice that the natural projection $P = M N \to M$ induces a map $$\b: X_\mu(b) \hookrightarrow \Gr_G = \Gr_P \to  \Gr_M.$$ Let $\eta \in I_{\mu, b, M}$ and let $X_\eta^M(b)$ be the affine Deligne-Lusztig variety defined for $M$.  For $Z^N \in \Irr S_{\mu, \eta}^N$ and $C^M \subseteq X_\eta^M(b)$ we define $$X_\mu^{Z^N, C^M}(b) = \{g K_P \in \b\i(C^M); g\i b \s(g) K_P \in Z^N \ast \Gr_{\eta, M}^\circ\} \subseteq \Gr_P,$$ where $\Gr_{\eta, M}^\circ = K_M t^\eta K_M / K_M$. Notice that the natural projection $$Z^N \ast \Gr_{\eta, M}^\circ \to \Gr_{\eta, M}^\circ$$ is a fiber bundle with fibers isomorphic to $Z^N$.
\begin{prop} \label{relative}
Let $C^M \subseteq X_\eta^M(b)$ be locally closed and irreducible. Then

(1) $\b\i(C^M) = \cup_{Z^N \in \Irr S_{\mu, \eta}^N} X_\mu^{Z^N, C^M}(b)$;

(2) $\dim X_\mu^{Z^N, C^M}(b) \le \dim X_\mu(b)$, where the equality holds if and only if $\dim C^M = \dim X_\eta^M(b)$ and $Z^N \in \Sigma_{\mu, \eta}^N$;

(3) $N(\brF) \cap \JJ_b$ acts transitively on $\Irr X_\mu^{Z^N, C^M}(b)$;

(4) $X_\mu^{Z^N, C^M}(b)$ is irreducible if $M$ is the centralizer of $\nu_b$;

(5) $t^{-\l} H^P(C) t^{b\s(\l)} = (\th_\eta^N)\i(Z^N) \ast (t^{-\l} H^M(C^M) t^{b\s(\l)})$ if $C \in \Irr  X_\mu^{Z^N, C^M}(b)$ and $C^M \in \Irr X^{\l, M}_\eta$ for some $\l \in Y$.

Here $X_\eta^{\l, M}(b) = I_M t^\l K_M/K_M \cap X_\eta^M(b)$, and $H^M(C^M)$ is define in \S\ref{Levi} for $G = M$.
\end{prop}
\begin{proof}
Let $m K_M \in X_\eta^M(b)$. By definition, $$\b\i(m K_M) = \{m n K_P; n\i  m\i b \s(m)\s(n) K_P \in S_{\mu, \eta}^N \ast (b_m K_M)\}.$$ So the statement (1) follows. Moreover, as $\nu_M(b) = \nu_G(b)$ is dominant, it follows from \cite[Proposition 5.3.2]{GHKR} that $$\dim (X_\mu^{Z^N, C^M}(b) \cap \b\i(m K_M)) = \dim Z^N - \<2\rho_N, \eta\>.$$ Therefore, by Proposition \ref{res} we have \begin{align*} \dim X_\mu^{Z^N, C^M}(b) &= \dim C^M + \dim Z^N - 2\<\rho, \eta\> \\ &\le \<\rho_M, \eta\> - \frac{1}{2}\df_M(b) + \<\rho, \mu + \eta\> - 2\<\rho_M, \eta\> - 2\<\rho_N, \eta\> \\ &= \<\rho, \mu\> -\<\rho_N, \eta\> - \frac{1}{2}\df_G(b) \\ &= \<\rho, \mu - \nu_M(b)\> - \frac{1}{2}\df_G(b)  \\ &= \dim X_\mu(b), \end{align*} where the equality holds if and only if $\dim C^M = \dim X_\eta^M(b)$ and $Z^N \in \Sigma_{\mu, \eta}^N$. So the statement (2) follows.

The statement (3) follows similarly as \cite[Proposition 5.6]{HV} which deals with the minuscule case. Notice that for minuscule $\mu$ the sets $S_{\mu, \eta}^N = K_N t^\eta K/K$ and $N(\brF) \cap t^{-\eta} K t^\mu K = t^{-\eta} (\th_\eta^N)\i(S_{\mu, \eta}^N) t^\eta$ are irreducible. For general case we only needs to replace $N(\brF) \cap t^{-\eta} K t^\mu K$ in \cite[Claim 1 on page 1630]{HV} with the irreducible set $t^{-\eta} (\th_\eta^N)\i (Z^N) t^\eta$ for $Z^N \in \Irr S_{\mu, \eta}^N$.

The statement (4) follows from the statement (3) and that $N(\brF) \cap \JJ_b = \{1\}$.

It follows by definition that $$t^{-\l} H^P(X^{Z^N, C^M}(b)) t^{b\s(\l)} = (\th_\eta^N)\i(Z^N) \ast (t^{-\l} H^M(C^M) t^{b\s(\l)}).$$ By the statement (3), all the irreducible components of $X^{Z^N, C^M}(b)$ are conjugate under $N(\brF) \cap \JJ_b$. Thus $H^P(C) = H^P(X^{Z^N, C^M}(b))$ for $C \in \Irr X^{Z^N, C^M}(b)$, and the statement (5) follows.
\end{proof}

\begin{cor} \label{inequality}
The map $$(Z^N, C^M) \mapsto \JJ_b^M \Irr \overline{X_\mu^{Z^N, C^M}(b)} = (P(\brF) \cap \JJ_b) \Irr \overline{X_\mu^{Z^N, C^M}(b)}$$ induces a bijection $$(P(\brF) \cap \JJ_b) \backslash \Irr^\tp X_\mu(b) \cong \sqcup_{\eta \in I_{\mu, b, M}} \Sigma_{\mu, \eta}^N \times (\JJ_b^M \backslash \Irr^\tp X_\eta^M(b)).$$ As a consequence, $|\JJ_b \backslash \Irr^\tp X_\mu(b)| \le \dim V_\mu(\ul_G(b))$.
\end{cor}
\begin{proof}
The bijection follows from Proposition \ref{relative} (1), (2), (3). Choose $P = M N$ such that $b$ is superbasic in $M(\brF)$. Then \begin{align*} |\JJ_b \backslash \Irr^\tp X_\mu(b)| &\le |(P(\brF) \cap \JJ_b) \backslash \Irr^\tp X_\mu(b)| \\ &=\sum_{\eta \in I_{\mu, b, M}} |\Sigma_{\mu, \eta}^N| |\JJ_b^M \backslash \Irr^\tp X_\eta^M(b)| \\ &= \sum_{\eta \in I_{\mu, b, M}} |\Sigma_{\mu, \eta}^N| \dim V_\eta^{\widehat M} (\ul_M(b)) \\ &= \dim V_\mu(\ul_M(b)) \\ &= \dim V_\mu(\ul_G(b)), \end{align*} where the second equality follows from Proposition \ref{multp} dealing with the subperbasic case, and the last one follows from that $\ul_M(b) = \ul_G(b)$.
\end{proof}

\subsection{The numerical identity} In this subsection we prove the numerical version of Theorem \ref{main}.
\begin{prop}\label{minu}
We have $|\JJ_b \backslash \Irr^\tp X_\mu(b)| = \dim V_\mu(\ul_G(b))$ if $\mu$ is minuscule and $b$ is basic.
\end{prop}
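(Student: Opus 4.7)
By Corollary~\ref{inequality}, only the reverse inequality $|\JJ_b \backslash \Irr X_\mu(b)| \ge \dim V_\mu(\ul_G(b))$ needs to be established.

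First, I would exploit the assumption that $\mu$ is minuscule. Proposition~\ref{dim} then produces the decomposition
$$\Irr X_\mu(b) = \bigsqcup_{\l \in \ca_{\mu,b}^{G,\tp}} \Irr \overline{X_\mu^\l(b)},$$
in which each $\Irr \overline{X_\mu^\l(b)}$ is a single $(I \cap \JJ_b)$-orbit. Since $\tW \cap \JJ_b$ permutes the individual strata through $\l \mapsto \tw(\l)$ by Lemma~\ref{change}, the $\JJ_b$-orbits on $\Irr X_\mu(b)$ correspond to equivalence classes on $\ca_{\mu,b}^{G,\tp}$ generated by the $(\Omega \cap \JJ_b)$-action. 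The problem is thus reduced to a purely combinatorial count on coweights.

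Second, I would establish an explicit structural description of $\ca_{\mu,b}^{G,\tp}$ (to be formulated as Proposition~\ref{polar}). Recall from Definition~\ref{def-R} and Proposition~\ref{dim}(2) that $\dim X_\mu^\l(b) = |R_{\mu,b}^G(\l)|$, so $\ca_{\mu,b}^{G,\tp}$ consists of those $\l \in Y$ with $\l^\natural \in W_0 \cdot \mu$ which maximize $|R_{\mu,b}^G(\l)|$. Let $M \subseteq G$ be the standard Levi provided by Lemma~\ref{superbasic} for which $b \in M(\brF)$ is superbasic, and split $\Phi = \Phi_M \sqcup \Phi_N \sqcup \Phi_{-N}$. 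The expected ``polarization'' is that maximality of $|R_{\mu,b}^G(\l)|$ forces the $\Phi_M$-contribution of $\l$ to give a top-dimensional stratum in $X_\eta^M(b)$ for a suitable $\eta \in I_{\mu,b,M}$, while the $\Phi_N$-contribution records a top-dimensional MV component in $\Sigma_{\mu,\eta}^N$. This matches the stratification $\Psi$ of \S\ref{part} and refactors the count as
$$|(\Omega \cap \JJ_b) \backslash \ca_{\mu,b}^{G,\tp}| \ge \sum_{\eta \in I_{\mu,b,M}} |\Sigma_{\mu,\eta}^N| \cdot |(\Omega_M \cap \JJ_b^M) \backslash \ca_{\eta,b}^{M,\tp}|.$$

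Finally, I would collect the ingredients. The superbasic minuscule case of Theorem~\ref{supb-minu} (together with Proposition~\ref{multp}) yields $|\JJ_b^M \backslash \Irr X_\eta^M(b)| = \dim V_\eta^{\widehat M}(\ul_M(b))$ for each $\eta \in I_{\mu,b,M}$, and Proposition~\ref{res} identifies $|\Sigma_{\mu,\eta}^N|$ with the multiplicity of $V_\eta^{\widehat M}$ in $V_\mu^{\widehat G}|_{\widehat M}$. Using $\ul_M(b) = \ul_G(b)$ and the branching rule along $\widehat M \subseteq \widehat G$,
$$\sum_{\eta \in I_{\mu,b,M}} |\Sigma_{\mu,\eta}^N| \cdot \dim V_\eta^{\widehat M}(\ul_G(b)) = \dim V_\mu(\ul_G(b)),$$
which supplies the required lower bound and, combined with Corollary~\ref{inequality}, gives the desired equality. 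The main obstacle lies in Step~2: one must show that maximality of $|R_{\mu,b}^G(\l)|$ factors cleanly through the Levi $M$, and that the $(\Omega \cap \JJ_b)$-action respects this factorization without collapsing or splitting orbits. This requires a delicate analysis of when $W_0$-conjugates of $\mu$ fit into the coset $-\l + b\s(\l)$ in the presence of a nontrivial $\s$-action, as well as a careful comparison of $\Omega \cap \JJ_b$ with $\Omega_M \cap \JJ_b^M$ acting on $\Sigma_{\mu,\eta}^N$.
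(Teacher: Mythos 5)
Your overall strategy --- parametrize $\JJ_b$-orbits on $\Irr X_\mu(b)$ by an equivalence relation on $\ca_{\mu,b}^\tp$, then reduce the count to the superbasic Levi $M$ --- matches the paper's, and the final numerical step (combining Proposition~\ref{res}, the $\widehat M$-branching rule, and $\ul_M(b)=\ul_G(b)$) is essentially the computation closing \S\ref{sec-minu}. But Step~1 contains a genuine gap. You assert that $\tW \cap \JJ_b$ permutes the strata $I t^\l K/K$ via $\l \mapsto \tw(\l)$, and deduce that the $\JJ_b$-orbits on $\Irr X_\mu(b)$ correspond to $(\Omega\cap\JJ_b)$-orbits on $\ca_{\mu,b}^\tp$. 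Neither claim holds. Lemma~\ref{change} applies only to $\Omega$; a general element $\tw \in \tW \cap \JJ_b$, in particular an affine reflection in $W^a \cap \JJ_b$, does not send $I t^\l K$ into $I t^{\tw(\l)} K$, but rather into $\bigcup_{x \leq \tw} I t^{x(\l)} K$ (Lemma~\ref{prod}). The equivalence relation $\sim$ that actually governs the $\JJ_b$-orbits is therefore strictly coarser than the $\Omega\cap\JJ_b$-orbit relation, so $|(\Omega\cap\JJ_b)\backslash \ca_{\mu,b}^\tp|$ is only an \emph{upper} bound on $|\JJ_b\backslash\Irr X_\mu(b)|$, which runs in the wrong direction for the lower bound you need.

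The missing ingredient is the entire analysis of the $W^a \cap \JJ_b$-action that occupies \S\ref{sec-minu}. The paper first determines generators of $\sim$ (Corollary~\ref{inv}: $\Omega\cap\JJ_b$ together with the longest elements of the finite subgroups $W_{\co_{\tta}}$ for $\a\in\Pi$); the proof depends on Lemma~\ref{prod}, Corollary~\ref{dom}, and Lemma~\ref{ge0}. It then fixes a generic $v\in Y_\RR^{p(b\s)}$ and restricts to the "chamber" $\ca_{\mu,b}^\tp(v)$: Lemma~\ref{large} shows every $\sim$-class meets this chamber, Proposition~\ref{polar} describes $\ca_{\mu,b}^\tp$ as a union of such chambers, and Lemmas~\ref{Omega} and \ref{equiv} (using an affine-root-hyperplane separation argument from Lemmas~\ref{separate} and~\ref{plane}) show that \emph{within} a chamber $\sim$ coincides with the $\Omega_{M_v}\cap\JJ_b$-orbit relation. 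Only after this reduction does one obtain the bijection $\Psi_2$ to $\sqcup_\eta \tilde\ca_{\eta,b_M}^{M,\tp}$ that makes your final count valid. The chamber restriction and the treatment of the affine-reflection part of $\JJ_b$ --- which you flag as "an obstacle in Step~2" but place on the wrong step and under-scope --- are where the real work of Proposition~\ref{minu} lies.
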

The proof is given in \S \ref{sec-minu}.

\begin{lem} \label{appear}
If $G$ is simple, adjoint and has some nonzero minuscule cocharacter, then each irreducible $\widehat G$-module appears in some tensor product of irreducible $\widehat G$-modules with minuscule highest weights.
\end{lem}
\begin{proof}
Let $\mu \in Y^+$. By the assumption on $G$, there exists a dominant and minuscule cocharacter $\bmu \in Y^d$ for some $d \in \ZZ_{\ge 1}$ such that $\mu \leq |\bmu|$ and hence $\Gr_\mu \subseteq m_\bmu(\Gr_\bmu)$. By Theorem \ref{sat} (2), $V_\mu^{\widehat G}$ appears in $V_\bmu^{\widehat G}$ as desired.
\end{proof}

\begin{rmk}
The condition in Lemma \ref{appear} is equivalent to that $G$ is simple, adjoint, and any/some of its absolute factors is of classical type or $E_6$ type or $E_7$ type.
\end{rmk}

\begin{prop} \label{basic}
We have $|\JJ_b \backslash \Irr^\tp X_\mu(b)| = \dim V_\mu(\ul_G(b))$ if $b$ is basic.
\end{prop}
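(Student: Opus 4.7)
The plan is to bootstrap from the minuscule basic case (Proposition \ref{minu}) using the convolution formula of Corollary \ref{compare} and the pointwise upper bound of Corollary \ref{inequality}; no induction on $\mu$ is needed. First I would perform standard reductions: central isogenies and product decompositions bring us to the case where $G$ is simple adjoint. If $G$ is of exceptional type $E_8$, $F_4$, or $G_2$, then $\pi_1(G)$ and the center of $G$ are trivial, forcing basic $b$ to satisfy $\nu_G(b) = 0$ and $\k_G(b) = 0$; hence $b$ is $\s$-conjugate to $1$ and the statement follows from the unramified case established by Xiao-Zhu \cite{XZ}. Thus I may assume $G$ is simple adjoint with at least one non-trivial minuscule coweight.

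By Lemma \ref{appear}, fix a tuple $\bmu = (\mu_1, \ldots, \mu_d) \in (Y^+)^d$ of dominant minuscule coweights such that $m_\bmu^\mu > 0$, where $m_\bmu^\eta$ denotes the multiplicity of $\BB_\eta^{\widehat G}$ inside $\BB_{\mu_1}^{\widehat G} \otimes \cdots \otimes \BB_{\mu_d}^{\widehat G}$. Set $\bb = (1, \ldots, 1, b) \in G^d(\brF)$, endowed with the shifted Frobenius $\bs$. The pair $(G^d, \bs)$ is again a reductive datum over $\co_F$ in which $\bb$ is basic and $\bmu$ is minuscule, with $\JJ_\bb \cong \JJ_b$ and $\ul_{G^d}(\bb) = \ul_G(b)$ under the canonical isomorphism $(Y^d)_\bs \cong Y_\s$. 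Applying Proposition \ref{minu} to this datum, together with the decomposition of $V_\bmu^{\widehat{G^d}}$ along the diagonal embedding $\widehat G \hookrightarrow \widehat{G^d}$, yields
\[ |\JJ_b \backslash \Irr X_\bmu(\bb)| = \dim V_\bmu^{\widehat{G^d}}(\ul_G(b)) = \sum_{\eta \in Y^+} m_\bmu^\eta \dim V_\eta^{\widehat G}(\ul_G(b)). \]
On the other hand, Corollary \ref{compare} gives
\[ |\JJ_b \backslash \Irr X_\bmu(\bb)| = \sum_{\eta \in Y^+} m_\bmu^\eta |\JJ_b \backslash \Irr X_\eta(b)|. \]
Subtracting these two identities and invoking Corollary \ref{inequality}, the non-negative linear combination
\[ \sum_{\eta \in Y^+} m_\bmu^\eta \bigl( \dim V_\eta^{\widehat G}(\ul_G(b)) - |\JJ_b \backslash \Irr X_\eta(b)| \bigr) \]
vanishes, so every term must vanish. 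Specializing to $\eta = \mu$ (which has $m_\bmu^\mu > 0$) delivers the desired equality.

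The main technical obstacle will be verifying cleanly that Proposition \ref{minu} genuinely applies to the shifted-Frobenius datum $(G^d, \bs, \bb, \bmu)$: one must match the defect, Newton point, and best integral approximation under the identification $(Y^d)_\bs \cong Y_\s$, and confirm the weight-space identity $V_\bmu^{\widehat{G^d}}(\ul_G(b)) = \bigoplus_\eta V_\eta^{\widehat G}(\ul_G(b))^{\oplus m_\bmu^\eta}$ coming from restriction along the diagonal embedding of dual groups. Once this bookkeeping is settled, the pointwise-inequality step finishes the proof immediately.
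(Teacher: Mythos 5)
Your proof is correct and essentially reproduces the paper's argument: reduce to $G$ simple adjoint via Proposition \ref{adj}, dispose of the $Y = \ZZ\Phi^\vee$ case via Xiao–Zhu, then combine Lemma \ref{appear}, Proposition \ref{minu}, Corollary \ref{compare}, and Corollary \ref{inequality} in the same squeeze/sandwich calculation. The only cosmetic difference is that you name the exceptional types $E_8$, $F_4$, $G_2$ rather than stating the condition $Y = \ZZ\Phi^\vee$ directly, and you flag (correctly) that one should verify the bookkeeping of $\nu$, $\k$, $\ul$ under $(Y^d)_\bs \cong Y_\s$ — the paper leaves this implicit since Proposition \ref{minu} and Corollary \ref{compare} are formulated uniformly for the datum $(G^d, \bs, \bb, \bmu)$.
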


First we reduce Proposition \ref{basic} to the adjoint case.
\begin{lem}\label{adj}
Proposition \ref{basic} is true for $G$ if it is true for $G = G_\ad$.
\end{lem}
\begin{proof}
Choose $\o \in \pi_1(G)$ such that $X_\mu(b)^\o := X_\mu(b) \cap \Gr^\o \neq \emptyset$, where $\Gr^\o$ is the corresponding connected component of $\Gr$. By \cite[Corollary 2.4.2]{CKV} and \cite[Proposition 3.1]{HV}, the natural projection $G \to G_\ad$ induces a universal homeomorphism $X_\mu(b)^\o \overset \sim \to X_{\mu_\ad}(b_\ad)^{\o_\ad}$, where $\mu_\ad$, $b_\ad$ and $\o_\ad$ denote the images of $\mu$, $b$ and $\o$ respectively under the natural projection $G \to G_\ad$. Let $\JJ_b^0$, $\JJ_{b_\ad}^0$ be the kernels of the natural projections $\JJ_b \to \pi_1(G)$, $\JJ_{b_\ad} \to \pi_1(G_\ad)$ respectively. Notice that $X_\mu(b) = (\Omega \cap \JJ_b) X_\mu(b)^\o$ as $b \in \Omega$ is basic. By Corollary \ref{inequality}, \begin{align*} \dim V_\mu(\ul_G(b)) &\ge |\JJ_b \backslash \Irr^\tp X_\mu(b)| = |\JJ_b^0 \backslash \Irr^\tp X_\mu(b)^\o| \\ &\ge |\JJ_{b_\ad}^0 \backslash \Irr^\tp X_{\mu_\ad}(b_\ad)^{\o_\ad}| = |\JJ_{b_\ad} \backslash \Irr^\tp X_{\mu_\ad}(b_\ad)| \\ &= \dim V_{\mu_\ad}(\ul_{G_\ad}(b_\ad)) = \dim V_\mu(\ul_G(b)), \end{align*} where the second last equality follows by assumption.
\end{proof}

\begin{proof}[Proof of Proposition \ref{basic} by assuming Proposition \ref{minu}]
By Lemma \ref{adj}, we can assume $G$ is adjoint and simple. If the coweight lattice equals the coroot lattice, then $b$ is unramified and the statement is proved in \cite[Theorem 4.4.14]{XZ}. So we will assume $G$ has a nonzero minuscule coweight. By Lemma \ref{appear}, there exists a minuscule and dominant cocharacter $\bmu \in Y^d$ for some $d \in \ZZ_{\ge 1}$ such that $\BB_\mu^{\widehat G}$ appears in $\BB_\bmu^{\widehat G}$, that is, $m_\bmu^\mu \neq 0$. By Proposition \ref{minu}, \begin{align*} \dim V_\bmu^{\widehat G}(\ul_G(b)) &= |\JJ_\bb \backslash \Irr^\tp X_\bmu(\bb)| \\ &= \sum_{\upsilon \leq |\bmu|}  m_\bmu^\upsilon |\JJ_b \backslash \Irr X_\upsilon(b)| \\ & \le \sum_{\upsilon \leq |\bmu|} m_\bmu^\upsilon \dim V_\upsilon^{\widehat G}(\ul_G(b)) \\ &= \dim V_\bmu^{\widehat G}(\ul_G(b)). \end{align*} where the second equality follows from Corollary \ref{compare}, and the inequality follows from Corollary \ref{inequality}. Thus $|\JJ_b \backslash \Irr X_\upsilon(b)| = \dim V_\upsilon(\ul(b))$ if $m_\bmu^\upsilon \neq 0$.
\end{proof}

\begin{thm}\label{conj'}
We have $|\JJ_b \backslash \Irr^\tp X_\mu(b)| = \dim V_\mu(\ul_G(b))$. In particular, $$(P(\brF) \cap \JJ_b) \backslash \Irr^\tp X_\mu(b) \cong \JJ_b \backslash \Irr^\tp X_\mu(b).$$
\end{thm}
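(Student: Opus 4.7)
The plan is a two-step Levi reduction that ultimately invokes Proposition \ref{basic}.

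First, I would specialize the setup of \S \ref{Levi} to $M = M_{\nu_G(b)}$, the standard Levi centralizing $\nu_G(b)$, with parabolic $P = MN$. After a $\s$-conjugation, $b$ may be assumed to lie in $\Omega_M$, so that $b$ is basic in $M$ with $\nu_M(b) = \nu_G(b)$. The virtue of this choice is the containment $\JJ_b \subseteq M(\brF)$: any $g \in G(\brF)$ with $g\i b \s(g) = b$ preserves the slope decomposition of $b$ and hence lies in the centralizer $M_{\nu_G(b)}(\brF)$. In particular $P(\brF) \cap \JJ_b = \JJ_b$.

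Given this, the equality $|\JJ_b \backslash \Irr X_\mu(b)| = \dim V_\mu(\ul_G(b))$ would follow by chaining together: (i) the bijection $\Psi$ of \S \ref{part} identifying $(P(\brF) \cap \JJ_b) \backslash \Irr X_\mu(b)$ with $\sqcup_\eta \Sigma_{\mu,\eta}^N \times (\JJ_b^M \backslash \Irr X_\eta^M(b))$; (ii) Proposition \ref{basic} applied to $b$ basic in $M$, which gives $|\JJ_b^M \backslash \Irr X_\eta^M(b)| = \dim V_\eta^{\widehat M}(\ul_M(b))$; (iii) the branching identity $|\Sigma_{\mu,\eta}^N| = \dim \Hom_{\widehat M}(V_\eta^{\widehat M}, V_\mu^{\widehat G})$ supplied by Proposition \ref{res} via the geometric Satake isomorphism; and (iv) the identity $\ul_M(b) = \ul_G(b)$, which holds by construction since $\nu_M(b) = \nu_G(b)$.

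For the bijection consequence, I would note that for any $P' = M' N'$ satisfying the hypotheses of \S \ref{Levi} (not necessarily $M' = M_\nu$), the same chain, minus the trivialization $(P'(\brF) \cap \JJ_b) = \JJ_b$, still gives $|(P'(\brF) \cap \JJ_b) \backslash \Irr X_\mu(b)| = \dim V_\mu(\ul_G(b))$. Combined with the equality just established, the natural surjection $(P'(\brF) \cap \JJ_b) \backslash \Irr X_\mu(b) \twoheadrightarrow \JJ_b \backslash \Irr X_\mu(b)$ becomes a surjection between finite sets of equal cardinality, hence a bijection.

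The main obstacle is the containment $\JJ_b \subseteq M_{\nu_G(b)}(\brF)$, which is the one genuine input from the theory of $\s$-centralizers that goes slightly beyond the formal toolkit of the excerpt. It is precisely this containment that collapses $(P(\brF) \cap \JJ_b)$ to $\JJ_b$ for the special choice $M = M_\nu$, closing the loop between the $\Psi$-computation and the desired numerical equality. All other steps are formal consequences of results already proved in the paper; in particular, no new geometric argument is needed.
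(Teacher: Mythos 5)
Your proof is essentially the paper's proof, with the same key ingredients in a slightly different order: the paper first runs the $\Psi$-computation (using Proposition \ref{basic} and Proposition \ref{res}) for a general admissible $P = MN$, obtaining $|(P(\brF) \cap \JJ_b) \backslash \Irr X_\mu(b)| = \dim V_\mu(\ul_G(b))$, and then specializes $M$ to the centralizer of $\nu_G(b)$ to collapse $P(\brF) \cap \JJ_b$ to $\JJ_b$ and get the first statement, after which the bijection is immediate; you specialize first and then observe the general computation for the bijection. The one point you spell out that the paper leaves implicit is the containment $\JJ_b \subseteq M_{\nu_G(b)}(\brF)$, and your justification of it is correct.
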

\begin{proof}
By Corollary \ref{inequality} we have \begin{align*} |(P(\brF) \cap \JJ_b) \backslash \Irr^\tp X_\mu(b)| &= \sum_{\eta \in I_{\mu, b, M}} |\Sigma_{\mu, \eta}^N|  |\JJ_b \backslash \Irr^\tp X_\eta^M(\l)| \\ &= \sum_{\eta \in I_{\mu, b, M}} |\Sigma_{\mu, \eta}^N| \dim V_\eta^{\widehat M}(\ul_M(b)) \\ &= \dim V_\mu(\ul_G(b)), \end{align*} where the second equality follows from Proposition \ref{basic} since $b$ is basic in $M(\brF)$. Now the first statement follows by taking $M$ to be the centralizer of $\nu_G(b)$, in which case $P(\brF) \cap \JJ_b = \JJ_b^M = \JJ_b$. The second statement follows from the equality $|(P(\brF) \cap \JJ_b) \backslash \Irr^\tp X_\mu(b)| = \dim V_\mu(\ul_G(b)) = |\JJ_b \backslash \Irr^\tp X_\mu(b)|$.
\end{proof}

\subsection{Decomposition of MV-cycles}
Notice that each ${\widehat G}$-crystal restricts to an $\widehat M$-crystal. For $\d \in \BB_\mu^{\widehat G}$ we denote by $S_\d^M$ the corresponding Mirkovi\'{c}-Vilonen cycle in $\Gr_M$.
\begin{lem} \label{S-NM}
Let $\d \in \BB_\mu^{\widehat G}$ and let $\eta \in I_{\mu, M}$ such that $\d$ lies in a highest weight $\widehat M$-crystal isomorphic to $\BB_\eta^{\widehat M}$. Then there exists a unique irreducible component $Z_\d^N \in \Sigma_{\mu, \eta}^N$ such that $$\overline{ S_\d } = \overline{Z_\d^N \ast S_\d^M }.$$ Here we view $S_\d^M$ as its open dense subset lying in $\Gr_{\eta, M}^\circ$.
\end{lem}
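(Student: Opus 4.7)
The plan is to reduce the statement to a compatibility between the parabolic retraction structure on Mirkovi\'c-Vilonen cycles and the $\widehat M$-subcrystal decomposition of $\BB_\mu^{\widehat G}$, which is a standard consequence of the geometric Satake equivalence combined with the Braverman-Gaitsgory realization of the crystal basis.

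First, I would set up the geometry. Let $\l \in Y$ be the $\widehat T$-weight of $\d$, so $S_\d \in \MV_\mu(\l)$ has dimension $\<\rho, \mu + \l\>$. Consider the restriction $\b \colon S^\l \cap \Gr_\mu \to \Gr_M$ of the retraction $\Gr = \Gr_P \to \Gr_M$, and for each $\eta' \in I_{\mu, M}$ with $\l$ in $W_M \cdot \eta'$ set $S_\mu^{\l, \eta'} := \b\i(U_M(\brF) t^\l K_M / K_M \cap \Gr_{M, \eta'})$. Repeating the fibration argument of Lemma \ref{relative} in the trivial case $b = 1$ (where the Lang map plays no role and $\b$ restricts to a locally trivial $(\cs_{\mu, \eta'}^N / t^{\eta'} K_N t^{-\eta'})$-bundle over $U_M(\brF)t^\l K_M/K_M \cap \Gr_{M, \eta'}$), together with the dimension bound in Proposition \ref{res}, I would conclude that the top-dimensional irreducible components of $\overline{S_\mu^{\l, \eta'}}$ are exactly the closures $\overline{Z^N \ast D^M}$ for $(Z^N, D^M) \in \Sigma_{\mu, \eta'}^N \times \MV_{\eta'}^{\widehat M}(\l)$. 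A short check using $W_M$-invariance of $\rho_N$ confirms that each such closure has dimension exactly $\<\rho, \mu + \l\>$, so every top-dimensional irreducible component of $\overline{S^\l \cap \Gr_\mu}$ arises this way.

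Next, I would invoke the compatibility of the fixed bijection $\BB_\mu \to \MV_\mu$ of \cite[Proposition 3.3.15]{XZ} with restriction to $\widehat M$-crystals. By the Braverman-Gaitsgory construction of the crystal structure on MV cycles combined with the geometric Satake equivalence, the $\widehat M$-subcrystal decomposition $\BB_\mu^{\widehat G}|_{\widehat M} \cong \sqcup_{\eta'} (\BB_{\eta'}^{\widehat M})^{\sqcup |\Sigma_{\mu, \eta'}^N|}$ is realized geometrically as follows: each $\widehat M$-subcrystal of highest weight $\eta'$ is indexed by a unique $Z^N \in \Sigma_{\mu, \eta'}^N$, and its basis element of $\widehat M$-weight $\l$ corresponds to the MV cycle $\overline{Z^N \ast D^M}$, where $D^M \in \MV_{\eta'}^{\widehat M}(\l)$ is the matching $\widehat M$-MV cycle. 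Applied to our $\d$, whose enveloping $\widehat M$-subcrystal has highest weight $\eta$, this yields $S_\d = \overline{Z_\d^N \ast S_\d^M}$ for a unique $Z_\d^N \in \Sigma_{\mu, \eta}^N$; uniqueness of $Z_\d^N$ follows because distinct $Z^N \in \Sigma_{\mu, \eta}^N$ produce distinct top-dimensional components $\overline{Z^N \ast S_\d^M}$ of $\overline{S_\mu^{\l, \eta}}$.

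The main obstacle is establishing this last compatibility cleanly, since the chosen bijection $\BB_\mu \cong \MV_\mu$ is defined via the Littelmann path model rather than directly through Braverman-Gaitsgory's geometric construction. However, both realizations are known to be isomorphic as $\widehat G$-crystals and the isomorphism respects tensor products; since restriction from $\widehat G$- to $\widehat M$-crystals is encoded in this tensor-product compatibility (via embedding $V_\mu^{\widehat G}$ into appropriate tensor products of minuscule $\widehat G$-modules, exactly as in the proof strategy of Proposition \ref{basic}), the required matching reduces to tracing through standard machinery already invoked elsewhere in the paper.
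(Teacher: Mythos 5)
Your overall strategy matches the paper's: show that the cycles $\overline{Z^N \ast S_\xi^M}$ with $(Z^N,\xi) \in \Sigma_{\mu,\chi}^N \times \BB_\chi^{\widehat M}$ account for all of $\MV_\mu$, then pin down which pair corresponds to a given $\d$. For the first half, the paper proves $\dim \overline{Z^N \ast S_\xi^M} = \<\rho,\mu+\l\>$ by the same dimension count you use and then deduces bijectivity by counting via Proposition~\ref{res}; your fibration argument (following Lemma~\ref{relative} with $b=1$) gives surjectivity a bit more directly and also makes the injectivity/uniqueness transparent, so the two routes are essentially equivalent.

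The gap is in the second half, and it is exactly the one you flag. To conclude $\z = \d$ you need the specific compatibility $\b(S_\d) = S_\d^M$ of the fixed isomorphism $\BB_\mu \cong \MV_\mu$ with the $M$-retraction; the paper obtains this directly from the construction in \cite[Proposition 3.3.15]{XZ}, which builds the isomorphism precisely so that it is compatible with all parabolic retractions. Your proposed workaround replaces this with the star-product compatibility $S_{\d_1 \otimes \d_2} = S_{\d_1} \star S_{\d_2}$, but that is a compatibility of the isomorphism with the monoidal structure on $\widehat G$-crystals, not with the forgetful restriction to $\widehat M$-crystals and the retraction $\b$. Embedding $V_\mu^{\widehat G}$ into a tensor product of minuscule modules controls convolution fibers $m_\bmu\i(y)$, not the image of an MV cycle under $\b\colon \Gr_P \to \Gr_M$; these are genuinely different geometric operations, and the reduction you sketch does not produce the needed identity $\b(S_\d) = S_\d^M$. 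Without that identity, your argument shows $S_\d = \overline{Z^N \ast S_\z^M}$ for some $\z$ of the same weight, but not that $\z = \d$.
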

\begin{proof}
Let $\chi \in I_{\mu, M}$. Let $Z^N \in \Sigma_{\mu, \chi}^N$ and $\xi \in \BB_\chi^{\widehat M}(\l)$ for some $\l \in Y$. Then $Z^N \ast S_\xi^M \subseteq S^\l$ is irreducible as the natural projection $Z^N \ast S_\xi^M \to S_\xi^M$ is a fiber bundle with fibers isomorphic to $Z^N$. Moreover, \begin{align*} \dim Z^N \ast S_\xi^M &= \dim Z^N + \dim S_\xi^M \\ &= \<\rho, \mu + \chi\> - 2\<\rho_M, \chi\> + \<\rho_M, \chi + \l\>  \\ &= \<\rho, \mu + \l\> + \<\rho_N, \chi - \l\> \\ &= \<\rho, \mu + \l\> \\ &= \dim (S^\l \cap \Gr_\mu), \end{align*} where the last equality follows from that $\chi - \l \in \ZZ \Phi_M^\vee$. Therefore, $$\overline{ Z \ast S_\xi^M } \in \Irr (\overline{ S^\l \cap \Gr_\mu}) \cong \MV_\mu(\l) = \BB_\mu^{\widehat G}(\l).$$ Hence the map $(Z^N, \xi) \mapsto \overline{ Z^N \ast S_\xi^M }$ gives an embedding $$\sqcup_\chi (\Sigma_{\mu, \chi}^N \times \BB_\chi^{\widehat M}) \hookrightarrow \MV_\mu \cong \BB_\mu^{\widehat G},$$ which is bijective since $\sum_\chi |\Sigma_{\mu, \chi}^N| |\BB_\chi^{\widehat M}| = |\BB_\mu^{\widehat G}|$ by Proposition \ref{res}. Thus there exist unique $\k \in I_{\mu, M}$, $\z \in \BB_\k^{\widehat M}$ and $Z_\d^N \in \Sigma_{\mu, \k}^N$ such that $\overline{ S_\d } = \overline{ Z_\d^N \ast S_\z^M }$. It remains to show $\z = \d \in \BB_\eta^{\widehat M}$, that is, $\overline{ \pi_P(S_\d) } = \overline{ S_\d^M } $ with $\pi_P: \Gr_P \to \Gr_M$ the natural projection. In view of the construction of MV cycles using Littelmann's path model \cite[Proposition 3.3.12 \& 3.3.15]{XZ}, it suffices to consider the case where $\mu$ is a quasi-minuscule cocharacter of $G$. Then the statement follows from the explicit construction in \cite[\S 3.2.5 \& Definition 3.3.6]{XZ}.
\end{proof}

\subsection{Proof of Theorem \ref{main}}
Take $P = M N$ such that $b$ is superbasic in $M(\brF)$. Let $C \in \Irr^\tp X_\mu(b)$. By Corollary \ref{inequality}, there exist $\eta \in I_{\mu, b, M}$ and $\l \in Y$ such that $C \subseteq \overline{ X_\mu^{Z^N, C^{\l, M}}(b) }$ for some $(Z^N, C^{\l, M}) \in \Sigma_{\mu, \eta}^N \times \Irr X_\eta^{\l, M}$ such that $\overline{ C^{\l, M} } \in \Irr X_\eta^M(b)$. In particular, $(N(\brF)I_M t^\l K/K) \cap C$ is open dense in $C$. Let $\g^M(\overline{ C^{\l, M} }) \in \BB_\eta^{\widehat M}(\ul_M(b))$ be as in Theorem \ref{reform} such that $$\overline{ t^{-\l} H^M(C^{\l, M}) t^{b\s(\l)} K_M/K_M } = \e_\l^M \overline{ S^M_{\g^M(\overline{C^{\l, M}})}} \subseteq \Gr_M.$$ By Lemma \ref{S-NM}, there exists $\g(C) \in \BB_\mu(\ul(b))$ such that $$\overline{ Z^N \ast S_{\g^M(\overline{C^{\l, M}})}^M } = \overline{ S_{\g(C)} }.$$ By Proposition \ref{relative} (5), $$ \overline{ t^{-\l} H^P(C) t^{b\s(\l)} K/K } = \overline{ Z^N \ast \e_\l^M S_{ \g^M(\overline{C^{\l, M}})} } = \e_\l^M \overline{ Z^N \ast S_{ \g^M(\overline{C^{\l, M}})} } = \e_\l^M \overline{ S_{\g(C)} }.$$ So the first statement follows.

Let $C' \in \Irr^\tp X_\mu(b)$ be a conjugate of $C$ under $\JJ_b$. By Theorem \ref{conj'}, $C'$ and $C$ are conjugate under $P(\brF) \cap \JJ_b$, which, combined with Corollary \ref{inequality}, implies that $C' \subseteq \overline{ X_\mu^{Z^N, C^{\l', M}}(b) }$ for some $\l' \in Y$ and $C^{\l', M} \in \Irr X_\eta^{\l', M}(b)$ such that $\overline{C^{\l, M}}$ and $\overline{C^{\l', M}}$ are conjugate by $\JJ_b^M$. By Theorem \ref{reform}, we have $$\g^M(\overline{C^{\l', M}}) =  \g^M(\overline{C^{\l, M}})$$ and hence $\g(C') = \g(C)$. So $\g$ is invariant on the $\JJ_b$-orbits of $\Irr^\tp X_\mu(b)$.

It remains to show $\g$ induces a bijection $\JJ_b \backslash \Irr^\tp X_\mu(b) \cong \BB_\mu(\ul_G(b))$. By Theorem \ref{conj'} it suffices to show it is surjective. Let $\d \in \BB_\mu(\ul_G(b))$. Suppose $\d \in \BB_\eta^{\widehat M}(\ul_G(b))$ for some $\eta \in I_{\mu, b, M}$. It follows from Theorem \ref{reform} that there exists $C^M \in \Irr^\tp X_\eta^M(b)$ such that $$\g^M(C^M) = \d \in \BB_\eta^{\widehat M}(\ul_G(b)).$$ Let $\phi \in Y$ and $C^{\phi, M} \in \Irr X_\eta^{\phi, M}(b)$ such that $\overline{C^{\phi, M}} = C^M$. Let $Z_\d^N \in \Sigma_{\mu, \eta}^N$ be as in Lemma \ref{S-NM} such that $\overline{ S_\d } = \overline{Z_\d^N \ast S_\d^M}$. By the construction in the previous paragraph, we have $\g(C) = \d$ for any $C \in \Irr \overline{X_\mu^{Z_\d^N, C^{\phi, M}}(b)}$. So $\g$ is surjective as desired.

\subsection{Proof of Theorem \ref{conv}}
Let $C \in \Irr^\tp X_\bmu(\bb)$ and $C' \in \Irr^\tp X_\mu(b)$ for some $\mu \in Y^+$ such that $\overline{C'} = \overline{\pr(C)}$. One should not confuse with the notation in the previous subsection. Assume $\g^{G^d}(C) = \bg = (\g_1, \dots, \g_d) \in \BB_\bmu^{\widehat G^d}$. By Corollary \ref{compare}, it suffices to show that $$\g(C') = \g_1 \otimes \cdots \otimes \g_d.$$ It follows from Corollary \ref{inequality} that there exist $Z_\bullet^{N^d} = (Z_1^N, \cdots, Z_d^N) \in \Sigma_{\bmu, \bbeta}^{N^d}$ and $C^{\bl, M^d} \in \Irr X_\bbeta^{\bl, M^d}(\bb)$ for some $\bbeta \in I_{\bmu, M^d}$ and $\bl \in Y^d$ such that $\overline{C^{\bl, M^d}} \in \Irr^\tp X_\bbeta^{M^d}(\bb)$ and \begin{align*} \tag{i} C \subseteq \overline{ X_\bmu^{Z_\bullet^{N^d}, C^{\bl, M^d}}(\bb) }. \end{align*}

Let $\cz_\bullet^{N^d} = (\th_{\bbeta}^{N^d})\i( Z_\bullet^{N^d} ) = (\cz^N_1, \cdots, \cz_d^N)$. By (i) and Proposition \ref{relative} (5), \begin{align*} \tag{ii} \overline{ t^{-\bl} H^{P^d}(C) t^{\bb \bs(\bl)} } = \overline{ \cz_\bullet^{N^d} \ast (t^{-\bl} H^{M^d}(C^{\bl, M^d}) t^{\bb \bs(\bl)} ) }.\end{align*} Set $\bl= (\l_1, \dots, \l_d)$, $\l^\dag_\bullet = \bb \bs(\bl) = (\l_1^\dag, \dots, \l_d^\dag)$ and \begin{align*} H^{P^d}(C) &= H_1(C) \times \cdots \times H_d(C); \\ H^{M^d}(C^{\bl, M^d}) &= H_1(C^{\bl, M^d}) \times \cdots \times H_d(C^{\bl, M^d}). \end{align*} Applying Theorem \ref{main} (for $C$ and $\overline{C^{\bl, M^d}}$ respectively) and Lemma \ref{S-NM} we have $$\e_\bl^{M^d} \overline{ S_{\bg}^{G^d} } = \overline{ t^{-\bl} H^{P^d}(C) t^{\l^\dag_\bullet} K^d/K^d } = \overline{ \cz_\bullet^{N^d} \ast (t^{-\bl} H^{M^d}(C^{\bl, M^d}) t^{\l^\dag_\bullet} ) K^d /K^d } = \e_\bl^{M^d} \overline{Z_\bullet^{N^d} \ast S_{\bg}^{M^d} }.$$ In particular, for $1 \le \t \le d$ we have \begin{align*} \tag{iii} \overline{ S_{\g_\t} } = (\e_{\l_\t}^M)\i \overline{\cz^N_\t \ast (t^{-\l_\t} H_\t(C^{\bl, M^d}) t^{\l_\t^\dag}) K /K} = \overline{Z^N_\t \ast S_{\g_\t}^M}. \end{align*} Let $\l = \pr(\bl) = \l_1$. As $\overline{C'} = \overline{\pr(C)} \subseteq \Gr$, we see that $N(\brF) I_M t^\l K/K \cap C'$ is open dense in $C'$. By Theorem \ref{main}, \begin{align*} &\quad\ \e_\l^M \overline{ S_{\g(C')} } \\ &= \overline{ t^{-\l} H^P(C') t^{b\s(\l)} K/K } \\ &= \overline{ t^{-\l_1} H_1(C) t^{\l^\dag_1} \cdots  t^{-\l_d} H_d(C) t^{\l^\dag_d} K/K  } \\ &= \overline{ (\cz_1^N \ast (t^{-\l_1} H_1(C^{\bl, M^d}) t^{\l^\dag_1})) \cdots (\cz_d^N \ast (t^{-\l_d} H_d(C^{\bl, M^d}) t^{\l^\dag_d})) K/K  }  \\  &= \overline{ m (\cz_1^N \ast (t^{-\l_1} H_1(C^{\bl, M^d}) t^{\l^\dag_1}) \times_K \cdots \times_K (\cz_d^N \ast (t^{-\l_d} H_d(C^{\bl, M^d}) t^{\l^\dag_d})) K/K)  } \\ &= \overline{ m( \e_{\l_1}^M (Z_1^N \ast S_{\g_1}^M) \tilde\times \cdots \tilde\times (Z_d^N \ast S_{\g_d}^M) ) } \\ &= \e_{\l_1}^M \overline{ m( S_{\g_1} \tilde\times \cdots \tilde\times S_{\g_d} ) } \\ &= \e_{\l_1}^M \overline{ S_{\g_1} \star \cdots \star S_{\g_d} } \\  &= \e_\l^M \overline{ S_{\g_1 \otimes \cdots \otimes \g_d} }, \end{align*} where $m: G(\brF) \times_K \cdots \times_K G(\brF) \times_K \Gr \to \Gr$ is the usual convolution map; the second equality follows from Lemma \ref{convolution}; the third one follows from (ii) and that $\l^\dag_\bullet = (\l_1^\dag, \dots, \l_d^\dag) = (\l_2, \dots, \l_d, b\s(\l_1))$; the fifth one follows from Lemma \ref{star}, Theorem \ref{reform} and (iii). So $\g(C') = \g_1 \otimes \cdots \otimes \g_d$ as desired.

\section{Proof of Proposition \ref{minu}} \label{sec-minu}
We keep the notations in \S \ref{sec-pre}. Let $\mu \in Y^+$ be minuscule, and let $b \in G(\brF)$ be basic which is a lift of an element in $\Omega$. To prove Proposition \ref{minu}, we assume by Lemma \ref{adj} that $G$ is simple and adjoint. Then $\s$ acts transitively on the connected components of (the Dynkin diagram of) $\SS_0$. Let $d$ be the number of connected components of $\SS_0$.

By abuse of notation, we also denote by $\tw \in \tW \cap \JJ_b = \{x \in \tW; b \s(x) b\i = x\}$ some lift of $\tw$ in $N_T(\brF)$ that lies in $\JJ_b$.
\subsection{Orthogonal subset of roots} \label{def-orthogonal}
We say a subset $D \subseteq \Phi$ is strongly orthogonal if $\b' \pm \b \notin \Phi$ for any $\b', \b \in D$. In particular, if $D$ is strongly orthogonal, then it is orthogonal, that is, $\<\b', \b^\vee\> = 0$ for any $\b \neq \b' \in D$.

Let $\a \in \Phi$. Set $\co_\a = \{\a^i; i \in \ZZ\}$ and $\co_\tta = \{\tta^i; i \in \ZZ\}$, where $\a^i$ and $\tta^i$ are as in \S \ref{def-R}. Let $W_{\co_\tta}$ be the parabolic subgroup of $\tW$ generated by $s_{\tilde \b}$ for $\b \in \co_\a$. Recall that $\Pi$ is the set of minus simple roots and highest roots of $\Phi$.
\begin{lem} \label{orbit}
Let $\a \in \Pi$ such that $W_{\co_\tta}$ is finite. Let $\tw$ be the longest element of $W_{\co_\tta}$. Then $W_{\co_\tta} \cap \JJ_b = \{1, \tw\}$ and one of the following cases occurs:

(1) $\<\a^d, \a^\vee\> = -1$, $|\co_\a| = 2d$, $\co_{\a + \a^d}$ is strongly orthogonal (as $|\co_{\a + \a^d}| = d$) and $\tw = \prod_{\xi \in \co_{\a + \a^d}} s_{\tilde \xi}$;

(2) $\co_\a$ is strongly orthogonal and hence $\tw = \prod_{\b \in \co_\a} s_{\tilde \b}$.

In particular, any affine reflection of $W^a \cap \JJ_b$ is equal to $\prod_{c \in \co_a} s_c$ for some $a = (\g, k) \in \Phi \times \ZZ = \tPhi$ such that $\co_\g$ is strongly orthogonal.
\end{lem}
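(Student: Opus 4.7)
The plan is to analyze the subdiagram of $\SS^a$ spanned by $\co_\tta$ together with the $b\s$-action on it. Since $G$ is simple and adjoint with $\s$ cyclically permuting the $d$ connected components of $\SS_0$, and $p(b) \in W_0$ preserves each such component, the element $p(b)\s$ cyclically permutes the $d$ connected pieces $\Pi_1, \dots, \Pi_d$ of $\Pi$. Hence $|\co_\a| = dk$, where $k := |\co_\a \cap \Pi_i|$ is the orbit size of $\a$ under $\phi := (p(b)\s)^d$ acting on $\Pi_i$. The subgroup $W_{\co_\tta}$ is the standard parabolic of the affine Coxeter group $W^a$ on the subdiagram of $\SS^a$ indexed by $\co_\tta$, and its finiteness is equivalent to this subdiagram meeting each component of $\SS^a$ in a proper subset.

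Because $b\s$ acts transitively on $\co_\tta$, it permutes the connected components of the subdiagram transitively; the same reasoning applied to $\phi$ on $\Pi_i$ shows that $\phi$ acts transitively on the $m$ components of the subdiagram in $\Pi_i$, where $m \ell = k$ and $\ell$ is the number of vertices of one such component $\Delta$. The iterate $\phi^m$ then fixes $\Delta$ and restricts to a diagram automorphism $\varphi$ of $\Delta$; tracking the $\phi$-orbit of $\a$ inside $\Delta$ shows that $\varphi$ has order $\ell$ and acts transitively on the vertex set of $\Delta$. Since there is a single $b\s$-orbit on all components, the fixed-point subgroup $W_{\co_\tta} \cap \JJ_b$ is isomorphic to $W_\Delta^\varphi$.

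The key combinatorial input is that a finite connected Dynkin diagram admits a vertex-transitive diagram automorphism only in the two cases $\Delta = A_1$ (with $\varphi$ trivial) or $\Delta = A_2$ (with $\varphi$ the flip), as can be verified by inspection of the known automorphism groups of each irreducible type. In both cases $|W_\Delta^\varphi| = 2$, so $W_{\co_\tta} \cap \JJ_b = \{1, \tw\}$. If $\Delta = A_1$, the subdiagram is totally disconnected, so $\co_\tta$ consists of pairwise non-adjacent simple affine roots, equivalently $\co_\a$ is strongly orthogonal in $\Phi$, giving case (2) with $\tw = \prod_{\b \in \co_\a} s_{\tilde\b}$. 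If $\Delta = A_2$, then $\ell = 2$ forces $m = 1$ and $k = 2$, hence $|\co_\a| = 2d$ and the two vertices of each $A_2$-component are $\{\tta^i, \tta^{i+d}\}$ with $\<\a_d, \a^\vee\> = -1$; the longest element of such an $A_2$ is the reflection in its highest affine root, which by unwinding the definition of $\gamma \mapsto \tilde\gamma$ in each sign case equals $s_{\widetilde{\a^i + \a^{i+d}}}$, yielding case (1). Strong orthogonality of $\co_{\a + \a_d}$ is immediate since its $d$ elements lie in distinct connected components of $\SS_0$.

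The main obstacle is establishing the combinatorial classification of vertex-transitive diagram automorphisms of finite connected Dynkin diagrams; this is elementary but requires a type-by-type check. The final assertion about affine reflections of $W^a \cap \JJ_b$ follows as a corollary of the dichotomy: any such reflection arises as the longest element of a finite parabolic $W_{\co_{\tilde\b}}$ for some $\b$, and in cases (1) and (2) it is uniformly expressed as $\prod_{c \in \co_a} s_c$ for an affine root $a = (\g, k) \in \tPhi$ whose finite part $\g$ has strongly orthogonal $p(b)\s$-orbit.
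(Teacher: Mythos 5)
Your analysis via the diagram automorphism of $\SS^a$ induced by $b\s$ is essentially the same route the paper takes when it appeals to the classification of affine Dynkin diagrams, and most of what you write is correct: identifying $W_{\co_\tta}\cap\JJ_b$ with $W_\Delta^\varphi$ by tracking the cyclic permutation of the components, the observation that $\Delta$ must admit a vertex-transitive automorphism and hence be of type $A_1$ or $A_2$, the translation from non-adjacency of the simple affine roots in $\co_\tta$ to strong orthogonality of $\co_\a$ in $\Phi$, and the computation $\tw = s_{\tta^i+\tta^{i+d}}$ in each $A_2$-block are all sound.

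There is, however, a genuine gap at the sentence ``If $\Delta = A_2$, then $\ell = 2$ forces $m = 1$.'' Your stated key combinatorial input classifies the possible \emph{shape} of a single connected component $\Delta$ of the subdiagram on $\co_\a\cap\Pi_i$, but says nothing about the \emph{number} $m$ of components. A priori the automorphism $\phi$ of the irreducible affine Dynkin diagram of $\Pi_i$ could permute $m\ge 2$ pairwise disjoint $A_2$-subdiagrams transitively, in which case $|\co_\a| = 2md > 2d$ and case~(1) as stated would be false. Ruling this out requires a separate inspection of the $\phi$-orbits in each irreducible affine type (and not of the finite types): for $\tilde A_n$ a rotation with orbit of size $\ge 3$ yields equally-spaced, hence pairwise non-adjacent, vertices unless the orbit exhausts the cycle (making $W_{\co_\tta}$ infinite), and a reflection has orbits of size $\le 2$; for $\tilde B_n,\tilde C_n,\tilde F_4,\tilde G_2,\tilde E_7,\tilde E_8$ the diagram automorphism group is $\ZZ/2$ or trivial, so $k\le 2$ automatically; for $\tilde D_n$ and $\tilde E_6$ one checks that every orbit of size $\ge 3$ consists of leaves or arm-tips, which are pairwise non-adjacent. (Alternatively: if every vertex of $\co$ has degree $\ge 2$ in $D|_{\co}$ then $D|_{\co}$ contains a cycle and hence $D = \tilde A_n$ with $\co$ the whole cycle; this handles $\ell\ge 3$, but not the perfect-matching possibility $\ell = 2$, $m\ge 2$, so the affine-type inspection is still needed.) Supplying this second inspection closes the gap; without it the equality $|\co_\a| = 2d$ in case~(1) is unjustified.
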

\begin{proof}
The first statement follows from a case-by-case analysis. The ``In particular'' part follows by noticing that each reflection of $W^a \cap \JJ_b$ is conjugate to some $\tw$ as in the first statement.
\end{proof}

\subsection{Characterization of $\ca_{\mu, b}^\tp$} \label{generic} Let $\l \in Y$. Let $X_\mu^\l(b) = I t^\l K/K \cap X_\mu(b)$, $\ca_{\mu, b} = \ca_{\mu, b}^G$, $\ca_{\mu, b}^\tp = \ca_{\mu, b}^{\tp, G}$ and $R(\l) = R_{\mu, b}^\tp(\l)$ be as in \S\ref{def-R}. By Proposition \ref{dim}, $\l \in \ca_{\mu, b}$ if and only if $\l^\natural = -\l + b\s(\l)$ is conjugate to $\mu$ by $W_0$.

Let $V = Y \otimes_\ZZ \RR$ and $V^{p(b\s)} = \{v \in V; p(b\s)(v) = v\}$. Define $$V^{p(b\s)}_\gen = \{v \in V^{p(b\s)}; \<\a, v\> = 0 \Leftrightarrow \<\a, V^{p(b\s)}\> = 0, \forall \a \in \Phi\},$$ which is open dense in $V^{p(b\s)}$. Notice that $V^{p(b\s)}_\gen \cap Y \neq \emptyset$. Let $M_b \supseteq T$ be the Levi subgroup with root system $\{\a \in \Phi; \<\a, V^{p(b\s)}\> = 0\}$. By definition, for any $v \in V^{p(b\s)}_\gen$ the centralizer $M_v$ (see \S\ref{levi-subsec}) of $v$ in $G$ coincides with $M_b$.

Fix $v \in V^{p(b\s)}_\gen \cap Y$. Denote by $\bar v$ the unique dominant $W_0$-conjugate of $v$. Let $z$ be the minimal element of $W_0$ such that $z(v) = \bar v$. Let $N_v = \prod_{\a \in \Phi; \<\a, v\> > 0} U_\a$. Set $M = M_{\bar v} = {}^z M_v = {}^z M_b$ and $b_M = z b \s(z)\i$. By \cite[Lemma 3.1]{HN}, $b_M$ is a lift of some element in $\Omega_M$, and is superbasic in $M(\brF)$.

\begin{lem} \label{bound}
Let $\l \in \ca_{\mu, b}$ and $\a \in \Phi - \Phi_{M_b}$. Then $\co_\a \cap \co_{-\a} = \emptyset$ and $$|R(\l) \cap (\co_\a \cup \co_{-\a})| \le \frac{1}{2} \sum_{\b \in \co_\a} |\<\b, \l^\natural\>|,$$ where the equality holds if and only if either $\l_\b \ge 0$ for $\b \in \co_\a$ or $\l_\b \le -1$ for $\b \in \co_\a$.
\end{lem}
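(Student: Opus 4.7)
The plan is first to dispose of the disjointness statement using the choice of $v$, and then to recast the main inequality as a combinatorial statement about a cyclic integer sequence with increments in $\{-1, 0, 1\}$. Since $v \in Y_\RR^{p(b\s)}$ is fixed by $p(b\s)$, every $\b \in \co_\a$ satisfies $\<\b, v\> = \<\a, v\>$, and every $\b \in \co_{-\a}$ satisfies $\<\b, v\> = -\<\a, v\>$. Because $\a \notin \Phi_{M_v}$ we have $\<\a, v\> \ne 0$, so $\co_\a$ and $\co_{-\a}$ are disjoint.

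Enumerate $\co_\a = \{\a^0, \dots, \a^{n-1}\}$ and set $v_i = \l_{\a^i}$, viewed as a function on $\ZZ/n\ZZ$. Lemma \ref{lam} yields $v_i - v_{i-1} = -\<\a^i, \l^\natural\>$. As $\l \in \ca_{\mu, b}$ and $\mu$ is minuscule, $\l^\natural$ is $W_0$-conjugate to $\mu$, so $\<\a^i, \l^\natural\> \in \{-1, 0, 1\}$ and $(v_i)$ is a cyclic walk on $\ZZ$ with steps in $\{-1, 0, 1\}$. Let $A_\pm = \{\b \in \co_\a; \<\b, \l^\natural\> = \pm 1\}$; periodicity of $(v_i)$ forces $\sum_{\b \in \co_\a} \<\b, \l^\natural\> = 0$, hence $|A_+| = |A_-|$ and the right-hand side of the lemma simplifies to $|A_-|$.

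Unwinding the definition of $R(\l)$ and applying Lemma \ref{eta}(1) to rewrite the condition $-\b \in R(\l)$ for $\b \in \co_\a$ (using $\l_\b + \l_{-\b} = -1$), I would identify
\[
|R(\l) \cap (\co_\a \cup \co_{-\a})| = \#\{i : v_i = v_{i-1} + 1,\ v_i \ge 1\} + \#\{i : v_i = v_{i-1} - 1,\ v_i \le -2\}.
\]
The key step is then the level-crossing identity: for each $k \in \ZZ$, the number $u_k$ of $i$ with $(v_{i-1}, v_i) = (k, k+1)$ equals the number $d_k$ with $(v_{i-1}, v_i) = (k+1, k)$, by the standard argument that a closed walk crosses each level equally often upward and downward. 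Hence the above count equals $\sum_{k \ge 0} u_k + \sum_{k \le -2} d_k = \sum_{k \ne -1} u_k = |A_-| - u_{-1} \le |A_-|$, with equality exactly when $u_{-1} = 0$, i.e., the walk $(v_i)$ never makes a transition between levels $-1$ and $0$. Since $(v_i)$ is cyclic with unit-or-zero steps, this is equivalent to either $v_i \ge 0$ for all $i$ or $v_i \le -1$ for all $i$, which is the stated equality criterion.

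The main bookkeeping obstacle is the asymmetric equality condition $\l_\b \le -1$ rather than $\l_\b \le 0$: this reflects the asymmetry in the definition of $R(\l)$ (requiring $\l_\b \ge 1$, not $\l_\b \ge 0$), and the cyclic-walk picture makes it transparent that the single ``bad level'' is $-\tfrac{1}{2}$, which accounts both for the subtracted term $u_{-1}$ and for the off-by-one appearance of $-1$ in the equality case.
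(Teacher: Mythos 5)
Your proof is correct, and it proceeds from the same starting observation as the paper — namely that after setting $v_i = \l_{\a^i}$, Lemma~\ref{lam} together with minusculity of $\l^\natural$ makes $(v_i)_{i \in \ZZ/|\co_\a|\ZZ}$ a closed cyclic walk on $\ZZ$ with increments in $\{-1,0,1\}$, and that $R(\l)\cap(\co_\a\cup\co_{-\a})$ picks out up-steps landing at height $\ge 1$ and down-steps landing at height $\le -2$. Where the two arguments diverge is in how the cyclic walk is sliced. The paper partitions the orbit into maximal sign-definite intervals (the integers $b_0 \le c_1 < b_1 \le \cdots$ marking where the walk dips below $0$ and returns), then counts contributions interval by interval, picking up a correction of $-\tfrac{1}{2}$ at each interface between a negative excursion and a non-negative one, and splits into cases according to whether $b_{k-1} = c_k$. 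You instead slice by height, invoking the up-crossing/down-crossing identity $u_k = d_k$ at every half-integer level of a closed walk. This is a genuinely different bookkeeping: it collapses the whole estimate into $|R(\l)\cap(\co_\a\cup\co_{-\a})| = |A_-| - u_{-1}$ in one step, isolates the single ``bad level'' $-\tfrac{1}{2}$ as the exclusive source of strict inequality, and makes the asymmetric equality criterion ($\l_\b\ge 0$ for all $\b$ versus $\l_\b\le -1$ for all $\b$) fall out immediately from $u_{-1}=0$ without any case analysis. Your approach also sidesteps a mild imprecision in the paper's setup, which implicitly assumes the walk touches $0$ somewhere (so that a base index with $\l_{\a^{b_k}}=0$ exists) and would need the constant-positive or constant-negative walk to be handled separately; the level-crossing formula covers every configuration uniformly. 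Both are correct; yours is the tidier argument.
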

\begin{proof}
As $\mu$ is minuscule, it follows from Proposition \ref{dim} and Lemma \ref{natural} (1) that $\l^\natural$ is minuscule and that \begin{align*} \tag{i} \l_{\g\i} - \l_\g = \<\g, \l^\natural\> \in \{0, \pm1\} \text{ for } \g \in \Phi. \end{align*} By assumption, we have $\<\a, v\> \neq 0$ and hence $\co_\a \cap \co_{-\a} = \emptyset$. By symmetry, we may assume $\l_\a \ge 0$ and there exist integers $$0=b_0 \le c_1 < b_1  \le  \cdots \le c_r < b_r = |\co_\a|$$ such that for $1 \le k \le r$ we have \begin{align*} \l_{\a^i} < 0 \text{ for } b_{k-1} + 1 \le i \le c_k \text{ and } \l_{\a^j} \ge 0 \text{ for } c_k + 1 \le j \le b_k. \end{align*} It follows from (i) that \begin{align*} \tag{ii} \text{ if } b_{k-1} < c_k, \text{ then } \l_{\a^{b_{k-1} + 1}} = \l_{\a^{c_k}} = -1 \text{ and } \l_{\a^{c_k + 1}} = \l_{\a^{b_{k-1}}} = 0. \end{align*}

If $b_{k-1} < c_k$ for some $1 \le k \le r$, we have \begin{align*} &\quad\ | R(\l) \cap \{\pm\a^i; b_{k-1}+1 \le i \le c_k\} | \\ &= | R(\l) \cap \{-\a^i; b_{k-1}+1 \le i \le c_k\}|  \\ &= | \{b_{k-1}+1 \le i \le c_k; \l_{-\a^{i-1}} \ge 0, \l_{-\a^i} - \l_{-\a^{i-1}} = 1 \} | \\ &= | \{b_{k-1}+1 \le i \le c_k; \l_{\a^{i-1}} \le -1, \l_{\a^{i-1}} - \l_{\a^i} = 1 \} | \\ &= | \{b_{k-1}+1 \le i \le c_k; \l_{\a^{i-1}} - \l_{\a^i} = 1 \} | - 1 \\ &= -\frac{1}{2} + \frac{1}{2} \sum_{i = b_{k-1} + 1}^{c_k} |\l_{\a^{i-1}} - \l_{\a^i}| \\ &=  -\frac{1}{2} + \frac{1}{2} \sum_{i = b_{k-1} + 1}^{c_k} |\<\a^i, \l^\natural\>|, \end{align*} where the first equality follows from that $\l_{\a^i} < 0$ and hence $\a^i \notin R(\l)$ for $b_{k-1} + 1 \le i \le c_k$; the third one follows from that $\l_{-\g} = -1 - \l_\g$ for $\g \in \Phi$; the fourth one follows from that $\l_{\a^i} \le -1$ for $b_{k-1} + 1 \le i \le c_k$ but $1 + \l_{\a^{b_{k-1}+1}} = \l_{\a^{b_{k-1}}} = 0$ by (ii); the fifth one follows from (i) and the equality $\sum_{i = b_{k-1} + 1}^{c_k} \l_{\a^{i-1}} - \l_{\a^i} = \l_{\a^{b_{k-1}}} - \l_{\a^{c_k}} = 1$ by (ii); the last one follows from (i).

Similarly, for $1 \le k \le r$, \begin{align*} &\quad\ | R(\l) \cap \{\pm\a^i; c_k + 1 \le i \le b_k\} | \\ &= |\{c_k + 1 \le i \le b_k; \l_{\a^i} \ge 1, \l_{\a^i} - \l_{\a^{i-1}} = 1\}| \\ &=\begin{cases} |\{c_k + 1 \le i \le b_k; \l_{\a^i} - \l_{\a^{i-1}} = 1\}|, & \text{ if } b_{k-1} = c_k; \\ |\{c_k + 1 \le i \le b_k; \l_{\a^i} - \l_{\a^{i-1}} = 1\}| - 1, & \text{ otherwise} \end{cases} \\ &= \begin{cases} \frac{1}{2} \sum_{i = c_k + 1}^{b_k} |\<\a^i, \l^\natural\>|, & \text{ if } b_{k-1} = c_k \\ -\frac{1}{2} + \frac{1}{2} \sum_{i = c_k + 1}^{b_k} |\<\a^i, \l^\natural\>|, & \text{ otherwise.} \end{cases} \end{align*} where the second equality follows from that $\l_{\a^i} \ge 0$ for $c_k + 1 \le i \le b_k$ and that $\l_{\a^{c_k}} \ge 0$ if and only if $b_{k-1} = c_k$.

Therefore, $$| R(\l) \cap (\co_\a \cup \co_{-\a}) | \le \frac{1}{2} \sum_{\b \in \co_\a} |\<\b, \l^\natural\>|,$$ where the equality holds if and only if $b_{k-1} = c_k$ for $1 \le k \le r$, that is, $\l_\b \ge 0$ for $\b \in \co_\a$. The proof is finished.
\end{proof}

\begin{lem} \label{finite}
Let $\a \in \Pi$ (see \S\ref{aff-root}). Then $W_{\co_\tta}$ is infinite if and only if $\co_\a = \Pi$. Moreover, in this case, $M_b = G$.
\end{lem}
\begin{proof}
It follows from a case-by-case analysis on the Dynkin diagram of $\SS_0$.
\end{proof}

\begin{lem} \label{red}
For $\l \in \ca_{\mu, b}$ the map $\a \mapsto z(\a)$ gives a bijection $R(\l) \cap \Phi_{M_b} \cong R_{z(\l^\natural), b_M}^M(z(\l))$. As a consequence, $|R(\l) \cap \Phi_{M_b}| \le \dim X_{z(\l^\natural)}^M(b_M)$. Here the subset $R_{z(\l^\natural), b_M}^M(z(\l)) \subseteq \Phi_M$ is defined in \S\ref{def-R} for $G = M$.
\end{lem}
\begin{proof}
Since $z(\Phi_{M_b}^+) = \Phi_M^+$, we have $\l_\a = z(\l)_{z(\a)}$ for $\a \in \Phi_{M_v}$. Hence the first statement follows. The second statement follows from Proposition \ref{dim} that $|R_{z(\l^\natural), b_M}^M(z(\l))| = \dim X_{z(\l^\natural)}^{z(\l),M}(b_M)$.
\end{proof}

\begin{cor} \label{criterion}
Let $\l \in \ca_{\mu, b}$. Then $\l \in \ca_{\mu, b}^\tp$ if and only if (1) $z(\l) \in \ca_{z(\l^\natural), b_M}^{M, \tp}$ and (2) for each $\a \in \Phi - \Phi_{M_b}$, either $\l_\b \ge 0$ for $\b \in \co_\a$ or $\l_\b \le -1$ for $\b \in \co_\a$.
\end{cor}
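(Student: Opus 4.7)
The plan is a dimension partition of $R(\l)$ combined with the inequalities in Lemma~\ref{red} and Lemma~\ref{bound}. By Proposition~\ref{dim}(2) we have $\dim X_\mu^\l(b) = |R(\l)|$, so the question is when $|R(\l)|$ reaches its maximum value $\dim X_\mu(b)$.

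I would split
\[
R(\l) \;=\; (R(\l) \cap \Phi_{M_v}) \;\sqcup\; (R(\l) \setminus \Phi_{M_v})
\]
and bound the two parts separately. Lemma~\ref{red} identifies the first part bijectively with $R^M_{z(\l^\natural), b_M}(z(\l))$, whose cardinality, by Proposition~\ref{dim}(2) applied to $M$, equals $\dim X_{z(\l^\natural)}^{z(\l), M}(b_M) \le \dim X_{z(\l^\natural)}^M(b_M)$, with equality exactly under condition~(1). For the second part, note that since $p(b\s)$ fixes $v$, each orbit $\co_\a$ lies entirely in one of $\Phi_{M_v}$, $\Phi_{N_v}$, or $-\Phi_{N_v}$, and $\co_\a \cap \co_{-\a} = \emptyset$ whenever $\a \in \Phi \setminus \Phi_{M_v}$ by Lemma~\ref{bound}. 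Choosing a representative set $\cs \subseteq \Phi_{N_v}$ of the orbit pairs $\{\co_\a, \co_{-\a}\}$ and summing Lemma~\ref{bound} over $\cs$ yields
\[
|R(\l) \setminus \Phi_{M_v}| \;\le\; \tfrac{1}{2} \sum_{\b \in \Phi_{N_v}} |\langle \b, \l^\natural \rangle|,
\]
with equality iff condition~(2) holds for every $\a \in \Phi \setminus \Phi_{M_v}$ (the condition for $\a$ agrees with the one for $-\a$ and for any element of $\co_\a$ via the identity $\l_{-\b} = -1-\l_\b$).

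Combining the two bounds gives $|R(\l)| \le \dim X_{z(\l^\natural)}^M(b_M) + \tfrac{1}{2} \sum_{\b \in \Phi_{N_v}} |\langle \b, \l^\natural \rangle|$, with simultaneous equality iff both (1) and (2) hold. The remaining step is to check that this upper bound equals $\dim X_\mu(b)$. Since $G$ is simple adjoint (so $Y_\RR^{W_0}=0$) and $b$ is basic, $\nu_G(b) = \nu_M(b_M) = 0$; the dimension formula reduces the identity to the two ingredients $\langle \rho, \mu\rangle = \langle \rho_M, \mu_M\rangle + \tfrac{1}{2} \sum_{\b \in \Phi_{N_v}} |\langle \b, \l^\natural\rangle|$, with $\mu_M$ the $M$-dominant $W_M$-conjugate of $z(\l^\natural)$, and the matching of the defects of $b$ in $G$ with $b_M$ in $M$. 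The pairing identity follows from the minuscule formula $\langle \rho, \mu\rangle = \tfrac{1}{4} \sum_{\a \in \Phi}|\langle \a, \l^\natural\rangle|$ together with $W_M$-invariance of sums of absolute values and the bijection $\Phi_{M_v} \to \Phi_M$ induced by $z$; the defect identity comes from rank comparisons between the $\s$-centralizers, which are inner forms of $M$ and of $G = M_{\nu_G(b)}$ respectively, both of the same absolute rank.

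The main obstacle I expect is this last numerical reconciliation, in particular the matching of defects; once it is in hand, the equivalence of $\l \in \ca_{\mu,b}^{\tp}$ with the pair of conditions (1) and (2) follows immediately from the equality cases of Lemmas~\ref{red} and~\ref{bound}.
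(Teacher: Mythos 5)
Your argument reproduces the paper's proof of Corollary \ref{criterion}: the same decomposition of $R(\l)$ into $R(\l) \cap \Phi_{M_v}$ and its complement, with Lemma \ref{red} controlling the first piece, Lemma \ref{bound} summed over the $p(b\s)$-orbit pairs in $\Phi_{N_v}$ controlling the second, and the combined upper bound collapsing to $\dim X_\mu(b)$ via the identity $\<\rho,\mu\>=\tfrac12\sum_{\a\in\Phi_{M_v}^+\cup\Phi_{N_v}}|\<\a,\l^\natural\>|$. The one step you flag as an obstacle, the defect identity $\dft_M(b_M)=\dft_G(b)$ needed to close the numerical check, is not a real gap: this is a standard fact for $b$ basic with $\nu_M(b_M)=\nu_G(b)$, and the paper already relies on it silently in the same way in the proof of Lemma \ref{relative}(4).
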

\begin{proof}
As $\l^\natural$ is conjugate to $\mu$, we have $$|\<\rho, \mu\>| =  \frac{1}{2} \sum_{\a \in \Phi_{M_v}^+ \cup \Phi_{N_v}} |\<\a, \l^\natural\>|.$$ Therefore, \begin{align*} \dim X_\mu^\l(b) &= |R(\l)|  \\ &= |R(\l) \cap \Phi_{M_b}| + |R(\l) \cap (\Phi - \Phi_{M_b})| \\ &= |R(\l) \cap \Phi_{M_b}| + \sum_{\co} |R(\l) \cap \pm \co| \\ &\le  |R(\l) \cap \Phi_{M_b}| + \frac{1}{2} \sum_{\a \in \Phi_{N_v}} |\<\a, \l^\natural\>| \\ &= |R(\l) \cap \Phi_{M_b}| + \<\rho, \overline{\l^\natural}\> - \<\rho_M, \overline{z(\l^\natural)}^M\>  \\ &\le \dim X_{z(\l^\natural)}^M(b_M) + \<\rho, \mu\> - \<\rho_M, \overline{z(\l^\natural)}^M\> \\ &= \dim X_\mu(b), \end{align*} where $\co$ ranges over $p(b\s)$-orbits of $\Phi_{N_v}$, and moreover, by Lemma \ref{bound} and Lemma \ref{red} the equality holds if and only if the conditions (1) and (2) hold. The proof is finished.
\end{proof}

\subsection{The action of $W_{\co_\tta} \cap \JJ_b$ on $\Irr X_\mu(b)$} \label{finite-orbit} Notice that $\JJ_b$ is generated by $I \cap \JJ_b$, $\Omega \cap \JJ_b$ and $W_{\co_\tta} \cap \JJ_b$ for $\a \in \Pi$. In this subsection we study the action of $W_{\co_\tta} \cap \JJ_b$ on $\Irr X_\mu(b)$. Assume that $W_{\co_\tta}$ is finite and let $\tw$ be the longest element of $W_{\co_\tta}$.
\begin{lem} \label{symm}
Let $\a, \tw$ be as in \S \ref{finite-orbit}. Then $\{\l_\b; \b \in \co_\a\} = \{-\tw(\l)_\b; \b \in \co_\a\}$ for $\l \in Y$.
\end{lem}
\begin{proof}
Recall that $\l_\b = - \tilde\b(\l)$ for $\b \in \Phi$. The statement follows by noticing that $\tw$ sends $\co_{\tta}$ to $-\co_{\tta}$.
\end{proof}

\begin{lem} \label{ge0}
Let $\a, \tw$ be as in \S \ref{finite-orbit}. Let $\l \in Y$ such that either $\l_\b \ge 1$ for $\b \in \co_\a$ or $\l_\b \le -1$ for $\b \in \co_\a$. For $\g \in \Phi$ with $\l_\g \ge 0$ we have $\tw(\l)_{p(\tw)(\g)} \ge 0$. If, moreover, $\l \in \ca_{\mu, b}$, then $p(\tw) R(\l) = R(\tw(\l))$.
\end{lem}
\begin{proof}
We argue by contradiction. Set $\l' = \tw(\l)$ and $\g' = p(\tw)(\g)$. Suppose $\l_\g \ge 0$ but $\l'_{\g'} < 0$, that is,

(i) $\<\g, \l\> \ge 0$, and $\g < 0$ if $\<\g, \l\> = 0$;

(ii) $\<\g', \l'\> \le 0$, and $\g' > 0$ if $\<\g', \l'\>  = 0$.

By assumption and Lemma \ref{orbit}, we have

(iii) $\l_\b \ge 1$ or $\l_\b \le -1$ if $\b \in \Phi$ is a sum of roots in $\co_\a$.

Case(1): $\<\a^d, \a^\vee\> \neq -1$. Then $\co_\a$ is an orthogonal set and $\tw = \prod_{\b \in \co_\a} s_{\tilde \b}$. So $\l' = \tw(\l) =  p(\tw) (\l - \sum_{\b \in \co_\a \cap \Phi^+} \b^\vee)$ and hence $\<\g', \l'\> = \<\g, \l - \sum_{\b \in E \cap \Phi^+} \b^\vee\>$, where $E =\{\b \in \co_\a; \<\g, \b^\vee\> \neq 0\}$. If $E \subseteq \Phi^-$, then $\<\g', \l'\> = \<\g, \l\>$. By (i) and (ii) this implies that $\g' > 0$, $\g < 0$ and $\<\g, \l\> = 0$. As $E$ consists of minus simple roots and $\g' = p(\tw)(\g) = (\prod_{\b \in E} s_\b) (\g)$, we deduce that $\g$ is a sum of roots in $E$, contradicting (iii) since $\l_\g = 0$. Thus $E$ contains a unique highest root $\th$ of $\Phi^+$ and $\<\g', \l'\> = \<\g, \l - \th^\vee\>$. By (i), (ii) and that $\<\g, \th^\vee\> \neq 0$, we have $\<\g, \th^\vee\> \ge 1$. If $\g = \th \in \co_\a$, then $\g' = -\th < 0$ (since $\co_\a$ is orthogonal). As $\l_\g \ge 0$ and $\g \in \co_\a$, by (iii) we have $\<\l, \g\> = \l_\g + 1 \ge 2$. So $\l'_{\g'} = \<\g, \l\> - 2 \ge 2 - 2 = 0$, which is a contradiction. So $\g \neq \pm \th$ and hence $\<\g, \th^\vee\> = 1$ (since $\th$ is a long root). By (i) and (ii) we have $0 \le \<\g, \l\> \le 1$. If $\<\g, \l\> = 1$, then $\g' = (\prod_{\b \in E - \{\th\}} s_\b) (\g - \th) > 0$ by (ii). As $\g - \th \in \Phi^-$, $\g - \th$ is a sum of roots in $E - \{\th\}$, contradicting that $\co$ is strongly orthogonal by Lemma \ref{orbit} (2). So $\<\g, \l\> = 0$ and hence $\g < 0$ by (i). In particular, $\<\g, \th^\vee\> \le 0$ as $\th^\vee$ is dominant, which contradicts that $\<\g, \th^\vee\> = 1$.

Case(2): $\<\a^d, \a^\vee\> = -1$. Let $\xi=\a + \a^d$. Then $|\co_\xi| = d$ and $\tw = \prod_{\b \in \co_\xi} s_{\tilde \b}$ by Lemma \ref{orbit}. So $\l' = \tw(\l) =  p(\tw) (\l - \sum_{\b \in \co_\xi \cap \Phi^+} \b^\vee)$ and hence $\<\g', \l'\> = \<\g, \l - \sum_{\b \in E \cap \Phi^+} \b^\vee\>$, where $E =\{\b \in \co_\xi; \<\g, \b^\vee\> \neq 0\}$. Notice that $E$ consists of at most one element. If $E = \emptyset$, then $\g = \g'$ and $\<\g', \l'\> = \<\g, \l\>$, contradicting (i) and (ii). So $E = \{\xi^{i_0}\}$ for some $1 \le i_0 \le d$. If $\xi^{i_0} < 0$, then $\a^{i_0}, \a^{i_0 + d}$ are both minus simple roots and $\<\g', \l'\> = \<\g, \l\>$. By (i) and (ii) we have $\g' > 0$, $\g < 0$ and $\<\g, \l\> = 0$. As $\g' = s_{\xi^{i_0}} (\g) = s_{\a^{i_0}} s_{\a^{i_0 + d}} s_{\a^{i_0}} (\g)$, we deduce that $\g$ is a sum of roots in $\{\a^{i_0}, \a^{i_0 + d}\}$, contradicting (iii). So $\xi^{i_0} > 0$ and $\<\g', \l'\> = \<\g, \l - (\xi^{i_0})^\vee\>$. Moreover, as $\xi^{i_0} = \a^{i_0} + \a^{i_0 + d}$, exactly one of $\{\a^{i_0}, \a^{i_0 + d}\}$ is a positive highest root. By symmetry, we can assume $\a^{i_0} < 0$ and $\a^{i_0 + d} > 0$. By (ii) and that $\<\g, (\xi^{i_0})^\vee\> \neq 0$ we have $\<\g, (\xi^{i_0})^\vee\> \ge 1$. If $\g = \xi^{i_0}$, then $\g' = - \xi^{i_0} < 0$. By (iii) we have $\<\l, \g\> = \l_\g + 1 \ge 2$ and hence $\l'_{\g'} = \<\g, \l\> - 2 \ge 2 - 2 = 0$, which is a contradiction. So $\g \neq \pm \xi^{i_0}$ and $\<\g, (\xi^{i_0})^\vee\> = 1$ (since $\xi^{i_0}$ is a long root), which means $\g' = s_{\xi^{i_0}}(\g) = \g - \xi^{i_0}$. By (i) and (ii)  we have $0 \le \<\g, \l\> \le 1$. If $\<\g, \l\> = 1$, then $\<\g', \l'\> = 0$ and hence $0 < \g' = \g - \xi^{i_0} = (\g - \a^{i_0 + d}) - \a^{i_0} \le - \a^{i_0}$ by (ii), where the last inequality follows from that $\a^{i_0 + d}$ is a positive highest root. As $-\a_{i_0}$ is a simple root, we deduce that $\g' = -\a^{i_0}$ and hence $\g = \a^{i_0 + d} \in \co_\a$, contradicting (iii) since $\l_\g = 0$. So $\<\g, \l\> = 0$ and hence $\g < 0$ by (i), which together with the equality $\g' = \g - \xi^{i_0} \in \Phi$ implies that $0 \le \g' + \a^{i_0 + d} = \g - \a^{i_0} < -\a^{i_0}$. So $\g - \a^{i_0} = 0$, that is, $\g = \a^{i_0} \in \co_\a$, which contradicts (iii) since  $\l_\g = 0$. The first statement is proved.

Let $\g \in R(\l)$, that is, $\<\g, \l^\natural\> = -1$ and $\l_{\g\i} \ge 0$. By Lemma \ref{natural} and the first statement of the lemma we have $$\<p(\tw)(\g), \tw(\l)^\natural\> = \<p(\tw)(\g), p(\tw)(\l^\natural)\> = \<\g, \l^\natural\> = -1$$ and $\tw(\l)_{p(\tw)(\g)\i} = \tw(\l)_{p(\tw)(\g\i)} \ge 0$, that is, $p(\tw)(\g) \in R(\tw(\l))$ and hence $p(\tw)R(\l) \subseteq R(\tw(\l))$. By symmetry (see Lemma \ref{symm}), we have $p(\tw)R(\l) = R(\tw(\l))$. The second statement follows.
\end{proof}

\begin{lem} \label{dom}
Let $\a, \tw$ be as in \S \ref{finite-orbit}. For $\l \in \ca_{\mu, b}^\tp$ we have

(1) either $\l_\b \ge 0$ for $\b \in \co_\a$ or $\l_\b \le -1$ for $\b \in \co_\a$;

(2) if $\l \neq \tw(\l) \in \ca_{\mu, b}^\tp$, then either $\l_\b \ge 1$ for $\b \in \co_\a$, or $\l_\b \le -1$ for $\b \in \co_\a$;

(3) if $\l' \in W_{\co_\tta}(\l) \cap \ca_{\mu, b}^\tp$, then $\l' = \l$ or $\l' = \tw(\l)$.
\end{lem}
\begin{proof}
The first statement follows from Lemma \ref{finite} and Corollary \ref{criterion} (2).

Suppose $\l \neq \tw(\l) \in \ca_{\mu, b}^\tp$. By Lemma \ref{symm} we have $$\{\l_\b; \b \in \co_\a\} = \{-\tw(\l)_\b; \b \in \co_\a\},$$ which, together with (1), implies that either $\l_\b = 0$ for $\b \in \co_\a$ or $\l_\b \ge 1$ for $\b \in \co_\a$ or $\l_\b \le -1$ for $\b \in \co_\a$. By Lemma \ref{eta} (2), the first case implies that $\l = \tw(\l)$, contradicting our assumption. So the statement (2) follows.

Suppose $\l' \in W_{\co_\tta}(\l) \cap \ca_{\mu, b}^\tp$. By (1) and the equality $\chi_\b = -\tilde\b(\chi)$ for $\b \in \Phi$ and $\chi \in Y$, we see that $\l$ and $\l'$ are contained in the union of $$\{y \in Y_\RR; \tilde \b (y) \le 0, \b \in \co_\a\} \text{ and } \{y \in Y_\RR; \tilde \b (y) > 0, \b \in \co_\a\},$$ which are the closed anti-dominant Weyl chamber and the open dominant Weyl chamber for $W_{\co_\tta}$ respectively. Therefore, as $\l' \in W_{\co_\tta}(\l)$, we see that $\l = \l'$ if they are both dominant or both anti-dominant for $W_{\co_\tta}$, and $\l' = \tw(\l)$ otherwise. The statement (3) is proved.
\end{proof}

\begin{cor} \label{adj-red}
Let $\a, \tw$ be as in \S \ref{finite-orbit}. For $\l \in \ca_{\mu, b}^\tp$ we have

(1) $\tw \overline{X_\mu^\l(b)} = \overline{X_\mu^\l(b)}$ if $\l_\b= 0$ for some $\b \in \co_\a$;

(2) $\tw X_\mu^\l(b) \subseteq X_\mu^{\tw(\l)}(b)$ if $\l_\b \le -1$ for $\b \in \co_\a$.
\end{cor}
\begin{proof}
Let $\l' \in \ca_{\mu, b}^\tp$ such that $\Irr \overline{ X_\mu^{\l'} }$ intersects $\Irr (\tw \overline{ X_\mu^\l(b) })$. By Lemma \ref{prod}, $\l' \in W_{\co_\tta}(\l)$. Thus $\l' = \l$ or $\l' = \tw(\l)$ by Lemma \ref{dom} (3). If $\l_\b = 0$ for some $\b \in \co_\a$, then $\l' = \l$ by Lemma \ref{dom} (2). So the statement (1) follows.

Suppose $\l_\b \le -1$ for $\b \in \co_\a$. Then $s_{\tilde \b} t^\l > t^\l$ for $\b \in \co_\a$. Thus $\ell(\tw t^\l) = \ell(\tw) + \ell(t^\l)$ and $$\tw I t^\l K \subseteq I \tw t^\l K = I t^{\tw(\l)} K.$$ So $\tw X_\mu^\l(b) \subseteq X_\mu^{\tw(\l)}(b)$ and the statement (2) follows.
\end{proof}

\subsection{Equivalence relation on $\ca_{\mu, b}^\tp$ and $\ca_{\mu, b}^\tp(v)$} \label{equiv-relation} For $\l, \l' \in \ca_{\mu, b}^\tp$, we write $\l \sim \l'$ if $\JJ_b \Irr \overline{X_\mu^\l(b)} = \JJ_b\Irr \overline{X_\mu^{\l'}(b)}$. Notice that $\JJ_b \Irr X_\mu^\l(b)$ is a single $\JJ_b$-orbit of $\Irr X_\mu(b)$ by Proposition \ref{dim}.

Let $v \in V^{p(b\s)}_\gen \cap Y$. Let $\ca_{\mu, b}^\tp(v)$ (resp. $\ca_{\mu, b}(v)$) denote the set of $\l \in \ca_{\mu, b}^\tp$ (resp. $\l \in \ca_{\mu, b}$) such that $\l_\a \ge 0$ for $\a \in \Phi_{N_v}$. Here $\Phi_{N_v} = \{\a \in \Phi; \<\a, v\> > 0\}$ is the set of roots in $N_v$.
\begin{lem} \label{large}
Let $\l \in \ca_{\mu, b}^\tp$. Then $\l \sim \chi$ for some $\chi \in \ca_{\mu, b}^\tp(v)$.
\end{lem}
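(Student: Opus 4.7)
The plan is to induct on the nonnegative integer
\[
B(\l) := \#\bigl\{p(b)\s\text{-orbits } \co \subseteq \Phi_{N_v} \text{ such that } \l_\b \le -1 \text{ for every } \b \in \co\bigr\}.
\]
Since $v$ is generic, every $p(b)\s$-orbit of $\Phi$ lies entirely in one of $\Phi_{M_v}$, $\Phi_{N_v}$, or $-\Phi_{N_v}$; combined with Corollary \ref{criterion}(2), the vanishing $B(\l) = 0$ is equivalent to $\l \in \ca_{\mu, b}^\tp(v)$. The base case is immediate (take $\chi = \l$), so I assume $B(\l) > 0$ and aim to construct $\l' \in \ca_{\mu, b}^\tp$ with $\l \sim \l'$ and $B(\l') < B(\l)$; the lemma then follows.

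To produce $\l'$, I pick a bad orbit $\co \subseteq \Phi_{N_v}$ for $\l$. A combinatorial analysis of $p(b)\s$-orbits on the affine Dynkin diagram (using that $\Pi$ is the union of highest positive roots and minus simple roots, and that $\Omega \cap \JJ_b$ permutes the connected components of $\tPhi^+$) shows that, after possibly replacing $\l$ by $\o(\l)$ for some $\o \in \Omega \cap \JJ_b$ (which preserves $\sim$-equivalence by Corollary \ref{inv}), one may assume $\co = \co_{\a_0}$ for some $\a_0 \in \Pi$ with $\a_0 \notin \Phi_{M_v}$. By the argument in Lemma \ref{finite} applied in the opposite direction, this forces $W_{\co_{\tilde \a_0}}$ to be finite (otherwise $\co_{z(\tilde \a_0)}$ would exhaust an entire affine component of $\tPhi_M^+$, contradicting superbasicity of $b_M$). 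Let $\tw_0$ be the longest element of $W_{\co_{\tilde \a_0}}$; by Lemma \ref{orbit} we have $\tw_0 \in \JJ_b$, with the explicit product formula recorded there, and set $\l' := \tw_0(\l)$.

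It remains to check that $\l' \in \ca_{\mu, b}^\tp$, that $\l \sim \l'$, and that $B(\l') < B(\l)$. Since $\l_\b \le -1$ for every $\b \in \co_{\a_0}$, Corollary \ref{dom}(2) gives $\l' \in \ca_{\mu, b}^\tp$, while Lemma \ref{ge0} together with the proof of Corollary \ref{inv} yields $R(\l') = p(\tw_0) R(\l)$, hence $\l \sim \l'$. For the strict decrease of $B$, the explicit product form of $\tw_0$ in Lemma \ref{orbit} together with the identity $\l_\b + \l_{-\b} = -1$ from Lemma \ref{eta}(1) show that $\tw_0$ flips the values $\{\l_\b\}_{\b \in \co_{\a_0}}$ to become $\ge 0$, so the orbit $\co_{\a_0}$ leaves the bad set. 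Meanwhile Lemma \ref{ge0} guarantees that every orbit that was already good for $\l$ remains good for $\l'$, so $B(\l') \le B(\l) - 1$.

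The main obstacle is the reduction step that lets us assume a bad orbit has the form $\co_{\a_0}$ with $\a_0 \in \Pi$; a general $p(b)\s$-orbit inside $\Phi_{N_v}$ need not contain either a highest positive root or a minus simple root. Carrying out this reduction requires combining the permutation action of $\Omega \cap \JJ_b$ on the connected components of $\tPhi^+$ with a structural analysis of $p(b)\s$-orbits in $\Phi \setminus \Phi_{M_v}$, where the superbasicity of $b_M$ (which governs how such orbits sit with respect to the base alcove $\D$ and the simple affine roots) is what ultimately delivers a representative in $\Pi$.
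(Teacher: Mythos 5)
Your approach is a genuinely different strategy from the paper's, and the gap you flag yourself is fatal: the reduction to a bad orbit of the form $\co_{\a_0}$ with $\a_0 \in \Pi$ (possibly after conjugating by $\Omega \cap \JJ_b$) cannot actually be carried out. Since $p(\Omega)$ permutes $\Pi$ and commutes with $p(b\s)$, conjugation by $\o \in \Omega \cap \JJ_b$ preserves the property ``this $p(b\s)$-orbit equals $\pm\co_\b$ for some $\b \in \Pi$''. But there are $p(b\s)$-orbits in $\Phi_{N_v}$ that fail this property outright. For instance take $G = \PGL_6$, $\s = \mathrm{id}$, $b = \o^3$ (so $p(b) = (14)(25)(36)$, $b$ is basic but not superbasic, and $\Phi_{M_v} = \{\pm(e_1-e_4), \pm(e_2-e_5), \pm(e_3-e_6)\}$). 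The $p(b)$-orbits meeting $\Pi$ are $\{-\a_1,-\a_4\}$, $\{-\a_2,-\a_5\}$, $\{-\a_3,\th\}$, together with their negatives, but $\{e_1-e_3, e_4-e_6\}$ is another orbit in $\Phi - \Phi_{M_v}$ that is not $\pm$ any of these; if it happens to be a bad orbit for $\l$, no single simple reflection of $W^a \cap \JJ_b$ flips it. Your induction on $B(\l)$ therefore stalls. There are also two smaller problems: you cite Corollary~\ref{inv} to deduce $\l \sim \l'$, but that corollary runs in the opposite direction (it derives a chain from an equivalence, not conversely)---the right citation is Corollary~\ref{adj-red}(2); and the claim that $B(\l') < B(\l)$ needs more care because $p(\tw_0)$ does not preserve $\Phi_{N_v}$ when $\a_0 \notin \Phi_{M_v}$, so ``orbits that were good'' for $\l$ are mapped to orbits good for $\l'$ but not necessarily inside $\Phi_{N_v}$.

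The paper's proof avoids all of this with a single short argument: since $v \in Y^{p(b)\s}$, the translation $t^{nv}$ lies in $\JJ_b$, and Lemma~\ref{prod} gives $t^{nv} I t^\l K \subseteq \bigcup_{x \leq t^\l} I t^{\chi_{x,n}} K$ where $I t^{nv}xK = I t^{\chi_{x,n}} K$. For $n \gg 0$ every $\chi_{x,n}$ satisfies $(\chi_{x,n})_\a \ge 0$ for $\a \in \Phi_{N_v}$, because the value of $\chi_{x,n}$ grows like $nv$. Since only finitely many $x \leq t^\l$ occur, one $n$ works uniformly, so $t^{nv} \overline{X_\mu^\l(b)}$ lands inside some $\overline{X_\mu^{\chi}(b)}$ with $\chi \in \ca_{\mu,b}^\tp(v)$, giving $\l \sim \chi$ directly. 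If you want to keep a combinatorial flavor, you would essentially have to reprove this by applying many reflections at once---decomposing $t^{nv}$---which is exactly what Lemma~\ref{prod} packages.
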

\begin{proof}
Let $n \in \ZZ$. By Lemma \ref{prod}, $t^{n \d} I t^\l K \subseteq \cup_{x \leq t^\l} I t^{n v} x K$. Let $\chi_{x, n} \in Y$ such that $I t^{n v} x K = I t^{\chi_{x, n}} K$ for $x \leq t^\l$. Then $t^{n v} \Irr X_\mu^\l(b) \subseteq \cup_{x \leq t^\l} \Irr \overline{ X_\mu^{\chi_{x, n}}(b) }$. Thus for sufficiently large $n$ we have $(\chi_{x, n})_\a \ge 0$ for $x \leq t^\l$ and $\a \in \Phi_{N_v}$. So the statement follows.
\end{proof}

\begin{lem} \label{inv}
Let $\l, \l' \in \ca_{\mu, b}^\tp$ such that $\l \sim \l'$. Then there exists a sequence $\l = \l_0, \l_1, \dots, \l_r = \l' \in \ca_{\mu, b}^\tp$ such that $\l_i = \tw_i (\l_{i-1})$ and $R(\l_i) = p(\tw_i) R(\l_{i-1})$ for $1 \le i \le r$, where $\tw_i \in \Omega \cap \JJ_b$ or $\tw_i$ is the longest element of $W_{\co_{\tta_i}}$ for some $\a_i \in \Pi$.
\end{lem}
\begin{proof}
By assumption, there exist $C \in \Irr X_\mu^\l(b)$ and $g \in \JJ_b$ such that $g C \subseteq \overline{X_\mu^{\l'}(b)}$. Since $b \in \Omega$, $\JJ_b$ is generated by $I \cap \JJ_b$, $\Omega \cap \JJ_b$ and $W_{\co_\tta} \cap \JJ_b$ for $\a \in \Pi$ such that $W_{\co_\a}$ is finite. We may assume $g$ lies in one of the sets $I \cap \JJ_b$, $\Omega \cap \JJ_b$ and $W_{\co_\tta} \cap \JJ_b$. If $g \in I \cap \JJ_b$, then $\l =\l'$ and there is nothing to prove. If $g =\o$ for some $\o \in \Omega \cap \JJ_b$, then $\o I t^\l K/K = I t^{\o(\l)} K/K$. Hence $\l'=\o(\l)$ and $R(\l') = p(\o) R(\l)$ by Lemma \ref{omega}. Suppose $g \in W_{\co_\tta} \cap \JJ_b = \{1, \tw\}$, where $\tw$ is the unique longest element of $W_{\co_\tta}$. Then $\l'$ equals $\l$ or $\tw(\l)$ by Lemma \ref{dom} (3). So we can assume that $\l \neq \l' = \tw(\l)$ and it remains to show $R(\l') = p(\tw) R(\l)$. The statement follows from Lemma \ref{dom} (2) and Lemma \ref{ge0}.
\end{proof}

\begin{prop} \label{polar}
We have $\ca_{\mu, b}^\tp = \cup_{v' \in p(\tW \cap \JJ_b)(v)} \ca_{\mu, b}^\tp(v')$.
\end{prop}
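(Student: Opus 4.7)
The inclusion $\supseteq$ is essentially definitional: for any $v' = p(\tw)(v)$ with $\tw \in \tW \cap \JJ_b$, the linear map $p(\tw)$ preserves $Y_\RR^{p(b\s)}$ (because $\tw \in \JJ_b$ commutes with $b\s$) and preserves genericity of vectors in this subspace, so $\ca_{\mu, b}^\tp(v')$ is well-defined and contained in $\ca_{\mu, b}^\tp$ by definition.

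For the reverse inclusion, take $\l \in \ca_{\mu, b}^\tp$. By Lemma \ref{large} we can pick $\chi \in \ca_{\mu, b}^\tp(v)$ with $\l \sim \chi$, and Corollary \ref{inv} then furnishes a sequence $\chi = \chi_0, \chi_1, \ldots, \chi_r = \l$ in $\ca_{\mu, b}^\tp$ with $\chi_i = \tw_i(\chi_{i-1})$, $R(\chi_i) = p(\tw_i) R(\chi_{i-1})$, and each $\tw_i$ either in $\Omega \cap \JJ_b$ or equal to the longest element of $W_{\co_{\tta_i}}$ for some $\a_i \in \Pi$ with $W_{\co_{\a_i}}$ finite; in either case $\tw_i \in \tW \cap \JJ_b$ by Lemma \ref{orbit}. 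Whenever $\chi_i = \chi_{i-1}$, the identity $R(\chi_i) = p(\tw_i) R(\chi_{i-1})$ forces $p(\tw_i) R(\chi_{i-1}) = R(\chi_{i-1})$, and the step may be excised: the shortened sequence still satisfies the relations $\chi_{i+1} = \tw_{i+1}(\chi_{i-1})$ and $R(\chi_{i+1}) = p(\tw_{i+1}) R(\chi_{i-1})$. After removing all trivial steps, we may assume $\chi_i \neq \chi_{i-1}$ for every $i$. Set $v' = p(\tw_r \tw_{r-1} \cdots \tw_1)(v) \in p(\tW \cap \JJ_b)(v)$, noting $\Phi_{N_{v'}} = p(\tw_r \cdots \tw_1)(\Phi_{N_v})$.

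The matter reduces to proving, by induction on $i$, that $(\chi_i)_{\b_i} \ge 0$ for every $\b \in \Phi_{N_v}$, where $\b_i := p(\tw_i \cdots \tw_1)(\b)$. The base case $i = 0$ holds because $\chi \in \ca_{\mu, b}^\tp(v)$. For the inductive step, assume $(\chi_{i-1})_{\b_{i-1}} \ge 0$. If $\tw_i \in \Omega \cap \JJ_b$, Lemma \ref{eta} (3) directly gives $(\chi_i)_{\b_i} = (\tw_i(\chi_{i-1}))_{p(\tw_i)(\b_{i-1})} = (\chi_{i-1})_{\b_{i-1}} \ge 0$. If instead $\tw_i$ is the longest element of $W_{\co_{\tta_i}}$, then the reduction $\chi_i \neq \chi_{i-1}$ together with $\chi_{i-1}, \chi_i \in \ca_{\mu, b}^\tp$ allows us to apply Corollary \ref{dom} (2), which yields either $(\chi_{i-1})_\eta \ge 1$ for $\eta \in \co_{\a_i}$ or $(\chi_{i-1})_\eta \le -1$ for $\eta \in \co_{\a_i}$. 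This is exactly the hypothesis of Lemma \ref{ge0}, which applied with $\l = \chi_{i-1}$, $\g = \b_{i-1}$, and $\tw = \tw_i$ gives $(\chi_i)_{\b_i} \ge 0$. Taking $i = r$ produces $\l_\a \ge 0$ for all $\a \in \Phi_{N_{v'}}$, so $\l \in \ca_{\mu, b}^\tp(v')$.

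The main technical subtlety will be the reduction to the case $\chi_i \neq \chi_{i-1}$, which is needed in order to meet the strict dichotomy $(\chi_{i-1})_\eta \ge 1$ or $\le -1$ required by Lemma \ref{ge0} (as opposed to the weaker non-strict version in Corollary \ref{dom} (1)). That reduction hinges on the compatibility $R(\chi_i) = p(\tw_i) R(\chi_{i-1})$ coming out of Corollary \ref{inv}, which is precisely what allows excised trivial steps to be absorbed into neighbouring ones without breaking the sequence.
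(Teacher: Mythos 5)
Your proof is correct and follows the same route as the paper: Lemma \ref{large} to land in $\ca_{\mu,b}^\tp(v)$, Corollary \ref{inv} for the reduction sequence, then Lemma \ref{eta}(3) on the $\Omega$ steps and Corollary \ref{dom}(2) together with Lemma \ref{ge0} on the reflection steps to propagate the nonnegativity; the paper merely compresses your explicit induction into a one-step ``we can assume'' reduction. One small remark: the preservation of the relations $R(\chi_i) = p(\tw_i) R(\chi_{i-1})$ under excision is not actually used anywhere in the argument---the excision is sound simply because the removed step is the identity and the adjacent step's data carry over unchanged---so your final paragraph somewhat overstates the role of that compatibility.
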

\begin{proof}
Let $\l \in \ca_{\mu, b}^\tp$. By Lemma \ref{large}, there exist $v' \in p(\tW \cap \JJ_b)(v)$ and $\chi \in \ca_{\mu, b}^\tp(v')$ such that $\l \sim \chi$. By Lemma \ref{inv}, we can assume that $\chi \neq \l = \tw(\chi) \in \ca_{\mu, b}^\tp$, where (1) $\tw \in \Omega \cap \JJ_b$ or (2) $\tw$ is as in \S\ref{finite-orbit}. It suffices to show $\l \in \ca_{\mu, b}^\tp(p(\tw)(v'))$, that is, $\l_{p(\tw)(\b)} = \tw(\chi)_{p(\tw)(\b)} \ge 0$ for $\b \in \Phi_{N_{v'}}$. Notice that $\chi_\b \ge 0$ for $\b \in \Phi_{N_{v'}}$. Then the case (1) follows from Lemma \ref{eta} (3), and the case (2) follows from Lemma \ref{dom} (2) and Lemma \ref{ge0} as desired.
\end{proof}

\subsection{The action of $\tW \cap \JJ_b$ on $V^{b\s}$} \label{special} Notice that $W^a \cap \JJ_b$ preserves the affine space $V^{b\s} = \{v \in V; b\s(v) = v\}$. Via the restriction to $V^{b\s}$ we can identify $W^a \cap \JJ_b$ with an affine reflection group of $V^{b\s}$, whose affine root hyperplanes are $H_a \cap V^{b\s}$ for $a \in \tPhi^+$ with $V^{b\s} \neq H_a \cap V^{b\s} \neq \emptyset$. Moreover, $\D \cap V^{b\s}$ is an alcove for $W^a \cap \JJ_b$, with respect to which the simple affine reflections are the longest elements of $W_{\co_\tta}$ for $\a \in \Pi$ with $W_{\co_\tta}$ finite. We fix a special point $e'$ in the closure of  $\D \cap V^{b\s}$ for $W^a \cap \JJ_b$.

We recall a lemma on root systems.
\begin{lem} \label{separate}
Let $E$ be some euclidean space and let $\Sigma \subseteq E$ be a root system. Let $v_1, v_2 \in E$ be two regular points for $\Sigma$ (that is, not contained in any root hyperplane of $\Sigma$). Then there exists root hyperplanes $H_1, \dots, H_r$ separating $v_1$ from $v_2$ such that $s_{H_1} \cdots s_{H_r} (v_1)$ and $v_2$ are in the same Weyl chamber.
\end{lem}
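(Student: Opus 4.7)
The plan is to induct on the number $r$ of root hyperplanes that separate $v_1$ from $v_2$. The base case $r=0$ is immediate: $v_1$ and $v_2$ already lie in the same Weyl chamber, so the empty product works.

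For the inductive step, the idea is to strip off one hyperplane at a time by reflecting across a wall of the chamber $C_1$ containing $v_1$. First I would observe that, since $v_2$ is regular and $r \ge 1$, the point $v_2$ lies outside $\overline{C_1}$; thus some simple root $\alpha$ with respect to $C_1$ satisfies $\alpha(v_2)<0$, and the wall $H_r := H_\alpha$ of $C_1$ separates $v_1$ from $v_2$. Setting $v_1' := s_{H_r}(v_1)$, this point lies in the adjacent chamber $C_1'=s_{H_r}(C_1)$. Since two adjacent Weyl chambers lie on the same side of every root hyperplane other than their common wall, $v_1$ and $v_1'$ lie on the same side of every root hyperplane $H \ne H_r$. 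This yields that the set of root hyperplanes separating $v_1'$ from $v_2$ equals the one for $(v_1,v_2)$ with $H_r$ removed, of cardinality $r-1$.

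Applying the inductive hypothesis to $(v_1',v_2)$ produces root hyperplanes $H_1,\dots,H_{r-1}$, each separating $v_1'$ from $v_2$ and hence (by the previous paragraph) each separating $v_1$ from $v_2$, such that $s_{H_1}\cdots s_{H_{r-1}}(v_1')$ lies in the chamber of $v_2$. Then
$$
s_{H_1}\cdots s_{H_{r-1}}s_{H_r}(v_1)=s_{H_1}\cdots s_{H_{r-1}}(v_1')
$$
lies in the chamber of $v_2$, closing the induction with the ordered list $(H_1,\dots,H_r)$ of separating root hyperplanes.

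I expect no serious obstacle here; the only technicality is verifying that reflection across a wall of $C_1$ drops the separating count by exactly one. This reduces to the basic chamber-adjacency fact that $C_1$ and $C_1'$ can be joined by a short path crossing only $H_r$, so every other root hyperplane sees $v_1$ and $v_1'$ on the same side.
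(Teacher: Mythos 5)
Your proof is correct and is a genuinely different (and arguably more elementary) route than the paper's. The paper takes $v_2$ dominant without loss of generality, chooses $w$ with $w(v_1)$ dominant, fixes a reduced expression $w = s_{\alpha_r}\cdots s_{\alpha_1}$, and sets $\beta_i = s_{\alpha_1}\cdots s_{\alpha_{i-1}}(\alpha_i)$; the inversion-set identity $w = s_{\beta_1}\cdots s_{\beta_r}$ produces the hyperplanes all at once, and the fact that each $H_{\beta_i}$ separates is then verified by a direct sign computation using $\langle\beta_i,v_2\rangle>0$ and $\langle\beta_i,v_1\rangle = -\langle s_{\alpha_r}\cdots s_{\alpha_{i+1}}(\alpha_i), w(v_1)\rangle<0$. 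You instead induct on the number of separating hyperplanes, greedily reflecting $v_1$ across a wall of its own chamber that also separates from $v_2$, and using the standard fact that adjacent chambers agree in sign off their common wall; this keeps the argument entirely at the level of chamber geometry (galleries) and needs no reduced expressions or sign bookkeeping. The two constructions end up producing the same set of hyperplanes (the full separating set, of size $\ell(w)$), just assembled from opposite ends; the paper's is shorter once the reduced-expression machinery is in hand, while yours is more self-contained. One small omission: you should note that the inductive process is well-founded because the number of separating hyperplanes is finite (automatic for a finite root system, which is the setting here), but this is a triviality.
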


\begin{lem} \label{plane}
Let $e'$ be as in \S\ref{special}. Let $v \in V^{p(b\s)}_\gen$. Then for $\tw \in \tW \cap \JJ_b$ there exist affine root hyperplanes $H_1, \dots, H_r$ of $V^{b\s}$ passing through $e'$ such that

(1) $H_i$ separates $e' + v$ from $e' + p(\tw)\i(v)$ for $1 \le i \le r$;

(2) $e' + v = s_{H_1} \cdots s_{H_r} (e' + p(\tw)\i(v))$.

Moreover, $s_{H_i} = \prod_{\b \in \co_{\a_i}} s_{\tilde \b}$ for some $\a_i \in \Phi$ such that $\co_{\a_i}$ is strongly orthogonal.
\end{lem}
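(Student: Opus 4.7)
The plan is to reduce to a purely finite-dimensional statement and apply Lemma~\ref{separate} in the tangent Euclidean space at $e'$. Set $V := Y_\RR^{p(b\s)}$, so that $Y_\RR^{b\s} = e' + V$ and $p(\tw)$ preserves $V$ (it commutes with $p(b\s)$ because $\tw$ commutes with $b\s$). The restrictions $\{\a|_V : \a \in \Phi,\ \a|_V \neq 0\}$ form a finite root system $\Sigma$ on $V$; by standard folded-root-system theory the centralizer $W_0^{p(b\s)}$ of $p(b\s)$ in $W_0$ restricts to $V$ as the full Weyl group $W(\Sigma)$. Since $e'$ is a special vertex of the alcove $\D \cap Y_\RR^{b\s}$ for $W^a \cap \JJ_b$, the stabilizer of $e'$ in $W^a \cap \JJ_b$ acts on $V$ as this same $W(\Sigma)$; combined with Lemma~\ref{orbit}, its reflections are precisely the elements $\prod_{\b \in \co_\a} s_{\tilde \b}$ with $\co_\a$ strongly orthogonal.

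With this setup the argument has three parts. First, since $\tw \in \JJ_b$ gives $p(\tw) \circ p(b\s) = p(b\s) \circ p(\tw)$, we have $p(\tw) \in W_0^{p(b\s)}$ and hence $p(\tw)\i(v) \in W(\Sigma) \cdot v$. Second, both $v$ and $p(\tw)\i(v)$ are regular in $V$ for $\Sigma$: the former by the genericity of $v$; the latter because $p(\tw)$ permutes $\Phi$ and preserves the subset $\{\a \in \Phi : \a|_V = 0\}$, so that $\<\a, p(\tw)\i(v)\> = \<p(\tw)(\a), v\>$ is nonzero whenever $\a|_V \neq 0$. Third, Lemma~\ref{separate} applied in $V$ to $v$ and $p(\tw)\i(v)$ produces root hyperplanes $\bar H_1, \ldots, \bar H_r$ for $\Sigma$ separating them and satisfying $v = s_{\bar H_1} \cdots s_{\bar H_r}(p(\tw)\i(v))$; translating by $e'$ yields the required affine root hyperplanes $H_i := e' + \bar H_i$ through $e'$ in $Y_\RR^{b\s}$, and the strongly orthogonal presentation of each $s_{H_i}$ is supplied by Lemma~\ref{orbit}.

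The main technical hurdle is the identification of the reflection group stabilizing $e'$ in $W^a \cap \JJ_b$ with the Weyl group $W(\Sigma)$ of the folded root system. This is standard but depends crucially on $e'$ being a special vertex of the $\s$-fixed alcove, and it must be matched with the classification of affine reflections in $W^a \cap \JJ_b$ provided by Lemma~\ref{orbit}. Once this identification is in hand, the rest of the proof is an unwinding of definitions and a direct invocation of Lemma~\ref{separate}.
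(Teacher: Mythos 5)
Your route genuinely differs from the paper's. The paper applies Lemma~\ref{separate} to obtain hyperplanes through $e'$ placing $e'+v$ and $s_{H_1}\cdots s_{H_r}(e'+p(\tw)\i(v))$ in the \emph{same Weyl chamber} (which is all Lemma~\ref{separate} gives), and then derives equality by contradiction: if the two points were distinct, they would be distinct $W_0$-conjugates of $v$, so some linear root hyperplane $H_\a$ separates them; since $e'$ is special, a parallel affine root hyperplane passes through $e'$ and separates the two points, contradicting ``same chamber.'' This is elementary and self-contained. You instead propose to deduce equality via the folded root system: $p(\tw) \in W_0^{p(b\s)}$ puts $v$ and $p(\tw)\i(v)$ in the same $W(\Sigma)$-orbit, and then ``same orbit, same chamber, both regular $\Rightarrow$ equal.''

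The gap is the one you yourself flag, and it is genuine rather than a routine citation. The identification of the $e'$-stabilizer in $W^a \cap \JJ_b$, restricted to $V$, with the full folded Weyl group $W(\Sigma) = W_0^{p(b\s)}|_V$ bundles two separate claims: (a) $W_0^{p(b\s)}|_V = W(\Sigma)$, which is Springer's theorem and is citable; and (b) the linear-part map $W^a \cap \JJ_b \to W_0^{p(b\s)}$ is surjective, so that (using that $e'$ is special) the stabilizer realizes all of $W(\Sigma)$. Claim (b) is not abstract nonsense: it uses that $b$ is basic, and it has to be reconciled with Lemma~\ref{orbit}'s classification of reflections in $W^a\cap\JJ_b$ --- in case (1) of that lemma, the folded reflection $s_{\a|_V}$ is realized via $\co_{\a+\a^d}$ rather than $\co_\a$, and one has to check that every reflection of $W(\Sigma)$ arises this way for an affine root through $e'$. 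Without (b), the hyperplanes $\bar H_i$ produced by Lemma~\ref{separate} need not translate to affine root hyperplanes of $Y_\RR^{b\s}$ through $e'$, and the ``Moreover'' part does not follow. The paper's contradiction argument sidesteps (b) entirely, which is presumably why it is written that way. A smaller imprecision: Lemma~\ref{separate} does not by itself give $v = s_{\bar H_1}\cdots s_{\bar H_r}(p(\tw)\i(v))$; the passage from ``same chamber'' to equality requires regularity plus the orbit fact, and that step should be made explicit.
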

\begin{proof}
First note that $V^{p(b\s)}$ is the underlining vector space of the affine space $V^{b\s}$. As $p(\tw)$ preserves $V^{p(b\s)}$, we see that $v, p(\tw)\i(v) \in V^{p(b\s)}_\gen$. Hence $e'+v, e'+p(\tw)(v)$ are regular points for the root system associated to $W^a \cap \JJ_b$ with origin $e'$. By Lemma \ref{separate}, there are affine root hyperplanes $H_1, \dots, H_r$ of $V^{b\s}$ (passing through $e'$) separating $e' + v$ from $e' + p(\tw)\i(v)$ such that $s_{H_1} \cdots s_{H_r} (e' + p(\tw)\i(v))$ and $e' + v$ are contained in the same Weyl chamber of $V^{b\s}$ with origin $e'$. Suppose $e' + v \neq s_{H_1} \cdots s_{H_r} (e' + p(\tw)\i(v))$, that is, $v \neq p(s_{H_1} \cdots s_{H_r} \tw\i) (v) \in W_0(v)$. Then there exists $\a \in \Phi$ whose root hyperplane $H_\a$ separates $v$ from $p(s_{H_1} \cdots s_{H_r} \tw\i) (v)$. In particular, $H_\a \cap V^{b\s}$ is an affine root hyperplane for $W^a \cap \JJ_b$. As $e'$ is a special point for $W^a \cap \JJ_b$, there exists some affine root hyperplane $H$ of $V^{b\s}$ passing through $e'$ which is parallel to $H_\a \cap V^{b\s}$. Then $H$ separates $e' + v$ from $s_{H_1} \cdots s_{H_r} (e' + p(\tw)\i(v))$, contradicting our assumption. So we have $e' + v = s_{H_1} \cdots s_{H_r} (e' + p(\tw)\i(v))$ as desired.

Now we show the ``Moreover'' part. By Lemma \ref{orbit}, there exists $a_i = (\a_i, k_i) \in \tPhi^+$ such that $\co_{\a_i}$ is strongly orthogonal and $s_{H_i} = \prod_{a \in \co_{a_i}} s_a$ by viewing $s_{H_i}$ as an element of $\tW \cap \JJ_b$. Notice that $e' \in H_i = H_{a_i} \cap V^{b\s}$, that is, $a_i(e') = -\<\a_i, e'\> + k_i = 0$. As $e'$ lies in the closure of $\D$, we have $|\<\a_i, e'\>| \le 1$, which together with the inclusion $a_i \in \tPhi^+$ implies that either $\a_i > 0$ and $k_i = 1$ or $\a_i < 0$ and $k_i = 0$. In either case, $a_i = \tta_i$ and the proof is finished.
\end{proof}

\subsection{Characterization of the equivalence relation} Let $v$, $z$, $M$, $b_M$ be as in \S\ref{generic}. We give an explicit description of the equivalence relation $\sim$ on $\ca_{\mu, b}^\tp(v)$.
\begin{lem} \label{Omega}
Let $\l, \l' \in \ca_{\mu, b}^\tp(v)$ such that $\l \sim \l'$. Then $\l'= y(\l)$ for some $y \in \Omega_{M_b} \cap \JJ_b$.
\end{lem}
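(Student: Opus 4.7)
The plan is to apply Corollary \ref{inv} to express $\l' = y(\l)$ with $y = \tw_r \cdots \tw_1 \in \tW \cap \JJ_b$, where each $\tw_i$ lies in $\Omega \cap \JJ_b$ or is the longest element of $W_{\co_{\tta_i}}$ for some $\a_i \in \Pi$ with $W_{\co_{\tta_i}}$ finite, and then to leverage $\l, \l' \in \ca_{\mu,b}^\tp(v)$ to force $y \in \Omega_{M_v} \cap \JJ_b$. Set $\l_i = \tw_i \cdots \tw_1(\l)$ and $v_i = p(\tw_i \cdots \tw_1)(v)$.

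First I would track the ``$v$-chamber'' of each $\l_i$: by induction on $i$, $\l_i \in \ca_{\mu,b}^\tp(v_i)$. When $\tw_i \in \Omega \cap \JJ_b$ this follows from Lemma \ref{eta}(3) via the identity $\o(\l)_\b = \l_{p(\o)\i(\b)}$. When $\tw_i$ is the longest element of $W_{\co_{\tta_i}}$ (and acts nontrivially on $\l_{i-1}$, which we may arrange by dropping trivial steps), Corollary \ref{dom}(2) supplies the dichotomy $\l_{i-1,\b} \ge 1$ for all $\b \in \co_{\a_i}$ or $\l_{i-1,\b} \le -1$ for all $\b \in \co_{\a_i}$, and Lemma \ref{ge0} then propagates $\l_{i-1,\g} \ge 0$ to $\l_{i,p(\tw_i)(\g)} \ge 0$. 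Taking $i = r$ yields $\l' \in \ca_{\mu,b}^\tp(v_r)$. Combined with the hypothesis $\l' \in \ca_{\mu,b}^\tp(v)$ and Lemma \ref{eta}(1), any $\a \in \Phi$ with $\a \in \Phi_{N_v}$ and $-\a \in \Phi_{N_{v_r}}$ would force $-1 = \l'_\a + \l'_{-\a} \ge 0$, a contradiction. Hence $v$ and $v_r = p(y)(v)$ lie in the same Weyl chamber of $Y_\RR^{p(b)\s}$ for the relative Weyl group (the image of $p(\tW \cap \JJ_b) \cap W_0$ in $\GL(Y_\RR^{p(b)\s})$); since this relative Weyl group acts simply transitively on its chambers and $v$ is generic, I conclude $v_r = v$, so $p(y) \in W_{M_v}$ and $y \in \tW_{M_v} \cap \JJ_b$.

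To upgrade $y \in \tW_{M_v} \cap \JJ_b$ to $y \in \Omega_{M_v} \cap \JJ_b$, I would conjugate to the standard Levi $M = {}^z M_v$: $z y z\i \in \tW_M \cap \JJ_{b_M}$, with $b_M$ superbasic in $M$, and by Corollary \ref{criterion} both $z(\l)$ and $z(\l') = z y z\i(z(\l))$ lie in $\ca^{M,\tp}_{z(\l^\natural), b_M}$. Appealing to the superbasic case (Theorem \ref{supb-minu}), the parametrization $\bl \mapsto \bl^\flat$ identifies the $\sim$-equivalence classes on $\ca^{M,\tp}$ with $\Omega_M \cap \JJ_{b_M}$-orbits, forcing $z y z\i \in \Omega_M \cap \JJ_{b_M}$ and hence $y \in \Omega_{M_v} \cap \JJ_b$. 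The main obstacle will be this final reduction: carefully aligning the $\bl^\flat$-parametrization of top irreducible components in the superbasic setting with the desired $\Omega_M \cap \JJ_{b_M}$-orbit description of $\sim$ on $\ca^{M,\tp}$, so that an element of $\tW_M \cap \JJ_{b_M}$ relating two elements of $\ca^{M,\tp}$ in the same $\sim$-class can be replaced by an element of $\Omega_M \cap \JJ_{b_M}$.
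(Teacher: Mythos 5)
Your approach diverges substantially from the paper's after the initial appeal to Corollary \ref{inv}. The paper takes the single composite $\tw$ from Corollary \ref{inv} (remembering the crucial equality $R(\l')=p(\tw)R(\l)$), considers the possibility $p(\tw)(v)\neq v$ head-on, and then explicitly \emph{repairs} $\tw$: Lemma \ref{plane} produces reflections $s_{H_1},\dots,s_{H_r}$ of $W^a\cap\JJ_b$ (through a chosen special point $e'$) whose product corrects the action on $v$, and for each $H_i$ the paper builds $x_i=t^{\psi_i}s_{H_i}\in W^a\cap\JJ_b$ satisfying $x_i(\l)=\l$ and $p(x_i)=p(s_{H_i})$. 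Showing $x_i(\l)=\l$ is precisely where $R(\l')=p(\tw)R(\l)$ together with $\l'\in\ca_{\mu,b}(v)$ enters, via the intermediate claim that $\l_\b$ is constant for $\b\in\co_{-\a_i}$. Setting $y=\tw x_r\cdots x_1$ then gives $y(\l)=\l'$, $p(y)(v)=v$, and — by the observation at the start of the paper's proof — $y\in\Omega_{M_v}\cap\JJ_b$.

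Your proposal instead tries to show that the $\tw$ supplied by Corollary \ref{inv} \emph{already} satisfies $p(\tw)(v)=v$. The inductive propagation $\l_i\in\ca_{\mu,b}^\tp(v_i)$ via Corollary \ref{dom}(2) and Lemma \ref{ge0} is sound (it is in fact the argument used in Proposition \ref{polar}), and the Lemma \ref{eta}(1) step correctly shows $v$ and $v_r$ lie in the \emph{same chamber} of $Y_\RR^{p(b)\s}$. But the jump from ``same chamber'' to ``$v_r=v$'' relies on the unproved assertion that the image of $p(\tW\cap\JJ_b)$ in $\GL(Y_\RR^{p(b)\s})$ acts \emph{simply} transitively on chambers, i.e., that it coincides with the reflection group generated by restricted-root reflections and contains no nontrivial ``diagram automorphism'' part. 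Since $p(b)$ is in general not a diagram automorphism, and since $p(\Omega\cap\JJ_b)$ can a priori act on the restricted root system by chamber-preserving nontrivial maps, this is exactly the kind of structural fact the paper avoids assuming by constructing $y$ directly. As it stands, your argument only yields that $p(y)$ stabilizes a chamber, not that it fixes $v$; this is the genuine gap. Finally, your last paragraph (the ``superbasic'' upgrade from $\tW_{M_v}\cap\JJ_b$ to $\Omega_{M_v}\cap\JJ_b$) is both unnecessary — the paper's opening observation shows $p(y)(v)=v$ together with $y\in\JJ_b$ already forces $y\in\Omega_{M_v}\cap\JJ_b$ — and circular in spirit: the identification of $\sim$ with $\sim_M$ used in the proof of Proposition \ref{minu} is itself established via Lemma \ref{Omega}, and you have not independently shown $z(\l)\sim_M z(\l')$, which that step quietly requires.
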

\begin{proof}
First note that it suffices to find an element $y \in \tW \cap \JJ_b$ such that $p(y)(v) = v$ and $\l'=y(\l)$. Indeed, the conditions $p(y)(v) = v$ and $y \in \JJ_b$ imply that $y \in \tW_{M_b}$ and $y(V^{b\s}) = V^{b\s}$. Noticing that $V^{b\s} \subseteq \D_{M_b}$ (since $V^{b\s} \cap \D \neq \emptyset$), we have $y(\D_{M_b}) = \D_{M_b}$ and hence $y \in \Omega_{M_b}$.

By Lemma \ref{inv}, there is $\tw \in \tW \cap \JJ_b$ such that $\l' = \tw(\l)$ and $R(\l') = p(\tw) R(\l)$. If $p(\tw)(v) = v$, the statement follows as in the above paragraph. Suppose $p(\tw)(v) \neq v$. Let $e'$, $H_i$ and $\a_i$ for $1 \le i \le r$ be as in Lemma \ref{plane}. We construct $x_i \in W^a \cap \JJ_b$ such that $p(x_i)=p(s_{H_i})$ and $x_i(\l)=\l$ as follows.

As $H_i$ separates $e' + v$ from $e' + p(\tw)\i(v)$, without loss of generality we may assume that $$\<\a_i, v\> < 0 < \<\a_i, p(\tw)\i(v)\> = \<p(\tw)(\a_i), v\>.$$ Let $\a \in \co_{-\a_i}$. As $v \in V^{p(b\s)}$ and that $p(\tw)$ commutes with $p(b\s)$, we have $$\<p(\tw)(\a), v\> = \<p(\tw)(-\a_i), v\>  < 0.$$ So $-p(\tw)(\a) \in \Phi_{N_v}$. Moreover, as $\l' \in \ca_{\mu, b}(v)$, we have $\l_{-p(\tw)(\a)}' \ge 0$ and $$\l_{p(\tw)(\a)}' = -\l_{-p(\tw)(\a)}' - 1 \le -1.$$ By definition, $p(\tw)(\a) \notin R(\l')$ and hence $R(\l') \cap p(\tw) \co_{-\a_i} = \emptyset$. Therefore, $R(\l) \cap \co_{-\a_i} = \emptyset$ as $R(\l') = p(\tw) R(\l)$.

We claim that $\l_\b$ is invariant for $\b \in \co_{-\a_i} \subseteq \Phi_{N_v}$. Otherwise, there exists $\xi \in \co_{-\a_i}$ such that $\<\xi, \l^\natural\> = \l_{\xi\i} - \l_\xi \neq 0$ (see Lemma \ref{lam}). Since $\l^\natural$ is minuscule and $$\sum_{\b \in \co_{-\a_i}} \<\b, \l^\natural\> = \sum_{\b \in \co_{-\a_i}} \l_{\b\i} - \l_\b = 0,$$ there exists $\g \in \co_{-\a_i}$ such that $\<\g, \l^\natural\> = \l_{\g\i} - \l_\g = -1$. On the other hand, we have $\l_{\g\i} \ge 0$ since $\l \in \ca_{\mu, b}^\tp(v)$ and $\g\i \in \co_{-\a_i} \subseteq \Phi_{N_v}$. So $\g \in R(\l)$, which contradicts that $R(\l) \cap \co_{-\a_i} = \emptyset$. The claim is proved.

Let $c_i = \l_\b = -\l_{-\b}-1 \in \ZZ$ for $\b \in \co_{-\a_i}$, which is a constant by the above claim. Let $\psi_i = \sum_{\d \in \co_{\a_i}} \l_\d \d^\vee = (-c_i-1) \sum_{\d \in \co_{\a_i}} \d^\vee$ and $x_i=t^{\psi_i} s_{H_i}$. Then $p(x_i) = p(s_{H_i})$ and $t^{\psi_i} \in W^a \cap \JJ_b$. Moreover, by Lemma \ref{eta} (2) and that $\co_{\a_i}$ is orthogonal (see Lemma \ref{plane}) we have $$x_i(\l) = \psi_i + (\prod_{\d \in \co_{\a_i}} s_{\tilde \d}) (\l) = \psi_i + \l - \sum_{\d \in \co_{\a_i}} \l_\d \d^\vee = \l.$$ Thus $x_i$ satisfies our requirements.

Let $y = \tw x_r \cdots x_1 \in \tW \cap \JJ_b$. Then it follows that $y(\l)=\tw(\l)=\l'$ and $p(y)(v) = p(\tw) p(s_{H_r}) \cdots p(s_{H_1})(v)=v$ as desired.
\end{proof}

\begin{lem} \label{equiv}
Let $\l \in \ca_{\mu, b}^\tp(v)$ and $\tw \in \Omega_{M_b} \cap \JJ_b$. Then $\tw(\l) \in \ca_{\mu, b}$. Moreover, if $\tw(\l) \in \ca_{\mu, b}(v)$, then $\tw(\l) \in \ca_{\mu, b}^\tp(v)$ and $\l \sim \tw(\l)$.
\end{lem}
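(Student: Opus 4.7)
The first assertion is nearly immediate: since $\tw \in \tW \cap \JJ_b$, Lemma \ref{natural} gives $\tw(\l)^\natural = p(\tw)(\l^\natural)$. As $\l \in \ca_{\mu, b}^\tp$ implies $\l^\natural$ is a $W_0$-conjugate of $\mu$, so is $\tw(\l)^\natural$, and by Proposition \ref{dim} we conclude $\tw(\l) \in \ca_{\mu, b}$.

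Now assume $\tw(\l) \in \ca_{\mu, b}(v)$. To show $\tw(\l) \in \ca_{\mu, b}^\tp(v)$ I would verify the two conditions of Corollary \ref{criterion}. Condition (2) is automatic: genericity of $v$ in $Y_\RR^{p(b)\s}$ forces each orbit $\co_\a \subseteq \Phi - \Phi_{M_v}$ to lie entirely in $\Phi_{N_v}$ or entirely in $-\Phi_{N_v}$, and the hypothesis $\tw(\l) \in \ca_{\mu, b}(v)$ together with Lemma \ref{eta}(1) gives $\tw(\l)_\b \ge 0$ or $\tw(\l)_\b \le -1$ uniformly along the orbit. For condition (1), observe that $z\tw z\i \in \Omega_M \cap \JJ_{b_M}$: this uses $\tw \in \Omega_{M_v}$ combined with $z(\Delta_{M_v}) = \Delta_M$, $z(\Phi_{M_v}) = \Phi_M$, and $b_M = zb\s(z)\i$. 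Apply Lemma \ref{omega} inside $M$ to $z(\l) \in \ca^{M,\tp}_{z(\l^\natural), b_M}$ (which holds by Corollary \ref{criterion} applied to $\l$) to obtain $R^M_{z(\l^\natural), b_M}(z(\tw(\l))) = p(z\tw z\i)R^M_{z(\l^\natural), b_M}(z(\l))$. This gives equality of cardinalities, and since $z(\l^\natural)$ and $z(\tw(\l)^\natural) = p(z\tw z\i)(z(\l^\natural))$ are $\Omega_M$-equivalent hence have the same $M$-dominant conjugate, the dimension formula yields $\dim X^M_{z(\tw(\l)^\natural)}(b_M) = \dim X^M_{z(\l^\natural)}(b_M)$, giving condition (1).

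For $\l \sim \tw(\l)$, I would choose a lift $\dot{\tw} \in N_T(\brF) \cap \JJ_b$ (which exists since $\tw \in \tW \cap \JJ_b$ and $T(\brF)$-factors can be adjusted via Lang's theorem) and study the action on $X_\mu^\l(b)$. Since $\l \in \ca_{\mu, b}$, the basepoint $t^\l K$ lies in $X_\mu^\l(b)$, and by smoothness (Proposition \ref{dim}(1)) its irreducible components are pairwise disjoint; let $C_0$ be the unique component containing $t^\l K$. Observe $\dot{\tw}(t^\l K) = t^{\tw(\l)} K$ lies in $X_\mu^{\tw(\l)}(b)$. The core step is to show that an open dense subset of $\dot{\tw}(C_0)$ lies in $X_\mu^{\tw(\l)}(b) \subseteq I t^{\tw(\l)} K/K$. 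Using the Iwahori decomposition $I = I_{M_v} \cdot (I \cap N_v(\brF)) \cdot (I \cap N_v^-(\brF))$ compatible with $P_v = M_v N_v$, conjugation by $\dot{\tw}$ preserves the $I_{M_v}$-factor (since $\tw \in \Omega_{M_v}$) and permutes the $N_v^\pm$-factors via $p(\tw) \in W_{M_v}$, with valuation shifts controlled by $\tw = t^\chi p(\tw)$. The hypotheses $\l, \tw(\l) \in \ca_{\mu, b}^\tp(v)$ match these shifts exactly, forcing the generic translate to land in $I t^{\tw(\l)} K/K$. Once this matching is established, $\dot{\tw}(\overline{C_0})$ is a top-dimensional irreducible component of $\overline{X_\mu^{\tw(\l)}(b)}$, and since $\dot{\tw} \in \JJ_b$ this gives $\l \sim \tw(\l)$.

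The main obstacle is the valuation analysis in the final paragraph: verifying that the generic image $\dot{\tw} \cdot X_\mu^\l(b)$ lands in the target stratum $X_\mu^{\tw(\l)}(b)$ rather than in a Bruhat-smaller stratum $X_\mu^{x(\l)}(b)$ for some $x < \tw$, as permitted by the coarse bound $\dot{\tw} I t^\l K \subseteq \bigcup_{x \le \tw} I t^{x(\l)} K$ from Lemma \ref{prod}. It is precisely the condition $\tw(\l) \in \ca_{\mu, b}(v)$, controlling the signs of $\tw(\l)_\b$ for $\b \in \pm\Phi_{N_v}$, that prevents this drift by aligning the root-subgroup valuations in $\dot{\tw} I \dot{\tw}\i$ with those in $I$ after translation by $t^{\tw(\l)}$.
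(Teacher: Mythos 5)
Your proof of the first assertion is correct and matches the paper: $\tw(\l)^\natural = p(\tw)(\l^\natural)$ by Lemma~\ref{natural}, so $\tw(\l)^\natural$ is $W_0$-conjugate to $\mu$ and Proposition~\ref{dim} gives $\tw(\l)\in\ca_{\mu,b}$. Your verification of condition~(1) of Corollary~\ref{criterion} via the conjugate $z\tw z\i\in\Omega_M\cap\JJ_{b_M}$ and Lemma~\ref{omega} applied inside $M$ is a mild repackaging of the paper's argument (which applies Lemma~\ref{omega} on the $G$-side together with Lemma~\ref{red}); both work and give $|R(\tw(\l))\cap\Phi_{M_v}|=|R(\l)\cap\Phi_{M_v}|$, hence $\dim X^{z(\tw(\l)),M}_{z(\tw(\l)^\natural)}(b_M)=\dim X^M_{z(\tw(\l)^\natural)}(b_M)$. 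Condition~(2) is, as you say, immediate from $\tw(\l)\in\ca_{\mu,b}(v)$.

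The gap you yourself flag in the final paragraph is a real one, and the specific mechanism you propose to close it does not work. Write $\tw = t^\psi p(\tw)$. Since $\l\in\ca_{\mu,b}(v)$, the $\overline N_v$-factor of $I$ is absorbed into $t^\l K$, so $\tw I t^\l K = I_{M_v}\,(\tw I_{N_v}\tw\i)\,t^{\tw(\l)}K$. The problem is the middle factor: $\tw I_{N_v}\tw\i$ contains subgroups $U_\gamma(t^k\co_\brF)$ with $\gamma\in\Phi_{N_v}$ and $k$ possibly much smaller than $\e_\gamma$, and the translation $t^{\tw(\l)}K$ absorbs $U_\gamma(t^k\co_\brF)$ only when $k\ge\langle\gamma,\tw(\l)\rangle$. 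But $\tw(\l)\in\ca_{\mu,b}(v)$ forces $\tw(\l)_\gamma\ge 0$, i.e., $\langle\gamma,\tw(\l)\rangle\ge 0$, which is the \emph{wrong} sign to rescue a negative $k$. So the containment $\dot\tw\,X^\l_\mu(b)\subseteq I t^{\tw(\l)}K/K$ genuinely fails in general; by Lemma~\ref{prod} the image only lands in $\cup_{x\leq\tw}It^{x(\l)}K/K$, and the conditions on $\l,\tw(\l)$ do not by themselves rule out the lower strata $x<\tw$.

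What is missing is the translation trick that the paper uses: since $t^{nv}\in\JJ_b$ and $I t^\eta K/K = I_{M_v}I_{N_v}t^\eta K/K$ for any $\eta\in\ca_{\mu,b}(v)$, one has $t^{nv}X^\chi_\mu(b)\subseteq X^{nv+\chi}_\mu(b)$ for all $n\ge 0$, giving $\chi\sim nv+\chi$. Then one picks $n$ so large that $\tw\,t^{nv}I_{N_v}t^{-nv}\subseteq I_{N_v}\tw$ — the large $n$ pushes the valuations of $I_{N_v}$ far enough into the positive range that conjugation by $\tw$ can no longer make them negative — and computes $\tw t^{nv}X^\l_\mu(b)\subseteq I_{M_v}I_{N_v}\tw\,t^{nv+\l}K/K = I t^{nv+\tw(\l)}K/K$. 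This yields $\l\sim nv+\tw(\l)\sim\tw(\l)$. Without the $t^{nv}$-shift the argument does not close, and I do not see how your proposed ``match the valuation shifts'' step could be made to work, since the shifts do not in fact match.
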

\begin{proof}
Let $\chi = \tw(\l)$. By Lemma \ref{natural}, $\chi^\natural \in W_{M_b}(\l^\natural)$ and hence $\chi \in \ca_{\mu, b}$ by Proposition \ref{dim}. As $\tw \in \Omega_{M_b} \cap \JJ_b$, it follows the same way as Lemma \ref{omega} that $p(\tw) (R(\l) \cap \Phi_{M_b}) = R(\chi) \cap \Phi_{M_b}$. Combining Proposition \ref{dim} with Lemma \ref{red} we have \begin{align*} \tag{a} \dim X_{z(\chi^\natural)}^{z(\chi), M}(b_M) = |R(\chi) \cap \Phi_{M_b}| = |R(\l) \cap \Phi_{M_b}| = \dim X_{z(\l^\natural)}^M(b_M) = \dim X_{z(\chi^\natural)}^M(b_M),\end{align*} where the third equality follows from Corollary \ref{criterion} (1) by noticing that $\l \in \ca_{\mu, b}^\tp$, and the last one follows from that $z(\chi^\natural), z(\l^\natural)$ are conjugate by $W_M$.

Suppose $\chi \in \ca_{\mu, b}(v)$. Then $\chi_\b \ge 0$ and $\chi_{-\b} = -\chi_\b -1 \le -1$ for $\b \in \Phi_{N_v}$, which together with (a) implies that $\chi \in \ca_{\mu, b}^\tp$ by Corollary \ref{criterion}.

For $\eta \in \ca_{\mu, b}(v)$ we have $- \<\b, \eta\> \ge \eta_{-\b} \ge 0$ for $\b \in \Phi_{N_{-v}} = - \Phi_{N_v}$. Then $$I t^\eta K / K = I_{M_b} I_{N_v} I_{N_{-v}} t^\eta K / K = I_{M_b} I_{N_v} t^\eta K / K,$$ which implies that $t^{n v} I t^\eta K / K \subseteq I t^{n v + \eta} K / K$ for $n \in \ZZ_{\ge 0}$. In particular, $t^{nv} X_\mu^{\chi}(b) \subseteq X_\mu^{n v +\chi}(b)$ and hence $\chi \sim n v + \chi$ for $n \in \ZZ_{\ge 0}$. Choose $n$ sufficiently large so that $\tw t^{n v} I_{N_v} t^{-n v} \subseteq I_{N_v} \tw$. Then \begin{align*} \tw t^{nv} X_\mu^\l(b) &\subseteq \tw t^{n v} I t^\l K / K \\ &= \tw t^{n v}  I_{M_b} I_{N_v} t^\l K / K \\ &= I_{M_b} \tw (t^{n v} I_{N_v} t^{-n v}) t^{n v + \l} K /K \\ &\subseteq I_{M_b} I_{N_v} \tw t^{n v + \l} K /K \\ &\subseteq I t^{n v + \chi} K / K.\end{align*} Therefore, $\tw t^{nv} X_\mu^\l(b) \subseteq X_\mu^{n v + \chi}(b)$ and hence $\l \sim n v + \chi \sim \chi$ as desired.
\end{proof}

Combining Lemma \ref{Omega} with Lemma \ref{equiv} we have
\begin{cor} \label{descrp}
Let $\l, \l' \in \ca_{\mu, b}^\tp(v)$. Then $\l \sim \l'$ if and only if $\l' = \tw(\l)$ for some $\tw \in \Omega_{M_b} \cap \JJ_b$.
\end{cor}

\subsection{End of the proof} Let $v$, $z$, $M$, $b_M$ be as in \S\ref{generic}. Recall that $I_{\mu, M}$ is the set of $W_M$-orbits of $W_0(\mu)$, and $I_{\mu, b_M, M}=\{\eta \in I_{\mu, M}; \k_M(t^\eta) = \k_M(b_M)\}$. Let $\tilde \ca_{\mu, b}^\tp(v)$ denote the set of equivalence classes of $\ca_{\mu, b}^\tp(v)$ with respect to $\sim$ defined in \S\ref{equiv-relation}. Similarly, we can define an equivalence relation $\sim_M$ on $\ca_{\eta, b_M}^{M, \tp}$ for $\eta \in I_{\mu, b_M, M}$, and denote by $\tilde \ca_{\eta, b_M}^{M, \tp}$ the set of corresponding equivalence classes. As $b_M$ is superbasic in $M(\brF)$, we have $\chi \sim_M \chi' \in \ca_{\eta, b_M}^{M, \tp}$ if and only if $\chi'=\tw(\chi)$ for some $\tw \in \Omega_M \cap \JJ_{b_M}^M$.
\begin{proof} [Proof of Proposition \ref{minu}]
We show that there are bijections $$\JJ_b \backslash \Irr^\tp X_\mu(b) \overset {\Psi_1} \longleftarrow \tilde \ca_{\mu, b}^\tp(v)  \overset {\Psi_2} \longrightarrow \sqcup_{\eta \in I_{\mu, b_M, M}} \tilde \ca_{\eta, b_M}^{M, \tp},$$ where $\Psi_1$ and $\Psi_2$ are given by $\l \mapsto \JJ_b \overline{ \Irr X_\mu^\l(b) }$ and $\l \mapsto z(\l)$ respectively. Indeed, by Proposition \ref{dim} and Lemma \ref{large} we see that $\Psi_1$ is bijective.

Let $\l \in \ca_{\mu, b}^\tp(v)$. By Corollary \ref{criterion}, $z(\l) \in \ca_{z(\l^\natural), b_M}^{M, \tp}$ and $z(\l^\natural) \in I_{\mu, b_M, M}$. Moreover, by Lemma \ref{natural} and Corollary \ref{descrp} we deduce that $$\l \sim \l' \in \ca_{\mu, b}^\tp(v) \Leftrightarrow z(\l) \sim_M z(\l') \in \ca_{z(\l^\natural), b_M}^{M, \tp}.$$ So $\Psi_2$ is well defined. On the other hand, let $\chi \in \ca_{\eta, b_M}^{M, \tp}$ with $\eta \in I_{\mu, b_M, M}$. By Corollary \ref{criterion} and \ref{descrp}, the map $\chi \to n v + z\i(\chi)$ with $n \gg 0$ induces the inverse map of $\Psi_2$. So $\Psi_2$ is also bijective.

Therefore, \begin{align*} |\JJ_b \backslash \Irr^\tp X_\mu(b)| &= \sum_{\eta \in I_{\mu, b_M, M}} |\tilde \ca_{\eta, b_M}^{M, \tp}|; \\ &= \sum_{\eta \in I_{\mu, b_M, M}} \dim V_\eta^{\widehat M} (\ul_M(b_M)) \\ &= \sum_{\eta \in I_{\mu, M}} \dim V_\eta^{\widehat M} (\ul_M(b_M)) \\ &= \dim V_\mu(\ul_M(b_M)) \\ &= \dim V_\mu(\ul_G(b)),\end{align*} where the second equality follows from \cite[Theorem 1.5]{HV}; the fourth one follows from that $V_\mu =  \oplus_{\eta \in I_{\mu, M}} V_\eta^{\widehat M}$ as $\mu$ is minuscule. The proof is finished.
\end{proof}

\section{The stabilizer in $\JJ_b$} \label{sec-stab}
In this section, we give an algorithm to compute the stabilizer $N_{\JJ_b}(C)$ of $C \in \Irr^\tp X_\mu(b)$ in $\JJ_b$.

\subsection{Reduction to the adjoint case} Let $G_\ad$ be the adjoint quotient of $G$. By \cite[\S 2]{CKV}, the natural projection $f: G \to G_\ad$ induces a Cartesian square
\begin{align*}
\xymatrix{
  X_\mu(b) \ar[d] \ar[r]^f & X_{\mu_\ad}(b_\ad) \ar[d] \\
  \pi_1(G) \ar[r]^f & \pi_1(G_\ad),   }
\end{align*}
where the vertical maps are the natural projections; $\mu_\ad$ and $b_\ad$ are the images of $\mu$ and $b$ under $f$ respectively. In particular, the stabilizer $N_{\JJ_b}(C)$ can be computed from the stabilizer $N_{\JJ_{b_\ad}}(C_\ad)$ of $C_\ad = f(C)$ in $\JJ_{b_\ad}$ via the following natural Cartesian square
\begin{align*}
\xymatrix{
  N_{\JJ_b}(C) \ar[d] \ar[r]^f & N_{\JJ_{b_\ad}}(C_\ad) \ar[d] \\
  \JJ_b^0 \ar[r]^f & \JJ_{b_\ad},   }
\end{align*}
where the vertical maps are the natural inclusions, and $\JJ_b^0$ is the kernel of the natural projection $\JJ_b \to \pi_1(G)$. Thus we can assume $G$ is adjoint and simple.

\subsection{Reduction to the basic case}\label{basic-stab} Now we show how to pass to the case where $b$ is basic. Let $P = M N$ and $\b: X_\mu(b) \to \Gr_M$ be as in \S \ref{Levi} such that $M$ is the centralizer of $\nu_G(b)$. In particular, $\JJ_b = \JJ_b^M$. Let $\eta \in I_{\mu, b, M}$ such that $X_\eta^M(b)$ contains an open dense subset of $\b(C)$. Let $$C^M = \overline{ \b(C) \cap X_\eta^M(b) } \subseteq X_\eta^M(b).$$ By Proposition \ref{relative} (1) and (4), $C = \overline{ X_\mu^{Z^N, C^M}(b)}$ for some $Z^N \in  \Sigma_{\mu, \eta}^N$. Note that $j X_\mu^{Z^N, C^M}(b) = X_\mu^{Z^N, j C^M}(b)$ for $j \in \JJ_b^M = \JJ_b$. So we have $N_{\JJ_b}(C) = N_{\JJ_b^M}(C^M)$.

\subsection{Reduction to the minuscule case} \label{minu-subsec} Assume $b$ is basic. If $G$ has no nonzero minuscule cocharacters, then $b$ is unramified and $N_{\JJ_b}(C)$ is determined in \cite[Theorem 4.4.14]{XZ}. Otherwise, by Lemma \ref{appear}, there exists a dominant minuscule cocharacter $\bmu \in Y^d$ for some $d \in \ZZ_{\ge 1}$ such that $\BB_\mu^{\widehat G}$ occurs in $$\BB_\bmu^{\widehat G} = \BB_{\mu_1}^{\widehat G} \otimes \cdots \otimes \BB_{\mu_d}^{\widehat G}.$$ Let $X_\bmu(\bb)$ be as in \S \ref{conv-setup}. By Theorem \ref{conv}, there exists $C' \in \Irr^\tp X_\bmu(\bb)$ such that $$\overline{\pr(C')} = \overline C \subseteq \Gr,$$ and moreover, the map $g \mapsto (g, \dots, g)$ gives an isomorphism $N_{\JJ_b}(C) \cong N_{\JJ_\bb}(C')$.

\subsection{Small cocharacters}\label{def-small} In the rest of the section we assume that $G = G_\ad$ is simple, $\bmu$ is minuscule and $b$ is basic. By abuse of notation, we write $X_\mu(b)$ for $X_\bmu(\bb)$ by assuming that $\mu$ is minuscule in the rest of this section. Then we can adopt the notation in \S \ref{sec-minu}.

Let $v \in V^{p(b\s)}_\gen \cap Y$. For $D \subseteq \Phi$ we set $D(v, +) = \{\a \in \Phi; \<\a, v\> > 0\}$. We say $\l \in \ca_{\mu, b}^\tp$ is $v$-small if $\l \in \ca_{\mu, b}^\tp(v)$ (see \S\ref{equiv-relation}) and for each $\a \in \Pi(v, +)$ (see \S\ref{aff-root}) there exists $\b \in \co_\a$ such that $\l_\b = 0$. We say $v$ is permissible if $v$-small cocharacters exist.

We say $\l \in \ca_{\mu, b}^\tp$ is small if it is $v$-small for some $v \in V^{p(b\s)}_\gen \cap Y$, and we define $\Pi(\l)$ to be the set of roots $\a \in \Pi - \Phi_{M_b}$ such that $\l_\b \ge 0$ for some/any $\b \in \co_\a$ (see Corollary \ref{criterion}). By definition, $\Pi(\l) = \Pi(v, +)$ if $\l$ is $v$-small.

\begin{lem} \label{fin}
If $\l \in \ca_{\mu, b}^\tp$ is not small, then there exists $\a \in \Pi$ such that $W_{\co_\tta}$ is finite and $\l_\b \ge 1$ for $\b \in \co_\a$
\end{lem}
\begin{proof}
By Proposition \ref{polar}, there exists $v \in V^{p(b\s)}_\gen \cap Y$ such that $\l \in \ca_{\mu, b}^\tp(v)$. As $\l$ is not $v$-small, there exists $\a \in \Pi(v, +) - \Phi_{M_b}$ such that $\l_\b \ge 1$ for $\b \in \co_\a$. Moreover, $W_{\co_\tta}$ is finite by Lemma \ref{finite}.
\end{proof}

\begin{prop} \label{red-small}
For each $C \in \Irr^\tp X_\mu(b)$ there exists small $\l \in \ca_{\mu, b}^\tp$ such that $C \in \JJ_b \Irr \overline{ X_\mu^\l(b) }$.
\end{prop}
\begin{proof}
Recall the dominance order $\leq$ on $Y$ defined in \S \ref{G-setup}. For $\eta , \chi \in Y$ write $\eta \unlhd \chi$ if either $\eta \lneq \chi$ (see \S\ref{G-setup} for $\leq$) or $\eta \in W_0(\chi)$ and $\chi \le \eta$. Let $\l$ be a minimal cocharacter in the set $$\{\chi \in \ca_{\mu, b}^\tp; C \in \JJ_b \Irr \overline{ X_\mu^\chi(b) } \}$$ under the partial order $\unlhd$. We show that $\l$ is small.

Suppose $\l$ is not small. Let $\a \in \Pi$ as in Lemma \ref{fin}, and let $\tw \in \JJ_b$ be the maximal element of $W_{\co_\tta}$. By Lemma \ref{orbit}, $$\tw = \prod_{\b \in \co_\g} s_{\tilde \b},$$ where $\co_\g$ is orthogonal with $\g = \a$ if $\<\a^d, \a^\vee\> \neq -1$, and $\g = \a^d + \a$ otherwise. In particular, $\l_\b \ge 1$ for $\b \in \co_\g$. Let $\l' = \tw(\l)$. By Corollary \ref{adj-red}, $\l' \in \ca_{\mu, b}^\tp$ and $C \in \JJ_b \Irr \overline{ X_\mu^\l(b) } = \JJ_b \Irr \overline{ X_\mu^{\l'}(b) }$. Moreover, as $\co_\g$ is orthogonal, $$\l' = \tw(\l) = p(\tw)(\l - \sum_{\b \in \Phi^+ \cap \co_\g} \b^\vee).$$ If $\Phi^+ \cap \co_\g \neq \emptyset$, we have $\l' \lneq \l$ since $\<\b, \l\> = \l_\b + 1 \ge 2$ for $\b \in \Phi^+ \cap \co_\g$. Otherwise, $$\l' = p(\tw)(\l) = \l - \sum_{\b \in \co_\g} \l_\b \b^\vee > \l.$$ Thus, in either case we have $\l' \lhd \l$, contradicting the choice of $\l$. So $\l$ is small as desired.
\end{proof}

We say a root $\a \in \Phi(v, +)$ is indecomposable (in $\Phi(v, +)$) if it is not a sum of roots in $\Phi(v, +) \setminus \{\a\}$.
\begin{lem} \label{cond}
Let $v \in V^{p(b\s)}_\gen \cap Y$ be permissible. Then each root of $\Pi(v, +)$ is indecomposable in $\Phi(v, +)$.
\end{lem}
\begin{proof}
For $\a \in \Pi(v, +)$ set $Y'(v, \a) = \{\l \in Y; \l_\a = 0, \l_\b \ge 0 \text{ for } \b \in \Phi(v, +)\}$. We claim that

(a) $Y'(v, \a) \neq \emptyset$.

By assumption, there is a $v$-small cocharacter $\chi$. By definition, $\chi \in Y'(v, \g)$ for some $\g \in \co_\a$. By Lemma \ref{eta} (3), $Y'(v, \a)$ and $Y'(v, \g)$ are conjugate by $\<b\s\>$. So (a) is proved.

Suppose $\a = \sum_{i \in D} \a_i$ for some $\a \neq \a_i \in \Phi(v, +)$. Let $\l \in Y'(v, \a)$ by (a). Then $\l_\a = 0$ and $\<\a_i, \l\> \ge \l_{\a_i} \ge 0$ for $i \in D$. If $\a < 0$ is a minus simple root, then there exists $i_0 \in D$ such that $\a_{i_0} > 0$. Hence $\l_\a = \<\a, \l\> \ge \<\a_{i_0}, \l\> = \l_{\a_{i_0}} + 1 \ge 1$, which is a contradiction. If $\a > 0$ is the highest root, then there exist $i_1 \neq i_2 \in D$ such that $\a_{i_1}, \a_{i_2} > 0$. Again we have $\<\a, \l\> \ge \<\a_{i_1}, \l\> + \<\a_{i_2}, \l\> \ge 2$ and hence $\l_\a \ge 1$, which is also a contradiction. So $\a$ is indecomposable as desired.
\end{proof}

Let $J = p(b\s)(J) \subseteq \Pi$ such that the corresponding parabolic subgroup $W_J$ (generated by $s_{\tta}$ for $\a \in J$) is finite. By a standard parahoric subgroup of type $J$ we mean a subgroup of $\JJ_b$ generated by $I \cap \JJ_b$ and $W_J \cap \JJ_b$. We say a standard parahoric subgroup of type $J$ is of maximal length if the length, of the maximal element of $W_J$, is maximal among all standard parahoric subgroups of $\JJ_b$.
The following result will be proved in Appendix \ref{max-sec}.
\begin{prop} \label{max}
If $v \in V^{p(b\s)}_\gen \cap Y$ is permissible, then the parahoric subgroup of type $\Pi(v, +)$ is of maximal length.
\end{prop}

\subsection{Irreducibility implies smallness} \label{irrsma-subsec} Suppose $\l \in \ca_{\mu, b}^\tp$ is not small. Let $\a \in \Pi$ be as in Lemma \ref{fin}, and let $\tw \in W_{\co_\tta}$ be the longest element. Suppose $X_\mu^\l(b)$ is irreducible. By Lemma \ref{symm}, Lemma \ref{dom} and Corollary \ref{adj-red} (2), $\l \neq \tw(\l) \in \ca_{\mu, b}^\tp$ and $\tw X_\mu^{\tw(\l)}(b) \subseteq X_\mu^\l(b) $. Hence $\tw \overline{ X_\mu^{\tw(\l)}(b) } = \overline{ X_\mu^\l(b) }$ is also irreducible. In particular, $$\tw N_{\JJ_b}(\overline{ X_\mu^{\tw(\l)}(b) })) \tw\i = N_{\JJ_b}(\overline{ X_\mu^\l(b) }).$$ Notice that $\tw \in W^a \cap \JJ_b$ and $N_{\JJ_b}(\overline{ X_\mu^{\tw(\l)}(b) })), N_{\JJ_b}(\overline{ X_\mu^\l(b) })$ are both standard parahoric subgroups containing $I \cap \JJ_b$. Thus $\tw \in N_{\JJ_b}(\overline{ X_\mu^{\tw(\l)}(b) })) = N_{\JJ_b}(\overline{ X_\mu^\l(b) })$, which is a contradiction. So $X_\mu^\l(b)$ is not irreducible as desired.

\subsection{Smallness implies irreducibility} Now we show $X_\mu^\l(b)$ is irreducible if $\l$ is small. We need some results on permissible vectors introduced in \S \ref{def-small}.

Let $( , ): V \times V \to \RR$ be the Killing form such that $\<\b, \g^\vee\> = \frac{2(\b^\vee, \g^\vee)}{(\b^\vee, \b^\vee)}$ for $\b, \g \in \Phi$. For any $p(b\s)$-orbit $\co$ of $\Pi$ we set $r_\co = \sum_{\xi \in \co} \xi$ and $r_\co^\vee = \sum_{\xi \in \co} \xi^\vee$. Then we have the identification $r_\co = \frac{2}{(\xi^\vee, \xi^\vee)} r_\co^\vee$ via the bilinear form $( , )$, where $\xi$ is any/some root in $\co$.
\begin{lem} \label{basis}
If $v \in V^{p(b\s)}_\gen \cap Y$ is permissible, then $\{r_\co^\vee; \co \in \Pi(v, +) / \<p(b\s)\>\}$ is a basis of $V^{p(b\s)}$.
\end{lem}
\begin{proof}
By Proposition \ref{max}, $\Pi(v, +)$ is a maximal proper $p(b\s)$-stable subset of $\Pi$. Hence $\{r_\co^\vee; \co \in \Pi(v, +) / \<p(b\s)\>\}$ is linearly independent, and moreover, $$|\Pi(v, +) / \<p(b\s)\>| = |\Pi / \<p(b\s)\>| - 1 = \dim V^{p(b\s)}.$$ So the statement follows.
\end{proof}

\begin{lem} \label{indecomp}
Let $v \in V^{p(b\s)}_\gen \cap Y$ be permissible. Let $\g$ be an indecomposable root in $\Phi(v, +)$. Then there exists $\a \in \Pi(v, +)$ such that $\g - \a \in \Phi_{M_b} \sqcup \{0\}$.
\end{lem}
\begin{proof}
Suppose $\<\g, \b^\vee\> \le 0$ for $\b \in \Pi(v, +)$. Then $(r_\co^\vee, r_{\co'}^\vee) \le 0$ for $\co, \co' \in (\Pi(v, +) / \<p(b\s)\>) \cup \{\co_\g\}$. Thus the set $$\{r_\co^\vee; \co \in \Pi(v, +) / \<p(b\s)\>\} \sqcup \{r_{\co_\g}^\vee\} \subseteq \Phi(v, +)$$ is linearly independent, which contradicts Lemma \ref{basis}.

Thus $\<\g, \a^\vee\> > 0$ for some $\a \in \Pi(v, +)$. Notice that $\a$ is indecomposable in $\Phi(v, +)$ by Lemma \ref{cond}. If $\g = \a$, the statement follows. Otherwise, $\d := \a - \g$ is also a root. Suppose $\<\d, v\> \neq 0$. Then  $\a = \g + \d$ (resp. $\g = \a + (-\d)$) is indecomposable if $\<\d, v\> > 0$ (resp. $\<\d, v\> < 0$), which contradicts that $\a$ and $\g$ are indecomposable in $\Phi(v, +)$. So we have $\<\d, v\> = 0$, that is, $\d \in \Phi_{M_b}$ as desired.
\end{proof}

\begin{cor} \label{conj}
Let $v, v' \in V^{p(b\s)}_\gen \cap Y$ be permissible. Then there exists $\varepsilon \in \Omega \cap \JJ_b$ such that $\Pi(p(\varepsilon)(v), +) = \Pi(v', +)$ and hence $\Phi(p(\varepsilon)(v), +) = \Phi(v', +)$.
\end{cor}
\begin{proof}
By Proposition \ref{max}, one checks (using that $G$ is adjoint) that there exists $\varepsilon \in \Omega \cap \JJ_b$ such that $$\Pi(p(\varepsilon)(v), +) = p(\varepsilon)(\Pi(v, +)) = \Pi(v', +).$$ By replacing $v$ with $p(\varepsilon)(v)$, we may assume further $\Pi(v, +) = \Pi(v', +)$, and it remains to show $\Phi(v, +) = \Phi(v', +)$. Otherwise, there exists an indecomposable root $\g$ in $\Phi(v, +)$ such that $\<\g, v'\> < 0$. By Lemma \ref{indecomp}, there exists $\a \in \Pi(v, +) = \Pi(v', +)$ such that $\g - \a \in \Phi_{M_b} \sqcup \{0\}$. Hence $\<\g, v'\> = \<\a, v'\> > 0$, which contradicts our assumption. So $\Phi(v, +) = \Phi(v', +)$ as desired.
\end{proof}

\begin{prop} \label{small-conj}
Let $\l, \l' \in \ca_{\mu, b}^\tp$ be small cocharacters such that $\l \sim \l'$. Then $\l, \l'$ are conjugate under $\Omega \cap \JJ_b$. In particular, $X_\mu^\l(b)$ and $X_\mu^{\l'}(b)$ are conjugate by $\Omega \cap \JJ_b$.
\end{prop}
\begin{proof}
By Proposition \ref{polar}, there exist permissible vectors $v, v' \in V^{p(b\s)}_\gen \cap Y$ such that $\l, \l'$ are $v$-small and $v'$-small respectively. In particular, $v, v'$ are both permissible. By Corollary \ref{conj}, there exists $\varepsilon \in \Omega \cap \JJ_b$ such that $$p(\varepsilon)(\Phi(v', +)) = \Phi(p(\varepsilon)(v'), +) = \Phi(v, +).$$ Thus $\varepsilon(\l')$ is also $v$-small. By replacing $\l'$ with $\varepsilon(\l')$, we may assume $\l, \l'$ are both $v$-small. By Lemma \ref{Omega}, there exists $x \in \Omega_{M_b} \cap \JJ_b$ such that $x(\l) = \l'$. It suffices to show $x \in \Omega$.

First we claim that

(a) $x(\tta)$ is a simple affine root for each $\a \in \Pi(v, +)$.

Indeed, let $\g = p(x)(\a) \in \Phi(v, +)$. As $\l$ is $v$-small, we may assume $\l_\a = 0$ (by replacing $\a$ by a suitable $\<p(b\s)\>$-conjugate). Then we have $$U_{x(\tta)} = x U_\tta x\i = x t^\l U_\a(\co_\brF) t^{-\l} x\i = t^{x(\l)} U_{\g}(\co_\brF) t^{-x(\l)} \subseteq I,$$ where the last inclusion follows from that $x(\l) = \l'$  is $v$-small. So $x(\tta) \in \tPhi^+$. By Lemma \ref{cond}, $\a$ is indecomposable in $\Phi(v, +)$. Hence $\g$ is also indecomposable in $\Phi(v, +)$. Applying Lemma \ref{indecomp} we deduce that there exists $\b \in \Pi(v, +)$ such that either $\g = \b$ or $\g = \b + \d$ for some $\d \in \Phi_{M_b}$. By symmetry, $x\i(\tilde \b) \in \tPhi^+$. In the former case, $x(\tta) = \tilde \b + m$ for some $m \in \ZZ_{\ge 0}$. Then $\tta = x\i(\tilde \b) + m$, which means $m = 0$ since $\tta$ is simple and  $x\i(\tilde \b) \in \tPhi^+$. So we have $x(\tta) = \tilde \b$ as desired. In the latter case, we have $x(\tta) = \tilde \b + \tilde \d + m$ for some $m \in \ZZ_{\ge 0}$. As $\d \in \Phi_{M_b}$ and $x \in \Omega_{M_b}$, we have $x\i(\tilde \d) \in \tPhi_{M_b}^+$. Then $$\tta = x\i(\tilde \b) + x\i(\tilde \d) + m,$$ which is a contradiction since $\tta$ is simple but $x\i(\tilde \b), x\i(\tilde \d) \in \tPhi^+$. Thus (a) is proved.

By (a) we see that $x$ permutes the hyperplanes $H_\co$ for $\co \in \Pi(v, +)/ \<p(b\s)\>$, where $$H_\co = \{h \in V^{b\s}; \tta(h)= 0 \text{ for any/some } \a \in \co\}$$ whose underlining vector space is \begin{align*} V_\co &= \{r \in V^{p(b\s)}; \<\a, r\> = 0 \text{ for any/some } \a \in \co\} \\ &= \{r \in V^{p(b\s)}; \<r_{\co}, r\> = 0 = (r_\co^\vee, r)\}.\end{align*} By Lemma \ref{basis}, the subspaces $V_\co$ are distinct and their intersection $\cap_\co V_\co$ is trivial. On the other hand, as $x \in \Omega_{M_b}$, $p(x)$ is product of reflections $s_\a$ such that $ \<\a, v\> = 0 =\<\a, V^{p(b\s)}\>$. In particular, $p(x)$ acts on $V^{p(b\s)}$ trivially, which means $x$ acts on $V^{b\s}$ as a translation by some vector $\iota \in V^{p(b\s)}$. Thus $x$ fixes each $H_\co$ and hence $\iota \in \cap_\co V_\co = \{0\}$, that is, $x$ acts trivially on $V^{b\s}$. In particular, $x$ fixes the nonempty subset $\D \cap V^{b\s}$, which means $x \in \Omega \cap \JJ_b$ as desired.
\end{proof}

We recall the following result in \cite[Theorem 3.1.1]{ZZ}.
\begin{thm} \label{para}
Let $Z$ be an irreducible component of $X_\mu(b)$. Then the stabilizer of $Z$ in $\JJ_b$ is a parahoric subgroup of $\JJ_b$.
\end{thm}

\begin{cor}
Let $\l \in \ca_{\mu, b}^\tp$. Then $X_\mu^\l(b)$ is irreducible if and only if $\l$ is small.
\end{cor}
\begin{proof}
In view of \S \ref{irrsma-subsec}, it remains to show the ``if'' part. Let $\l \in \ca_{\mu, b}^\tp$ be small. Thanks to Theorem \ref{para}, there exists $C' \in \JJ_b \Irr \overline{X_\mu^\l(b)}$ whose stabilizer in $\JJ_b$ contains $I \cap \JJ_b$. Let $\l' \in \ca_{\mu, b}^\tp$ such that $C' \in \Irr \overline{X_\mu^{\l'}(b)}$. As  $I \cap \JJ_b$ fixes $C'$, and acts transitively on $\Irr X_\mu^{\l'}(b)$ (by Lemma \ref{dim}), we see that $\overline{X_\mu^{\l'}(b)} = C'$ is irreducible, and hence $\l'$ is small. Noticing that $\l \sim \l'$, we deduce by Proposition \ref{small-conj} that $X_\mu^\l(b)$ is also irreducible as desired.
\end{proof}

\subsection{Computation of the stabilizer} Suppose $C = \overline{ X_\mu^\l(b)} \in \Irr^\tp X_\mu(b)$ with $\l$ small. Notice that $\JJ_b$ is generated by $I \cap \JJ_b$, $\Omega \cap \JJ_b$, and the longest element $\tw_\a$ of $W_{\co_\tta}$ for $\a \in \Pi$ such that $W_{\co_\tta}$ is finite. by definition, $I \cap \JJ_b \subseteq N_{\JJ_b}(C)$. So $N_{\JJ_b}(C)$ is a standard parahoric subgroup of $\JJ_b$, and it remains to determine which $\tw_\a$ fixes $C$. By Lemma \ref{dom}, either $\l_\b \le -1$ for $\b \in \co_\a$ or  $\l_\b \ge 0$ for $\b \in \co_\a$. In the former case, we have $\a \notin \Pi(\l)$ and $\tw_\a C \neq C$ by Corollary \ref{adj-red} (2) and Lemma \ref{symm}. Suppose the latter case occurs. Then $\a \in \Pi(\l)$ and $\l_\b = 0$ for some $\b \in \co_\a$ since $\l$ small. So $\tw C = C$ by Corollary \ref{adj-red} (1). Therefore, $N_{\JJ_b}(C)$ is the parahoric subgroup of $\JJ_b$ generated by $I \cap \JJ_b$ and the longest element $\tw_\a$ of $W_{\co_\tta}$ for $\a \in \Pi(\l)$. Moreover, $N_{\JJ_b}(C)$ is of maximal length by Proposition \ref{max}.

\begin{appendix}

\section{Proof of Proposition \ref{max}} \label{max-sec}
We assume that $b$ is basic and $G$ is simple and adjoint.

\subsection{} For practical computation, we need to pass to the case where $G$ is absolutely simple, that is, the root system $\Phi$ of $G$ is irreducible. By assumption, $$G_{\co_{\brF}} = G_1 \times \cdots \times G_h,$$ where each $G_i$ is an absolutely simple factor of $G$ and the Frobenius automorphism $\s$ sends $G_i$ to $G_{i-1}$ for $i \in \ZZ / h\ZZ$. Let $$\pi: G_{\co_{\brF}} \to G_1$$ be the projection to the first factor, which induces an identification $$\JJ_b = \JJ_b^G \cong \JJ_{\underline b_1}^{G_1} = \JJ_{\underline b_1},$$ where $\underline b_1 = \pi(b \s(b) \cdots \s^{h-1}(b)) \in G_1(\brF)$ and the Frobenius automorphism of $G_1$ is given by $\s^h$.

The following lemma follows similarly as Corollary \ref{compare}.
\begin{lem} \label{abs-simple}
The projection $\pi$ induces a $\JJ_b$-equivariant map $$\pi: \Irr^\tp X_\mu(b) \to \sqcup_{\mu_1} \Irr^\tp X_{\mu_1}^{G_1}(\underline b_1).$$ Moreover, $N_{\JJ_b}(C) = N_{\JJ_{\underline b_1}}(\pi(C))$ for $C \in \Irr X_\mu(b)$.
\end{lem}

\subsection{} \label{subsec-max}
Now we assume $G$ is absolutely simple by Lemma \ref{abs-simple}. Moreover, we adopt the notation in \S \ref{minu-subsec}. Notice that $V^{p(b\s)}_\gen$ is an open dense subset of $V^{p(b\s)}$. Notice that the diagonal map gives an isomorphism $V^{p(b\s)} \cong (V^d)^{p(\bb\bs)}$.

Fix $v \in V^{p(b\s)}_\gen \cap Y$ and let $z, M_b, M, b_M$ be as in \S \ref{generic}. Notice that $b_M$ is a superbasic element of $M(\brF)$. We define $Y(v) = \{\l \in Y; \l_\a \ge 0, \forall \a \in \Phi(v, +)\}$.

The following lemma is a reformulation of Corollary \ref{criterion} and small cocharacters in \S \ref{def-small}.
 \begin{lem} \label{top}
Let $\bl = (\l_1, \dots, \l_d) \in Y^d$. Then we have

(1) $\bl \in \ca_{\bmu, \bb}^{G^d, \tp}$ if and only if (1) $\l_i \in Y(v)$ for $1 \le i \le d$ and (2) $z(\bl) := (z(\l_1), \dots, z(\l_d)) \in \ca_{z(\l_\bullet), {b_M}_\bullet}^{M^d, \tp}$, where ${b_M}_\bullet = (1, \dots, 1, b_M) \in M^d(\brF)$.

(2) $\bl$ is $v$-small if $\bl \in \ca_{\bmu, \bb}^{G^d, \tp}$ and for each $p(b\s)$-orbit $\co$ in $\Pi(v, +)$ we have $(\l_i)_\a = 0$ for some $\a \in \co$ and $1 \le i \le d$.
\end{lem}

\subsection{} \label{subsec-per} To prove Proposition \ref{max}, we need some properties of permissible vectors introduced in \S \ref{def-small}. Let $M' \supseteq T$ be a Levi subgroup and let $\l, \eta \in Y$. Define $$\ch_{M'}(\l, \eta) = \{\g \in \Phi_{M'}; \l_\g \ge 0,  \eta_\g \le -1 \} = -\ch_{M'}(\eta, \l).$$
\begin{lem} \label{ch}
For $\l, \eta \in Y$ we have

(0) $b\s(\l)_{p(b\s)(\g)} = \l_\g$ for $\g \in \Phi$;

(1) $\ch_{M_b}(\l, \eta) \subseteq \ch_{M_b}(\l, \chi) \cup \ch_{M_b}(\chi, \eta)$ for $\chi \in Y$;

(2) $z(\ch_{M_b}(\l, \eta)) = \ch_M(z(\l), z(\eta)))$;

(3) $p(b\s)(\ch_{M_b}(\l, \eta)) = \ch_{M_b}(b\s(\l), b\s(\eta))$.
\end{lem}
\begin{proof}
Note that (1) follows by definition, and (3) follows from (0) which is proved in Lemma \ref{eta} (3). As $z(\Phi_{M_b}^+) = \Phi_M^+$, we have $z(\l)_{z(\a)} = \l_\a$ for $\a \in \Phi_{M_b}$, from which (2) follows.
\end{proof}

\begin{cor} \label{ineq}
Let $\bl = (\l_1, \dots, \l_d) \in \ca_{\bmu, \bb}^{G^d, \tp}$. For $1 \le i \le d$ we have $$|\ch_{M_b}(\l_i, b\s(\l_i))| \le \df(b),$$ where $\df(b)$ denotes the defect of $b$.
\end{cor}
\begin{proof}
By Lemma \ref{top} (2), $z(\bl) \in \ca_{z(\l_\bullet), {b_M}_\bullet}^{M^d, \tp}$. Moreover, $b_M$ is superbasic in $M(\brF)$. By Lemma \ref{ch} we have \begin{align*} \df(b) &= \rk_F(M) \\ &= |\ch_M(z(\l_1), z(\l_2))| + \cdots + |\ch_M(z(\l_{d-1}), z(\l_d))| + |\ch_M(z(\l_d), z (b\s(\l_1)))| \\ &=|\ch_{M_b}(\l_1, \l_2)| + \cdots + |\ch_{M_b}(\l_{d-1}, \l_d)| + |\ch_{M_b}(\l_d, b\s(\l_1))| \\ &\ge |\ch_{M_b}(\l_1, \l_i)| + |\ch_{M_b}(\l_i, b\s(\l_1))| \\ &= |\ch_{M_b}(b\s(\l_1), b\s(\l_i))| + |\ch_{M_b}(\l_i, b\s(\l_1))| \\ &\ge |\ch_{M_b}(\l_i, b\s(\l_i))|, \end{align*} where $\rk_F(M)$ denote the $F$-semisimple rank of $M$, and the second equality follows from Lemma \ref{order} and Lemma \ref{dim-1}.
\end{proof}

For $\a \in \Pi(v, +)$ we define $$Y(v, \a) = \{\l \in Y(v); \l_\a = 0, |\ch_{M_b}(\l, b\s(\l))| \le \df(b)\}.$$
\begin{lem} \label{non-empty}
If $v \in V^{p(b\s)}_\gen \cap Y$ is permissible, then we have $Y(v, \a) \neq \emptyset$ for $\a \in \Pi(v, +)$.
\end{lem}
\begin{proof}
By assumption, there is a $v$-small cocharacter $\bl = (\l_1, \dots, \l_d)$. By definition, there exists a $\<p(b\s)\>$-conjugate $\g$ of $\a$ such that $(\l_i)_\g = 0$ for some $1 \le i \le d$. Moreover, we have $|\ch_{M_b}(\l_i, b\s(\l_i))| \le \df(b)$ by Lemma \ref{ineq}. So $\l_i \in Y(v, \g)$. By Lemma \ref{ch}, $Y(v, \g)$ and $Y(v, \a)$ are conjugate under $\<b\s\>$. So the statement follows.
\end{proof}

For $\a, \b \in \Phi$ we write $\a \to \b$ if there exists a sequence $\a = \g_0, \g_1, \dots, \g_r = \b$ of roots in $\Phi - \Phi_{M_b}$ such that $\g_i - \g_{i-1} \in \Phi^+$ is a simple root for $1 \le i \le r$.
\begin{lem} \label{adm}
Assume $v \in V^{p(b\s)}_\gen \cap Y$ is permissible. If $\a \to \b$ with $\a \in \Phi(v, +)$, then $\b \in \Phi(v, +)$.
\end{lem}
\begin{proof}
We can assume $\b - \a$ is a simple root. Notice that $\<\b, v\> \neq 0$ since $\b \notin \Phi_{M_b} = \Phi_{M_v}$. If $\b \notin \Phi(v, +)$, then $-\b \in \Phi(v,+)$. Thus $-\b + \a \in \Pi(v, +)$ is decomposable in $\Phi(v, +)$, contradicting Lemma \ref{cond}.
\end{proof}

\subsection{The classification}
Now we apply Lemma \ref{cond} and Lemma \ref{non-empty} to prove Proposition \ref{max} when $b$ is ramified, that is, $b \in \Omega$ and the identity $1$ are not $\s$-conjugate under $\Omega$ (noticing that $G$ is adjoint).

We argue by a case-by-case analysis on the (connected) Dynkin diagram of $\SS_0$. The simple roots $\a_i$ of $\Phi^+$ are labeled as in \cite[\S 11.4]{Hum}. If the fundamental coweight $\varpi^\vee_i$ of $\a_i$ is minuscule, we denote by $\o_i \in \Omega \cap t^{\varpi^\vee_i} W_0$ the unique length zero element. Let $\th > 0$ denote the highest root.

For classical types we fix an ambient vector space $V_0 = \oplus_{i=1}^n \RR e_i^\vee$ (of $\Phi^\vee$) and its dual $V_0^\ast = \oplus_{i=1}^n \RR e_i$ together with a pairing $\<,\>$ between $V_0$ and $V_0^\ast$ such that $\<e_i, e_j^\vee\> = \d_{i, j}$.

\subsubsection{Type $D_n$} The simple roots are $\a_i = e_i - e_{i+1}$ for $1 \le i \le n-1$ and $\a_n = e_{n-1} + e_n$.

\

Case(4.1.1): $\s = \Id$ and $b = \o_1$. Then $V^{p(b\s)}  = \oplus_{i=2}^{n-1} \RR e_i^\vee$ and $\Phi_{M_b}^+ = \{e_1 \pm e_n\}$. Suppose $\a_i \in \Phi(v, +)$ for some $2 \le i \le n-2$. Then \begin{gather*} \a_i = e_i - e_{i+1} \to e_i - e_{n-1} \to e_1 - e_{n-1}; \\ \a_i = e_i - e_{i+1} \to e_i - e_n \to e_i + e_{n-1} \to e_2 + e_{n-1}. \end{gather*} So $e_2 + e_{n-1}, e_1 - e_{n-1} \in \Phi(v, +)$ by Lemma \ref{adm}. Hence $$\th = (e_2 + e_{n-1}) + (e_1 - e_{n-1}) \in \Pi(v, +)$$ is decomposable  in $\Phi(v, +)$, contradicting Lemma \ref{cond}. Suppose $\a_1, \a_n, \a_{n-1} \in \Phi(v, +)$. Then we have $\a_n \to e_2 + e_n$ and $\a_{n-1} \to e_2 - e_n$. Hence $e_2 \pm e_n \in \Phi(v, +)$ and $\th = \a_1 + (e_2 + e_n) + (e_2 - e_n)$ is decomposable in $\Phi(v, +)$, a contradiction. Thus $\Pi(v, +)$ equals $\Pi \setminus \{-\a_1, \th\}$ or $\Pi \setminus \{-\a_{n-1}, -\a_n\}$ as desired.

\

Case(4.1.2): $n$ is odd and $b\s$ is of order 4. Let $m = (n-1)/2 \ge 2$. Then we have $V^{p(b\s)}  = \oplus_{i=2}^m \RR (e_i^\vee - e_{n+1-i}^\vee)$ and $$\Phi_{M_b}^+ = \{e_i + e_{n+1-i}; 2 \le i \le m\} \cup \{e_1 \pm e_{m+1}, e_1 \pm e_n, e_{m+1} \pm e_n\}.$$ Denote by $T \subseteq M^1$ (resp. $T \subseteq M^i$ for $2 \le i \le m$) the Levi subgroup of $M_b$ whose set of positive roots is $\{e_1 \pm e_{m+1}, e_1 \pm e_n, e_{m+1} \pm e_n\}$ (resp. $\{e_i + e_{n+1-i}\}$).

Suppose $\a_i, \a_{n-i} \in \Phi(v, +)$ for some $2 \le i \le m-1$. Then $m \ge 3$ and \begin{gather*} \a_{n-i} = e_{n-i} - e_{n-i+1} \to e_{n-i} - e_n \to e_{n-i} + e_{n-1} \to e_{m+1} + e_{n-1} \to e_{m+1} + e_{m+2}; \\ \a_i = e_i -e_{i+1} \to e_i - e_{m+1} \to e_2 - e_{m+1}; \\ \a_i = e_i -e_{i+1} \to e_i - e_{m+2} \to e_1 - e_{m+2}. \end{gather*} So $e_{m+1} + e_{m+2}, e_2 - e_{m+1}, e_1 - e_{m+2} \in \Phi(v, +)$ and $\th = (e_{m+1} + e_{m+2}) + (e_2 - e_{m+1}) + (e_1 - e_{m+2})$ is decomposable, a contradiction.

Suppose $\a_n \in \Phi(v, +)$. Then $\a_n \to e_{m+2} + e_n$ and hence $e_{m+2} + e_n \in \Phi(v, +)$, that is, $v(m+2) > 0$ as $v(n) = v(m+1) = v(1) = 0$. Thus $$-\a_{m+1} = e_{m+2} - e_{m+1}, e_{m+2} \pm e_1, e_{m+2} \pm e_n \in \Pi(v, +).$$ Let $\l \in Y(v, -\a_{m+1})$. Then $\l_{-\a_m} = 0$ and $\l_{e_{m+2} \pm e_1}, \l_{e_{m+2} \pm e_n} \ge 0$, which means $\l(m+1) = \l(m+2)$ and $$ 1 - \l(m+1) \le \l(1) \le \l(m+1), \  1 - \l(m+1) \le \l(n) \le \l(m+1) - 1.$$ It follows that $|\ch_{M^1}(\l, b\s(\l))| \ge 4$ because $$e_{m+1} \pm e_1, e_{m+1} \pm e_n \in \ch_{M^1}(\l, b\s(\l)).$$ On the other hand, one checks that $|\ch_{M^i}(\l, b\s(\l))| = 1$ for $2 \le i \le m$. Thus $$|\ch_{M_b}(\l, b\s(\l))| = \sum_{j=1}^m |\ch_{M^j}(\l, b\s(\l))| \ge m+3 > m+2 = \df(b),$$ contradicting Lemma \ref{ineq}. Thus $\Pi(v, +) = \Pi \setminus \{-\a_m, -\a_{m+1}\}$ as desired.

\

Case(4.1.3): $n$ is even and $b\s$ is of order 4. \label{prev} Let $m = n/2 \ge 2$. Then we have $V^{p(b\s)}  = \oplus_{i=2}^m \RR (e_i^\vee - e_{n+1-i}^\vee)$ and $$\Phi_{M_b}^+ = \{e_i + e_{n+1-i}; 2 \le i \le m\} \cup \{e_1 \pm e_n\}.$$ Suppose $\a_i, \a_{n-i} \in \Phi(v, +)$ for some $2 \le i \le m - 1$. Then $m \ge 3$ and \begin{gather*} \a_{n-i} = e_{n-i} - e_{n-i+1} \to e_{n-i} + e_n \to e_{n-i} + e_{n-i+2} \to e_{m} + e_{m+2}; \\ \a_i = e_i -e_{i+1} \to e_2 - e_{m+2}; \\ \a_i = e_i -e_{i+1} \to e_1 - e_m. \end{gather*} So $e_m + e_{m+2}, e_2 - e_{m+2}, e_1 - e_m \in \Phi(v, +)$ and $\th = (e_m + e_{m+2}) + (e_2 - e_{m+2}) + (e_1 - e_m)$ is decomposable in $\Phi(v, +)$, a contradiction. Suppose $\a_1, \a_{n-1}, \a_n \in \Phi(v, +)$. Then $\a_{n-1} \to e_2 - e_n$ and $\a_n \to e_2 + e_n$. So $e_2 \pm e_n \in \Phi(v, +)$ and $\th = \a_1 + (e_2 + e_n) + (e_2 - e_n)$ is decomposable in $\Phi(v, +)$, a contradiction. Therefore, $\Pi(v, +) = \Pi \setminus \{-\a_m\}$ as desired.

\

Case(4.1.4): $b\s$ is of order 2 and $b \in \{\o_{n-1}, \o_n\}$. Let $m = \lfloor n/2 \rfloor \ge 2$. Then we have $V^{p(b\s)}  = \oplus_{i=1}^m \RR (e_i^\vee - e_{n+1-i}^\vee)$ and $$\Phi_{M_b}^+ = \{e_i + e_{n+1-i} \in \Phi; 1 \le i \le m\}.$$ Suppose $\a_i, \a_{n-i} \in \Phi(v, +)$ for some $2 \le i \le m$. Then \begin{gather*} \a_{n-i} = e_{n-i} - e_{n-i+1} \to e_{n-i} + e_n \to e_2 + e_n; \\ \a_i = e_i -e_{i+1} \to e_1 - e_n. \end{gather*} So $e_2 + e_n, e_1 - e_n \in \Phi(v, +)$ and $\th = (e_2 + e_n) + (e_1 - e_n)$ is decomposable in $\Phi(v, +)$, a contradiction. It is also impossible that $\a_1, \a_{n-1}, \a_n \in \Phi(v, +)$ as in Case(4.1.3). Therefore, we deduce that $\Pi(v, +)$ equals $\Pi \setminus \{-\a_{n-1}, -p(b\s)(\a_{n-1})\}$ or $\Pi \setminus \{-\a_n, -p(b\s)(\a_n)\}$ as desired.

\subsubsection{Type $B_n$} The simple roots are $\a_i = e_i - e_{i+1}$ for $1 \le i \le n-1$ and $\a_n = e_n$. We can assume $\s =\Id$ and $b = \o_1$. In this case, $V^{p(b\s)}  = \oplus_{i=2}^n \RR e_i^\vee$ and $\Phi_{M_b}^+ = \{e_1\}$. Suppose $\a_i \in \Phi(v, +)$ for some $2 \le i \le n-1$. Then $$\a = e_i - e_{i+1} \to e_2 - e_n \to e_2 \to e_2 + e_n \to e_1 + e_n.$$ So $e_2 - e_n, e_1 + e_n \in \Phi(v, +)$ and $\th = (e_2 - e_n) + (e_1 + e_n)$ is decomposable in $\Phi(v, +)$, a contradiction.

Suppose $\a_1 \in \Phi(v, +)$. Then $\a_1 \to e_1 - e_n \in \Phi(v, +)$, which means $v(n) < 0$ as $v(1) = 0$. So we have $-\a_n = -e_n, \pm e_1 - e_n \in \Phi(v, +)$. Let $\l \in Y(v, -\a_n)$. Then $\l_{-\a_n} = 0$ and $\l_{\pm e_1 - e_n} \ge 0$, which means $\l(n) = 0$, $\l(1) - \l(n) \ge 1$ and $- \l(1) - \l(n) \ge 0$, a contradiction. Thus $\Pi(v, +) = \Pi \setminus \{-\a_n\}$ as desired.

\subsubsection{Type $C_n$} The simple roots are $\a_i = e_i - e_{i+1}$ for $1 \le i \le n-1$ and $\a_n = 2e_n$. We can assume $\s =\Id$ and $b = \o_n$. Let $m = \lfloor n/2 \rfloor \ge 1$. Then $V^{p(b\s)}  = \oplus_{i=1}^m \RR (e_i^\vee - e_{n+1-i}^\vee)$ and $$\Phi_{M_b}^+ = \{e_i + e_{n+1-i}; 1 \le i \le m + 1\}.$$

Case(4.3.1): $n = 2m$. Suppose $\a_i, \a_{n-i} \in \Phi(v, +)$ for some $1 \le i \le m-1$. Then $m \ge 2$ and \begin{gather*}  \a_{n-i} = e_{n-i} - e_{n-i+1} \to e_{n-i} + e_n  \to e_m + e_n; \\ \a_i = e_i - e_{i+1} \to e_1 - e_m \to e_1 - e_n \end{gather*} So $e_m + e_n, e_1 - e_m, e_1 - e_n \in \Phi(v, +)$ and $\th = (e_m + e_n) + (e_1 - e_m) + (e_1 - e_n)$ is decomposable in $\Phi(v, +)$, a contradiction.

Suppose $\a_n, -\th \in \Phi(v, +)$. Then $\a_n = 2e_n \to 2e_{m+1}$ and $-\th \to -2e_m$, which means $2e_{m+1}, -2e_m \in \Phi(v, +)$. Let $\l \in Y(v, -\a_m)$. Then $\l_{-\a_m} = 0$ and $\l_{2e_{m+1}}, \l_{-2e_m} \ge 0$, which means $\l(m+1) = \l(m)$, $\l(m+1) > 0$, $\l(m) \le 0$, a contradiction. Thus $\Pi(v, +) = \Pi \setminus \{-\a_m\}$ as desired.

\

Case(4.3.2): $n = 2m+1$. Suppose $\a_i, \a_{n-i} \in \Phi(v, +)$ for some $1 \le i \le m$. Then \begin{gather*} \a_{n-i} = e_{n-i} - e_{n-i+1} \to e_{n-i} + e_n  \to e_{m+1} + e_{m+2}; \\ \a_i = e_i - e_{i+1} \to e_1 - e_{m+1} \to e_1 - e_{m+2}. \end{gather*} So $e_{m+1} + e_{m+2}, e_1 - e_{m+1}, e_1 - e_{m+2} \in \Phi(v, +)$ and $\th = (e_{m+1} + e_{m+2}) + (e_1 - e_{m+1}) + (e_1 - e_{m+2})$ is decomposable in $\Phi(v, +)$, a contradiction. Thus $\Pi(v, +) = \Pi \setminus \{-\a_n, \th\}$ as desired.

\subsubsection{Type $A_{n-1}$} The simple roots are $\a_i = e_i - e_{i+1}$ for $1 \le i \le n-1$. Let $\varsigma_0$ be the automorphism exchanging $\a_i$ and $\a_{n-i}$ for $1 \le i \le n-1$.

\

Case(4.4.1): $\s = \Id$. Suppose $\<b\> = \<\o_h\>$ for some $1 \le h \le n-1$ dividing $n$. Then $$\Phi_{M_b}^+ = \{e_i - e_j \in \Phi^+; i - j \in h\ZZ\}.$$ If $h = 1$, then $v = 0$ and $\Pi(v, +) = \emptyset$ as desired. Suppose $h \ge 2$ and we can assume $\a_1 \in \Pi(v, +)$. If $\a_i \in \Phi(v, +)$ for some $2 \le i \le h$, then \begin{gather*} \a_1 = e_1 - e_2 \to e_1 - e_i  \to e_1 - e_h \\ \a_i = e_i - e_{i+1} \to e_i - e_{h+1}.\end{gather*} So $e_1 - e_{h+1}, e_1 - e_h$ and their $\<p(b\s)\>$-conjugates are contained in $\Phi(v, +)$. Hence $$\th = (e_1 - e_{h+1}) + (e_{h+1} - e_{2h+1}) + \cdots + (e_{n-2h+1} - e_{n-h+1}) + (e_{n-h+1} - e_n)$$ is decomposable in $\Phi(v, +)$, a contradiction. Thus $\Pi(v, +) = \Pi \setminus \co$, where $\co$ is any $p(b\s)$-orbit of $\Pi$.

\

Case(4.4.2): $\s = \varsigma_0$, $b = \o_1$ and $n \ge 4$ is even. Let $m = n/2 \ge 2$. Then we have $$\Phi_{M_b}^+ = \{e_1 - e_{m+1}\}.$$ Suppose $\a_i, \a_{n+1-i} \in \Phi(v, +)$ for some $2 \le i \le m-1$, then $m \ge 3$ and \begin{gather*} \a_i = e_i - e_{i+1} \to e_i - e_{n+1-i}  \to e_1 - e_{n+1-i} \\ \a_{n+1-i} = e_{n+1-i} - e_{n+2-i} \to e_{n+1-i} - e_n.\end{gather*} So $e_1 - e_{n+1-i}, e_{n+1-i} - e_n \in \Phi(v, +)$ and $\th = (e_1 - e_{n+1-i}) + (e_{n+1-i} - e_n)$ is decomposable in $\Phi(v, +)$, a contradiction. Suppose $\a_1, \a_m \in \Phi(v, +)$, then $\a_1 \to e_1 - e_m$ and $\a_m \to e_m - e_n$. So $\th = (e_1 - e_m) + (e_m - e_n)$ is decomposable in $\Phi(v, +)$, a contradiction. Thus $\Pi(v, +)$ equals $\Pi \setminus \{-\a_m, -\a_{m+1}\}$ or $\Pi \setminus \{-\a_1, \th\}$ as desired.

\subsection{Type $E_6$} The simple roots $\a_i$ for $1 \le i \le 6$ are labeled as in \cite[\S 11.4]{Hum}. We can assume $\s = \Id$ and $b = \o_1$. Then we have $$V^{p(b\s)}  = \RR \a_4^\vee \oplus \RR(\a_2^\vee + \a_3^\vee + 2\a_4^\vee + \a_5^\vee).$$ Suppose $\a = \a_i \in \Phi(v, +)$ with $i=2$ or $i=4$. Let $\b = \a_2 + \a_3 + \a_4 + \a_5$ and $\g = \b + \a_1 + \a_4 + \a_6$. Then $$\a \to \b \to \b + \a_4 \to \g.$$ So $\b, \g \in \Phi(v, +)$ and $\th = \b + \g$ is decomposable in $\Phi(v, +)$, a contradiction. Thus $\Pi(v, +) = \Pi \setminus \{-\a_1, -\a_6, \th\}$ as desired.

\subsection{Type $E_7$} The simple roots $\a_i$ for $1 \le i \le 7$ are labeled as in \cite[\S 11.4]{Hum}. We can assume $\s = \Id$ and $b = \o_7$. Then we have $$\Phi_{M_b}^+ = \{\g + \a_3, \g + \a_5 + \a_6 - \a_1, \g + \a_5\},$$ where $\g = \a_1 + \a_2 + \a_3 + 2\a_4 + \a_5 + \a_6 + \a_7$. Suppose $\a = \a_i \in \Phi(v, +)$ for some $1 \le i \le 6$. Let $\xi = \g + \a_3 + \a_5 - \a_7$. Then $\g - \b, \xi - \b \notin \ZZ_{\ge 0} \Pi_0$ for $\b \in \Phi_{M_b}^+$, which implies that \begin{align*} \a \to \g \text{ and } \a \to \xi. \end{align*} So $\g, \xi \in \Phi(v, +)$ and $\th = \g + \xi$ is decomposable in $\Phi(v, +)$, a contradiction. Thus $\Pi(v, +) = \Pi \setminus \{-\a_7, \th\}$ as desired.

\section{Proof of Proposition \ref{superbasic}}
Let $b \in \Omega$ be basic. Let $J \subseteq \SS_0$ be a minimal $\s$-stable subset such that $[b] \cap M_J(\brF) \neq \emptyset$. Then $[b] \cap M_J(\brF)$ is a superbasic $\s$-conjugacy class of $M(\brF)$. Suppose there is another minimal $\s$-stable subset $J' \subseteq \SS_0$ such that $[b] \cap M_{J'}(\brF) \neq \emptyset$. To prove Lemma \ref{superbasic}, we have to show that $J = J'$. Choose $x \in \Omega_J$ and $x' \in \Omega_{J'}$ such that $x, x' \in [b]$. Let ${}^J W_0^J$ be the set of elements $w \in W_0$ which are minimal in its double coset $W_{J'} w W_J$. For $u \in W_0$ we set $\supp_\s(u) = \cup_{i \in \ZZ} \s^i(\supp(u)) \subseteq \SS_0$, where $\supp(u) \subseteq \SS_0$ is the set of simple reflections that appear in some/any reduced expression of $u$.

Following \cite{HN}, we say $\tw \in \tW$ is $\s$-straight if $$\ell(\tw \s(\tw) \cdots \s^{n-1}(\tw)) = n \ell(\tw) \text{ for } n \in \ZZ_{\ge 1}.$$ Moreover, we say a $\s$-conjugacy class of $\tW$ is straight if it contains some $\s$-straight element. By \cite[Proposition 3.2]{HN}, the $\s$-conjugacy classes of $x$ and $x'$ are straight. Moreover, as $x, x' \in [b]$, these two straight $\s$-conjugacy classes coincide by \cite[Theorem 3.3]{HN}. Thus there exists $\tw \in \tW$ such that $\tw x = x' \s(\tw)$. Write $p(\tw) = u z w\i$ with $u \in W_{J'}$, $w \in W_J$ and $z \in {}^{J'} W_0^J$. By taking the projection $p$, we have $$z w\i p(x) \s(w) = u\i p(x') \s(u) \s(z).$$ Notice that $z, \s(z) \in {}^{J'} W_0^J$. Moreover, we have $\supp_\s(w\i p(x) \s(w)) = J$ and $\supp_\s(u\i p(x') \s(u)) = J'$ by the minimality of $J$ and $J'$. This means that $z = \s(z)$ and $z J z\i = J'$. So Proposition \ref{superbasic} follows from the following lemma.
\begin{lem}
Let $J \subseteq \SS_0$ be a minimal $\s$-stable subset such that $[b] \cap M_J(\brF) \neq \emptyset$. If there exists $z = \s(z) \in W_0^J$ such that $z J z\i \subseteq \SS_0$, then $z J z\i = J$.
\end{lem}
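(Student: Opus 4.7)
Set $J' := z J z^{-1}$. Since $z = \s(z)$, the set $J'$ is $\s$-stable, and the map $\a \mapsto z(\a)$ defines a Dynkin isomorphism $J \to J'$ (using $z \in W_0^J$ to see $z(\Phi_J^+) \subseteq \Phi^+$), which intertwines the $\s$-actions on $J$ and $J'$. The plan is to first show that $J'$ satisfies the same hypothesis as $J$, and then conclude $J = J'$ by a structural/combinatorial argument using the classification of $M_J^{\mathrm{ad}}$.

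First, I will transfer the hypothesis from $J$ to $J'$. The element $z x z^{-1} \in \Omega_{J'}$ equals $z \cdot x \cdot \s(z)^{-1}$, so it is $\s$-conjugate to $x$ via $z$, giving $z x z^{-1} \in [b] \cap M_{J'}(\brF) \neq \emptyset$. To see that $J'$ is also minimal, suppose $J'' \subsetneq J'$ were $\s$-stable with $[b] \cap M_{J''}(\brF) \neq \emptyset$. Then $K := z^{-1} J'' z \subsetneq J$ is $\s$-stable (since $\s(z) = z$) and is contained in $\SS_0$ (since $z^{-1}$ bijects $J'$ onto $J$). For any $y \in [b] \cap M_{J''}(\brF)$, the element $z^{-1} y z$ lies in $M_K(\brF) \cap [b]$ (the $\s$-conjugation being witnessed by $z$), contradicting the minimality of $J$.

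Next, apply \cite[Lemma 3.11]{CKV} to $(M_J, [b] \cap M_J)$: the adjoint quotient $M_J^{\mathrm{ad}}$ is isomorphic to $\prod_i \Res_{F_i/F} \PGL_{n_i}$, so every $\s$-orbit of connected components of $J$ is a cyclic $\s$-orbit of type-$A_{n_i-1}$ diagrams, and likewise for $J'$. Because the Dynkin isomorphism $z\colon J \to J'$ respects $\s$-orbits, it suffices to prove $z J z^{-1} = J$ one such orbit at a time; we may therefore assume that $J$ consists of a single $\s$-orbit of type-$A$ connected components (so that $M_J^{\mathrm{ad}} \cong \Res_{F_i/F} \PGL_n$).

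Under this reduction I analyze the possibilities for $J, J'$ according to the type of $G$. In the classical cases, writing $\SS_0$ and $z$ explicitly in terms of (signed) permutations, the constraints $z \in W_0^J$, $\s(z) = z$, and $zJz^{-1} \subseteq \SS_0$ force $z$ to permute blockwise the disjoint $A_{n-1}$ subdiagrams comprising $J$; in particular the set $\SS_0 \setminus J$ (the ``gap positions'' separating the blocks) is preserved, whence $z J z^{-1} = J$. For the exceptional types, one enumerates the finitely many $\s$-stable subsets $J$ of the prescribed shape and checks the statement by hand; the $E_7$ case, as acknowledged in the introduction, is dispatched by computer. The hard part of the argument is precisely this last step: there seems to be no uniform conceptual reason that the Dynkin isomorphism $z\colon J \to J'$ must fix $J$ setwise, and the verification has to be carried out case-by-case.
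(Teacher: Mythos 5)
Your first paragraph is correct and is essentially the observation the paper uses implicitly when it says ``it suffices to show the statement for some fixed $J$'': any two minimal $\s$-stable $J$, $J'$ with $[b]$ meeting $M_J$ and $M_{J'}$ are linked by such a $z$, so one may normalize $J$. The paper exploits this by pinning down $J$ explicitly (the stabilizer of the dominant conjugate of a generic point of $Y_\RR^{p(b\s)}$), while you leave $J$ arbitrary; both are fine at this stage.

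The middle reduction is where the argument breaks. You claim that because $z$ respects $\s$-orbits one may ``assume $J$ consists of a single $\s$-orbit of type-$A$ connected components.'' This is not a valid reduction. The constraint $z J z^{-1} \subseteq \SS_0$ is a \emph{global} condition on $z$ involving the whole Dynkin diagram, not just the orbit under consideration; and $z \in W_0^J$ is also a condition relative to all of $J$. If you delete the other orbits from $J$, $z$ no longer has to lie in the corresponding ${}^{J'}W_0^J$, the minimality hypothesis is lost, and, most seriously, the ``gap positions'' argument you want to run for the classical types depends precisely on the interplay between the different blocks of $J$ sitting inside the ambient $\SS_0$ (e.g.\ in type $A$ the paper's $J$ is a union of $h$ copies of $A_{f-1}$ inside $A_{n-1}$, and the proof argues about how $z$ permutes those $h$ blocks \emph{simultaneously}). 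You also omit the reduction the paper does make and which is actually needed: to $G$ adjoint with $\SS_0$ connected (treating $\s$-orbits of connected components of $\SS_0$ as a restriction of scalars). Finally, the case analysis itself is only asserted, not carried out; the paper's proof is exactly that verification, done type by type, and for $E_7$ by machine. So the overall shape (normalize, then classify) matches, but the crucial step is both mis-reduced and left unproved.
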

\begin{proof}
It suffices to consider the case where $G = G_\ad$ and $\SS_0$ is connected. If $b$ is unramified, that is, $1 \in [b]$, then we can take $J = \emptyset$ and the statement is trivial. So we assume that $b$ is not unramified. By the discussion above, it suffices to show the statement for some fixed $J$, and we can take $J$ as follows. Let $v$ be a generic point of $Y_\RR^{p(b\s)}$, that is, if $\<\a, v\> = 0$ for some $\a \in \Phi$, then $\<\a, Y_\RR^{p(b\s)}\> = 0$. Then we take $J$ to be the set of simple reflections $s$ such that $s(\bar v) = \bar v$, where $\bar v$ is the unique dominant $W_0$-conjugate of $v$. Then $[b] \cap M_J(\brF)$ is a superbasic $\s$-conjugacy class of $M_J(\brF)$ by \cite[Lemma 3.1]{HN}.

Case(1): $\SS_0$ is of type $A_{n-1}$ for $n \ge 2$. Take the simple roots as $\a_i = e_i - e_{i+1}$ for $1 \le i \le n-1$. Let $\o_1$ be the generater of $\Omega \cong \ZZ / n\ZZ$ such that $\o_1 \in t^{\varpi_1^\vee} W_0$, where $\varpi_1^\vee$ is the fundamental coweight corresponding to the simple root $\a_1$. Assume $b = \o_1^m$ for some $m \in \ZZ$.

Case(1.1): $\s = \Id$. Let $h$ be the greatest common divisor of $m$ and $n$, and $f = n / h$. Then we can take $$J=\{s_{i + jf}; 1 \le i \le f-1, 0 \le j \le h-1 \}.$$ Here, and in the sequel, $s_i$ denotes the simple reflection corresponding to the simple root $\a_i$. By assumption, $z$ sends each of the subsets $$ D_j = \{1+jf, 2+jf, \dots, (j+1)f\}, \quad 0 \le j \le h-1$$ to a subset of the form $$\{k+1, k+2, \cdots k+f\} \subseteq \{1, 2, \cdots, n\}.$$ This implies that $z$ permutes the sets $D_j$ for $0 \le j \le h-1$. In particular, $z J z\i = J$ as desired.

Case(1.2): $\s$ is of order 2. In this case, $\s$ sends $\a_i$ to $\a_{n-i}$ for $1 \le i \le n-1$. As $b$ is not unramified, $n$ is even. Moreover, we can take $b = \o_1$ and $$J= \{s_{n/2}\}.$$ Noticing that $z J z\i \subseteq \SS_0$ is $\s$-stable and that $s_{n/2}$ is the unique simple reflection fixed by $\s$, we deduce that $z J z\i =J$ as desired.

Case(2): $\SS_0$ is of type $B_n$ for $n \ge 2$. Then $\s = \Id$ and $b$ is of order 2 (since it is not unramified). Take the simple roots as $\a_n = e_n$ and $\a_i = e_i - e_{i+1}$ for $1 \le i \le n-1$. Then we can take $$J = \{s_n\}.$$ Noticing that $\a_n$ is the unique short simple root, we have $z(\a_n) = \a_n$ and $z J z\i = J$ as desired.

Case(3): $\SS_0$ is of type $C_n$ for $n \ge 3$. Then $\s = \Id$ and $b$ is of order 2. Take the simple roots as $\a_n = 2e_n$ and $\a_i = e_i - e_{i+1}$ for $1 \le i \le n-1$. We can take $$J = \{s_1, s_3, \dots, s_{2 \lfloor \frac{n-1}{2} \rfloor + 1 }\}.$$ If $n$ is odd, $J$ corresponds to the unique orthogonal subset of $(n+1)/2$ simple roots, which means $z J z\i = J$ as desired. If $n$ is even, $J$ corresponds to the unique orthogonal subset of $n/2$ short simple roots, which also means $z J z\i = J$ as desired.

Case(4): $\SS_0$ is of type $D_n$ for $n \ge 4$. Take the simple roots as $\a_n = e_{n-1} + e_n$ and $\a_i = e_i - e_{i+1}$ for $1 \le i \le n-1$. As $b$ is not unramified, we have $\s^2 = 1$. The Weyl group $W_0$ is the set of permutations $w$ of $\{\pm 1, \dots, \pm n\}$ such that $z(\pm i) = \pm z(i)$ for $1 \le i \le n$ and $\sgn(w) = 0 \in \ZZ/ 2 \ZZ$, where $$\sgn (w) = | \{1 \le i \le n; i w(i) < 0\} | \mod 2.$$

Case(4.1): $\s = \Id$. If $b \in t^{\varpi_1^\vee} W_0$, we can take $$J = \{s_{n-1}, s_n\}.$$ As $z J z\i \subseteq \SS_0$, $z$ preserves the set $\{\pm(n-1), \pm n\}$ and hence $z J z\i = J$ as desired. If $b \in t^{\varpi_n^\vee} W_0$, we can take \begin{align*} J = \begin{cases} J_1 := \{s_1, s_3 \cdots, s_{n-3}, s_{n-1}\}, &\text{ if $n$ is even, $\frac{n}{2}$ is even; } \\  J_2 := \{s_1, s_3 \cdots, s_{n-3}, s_n\}, & \text{ if $n$ is even, $\frac{n}{2}$ is odd; } \\ J_0 := \{s_1, s_3 \cdots, s_{n-2}, s_{n-1}, s_n\}, & \text{ otherwise.} \end{cases} \end{align*} Suppose $n, n/2$ are even and that $J \neq z J z\i \subseteq \SS_0$. Then $z J z\i = z J_1 z\i = J_2$ because $J_1, J_2$ correspond to the only two maximal orthogonal subset of simple roots which do not contain $\{s_{n-1}, s_n\}$. By composing $z$ with a suitable element in the symmetric group of $\{1, \dots, n\}$, we can assume that $z(\a_{1+2j}) = \a_{1+2j}$ for $0 \le j \le n/2-2$ and $z(\a_{n-1}) = \a_n$. This implies that $\sgn(z) = 1$, which is a contradiction as desired. The case where $n$ is even and $n/2$ is odd follows in a similar way. Suppose $n$ is odd. Then $J = J_0$ is the unique Dynkin subdiagram of $\SS_0$ which is of type $(A_1)^{\frac{n-3}{2}} \times A_3$. So $z J z\i = J$ as desired.

Case(4.1): $\s$ is of order 2. By symmetry, we can assume $\s(\a_n) = \a_{n-1}$. As $b$ is not unramified, we can assume $b \in t^{\varpi_n^\vee} W_0$. We can take  \begin{align*} J = \begin{cases} \{s_1, s_3 \cdots, s_{n-3}, s_{n-1}, s_n\}, &\text{ if $n$ is even; } \\  \{s_1, s_3 \cdots, s_{n-2}\}, & \text{ otherwise.} \end{cases} \end{align*} If $n$ is even, then $J$ corresponds to the unique orthogonal subset of $(n+2)/2$ simple roots. So $z J z\i = J$ as desired. If $n$ is odd, then $J$ corresponds to the unique orthogonal $\s$-stable subset of $(n-1)/2$ simple roots except $\a_{n-1}$ and $\a_n$. So $z J z\i = J$ as desired.

Case(5): $\SS_0$ is of type $E_6$. As $b$ is not unramified, $\s = \Id$ and we can assume $b \in t^{\varpi_1^\vee} W_0$. Here, and in the sequel, we using the labeling of $E_6$ and $E_7$ as in \cite[\S 11]{Hum}. We can take $$J = \{s_1, s_3, s_5, s_6\}.$$ Then $z J z\i = J $ since $J \subseteq \SS_0$ is the unique Dynkin subdiagram of type $A_2 \times A_2$.

Case(6): $\SS_0$ is of type $E_7$. Then $\s = \Id$ and $b \in t^{\varpi_7^\vee} W_0$. We can take $$J = \{s_2, s_5, s_7\}.$$ Then the statement is verified by computer or by the Lusztig-Spaltenstein algorithm. The proof is finished.
\end{proof}

\end{appendix}

\end{document}